\def\@tocline#1#2#3#4#5#6#7{\relax
  \ifnum #1>\c@tocdepth 
  \else
    \par \addpenalty\@secpenalty\addvspace{#2}%
    \begingroup \hyphenpenalty\@M
    \@ifempty{#4}{%
      \@tempdima\csname r@tocindent\number#1\endcsname\relax
    }{%
      \@tempdima#4\relax
    }%
    \parindent\z@ \leftskip#3\relax \advance\leftskip\@tempdima\relax
    \rightskip\@pnumwidth plus4em \parfillskip-\@pnumwidth
    #5\leavevmode\hskip-\@tempdima
      \ifcase #1
       \or\or \hskip 1em \or \hskip 2em \else \hskip 3em \fi%
      #6\nobreak\relax
    \dotfill\hbox to\@pnumwidth{\@tocpagenum{#7}}\par
    \nobreak
    \endgroup
  \fi}
\newcommand{\FS}{\mathrm{FS}}
\pgfplotsset{compat=1.15}
\newcommand\redball[1]{\fill[color = red, opacity=0.76] #1; \draw #1;}
\pgfplotsset{compat=1.8}
\newcommand{\Sing}{\text{Sing}}
\newcommand{\VSing}{\text{VSing}}
\newcommand{\tbeta}{\gamma}
\newcommand{\tspace}{\eta}
\newcommand{\tstar}{\delta_*}
\newcommand{\tbigspace}{\delta_{**}}
\newcommand{\cspace}{C_\eta}
\newcommand{\kspace}{k_\eta}
\newcommand{\dset}{\OC 0d_\Z}
\newcommand{\turn}{k}
\newcommand{\dimsing}{\delta_\dims}
\newcommand{\dirichlet}{{\tfrac\qdim\pdim}}
\renewcommand{\SS}{\mathcal S}
\renewcommand{\bfA}{A}
\renewcommand{\bfB}{X}
\renewcommand{\bfC}{Y}
\renewcommand{\bfD}{Z}
\newcommand{\bigOC}[2]{\big(#1,#2\big]}
\renewcommand{\index}{k}
\newcommand{\mink}{h}
\newcommand{\Mink}{\hh}
\newcommand{\pert}{b}
\newcommand{\Pert}{\bb}
\newcommand{\exceptionalset}{\{\tfrac1\pdim,-\tfrac1\qdim\}}
\newcommand{\timeerror}{s}
\newcommand{\dynexp}{\tau}
\newcommand{\dir}{L}
\newcommand{\diff}{M}
\renewcommand{\vert}{\LL_-}
\newcommand{\niol}{N}
\begin{document}
\title[A variational principle in the parametric geometry of numbers]{A variational principle in the parametric geometry of numbers}

\authortushar\authorlior\authordavid\authormariusz

\subjclass[2020]{11K55, 11J13 (primary), 28A80, 28A78, 37A15, 37A17, 37C85, 37D40, 91A05, 91A44 (secondary)}
\keywords{Diophantine approximation, Hausdorff dimension, packing dimension, geometry of numbers, lattices, simultaneous approximation, successive minima, Schmidt games, topological games, homogeneous dynamics, divergent trajectories, diagonal flows}
\dedicatory{
Dedicated to S. G. Dani, G. M. Margulis, and W. M. Schmidt
}

\begin{Abstract}
We extend the parametric geometry of numbers (initiated by Schmidt and Summerer, and deepened by Roy) to Diophantine approximation for systems of $m$ linear forms in $n$ variables, and establish a new connection to the metric theory via a variational principle that computes fractal dimensions of a variety of sets of number-theoretic interest. The proof of our variational principle relies on two novel ingredients: a variant of Schmidt’s game capable of computing the Hausdorff and packing dimensions of any set, and the notion of \textsf{templates}, which generalize Roy’s \textsf{rigid systems}. We use our variational principle to compute the Hausdorff and packing dimensions of the set of singular systems of linear forms and show they are equal, resolving a conjecture of Kadyrov, Kleinbock, Lindenstrauss and Margulis, as well as a question of Bugeaud, Cheung and Chevallier. As a corollary of Dani's correspondence principle, the divergent trajectories of a one-parameter diagonal action on the space of unimodular lattices with exactly two Lyapunov exponents with opposite signs has equal Hausdorff and packing dimensions. 
Other applications include quantitative strengthenings of theorems due to Cheung and Moshchevitin, which originally resolved conjectures due to Starkov and Schmidt respectively; as well as dimension formulas with respect to the uniform exponent of irrationality for simultaneous and dual approximation in two dimensions, completing partial results due to Baker, Bugeaud, Cheung, Chevallier, Dodson, Laurent and Rynne.
\end{Abstract}
\maketitle

\tableofcontents

\draftnewpage
\part{Introduction}
\label{partintroduction}

\section{Readers' Guide}
\label{sectionreadersguide}

The following brief guide will aid non-linear navigation across the paper. To prevent misunderstanding, the reader should first acquaint themselves with {\bf Conventions 1 through 7}, which may be found at the start of Section \6\ref{sectionconventions}. 
The conventions are followed by a glossary of notation (in the order of their appearance), which may be skipped on a first reading.
After the conventions one must read {\bf Section \6\ref{sectionmain} (Main Results)} and Section {\bf \6\ref{sectionvariational} (The Variational Principle)}, which contain statements of all the main theorems as well as fundamental definitions that are germane to the sequel. Section \6\ref{sectionfuture} contains a sample of future research directions.
The several theorems of Section \6\ref{sectionmain} are all consequences of a single \emph{variational principle} in the parametric geometry of numbers, which provides a unifying perspective to both old and new  results in the metric theory of Diophantine approximation. Theorem \ref{theoremvariationaluniform} in {Section \6\ref{sectionvariational}} is the version of this variational principle we prove in the sequel.

At this stage, there are a few potential routes ahead. Readers keen to get directly to the various applications in Section \6\ref{sectionmain} could take the variational principle (Theorem \ref{theoremvariationaluniform}) for granted and move directly to {\bf Part \ref{partmainproofs} (Proofs of main theorems using the variational principle)}. This allows one to better familiarize themselves with how to apply the variational principle before entering the myriad details that its intricate proof entails.

An alternate route would be to skip the proofs of the applications in Part \ref{partmainproofs}, and instead move straight to the heart of the paper, viz. our proof of the variational principle (Theorem \ref{theoremvariationaluniform}). This proof involves reading {\bf Part \ref{partgames} (Dimension games)} and {\bf Part \ref{partproofvariational} (Proof of the variational principle)} in order. 
We note that the proof of the upper bound in Section \6\ref{sectionupper} is significantly shorter than that of the lower bound in Section \6\ref{sectionlower}. 

Readers particularly interested in our variant of Schmidt's game (that computes the Hausdorff and packing dimensions of any Borel set in a doubling metric space) may read {\bf Section \6\ref{sectiondimensionprelim} (Preliminaries on measures and dimensions)} and {\bf Section \6\ref{sectiongame} (A characterization of Hausdorff and packing dimensions using games)} (both in Part \ref{partgames}) independently of all other sections in the paper.  

\section{Conventions and Glossary of Notation}
\label{sectionconventions}

We begin with our most important conventions, which should not be skipped and may be especially useful for a non-linear reader.

\begin{convention}
We denote the nonnegative integers as $\N \df \{0,1,2,\ldots\}$.
\end{convention}
\begin{convention}
Where applicable, the nonzero integers $m$, $n$, and $d \df m + n$ are treated as constant.
\end{convention}
\begin{convention}
\label{measureconvention}
All measures and sets are assumed to be Borel, and measures are assumed to be locally finite. Sometimes we restate these hypotheses for emphasis.
\end{convention}
\begin{convention}
We use uppercase letters $X,Y,\dots$ for matrices and bold letters $\xx,\yy,\dots$ for vectors. 
\end{convention}
\begin{convention}
\label{spanconvention}
Given a vector space $V$ and some index set $I$ we use the notation 
\[
\lb x_i \in V  :  i \in I \rb
\] 
to mean the subspace generated by $\{ x_i \in V  :  i \in I \}$, or the smallest subspace containing $\{ x_i \in V  :  i \in I \}$.
\end{convention}
\begin{convention}
In what follows, $A \lesssim B$ or $A \lesssim_\times B$ means that there exists a constant $C$ (the \emph{implied constant}) such that $A \leq CB$. $A\asymp B$ or $A \asymp_\times B$ means $A \lesssim B \lesssim A$. Similarly, $A\lesssim_\plus B$ means that $A \leq B + C$ for some constant $C$.
When we write $A \lesssim_{\beta} B$ or $A \lesssim_{\plus,\beta} B$ this signifies that the implied constant depends on $\beta$.
We use $A\asymp_+ B$ to mean $A\lesssim_\plus B$ and $B\lesssim_\plus A$. For instance, this allows us to write $A \asymp_+ B = C \asymp_\plus D$ without having to write $O(1)$ everywhere, which would obscure some of the information and also be more cluttered.
\end{convention}
\begin{convention}
Recall that $\Theta(x)$ denotes any number such that $x/C \leq \Theta(x) \leq C x$ for some uniform constant $C$. Similarly, $\Omega(x)$ and $O(x)$ denote numbers such that $x/C \leq \Omega(x)$ and $|O(x)| \leq C x$ for some uniform positive constant $C$, respectively.
\end{convention}

\hrulefill

\begin{gloss*}
\emph{For the reader's convenience we summarize a partial list of notations and terminology in the order that they appear in the sequel.}
\begin{itemize}
\item{\hyperref[]{$\N$}} \dotfill The non-negative integers
\item{\hyperref[matrixspace]{$\MM$}} \dotfill The space of $m\times n$ matrices with real entries
\item{\hyperref[sing]{$\Sing(\pdim,\qdim)$}} \dotfill The set of singular $\pdim\times \qdim$ matrices
\item{\hyperref[theoremsing]{$\dimsing$}} \dotfill $\dimsing \df \dimprod\big(1 - \tfrac1{\dimsum}\big)$
\item{\hyperref[BA]{$\mathrm{BA}(\pdim,\qdim)$}} \dotfill The set of badly approximable $\pdim\times \qdim$ matrices
\item{\hyperref[VWA]{$\mathrm{VWA}(\pdim,\qdim)$}} \dotfill The set of very well approximable $\pdim\times \qdim$ matrices
\item{\hyperref[sectiondimensionprelim]{$\HD(S)$}} \dotfill The Hausdorff dimension of a set $S$
\item{\hyperref[sectiondimensionprelim]{$\PD(S)$}} \dotfill  The packing dimension of a set $S$
\item{\hyperref[subsectionDani]{$\mathrm I_k$}} \dotfill The $k$-dimensional identity matrix
\item{\hyperref[subsectionDani]{$d$}} \dotfill $d \df \pdim + \qdim$
\item{\hyperref[subsectionDani]{$\lambda_j(\Lambda)$ $(1\leq j \leq d)$}} \dotfill The $j$th minimum of a lattice $\Lambda \subset \R^d$
\item{\hyperref[subsectionDani]{$g_t$}} \dotfill For $t\in\R$, $g_t \df \left[\begin{array}{ll}
e^{t/\pdim} \mathrm I_\pdim &\\
& e^{-t/\qdim} \mathrm I_\qdim
\end{array}\right] \in \SL_d(\R)$
\item{\hyperref[subsectionDani]{$u_\bfA$}} \dotfill For an $\pdim\times \qdim$ matrix $\bfA$, $u_\bfA \df \left[\begin{array}{ll}
\mathrm I_\pdim & \bfA\\
& \mathrm I_\qdim
\end{array}\right] \in \SL_d(\R)$
\item{\hyperref[UEI]{$\what\omega(\bfA)$}} \dotfill The uniform exponent of irrationality of an $\pdim\times \qdim$ matrix $\bfA$
\item{\hyperref[vsing]{$\VSing(\pdim,\qdim)$}} \dotfill The set of very singular $\pdim\times \qdim$ matrices, i.e. $\{\bfA:\what\omega(\bfA) > \qdim/\pdim\}$
\item{\hyperref[theoremdani2]{$\what\dynexp(\bfA)$}} \dotfill $\what\dynexp(\bfA) \df \liminf_{t\to\infty} \frac{-1}t\log\lambda_1(g_t u_\bfA \Z^d)$
\item{\hyperref[dani]{$\tau$}} \dotfill $\tau = \frac1\qdim \frac{\omega - \dirichlet}{\omega + 1}$
\item{\hyperref[singomega]{$\Sing_\dims(\omega)$}} \dotfill $\Sing_\dims(\omega) \df \{\bfA : \what\omega(\bfA) = \omega\} = \{\bfA : \what\tau(\bfA) = \tau\}$
\item{\hyperref[remarktriviallysingular]{\emph{trivially singular}}} \dotfill See Section $\6$ \ref{remarktriviallysingular}
\item{\hyperref[remarktriviallysingular]{$\Sing_\dims^*(\omega)$}} \dotfill $\Sing_\dims^*(\omega) \df \{\bfA\in \Sing_\dims(\omega) : \bfA\text{ is not trivially singular}\}$
\item{\hyperref[cusp]{$\PP(\bfA)$}} \dotfill $\PP(\bfA) \df \lim_{\epsilon\to 0} \liminf_{T\to\infty} \frac1T \lambda\big(\big\{t\in [0,T] : \lambda_1(g_t u_\bfA \Z^d) \leq \epsilon\big\}\big)$
\item{\hyperref[cusp]{\emph{singular on average}}} \dotfill $\bfA$ is \emph{singular on average} if $\PP(\bfA) = 1$
\item{\hyperref[defksingulargeneral]{\emph{$k$-singular}}} \dotfill See Def. \ref{defksingulargeneral}
\item{$\{x\}$} \dotfill $\{x\}$ denotes the fractional part of $x \in \R$
\item{\hyperref[fkdef]{$f_\dims(k)$}} \dotfill $f_\dims(k) \df \dimprod - \frac{k(\dimsum-k)\dimprod}{(\dimsum)^2} - \left\{\frac{k\pdim}{\dimsum}\right\} \left\{\frac{k\qdim}{\dimsum}\right\}$
\item{\hyperref[hitdef]{$\Mink,\Mink_\bfA, \mink_i(t)$}} \dotfill $\Mink = \Mink_\bfA = (\mink_1,\ldots,\mink_d) : \Rplus \to \R^d$, \hyperref[hitdef]{$\mink_i(t) \df \log\lambda_i(g_t u_\bfA \Z^d)$}
\item{\hyperref[Vjt]{$V_{j,t}$}} \dotfill $V_{j,t} \df \mathrm{span}_{\R}\{\rr\in \Z^d : \|g_t u_\bfA \rr\| \leq \lambda_j(g_t u_\bfA \Z^d)\}$
\item{\hyperref[FjI]{$F_{j,I}(t)$}} \dotfill $F_{j,I}(t) \df \log\|g_t u_\bfA (V_{j,t}\cap \Z^d)\|$
\item{\hyperref[slopeset]{\emph{$d_+$, $d_-$}}} \dotfill $d_+ \df \pdim$ and $d_- \df \qdim$
\item{\hyperref[slopeset]{$[a,b]_\Z$}} \dotfill $[a,b]_\Z \df [a,b]\cap\Z$
\item{\hyperref[slopeset]{$Z(j)$}} \dotfill $Z(j) \df \left\{\tfrac{\dir_+}{\pdim} - \tfrac{\dir_-}{\qdim}: \dir_\pm \in [0,d_\pm]_\Z,\;\;\dir_+ + \dir_- = j\right\}$

\item{\hyperref[definitiontemplate]{\emph{template}}} \dotfill See Def. \ref{definitiontemplate}
\item{\hyperref[definitiontemplate]{\emph{balanced template}}} \dotfill See Def. \ref{definitiontemplate}
\item{\hyperref[definitiontemplate]{\emph{partial template}}} \dotfill See Def. \ref{definitiontemplate}
\item{\hyperref[definitiontemplate]{$\TT_{\dims}$}} \dotfill The space of $\pdim\times\qdim$ templates
\item{\hyperref[definitiontemplate]{$F_j$}} \dotfill $F_j \df \sum_{0 < i\leq j} f_i$ for a map $\ff:\Rplus\to\R^d$
\item{\hyperref[definitiontemplate]{\emph{convexity condition}}} \dotfill $F_j$ is convex when $f_j < f_{j+1}$
\item{\hyperref[definitiontemplate]{\emph{quantized slope condition}}} \dotfill Slopes of the pieces of $F_j$ are in $Z(j)$ when $f_j < f_{j+1}$
\item{\hyperref[definitiontemplate]{$f_0, f_{d+1}$}} \dotfill $f_0 \df -\infty$ and $f_{d+1} \df +\infty$
\item{\hyperref[Df]{$\DD(\ff)$}} \dotfill $\DD(\ff) \df \{\bfA : \Mink_\bfA \asymp_\plus \ff\}$
\item{\hyperref[DF]{$\DD(\FF)$}} \dotfill $\DD(\FF) \df \bigcup_{\ff\in\FF} \DD(\ff)$
\item{\hyperref[NSC]{$\NN(\SS,C)$}} \dotfill $\NN(\SS,C) \df \{\gg:\Rplus\to\R^d : \|\gg - \ff\| \leq C \text{ for some } \ff\in \SS\}$
\item{\hyperref[lowerbound]{$\DD(\ff,C)$}} \dotfill $\DD(\ff,C) \df \DD(\NN(\{\ff\},C))$
\item{\hyperref[MSdef]{$\DD(\SS)$}} \dotfill $\DD(\SS) = \{\bfA \in \MM : \Mink_\bfA \in \SS\}$
\item{\hyperref[Lqdef]{$\dir_\pm = \dir_\pm(\ff,I,q)$} \dotfill Chosen so that $\dir_+ + \dir_- = q$ and $F_q' = \frac{\dir_+}{m} - \frac{\dir_-}{n}$ on $I$}
\item{\hyperref[Mpqdef]{$\diff_\pm = \diff_\pm(p,q) = \diff_\pm(\ff,I,p,q)$} \dotfill $\diff_\pm(p,q) = \dir_\pm(q) - \dir_\pm(p)$}
\item{\hyperref[definitionintervalequality]{\emph{interval of equality}}} \dotfill See Def. \ref{definitiondimtemplate}
\item{\hyperref[Splusdef1]{$S_\pm = S_\pm(\ff,I)$}} \dotfill See \eqref{Splusdef1} and \eqref{Sminusdef1} in Def. \ref{definitiondimtemplate}
\item{\hyperref[dimfI]{$\delta(\ff,I)$}} \dotfill See Def. \ref{definitiondimtemplate}
\item{\hyperref[DeltafT]{$\Delta(\ff,T)$}} \dotfill See Def. \ref{definitiondimtemplate}
\item{\hyperref[DeltaT1T2]{$\Delta(\ff,[T_1,T_2])$}} \dotfill See Def. \ref{definitiondimtemplate}
\item{\hyperref[deltaT+T-]{$\delta(T_+,T_-) $}} \dotfill See Def. \ref{definitiondimtemplate}

%
%
%

\item{\hyperref[definitiondimtemplate]{$\underline\delta(\ff), \overline\delta(\ff)$}} \dotfill Lower and upper average contraction rates of a template $\ff$

\item{\hyperref[UDE]{$\what\dynexp(\ff)$}} \dotfill The uniform dynamical exponent of $\ff$: $\what\dynexp(\ff) \df \liminf_{t\to\infty} \frac{-1}t f_1(t)$
\item{\hyperref[super]{$\w\Sing_\dims^*(\omega)$}} \dotfill $\w\Sing_\dims^*(\omega) \df \{\bfA:\what\omega(\bfA) \geq \omega,\;\bfA\text{ not trivially singular}\}$

\item{\hyperref[definitionintervallinearity]{\emph{interval of linearity}}} \dotfill See Lemma \ref{lemmaphiprimebound}
\item{\hyperref[definitionstandardtemplate]{\emph{standard template for a pair of points}}} \dotfill See Def. \ref{definitionstandardtemplate}
\item{\hyperref[definitionstandardtemplate]{$\mbf s[(t_k,-\epsilon_k),(t_{k+1},-\epsilon_{k+1})]$}} \dotfill See Def. \ref{definitionstandardtemplate}

\item{\hyperref[definitionstandardtemplate2parameter]{\emph{standard template for a sequence of points}}} \dotfill See Def. \ref{definitionstandardtemplate2parameter}

\item{\hyperref[definitionstandardtemplate2parameter]{\emph{$\ff[\tau,\lambda]$, the standard template for parameters $\tau\geq 0$ and $\lambda > 1$}}} \dotfill See Def. \ref{definitionstandardtemplate2parameter}

\item{\hyperref[expequivariance]{\emph{exponential $\lambda$-equivariance}}} \dotfill See Def. \ref{definitionstandardtemplate2parameter}

\item{\hyperref[HM]{$\scrH^s(A)$}} \dotfill The $s$-dimensional Hausdorff measure of a set $A\subset\R^d$
\item{\hyperref[PM]{$\scrP^s(A)$}} \dotfill The $s$-dimensional packing measure of a set $A\subset\R^d$
\item{\hyperref[HDPD]{$\HD(A)$}} \dotfill The Hausdorff dimension of a set $A\subset\R^d$
\item{\hyperref[HDPD]{$\PD(A)$}} \dotfill The packing dimension of a set $A\subset\R^d$
\item{$B(\xx,\rho)$} \dotfill The closed ball centered at $\xx \in \R^d$ with radius $\rho>0$
\item{\hyperref[nbhd]{$\NN(A,\epsilon)$}}  \dotfill The $\epsilon$-neighborhood of a set $A\subset\R^d$  
\item{\hyperref[localdim]{$\underline\dim_\xx(\mu)$}} \dotfill $\underline\dim_\xx(\mu) \df \liminf_{\rho\to 0} \log\mu(B(\xx,\rho)) / \log\rho$
\item{\hyperref[localdim]{$\overline\dim_\xx(\mu)$}} \dotfill $\overline\dim_\xx(\mu) \df \limsup_{\rho\to 0} \log\mu(B(\xx,\rho)) \ \log\rho$

\item{\hyperref[definitiongames1]{\emph{$\delta$-dimensional Hausdorff and packing $\beta$-games}}} \dotfill See Def. \ref{definitiongames1}

\item{\hyperref[rhosep]{\emph{$\rho$-separated set}}} \dotfill See Footnote \ref{rhosep}

\item{\hyperref[Hausdorff]{$\underline\delta(\AA)$}} \dotfill $\underline\delta(\AA) \df \liminf_{k\to\infty} \frac{1}{k} \sum_{i = 0}^k - \log\#(A_i)/ \log(\beta)$
\item{\hyperref[packing]{$\overline\delta(\AA)$}} \dotfill $\overline\delta(\AA) \df \limsup_{k\to\infty} \frac{1}{k} \sum_{i = 0}^k - \log\#(A_i)/ \log(\beta)$

\item{\hyperref[definitionmodifiedgame]{\emph{modified $\delta$-dimensional Hausdorff and packing $\beta$-games}}} \dotfill See Section \ref{sectiongamesgeometry}
\item{\hyperref[outcome2]{\emph{$\xx_\infty$}}} \dotfill $\xx_\infty \df \xx_0 + \sum_{k = 1}^\infty \beta^k \rho_{-1} \xx_k$, see \eqref{outcome2}

\item{\hyperref[Lambdakdef]{\emph{$\alpha$}}} \dotfill $\alpha \df \frac{\dimprod}{\dimsum}$, in \eqref{Lambdakdef}
\item{\hyperref[Lambdakdef]{\emph{$Y_k$}}} \dotfill $Y_k \df X_0 + \sum_{i=1}^k \beta^i \rho_{-1} X_i$, in \eqref{Lambdakdef}
\item{\hyperref[Lambdakdef]{\emph{$\Lambda_{k+1}$}}} \dotfill $\Lambda_{k+1} \df g_{-\alpha\log(\beta^{k+1} \rho_{-1})} u_{Y_k} \Z^{\dimsum}$, in \eqref{Lambdakdef}
\item{\hyperref[gamma]{\emph{$\tbeta$ and $g$}}} \dotfill $\tbeta \df -\alpha\log(\beta) > 0$ and $g \df g_\tbeta$, in Notation \ref{gamma}

\item{\hyperref[hLambda]{$\Mink(\Lambda)$} \dotfill $\hh(\Lambda) \df (\log\lambda_1(\Lambda),\ldots,\log\lambda_d(\Lambda))$}

\item{\hyperref[mink2]{\emph{Minkowski's second theorem}}} \dotfill Theorem \ref{mink2} 
 \item{\hyperref[definitionLambdarational]{\emph{$\Lambda$-rational}}} \dotfill A subspace $V\subset \R^d$ is $\Lambda$-rational if $V\cap\Lambda$ is a lattice in $V$, Def. \ref{definitionLambdarational}
\item{\hyperref[Lambdarational]{$\VV_q(\Lambda)$}} \dotfill Set of all $q$-dimensional $\Lambda$-rational subspaces of $\R^d$
\item{\hyperref[Lambdarational]{$\|V\|$}} \dotfill Covolume of $V\cap\Lambda$ in $V$, see Notation \ref{Lambdarational}
\item{\hyperref[contracting]{$\vert$}} \dotfill $\vert \df \{\0\}\times \R^\qdim$ is the subspace of $\R^d$ contracted by the $(g_t)$ flow
\item{\hyperref[contracting]{$\CC(V,\epsilon)$}} \dotfill Conical $\epsilon$-neighborhood of a subspace $V \subset \R^d$
\item{\hyperref[definitionintegral]{\emph{$\tspace$-integral}}} \dotfill see Def. \ref{definitionintegral}
\item{\hyperref[definitionsimple]{\emph{splits, mergers, transfers}}} \dotfill See Def. \ref{definitionsimple}
\item{\hyperref[definitionsimple]{\emph{simple}}} \dotfill See Def. \ref{definitionsimple}

\item{\hyperref[definitiontstar]{\emph{$\tstar$}}} \dotfill $\tstar \df d\cdot(d^2)!\tspace \in \tspace\N$ in proof of Lemma \ref{lemmafindtemplate}
\item{\hyperref[definitiontbigspace]{\emph{$\tbigspace$}}} \dotfill $\tbigspace \df \dimprod d^{4d}\tstar \in \tspace\N$ in proof of Lemma \ref{lemmafindtemplate}

\item{\hyperref[definitionconvexhull]{\emph{convex hull function}}} \dotfill See Def. \ref{definitionconvexhull}

\item{\hyperref[definitionCmatch]{\emph{$C$-match}}} \dotfill See Def. \ref{definitionCmatch} 

\item{\hyperref[good1]{\emph{good on turn $k$}}} \dotfill See \eqref{good1} and \eqref{good2}
\item{\hyperref[grass]{$\GG = \GG(d,n) = \GG_n(\R^d)$}} \dotfill The Grassmannian variety of $n$-dim. subspaces of $\R^d$

\item{\hyperref[lemmahperturbation]{\emph{$\Pert$-perturbation} of $\ff$ at $t_0$}} \dotfill See Lemma \ref{lemmahperturbation}
\item{\hyperref[qinterval]{\emph{$q$-interval}}} \dotfill See page \pageref{qinterval}
\item{\hyperref[intervalmixing]{\emph{interval of mixing}}} \dotfill See page \pageref{intervalmixing}


\end{itemize}
\end{gloss*}

\section{Statements of Main results}
\label{sectionmain}

The notion of singularity (in the sense of Diophantine approximation) was introduced by Khintchine, first in 1937 in the setting of simultaneous approximation \cite{Khinchin6}, and later in 1948 in the more general setting of matrix approximation \cite{Khinchin4}. Since then this notion has been studied within Diophantine approximation and allied fields, see Moshchevitin's excellent yet far from comprehensive 2010 survey \cite{Moshchevitin3}. 

Let $\MM$\label{matrixspace} denote the set of all $\pdim\times \qdim$ matrices with real entries. A matrix $\bfA \in \MM$ is called \emph{singular}\label{sing} if for all $\epsilon > 0$, there exists $Q_\epsilon$ such that for all $Q \geq Q_\epsilon$, there exist integer vectors $\pp\in \Z^\pdim$ and $\qq\in \Z^\qdim$ such that
\begin{align*}
\|\bfA \qq + \pp\| \leq \epsilon Q^{-\qdim/\pdim}\;\;\;\; \text{ and } \;\;\;\;
0 < \|\qq\| \leq Q.
\end{align*}
Here and from now on $\|\cdot\|$ is used to denote two fixed norms\Footnote{Note that many definitions, such as the one above, and all our main theorems, are insensitive to the choice of these norms. In some cases, e.g. in the course of a proof, we specify a particular norm for computational convenience.}, one on $\R^m$ and the other on $\R^n$. We denote the set of singular $\pdim\times\qdim$ matrices by $\Sing(\pdim,\qdim)$. For $1\times 1$ matrices (i.e. numbers), being singular is equivalent to being rational, and in general any matrix $\bfA$ which satisfies an equation of the form $\bfA \qq = \pp$, with $\pp,\qq$ integral and $\qq$ nonzero, is singular. However, Khintchine proved that there exist singular $2\times 1$ matrices whose entries are linearly independent over $\Q$ \cite[Satz II]{Khinchin3}\Footnote{Although Khintchine's seminal 1926 paper \cite{Khinchin3} includes a proof of the existence of $2\times 1$ and $1\times 2$ matrices possessing a certain property which clearly implies that they are singular, it does not include a definition of singularity nor discuss any property equivalent to singularity.}, and his argument generalizes to the setting of $\pdim\times\qdim$ matrices for all $(\pdim,\qdim) \neq (1,1)$. The name \emph{singular} derives from the fact that $\Sing(\pdim,\qdim)$ is a Lebesgue nullset for all $\pdim,\qdim$, see e.g. \cite[p.431]{Khinchin6} or \cite[Chapter 5, \67]{Cassels}. Note that singularity is a strengthening of the property of \emph{Dirichlet improvability} introduced by Davenport and Schmidt \cite{DavenportSchmidt4}.

In contrast to the measure zero result mentioned above, the computation of the Hausdorff dimension of $\Sing(\pdim,\qdim)$ has been a challenge that so far only met with partial progress. The first breakthrough was made in 2011 by Cheung \cite{Cheung}, who proved that the Hausdorff dimension of $\Sing(2,1)$ is $4/3$; this was extended in 2016 by Cheung and Chevallier \cite{CheungChevallier}, who proved that the Hausdorff dimension of $\Sing(\pdim,1)$ is $\pdim^2/(\pdim+1)$ for all $\pdim\geq 2$; while most recently Kadyrov, Kleinbock, Lindenstrauss, and Margulis (KKLM) \cite{KKLM} proved that the Hausdorff dimension of $\Sing(\pdim,\qdim)$ is at most $\dimsing \df \dimprod\big(1 - \tfrac1{\dimsum}\big)$, and went on to conjecture that their upper bound is sharp for all $(\pdim,\qdim)\neq (1,1)$ (see also \cite[Problem 1]{BCC}).

Cheung and Chevallier's result for singular vectors was an equality and they needed to develop separate tools to deal with upper and lower bounds. They developed the notion of {\it best approximation vectors} and a multidimensional extension of Legendre's theorem on convergents of real continued fraction expansions, as well as the notion of {\it self-similar coverings} that construct Cantor sets with ``inhomogeneous'' tree structures. On the other hand, though KKLM were only able to prove an upper bound rather than an equality, their methods leveraged the technology of integral inequalities developed by Eskin, Margulis and Mozes \cite{EMM} and extend Cheung and Chevallier's upper bound to the matrix framework.

Without relying on the aforementioned results and techniques, we prove (as announced in \cite{DFSU_singular_announcement}) that KKLM's conjecture is correct, and further that the packing dimension of $\Sing(\pdim,\qdim)$ is the same as its Hausdorff dimension, thus answering a question of Bugeaud, Cheung, and Chevallier \cite[Problem 7]{BCC}. To summarize:
\begin{theorem}
\label{theoremsing}
For all $(\pdim,\qdim)\neq (1,1)$, we have
\[
\HD(\Sing(\pdim,\qdim)) = \PD(\Sing(\pdim,\qdim)) = \dimsing \df \dimprod\big(1 - \tfrac1{\dimsum}\big),
\]
where $\HD(S)$ and $\PD(S)$ denote the Hausdorff and packing dimensions of a set $S$, respectively.
\end{theorem}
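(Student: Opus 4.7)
The plan is to derive Theorem~\ref{theoremsing} from the variational principle (Theorem~\ref{theoremvariational2}). Via Dani's correspondence, a matrix $\bfA \in \MM$ is singular if and only if $\lambda_1(g_t u_\bfA \Z^d) \to 0$ as $t\to\infty$, equivalently $\mink_1(t) \to -\infty$. Accordingly, I call a template $\ff \in \TT_\dims$ \emph{singular} if $f_1(t) \to -\infty$, and let $\FF$ denote the collection of all singular templates. Modulo the $\asymp_+$ slop in the definition of $\DD(\ff)$, we have $\Sing(\pdim,\qdim) = \DD(\FF)$, so the variational principle yields
\[
\HD\big(\Sing(\pdim,\qdim)\big) = \sup_{\ff \in \FF} \underline\delta(\ff), \qquad
\PD\big(\Sing(\pdim,\qdim)\big) = \sup_{\ff \in \FF} \overline\delta(\ff).
\]
Since $\underline\delta(\ff)\leq\overline\delta(\ff)$ always, it suffices to prove matching upper and lower bounds of $\dimsing$.

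For the upper bound $\overline\delta(\ff)\leq\dimsing$, fix a singular $\ff\in\FF$. The convexity condition on the partial sums $F_j$, the normalization $F_d\equiv 0$ inherited from unimodularity, and $f_1\to-\infty$ together sharply restrict the rate at which $\ff$ can descend in the contracting subspace $\vert$. Quantitatively, the combinatorial formula $f_\dims(\index)$ from the glossary already encodes the maximal contraction compatible with a configuration in which exactly $\index$ of the minima have separated. Summing these local rates along the intervals on which $\ff$ is simple, subject to the quantized slope constraints $F_j'\in Z(j)$, recovers the KKLM bound $\dimsing = \dimprod(1-1/\dimsum)$; the extremum is approached when the template dips proceed at slope $-1/\qdim$ along $\vert$.

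For the lower bound, I will exhibit a single template $\ff^{\ast}\in\FF$ with
\[
\underline\delta(\ff^{\ast})=\overline\delta(\ff^{\ast})=\dimsing,
\]
which simultaneously forces $\HD\geq\dimsing$ and the equality $\HD=\PD$. Using the standard-template formalism of Definition~\ref{definitionstandardtemplate}, I choose a slowly increasing sequence $(t_k,-\epsilon_k)$ and concatenate the pieces $\mathbf{s}[(t_k,-\epsilon_k),(t_{k+1},-\epsilon_{k+1})]$ so that on each interval $[t_k,t_{k+1}]$ the first minimum descends at the maximal contracting slope $-1/\qdim$ until $f_1$ reaches the level $-\epsilon_{k+1}t_{k+1}$, and then recovers to a balanced configuration via a prescribed sequence of transfers and mergers. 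Taking $\epsilon_k\to 0$ slowly enough ensures $f_1(t_k)\to-\infty$ (hence singularity), while the uniform behavior on successive intervals forces the limit inferior and limit superior defining $\underline\delta$ and $\overline\delta$ to coincide at $\dimsing$.

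The main obstacle lies in the lower bound. The upper bound is essentially a convexity accounting, foreshadowed by the integral inequality argument of KKLM; but constructing $\ff^{\ast}$ with matching lower and upper average contraction rates requires an intricate orchestration of splits, mergers, and transfers which simultaneously respects the convexity condition, the quantized slope condition $F_j'\in Z(j)$, and the singularity requirement $f_1\to-\infty$. The payoff of the template approach is that once $\ff^{\ast}$ is in hand, the lower-bound half of the variational principle automatically delivers an abundant family of matrices $\bfA$ with $\Mink_\bfA\asymp_+\ff^{\ast}$ realizing the desired dimension, thereby bypassing the delicate self-similar Cantor constructions underlying the earlier Cheung--Chevallier results.
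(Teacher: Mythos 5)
Your reduction via the variational principle (taking $\SS=\{\ff: f_1(t)\to-\infty\}$, which is closed under finite perturbations) and the split into an upper bound for $\overline\delta$ and a lower bound for $\underline\delta$ matches the paper's strategy, but both halves as written have genuine gaps. For the upper bound, the assertion that $f_\dims(\index)$ "encodes the maximal contraction compatible with a configuration in which exactly $\index$ of the minima have separated" cannot be made into a pointwise bound: a singular template can have local contraction rate $\delta(\ff,t)=\dimprod>\dimsing$ on intervals of positive proportion (e.g.\ the recovery phase of the standard template in Figure \ref{figurebasic}, where $S_+(\ff,I)=\{1,\ldots,\pdim\}$), so no "configuration count" bounded by $\dimsing$ holds at individual times. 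The content of the upper bound is a statement about the \emph{time average} of $\delta(\ff,\cdot)$, and your sketch supplies no mechanism converting the hypothesis $f_1\to-\infty$ into such an average bound. The paper's mechanism is the potential function $\phi(t)=\max\bigl(\tfrac{\pdim^2\qdim}{\dimsum}|f_1(t)|,\tfrac{\pdim\qdim^2}{\dimsum}|f_d(t)|\bigr)$ together with the differential inequality $\phi'(t)\leq\dimsing-\delta(\ff,t)$, valid on intervals of linearity where $\ff(t)\neq\0$ (Lemma \ref{lemmaphiprimebound}); singularity guarantees $\ff(t)\neq\0$ eventually, and integrating plus $\phi\geq0$ gives $\overline\delta(\ff)\leq\dimsing$. (Note this is exactly where singularity enters: at $\ff=\0$ one has $\delta=\dimprod$, consistent with $\HD(\mathrm{BA}(\pdim,\qdim))=\dimprod$, so "convexity accounting" alone cannot yield $\dimsing$.) Without this, or an equivalent averaging argument, your upper bound is an assertion, not a proof.

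For the lower bound, your parameter regime is backwards. In the standard template $\mbf s[(t_k,-\epsilon_k),(t_{k+1},-\epsilon_{k+1})]$ of Definition \ref{definitionstandardtemplate} one has $f_1(t_k)=f_2(t_k)=-\epsilon_k$, so if $\epsilon_k\to0$ then $f_1(t_k)\to0$ and the template is \emph{not} singular; and if instead you read $\epsilon_k$ as a proportional depth while "recovering to a balanced configuration" near level $0$ after each dip, singularity again fails, since $f_1(t)\to-\infty$ must hold along \emph{all} times, not just at the dips. A single singular template with $\underline\delta=\dimsing$ does exist, but it requires $\epsilon_k\to\infty$ with $\epsilon_k/\Delta t_k\to0$ and $\epsilon_{k+1}/\Delta t_k\to0$ (this is precisely the construction in the paper's proof of Theorem \ref{theoremstarkov}); alternatively one can argue as in \S\ref{subsectionmainlowerbound} with the self-similar family $\ff[\tau,\lambda]$, $\lambda=1+\sqrt\tau$, which is $\tau$-singular with $\underline\delta=\dimsing-O(\sqrt\tau)$, and let $\tau\to0$, which proves the stronger bound $\HD(\VSing(\pdim,\qdim))\geq\dimsing$ and the asymptotics of Theorem \ref{theoremhsmall} at the same time. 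Finally, insisting on a single template with $\underline\delta=\overline\delta$ is unnecessary for $\HD=\PD$: that equality follows from $\HD\leq\PD$ combined with the upper bound $\PD(\Sing(\pdim,\qdim))\leq\dimsing$ and the Hausdorff lower bound, so the real issue to repair is the singularity of your construction, not the coincidence of its two contraction rates.
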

Note that we provide a new proof of the lower bound as well as a proof of the upper bound.

\subsection{Dani correspondence}
\label{subsectionDani}
The set of singular matrices is linked to homogeneous dynamics via the \emph{Dani correspondence principle} \cite{Dani4,KleinbockMargulis2}. For each $t\in\R$ and for each matrix $\bfA \in \MM$, let
\begin{align*}
g_t &\df \left[\begin{array}{ll}
e^{t/\pdim} \mathrm I_\pdim &\\
& e^{-t/\qdim} \mathrm I_\qdim
\end{array}\right],&
u_\bfA &\df \left[\begin{array}{ll}
\mathrm I_\pdim & \bfA\\
& \mathrm I_\qdim
\end{array}\right],
\end{align*}
where $\mathrm I_k$ denotes the $k$-dimensional identity matrix. Finally, let $d \df \pdim + \qdim$, and for each $j = 1,\ldots,d$, let $\lambda_j(\Lambda)$ denote the $j$th minimum of a lattice $\Lambda \subset \R^d$, i.e. the infimum of $\lambda$ such that the set $\{\rr\in\Lambda : \|\rr\| \leq \lambda\}$ contains $j$ linearly independent vectors. Then the Dani correspondence principle is a dictionary between the Diophantine properties of a matrix $\bfA$ on the one hand, and the dynamical properties of the orbit $(g_t u_\bfA \Z^d)_{t\geq 0}$ on the other. 

Recall that an $\pdim\times \qdim$ matrix $\bfA$ is called \emph{badly approximable}\label{BA} if there exists
$c > 0$ such that for all integer vectors $\pp\in \Z^\pdim$ and $\qq\in \Z^\qdim \setminus \{\0\}$ we have $\|\bfA \qq + \pp\| \geq c \|\qq\| ^{-\frac{\qdim}{\pdim}}$; and is called \emph{very well approximable}\label{VWA} if there exist $\varepsilon > 0$ and infinitely many integer vectors $\pp\in \Z^\pdim$ and $\qq\in \Z^\qdim \setminus \{\0\}$ such that $\|\bfA \qq + \pp\| \leq \|\qq\|^{-(\frac{\qdim}{\pdim} + \varepsilon)}$. Such classes have been intensively studied within the field of metric Diophantine approximation \cite{BernikDodson, Bugeaud, DodsonKristensen}.

\begin{center}
\begin{tabular}{|c|c|}
\hline
\spc{{\bf Diophantine properties of $\bfA$}}&
\spc{{\bf Dynamical properties of $(g_t u_\bfA x_0)_{t\geq 0}$}}\\
\hline
$\bfA$ is \emph{badly approximable}&
$(g_t u_\bfA x_0)_{t\geq 0}$ is bounded\\
\hline
$\bfA$ is \emph{singular}&
$(g_t u_\bfA x_0)_{t\geq 0}$ is divergent\\
\hline
$\bfA$ is \emph{very well approximable}&
$\limsup_{t\to\infty} \frac1t\dist(x_0,g_t u_\bfA x_0) > 0$\\
\hline
\end{tabular}
\end{center}

We denote the sets of badly approximable, singular, and very well approximable matrices by $\mathrm{BA}(\pdim,\qdim)$, $\Sing(\pdim,\qdim)$, and $\mathrm{VWA}(\pdim,\qdim)$, respectively. Using the Dani correspondence principle, the fact that they are all Lebesgue null sets can now be seen to follow from the ergodicity of the ($g_t$)-action (see \cite[Corollary 2.2 in Chapter III]{BekkaMayer}). Indeed, in each case it suffices to show that any trajectory that equidistributes is not in the respective set. An equidistributed trajectory is not bounded because the orbit must be dense, proving that $\mathrm{BA}(\pdim,\qdim)$ is Lebesgue null. An equidistributed trajectory is not divergent because that would imply escape of mass, proving that $\Sing(\pdim,\qdim)$ is Lebesgue null. Finally, an equidistributed trajectory does not escape to infinity at a linear rate because this would imply that it spends a proportionally long time near infinity infinitely often, which would imply escape of mass (along a subsequence); thereby proving that $\mathrm{VWA}(\pdim,\qdim)$ is Lebesgue null.

It follows from the Dani correspondence principle that Theorem \ref{theoremsing} implies that the set of divergent trajectories of the one-parameter diagonal ($g_t$)-action (on the space of unimodular lattices that has exactly two Lyapunov exponents with opposite signs) has equal Hausdorff and packing dimensions. In the sequel, we focus on Diophantine statements and leave it to the interested reader to translate our results in the language of homogeneous dynamics.

Let us precisely state the result mentioned in the middle row of the table above as it is particularly germane to our theme.
\begin{theorem}[{\cite[Theorem 2.14]{Dani4}}]
\label{theoremdani}
A matrix $\bfA \in \MM$ is singular if and only if the trajectory $(g_t u_\bfA \Z^d)_{t\geq 0}$ is divergent in the space of unimodular lattices in $\R^d$, or equivalently (via Mahler's compactness criterion \cite[Theorem 11.33]{EinsiedlerWard}) if
\[
\lim_{t\to\infty} \lambda_1(g_t u_\bfA \Z^d) = 0.
\]
\end{theorem}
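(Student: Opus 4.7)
The plan hinges on the explicit block form of $g_t u_\bfA$: for an integer vector $\rr = (\pp,\qq)$ with $\pp\in\Z^\pdim$ and $\qq\in\Z^\qdim$, one computes
$$g_t u_\bfA \rr \;=\; \bigl(\, e^{t/\pdim}(\bfA\qq + \pp),\; e^{-t/\qdim}\qq \,\bigr).$$
Adopting the max norm on $\R^d$, the inequality $\|g_t u_\bfA \rr\| \leq \delta$ is therefore equivalent to the simultaneous bounds $\|\bfA\qq + \pp\| \leq \delta e^{-t/\pdim}$ and $\|\qq\| \leq \delta e^{t/\qdim}$. The Diophantine conditions in the definition of singularity are thus, up to calibration of $t$ against the denominator bound $Q$, exactly the conditions that $g_t u_\bfA \Z^d$ contains a short vector, and the proof reduces to bookkeeping of parameters.

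For the forward direction, I would fix $\delta > 0$, set the dimensionally consistent $\epsilon \df \delta^{d/\pdim}$, and for each large $t$ take $Q \df \delta e^{t/\qdim}$, which exceeds the threshold $Q_\epsilon$ once $t$ is sufficiently large. Singularity then yields a nonzero $(\pp,\qq)$ with $\|\qq\| \leq Q$ and $\|\bfA\qq + \pp\| \leq \epsilon Q^{-\qdim/\pdim}$; a direct substitution verifies that both coordinates of $g_t u_\bfA(\pp,\qq)$ have max norm at most $\delta$, so $\lambda_1(g_t u_\bfA \Z^d) \leq \delta$. Since $\delta$ is arbitrary, $\lim_{t\to\infty}\lambda_1(g_t u_\bfA \Z^d) = 0$.

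The converse is the reverse bookkeeping. Given $\epsilon > 0$, set $\epsilon' \df \epsilon^{\pdim/d}$, so that for all sufficiently large $t$ the lattice $g_t u_\bfA \Z^d$ contains a nonzero vector $\rr_t = (\pp_t,\qq_t)$ of norm at most $\epsilon'$. For each sufficiently large $Q$ I pick $t$ with $\epsilon' e^{t/\qdim} = Q$; the block computation then gives $\|\qq_t\| \leq Q$ and $\|\bfA\qq_t + \pp_t\| \leq \epsilon Q^{-\qdim/\pdim}$. The one subtlety is to rule out $\qq_t = 0$; but in that case $\rr_t = (\pp_t, 0)$ would be a nonzero integer vector with $\|g_t u_\bfA \rr_t\| = e^{t/\pdim}\|\pp_t\| \geq e^{t/\pdim}$, contradicting smallness for large $t$. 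This produces the required approximations and verifies that $\bfA$ is singular.

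Finally, the equivalence between divergence of $(g_t u_\bfA \Z^d)_{t\geq 0}$ in the moduli space of unimodular lattices and $\lambda_1(g_t u_\bfA \Z^d) \to 0$ is a direct application of Mahler's compactness criterion: a sequence of unimodular lattices in $\R^d$ leaves every compact subset of the moduli space if and only if its first minimum tends to zero. There is no conceptual obstacle beyond setting up this dictionary, and the only genuinely quantitative step is the choice of the companion parameters $\epsilon = \delta^{d/\pdim}$ and $\epsilon' = \epsilon^{\pdim/d}$, both of which are forced by dimensional consistency between the contracting direction of weight $1/\pdim$ and the expanding direction of weight $1/\qdim$.
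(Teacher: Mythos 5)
Your argument is correct: with the max norm the block computation $g_t u_\bfA(\pp,\qq) = (e^{t/\pdim}(\bfA\qq+\pp),\,e^{-t/\qdim}\qq)$ together with the calibrations $\epsilon=\delta^{d/\pdim}$, $Q=\delta e^{t/\qdim}$ (forward) and $\epsilon'=\epsilon^{\pdim/d}$, $Q=\epsilon' e^{t/\qdim}$ (converse, with the $\qq_t\neq\0$ exclusion handled as you do) checks out, and the Mahler step is exactly as stated. The paper does not prove Theorem \ref{theoremdani} itself — it simply cites Dani — but your proof is the standard Dani-correspondence bookkeeping, essentially the same computation the paper carries out explicitly in its proof of Theorem \ref{theoremdani2}.
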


It is natural to ask about the set of matrices such that the above limit occurs at a prescribed rate, such as the set of matrices such that $-\log\lambda_1(g_t u_\bfA \Z^d)$ grows linearly with respect to $t$. This question is closely linked with the concept of uniform exponents of irrationality. The \emph{uniform exponent of irrationality}\label{UEI} of an $\pdim\times \qdim$ matrix $\bfA$, denoted $\what\omega(\bfA)$, is the supremum of $\omega$ such that for all $Q$ sufficiently large, there exist integer vectors $\pp\in \Z^\pdim$ and $\qq\in\Z^\qdim$ such that
\begin{align*}
\|\bfA \qq + \pp\| &\leq Q^{-\omega} \text{ and }
0 < \|\qq\| \leq Q.
\end{align*}
By Dirichlet's theorem (\cite{Dirichlet} or \cite[Theorem 1E in \6II]{Schmidt3}), every $\pdim\times\qdim$ matrix $\bfA$ satisfies $\what\omega(\bfA) \geq \dirichlet$. Moreover, it is immediate from the definitions that any matrix $\bfA$ satisfying $\what\omega(\bfA) > \dirichlet$ is singular. We call a matrix \emph{very singular}\label{vsing} if it satisfies the inequality $\what\omega(\bfA) > \dirichlet$, in analogy with the set of \emph{very well approximable} matrices, which satisfy a similar inequality for the regular (non-uniform) exponent of irrationality. We denote the set of very singular $\pdim\times\qdim$ matrices by $\VSing(\pdim,\qdim)$. The relationship between uniform exponents of irrationality and very singular matrices on the one hand, and homogeneous dynamics on the other, is given as follows:

\begin{theorem}
\label{theoremdani2}
A matrix $\bfA$ is very singular if and only if $\what\tau(\bfA) > 0$, where
\[
\what\dynexp(\bfA) \df \liminf_{t\to\infty} \frac{-1}t\log\lambda_1(g_t u_\bfA \Z^d).
\]
Moreover, the quantities $\tau = \what\tau(\bfA)$ and $\omega = \what\omega(\bfA)$ are related by the formula
\begin{equation}
\label{dani}
\tau = \frac1\qdim \frac{\omega - \dirichlet}{\omega + 1}\cdot
\end{equation}
\end{theorem}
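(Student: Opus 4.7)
The plan is to read off the theorem directly from the shape of lattice vectors under the Dani correspondence. Any nonzero element of $g_t u_\bfA \Z^d$ has the form $\bigl(e^{t/\pdim}(\bfA\qq+\pp),\,e^{-t/\qdim}\qq\bigr)$ for $(\pp,\qq)\in\Z^\pdim\times\Z^\qdim\setminus\{\0\}$. Working with the sup norm on $\R^d=\R^\pdim\oplus\R^\qdim$ (which affects $\what\tau$ only up to an $o(1)$ term and so is harmless for $\liminf$ rates), the condition $\lambda_1(g_t u_\bfA\Z^d)\leq e^{-\tau t}$ is equivalent to the existence of such a pair $(\pp,\qq)$ satisfying
\[
\|\bfA\qq+\pp\|\leq e^{-(1/\pdim+\tau)t}\quad\text{and}\quad\|\qq\|\leq e^{(1/\qdim-\tau)t}.
\]
For $t$ large the first inequality forces $\qq\neq\0$ (else $\|\pp\|\geq 1$ would fail it). Introducing the change of variables $Q\df e^{(1/\qdim-\tau)t}$ recasts these as the Diophantine pair $\|\qq\|\leq Q$, $\|\bfA\qq+\pp\|\leq Q^{-\omega}$, provided $\omega$ is chosen so that $\omega(1/\qdim-\tau)=1/\pdim+\tau$; solving gives precisely the formula $\tau=\tfrac1\qdim\tfrac{\omega-\dirichlet}{\omega+1}$ of \eqref{dani}. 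This map $\omega\mapsto\tau(\omega)$ is a continuous, strictly increasing bijection from $[\dirichlet,\infty]$ onto $[0,1/\qdim]$ with $\tau(\dirichlet)=0$.

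With the dictionary in hand, the theorem reduces to two matching inequalities. For $\what\tau(\bfA)\geq\tau(\what\omega(\bfA))$, fix any $\omega<\what\omega(\bfA)$: by definition, for every $Q\geq Q_0$ there exist integer $\pp,\qq$ with $0<\|\qq\|\leq Q$ and $\|\bfA\qq+\pp\|\leq Q^{-\omega}$, and reading this back through the change of variables produces, for every sufficiently large $t$, a nonzero vector of $g_t u_\bfA\Z^d$ of sup-norm at most $e^{-\tau(\omega)t}$; hence $\what\tau(\bfA)\geq\tau(\omega)$, and letting $\omega\nearrow\what\omega(\bfA)$ with continuity of $\tau(\cdot)$ gives the inequality. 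For the reverse $\what\omega(\bfA)\geq\omega(\what\tau(\bfA))$, fix $\tau<\what\tau(\bfA)$: for every $t\geq t_0$ a shortest nonzero lattice vector in $g_t u_\bfA\Z^d$ has norm at most $e^{-\tau t}$ and (for large $t$) nonzero $\qq$-coordinate, so it supplies $(\pp,\qq)$ satisfying the Diophantine pair at $\omega=\omega(\tau)$ and $Q=e^{(1/\qdim-\tau)t}$. Since $t\mapsto Q(t)$ is continuous and surjects onto $[Q_0,\infty)$, the defining condition of $\what\omega(\bfA)$ is realised at every sufficiently large $Q$, giving $\what\omega(\bfA)\geq\omega(\tau)$; letting $\tau\nearrow\what\tau(\bfA)$ completes the identification and hence \eqref{dani}.

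The equivalence $\what\omega(\bfA)>\dirichlet \iff \what\tau(\bfA)>0$ then falls out of $\tau(\dirichlet)=0$ and strict monotonicity of the bijection. The proof is essentially bookkeeping around Theorem \ref{theoremdani}; the only mildly delicate points are to justify that the minimizing integer pair has $\qq\neq\0$ in all regimes used (automatic for $t$ beyond an explicit threshold) and to absorb the boundary case $\what\omega(\bfA)=\infty \leftrightarrow \what\tau(\bfA)=1/\qdim$ by continuity of $\omega\mapsto\tau(\omega)$ on the extended range. I do not anticipate any genuine obstacle.
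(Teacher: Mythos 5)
Your argument is correct and follows essentially the same route as the paper: the same change of variables $Q=e^{(1/\qdim-\tau)t}$ linking the Dirichlet-type inequalities to the norm of $g_t u_\bfA(\pp,\qq)$, with the formula \eqref{dani} emerging from matching the two exponents. The only difference is cosmetic — you work with $\omega<\what\omega(\bfA)$ instead of the paper's $\omega-\epsilon$ and you write out the converse inequality (shortest vector has $\qq\neq\0$, then surjectivity of $t\mapsto Q(t)$), which the paper dismisses as "similar."
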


This theorem is a straightforward example of the Dani correspondence principle and is probably well-known, but we have not been able to find a reference.

\begin{proof}
The first assertion follows from \eqref{dani}, so it suffices to prove \eqref{dani}. Let $\omega = \what\omega(\bfA)$, and let $\tau$ be given by \eqref{dani}; then we need to prove that $\what\tau(\bfA) = \tau$. We prove the $\geq$ direction; the $\leq$ direction is similar. Fix $\epsilon > 0$ and $t \geq 0$, and let $Q = e^{(1/n - \tau) t}$. By the definition of $\omega$, if $t$ (and thus $Q$) is sufficiently large then there exist $\pp,\qq$ such that $\|A\qq + \pp\| \leq Q^{-\omega+\epsilon}$ and $0 < \|\qq\| \leq Q$. Now let
\[
\rr = g_t u_A (\pp,\qq) = (e^{t/m}(A\qq + \pp), e^{-t/n}\qq).
\]
Then
\begin{align*}
\lambda_1(g_t u_A \Z^d) \leq \|\rr\| &\asymp \max(e^{t/m}\|A\qq + \pp\|, e^{-t/n}\|\qq\|)\\
&\leq \max(e^{t/m} Q^{-\omega+\epsilon},e^{-t/n} Q)\\
&= \max(e^{t/m} e^{(1/n-\tau)(-\omega+\epsilon)},e^{-\tau t})\\
&= \exp\left(-t\min\left(\tau, \left(\tfrac1n-\tau\right)(\omega-\epsilon) - \tfrac1m\right)\right).
\end{align*}
Since $t$ was arbitrary, it follows that
\[
\what\dynexp(\bfA) = \liminf_{t\to\infty} \frac{-1}t\log\lambda_1(g_t u_\bfA \Z^d) \geq \min\left(\tau, \left(\tfrac1n-\tau\right)(\omega-\epsilon) - \tfrac1m\right).
\]
Taking the limit as $\epsilon\to 0$ we get
\[
\what\dynexp(\bfA) \geq  \min\left(\tau, \left(\tfrac1n-\tau\right)\omega - \tfrac1m\right) = \min(\tau,\tau) = \tau.
\qedhere\]
\end{proof}

\subsection{Dimensions of very singular matrices}
Perhaps unsurprisingly, the set of very singular matrices has the same dimension properties as the set of singular matrices.

\begin{theorem}
\label{theoremvsing}
For all $(\pdim,\qdim)\neq (1,1)$, we have
\[
\HD(\VSing(m,n)) = \PD(\VSing(m,n)) = \dimsing.
\]
\end{theorem}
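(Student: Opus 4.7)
The upper bound is immediate from $\VSing(m,n)\subseteq\Sing(m,n)$ combined with Theorem \ref{theoremsing}, which yields $\HD(\VSing(m,n))\leq \PD(\VSing(m,n))\leq\PD(\Sing(m,n))=\dimsing$.

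For the lower bound I would apply the variational principle (Theorem \ref{theoremvariational2}) to a family of templates whose associated matrices are all very singular. Revisiting the lower bound argument in Theorem \ref{theoremsing}, the extremal value $\dimsing$ is attained by templates $\ff=(f_1,\ldots,f_d)\in\TT_\dims$ in which only the first coordinate $f_1$ drifts to $-\infty$, with $f_2,\ldots,f_d$ remaining bounded and adjusted so that the convexity and quantized slope conditions of Definition \ref{definitiontemplate} are preserved. Since the only negative slope available in $Z(1)=\{\tfrac{1}{m},-\tfrac{1}{n}\}$ is $-\tfrac{1}{n}$, any template in which $f_1$ saturates the maximal admissible descent must spend a set of positive density with $f_1$ sloping at rate $-\tfrac{1}{n}$, yielding $\what\dynexp(\ff)=\liminf_{t\to\infty}-f_1(t)/t>0$. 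By Theorem \ref{theoremdani2} and formula \eqref{dani}, every $\bfA\in\DD(\ff)$ then satisfies $\what\omega(\bfA)>\dirichlet$, so $\DD(\ff)\subseteq\VSing(m,n)$. Running the variational principle over this family gives $\HD(\VSing(m,n))\geq\dimsing$.

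The main obstacle is verifying the claim that an extremizer actually has $f_1$ going to $-\infty$ at a positive \emph{linear} rate, rather than merely unboundedly below. If the ``off-the-shelf'' extremal template from Theorem \ref{theoremsing} only guarantees $f_1(t)\to-\infty$, the fallback is to apply a $\Pert$-perturbation in the sense of Lemma \ref{lemmahperturbation}: for each $\varepsilon>0$, insert a uniform downward drift on $f_1$ and reroute the compensating mass to the other coordinates while maintaining the template axioms, producing a family $\FF_\varepsilon$ with $\what\dynexp(\ff)\geq\varepsilon$ for every $\ff\in\FF_\varepsilon$. Such a perturbation shifts the average contraction rates $\underline\delta(\ff),\overline\delta(\ff)$ by $O(\varepsilon)$, so the variational principle applied to $\FF_\varepsilon$ gives $\HD(\DD(\FF_\varepsilon))\to\dimsing$ as $\varepsilon\to 0$. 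Since $\VSing(m,n)=\bigcup_{k\geq 1}\{\bfA:\what\dynexp(\bfA)\geq 1/k\}$, countable stability of Hausdorff dimension finishes the lower bound. The delicate point is controlling how the perturbation interacts with the full dimension formula (not merely the first-minimum exponent), which is precisely what Lemma \ref{lemmahperturbation} is designed to handle.
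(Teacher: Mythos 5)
Your route is the paper's: the upper bound comes from $\VSing(m,n)\subset\Sing(m,n)$ together with the packing bound $\PD(\Sing(m,n))\leq\dimsing$, and the lower bound comes from feeding the variational principle templates with strictly positive uniform dynamical exponent and lower average contraction rate close to $\dimsing$, then letting that exponent tend to zero (monotonicity alone finishes here; the countable-stability step is superfluous). This is exactly what \S\ref{subsectionmainlowerbound} does with the standard templates $\ff[\tau,\lambda]$ of Definition \ref{definitionstandardtemplate2parameter}, taking $\lambda=1+\Theta(\sqrt\tau)$ so that $\underline\delta(\ff[\tau,\lambda])=\dimsing-O(\sqrt\tau)$ while $\what\dynexp(\ff[\tau,\lambda])=\tau>0$; hence $\DD(\ff[\tau,\lambda])\subset\VSing(m,n)$, and $\tau\to 0$ gives the lower bound.

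Two local corrections, though. First, the claim in your second paragraph --- that an extremizer for the $\Sing$ lower bound automatically has $\what\dynexp(\ff)>0$ because $f_1$ must carry slope $-\tfrac1n$ on a set of positive density --- is false as stated: the template glued from the blocks of Figure \ref{figurebasic} has exactly that feature, yet $f_1$ returns essentially to $0$ at the block endpoints, so $\what\dynexp(\ff)=0$; only the modified family with $\epsilon_k=\tau t_k$ has positive exponent. You flag this yourself, and your fallback is the right idea, but Lemma \ref{lemmahperturbation} is not the tool that implements it: that lemma perturbs a template by a bounded vector at a single time $t_0$ (it is the error-correction device inside the proof of the variational principle) and cannot install a linear drift on $f_1$ over all time. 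The device that does the job is the standard-template family above, and the dimension cost of demanding $\what\dynexp\geq\varepsilon$ is $\Theta(\sqrt\varepsilon)$ rather than $O(\varepsilon)$ when $(m,n)\neq(2,2)$ (Theorem \ref{theoremhsmall}); this is immaterial as $\varepsilon\to 0$, so your conclusion stands once the construction is made explicit.
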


One can also ask for more precise results regarding the function $\what\omega$. Specifically, for each $\omega > \dirichlet$ we can consider the levelset\Footnote{For results considering the superlevelset, see Theorem \ref{theoremvariational5}.}
\begin{equation}
\label{singomega}
\Sing_\dims(\omega) \df \{\bfA : \what\omega(\bfA) = \omega\} = \{\bfA : \what\tau(\bfA) = \tau\} \df \Sing_\dims(\tau),
\end{equation}
where $\tau$ is given by \eqref{dani}.\Footnote{This is somewhat of an abuse of notation since $\Sing_\dims$ is being used in two separate senses in the equation \eqref{singomega}. We avoid confusion by using $\Sing_\dims$ in the second sense only when the parameter is named $\tau$.} Elements of the set above are called \emph{$\omega$-singular} or \emph{$\tau$-singular}.

It would be desirable to obtain precise formulas for the Hausdorff and packing dimensions of $\Sing_\dims(\omega)$ in terms of $\omega$, $\pdim$, and $\qdim$, see e.g. \cite[Problem 2]{BCC}. However, this appears to be extremely challenging at the present juncture. We have made significant progress towards this question: solving it completely in the cases $(\pdim,\qdim) = (1,2)$ and $(\pdim,\qdim) = (2,1)$, and for packing dimension in the case where $n \geq 2$. See Theorems \ref{theorempacking} and \ref{theoremspecialcase} for details.

In general, we have obtained asymptotic formulas of two types: estimates valid when $\omega$ is small and estimates valid when $\omega$ is large. Note that while the minimum value of $\what\omega$ is always $\dirichlet$ (corresponding to $\what\tau = 0$), the maximum value depends on whether or not $\qdim$ is at least $2$. If $\qdim \geq 2$, then the maximum value of $\what\omega$ is $\infty$ (corresponding to $\what\tau = \frac1\qdim$), while if $\qdim = 1$, then the maximum value of $\what\omega$ (excluding rational points) is $1$ (corresponding to $\what\tau = \frac{\pdim-1}{2\pdim}$).\Footnote{The reason for this is that if $\qdim = 1$, then for trivial reasons the value of $\what\omega$ at a point $\xx\in\R^\pdim$ is at most the minimum value of $\what\omega$ over the coordinates $x_1,\ldots,x_m$, and if $\xx$ is irrational, then for some $i = 1,\ldots,m$, $x_i$ is irrational and therefore (since we are in one dimension) satisfies $\what\omega(x_i) = 1$.} Consequently, we have two different asymptotic estimates of the dimensions of $\Sing_\dims(\omega)$ when $\omega$ is large corresponding to these two cases. 
In all of the formulas below, $\tau$ is related to $\omega$ by the formula \eqref{dani}.

\begin{theorem}
\label{theoremhsmall}
Suppose that $(\pdim,\qdim) \neq (1,1)$. Then for all $\omega > \dirichlet$ sufficiently close to $\dirichlet$, we have\vspace{-0.23in}
\begin{align*}
\phantom{\HD(\Sing_\dims(\omega))} &\hspace{2 in}&
\phantom{\PD(\Sing_\dims(\omega))} &\hspace{1.2 in}\\
\HD(\Sing_\dims(\omega)) &= \dimsing - \Theta\left(\sqrt{\omega - \dirichlet}\right)&
\PD(\Sing_\dims(\omega)) &= \dimsing - \Theta\Big(\omega - \dirichlet\Big)\\
&= \dimsing - \Theta\left(\sqrt{\tau}\,\right)&
&= \dimsing - \Theta\left(\tau\right)
\end{align*}
unless $(\pdim,\qdim) = (2,2)$, in which case\vspace{-0.23in}
\begin{align*}
\phantom{\HD(\Sing_\dims(\omega))} &\hspace{2 in}&
\phantom{\PD(\Sing_\dims(\omega))} &\hspace{1.2 in}\\
\HD(\Sing_\dims(\omega)) &= \dimsing - \Theta\Big(\omega - \dirichlet\Big)&
\PD(\Sing_\dims(\omega)) &= \dimsing - \Theta\Big(\omega - \dirichlet\Big)\\
&= \dimsing - \Theta\left(\tau\right)&
&= \dimsing - \Theta\left(\tau\right).
\end{align*}
In the sequel, we refer to the dimension formulas in the case $(\pdim,\qdim) \notin \{ (1,1), (2,2) \}$ as ``the first case of Theorem \ref{theoremhsmall}'', and to the dimension formulas in the case $(\pdim,\qdim) = (2,2)$ as ``the second case of Theorem \ref{theoremhsmall}''.
\end{theorem}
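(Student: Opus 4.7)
The plan is to apply the variational principle (Theorem \ref{theoremvariational2}) to reduce the computation of the Hausdorff and packing dimensions of $\Sing_\dims(\omega)$ to optimization problems over the template space $\TT_\dims$. Heuristically, one expects
\[
\HD(\Sing_\dims(\omega)) = \sup_{\substack{\ff \in \TT_\dims \\ \what\dynexp(\ff) \geq \tau}} \underline\delta(\ff), \qquad \PD(\Sing_\dims(\omega)) = \sup_{\substack{\ff \in \TT_\dims \\ \what\dynexp(\ff) \geq \tau}} \overline\delta(\ff),
\]
with $\tau$ related to $\omega$ via \eqref{dani} and with the passage from the super-level set to the level set handled by a soft argument valid near the critical value $\omega = \dirichlet$. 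Since the trivial template $\ff \equiv 0$ realizes $\underline\delta = \overline\delta = \dimsing$ but only $\what\dynexp = 0$, the problem reduces to quantifying how these suprema decay as $\tau$ grows from zero.

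For the packing dimension, I would construct near-optimal templates with a staircase profile for $f_1$: long flat stretches near equilibrium, punctuated by scheduled short drops so that $\liminf_{t \to \infty} -f_1(t)/t = \tau$, and with $f_2,\ldots,f_d$ returning to equilibrium between drops. The flat stretches (which occupy almost all the time axis) keep the local dimension density close to $\dimsing$, and evaluating the $\limsup$ defining $\overline\delta$ along a subsequence just before the drops yields $\overline\delta(\ff) \geq \dimsing - \Theta(\tau)$. The matching upper bound follows by integrating the uniform constraint $-f_1(t) \geq \tau t + o(t)$ against the local dimension density of any admissible template, producing an $\Omega(\tau)$ deficit.

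For the Hausdorff dimension in the generic case $(\pdim,\qdim) \neq (1,1),(2,2)$, the $\liminf$ defining $\underline\delta$ demands the deficit be felt on \emph{every} long time window, so the staircase construction is inadmissible. Instead one is led to templates whose $f_1$ is roughly linear of slope $-\tau$. Near equilibrium the local dimension function $f_\dims(\cdot)$ is non-degenerate quadratic in the perturbation directions allowed by the convexity and quantized-slope conditions, so a local slope perturbation of amplitude $\epsilon$ provides $\Theta(\epsilon)$ of instantaneous contraction for $f_1$ at instantaneous dimension cost $\Theta(\epsilon^2)$. A Cauchy--Schwarz type optimization then shows that to produce cumulative contraction rate $\tau$ with minimal dimension deficit, the perturbation should be spread uniformly in time with amplitude $\Theta(\sqrt{\tau})$, yielding total dimension deficit $\Theta(\sqrt{\tau})$. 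The lower bound is realized by an explicit uniform template; the upper bound is a global convex-analysis argument over all admissible templates.

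The anomalous linear scaling in the $(2,2)$ case should reflect a degeneracy of $f_\dims(\cdot)$ in dimension $d = 4$ with $\pdim = \qdim$: there is a distinguished one-parameter family of template perturbations along which the dimension cost is linear (not quadratic) in the amplitude, collapsing the square-root phenomenon to a linear one. I would verify this by direct computation of $f_\dims(k)$ for $k = 1,2,3$ with $\pdim = \qdim = 2$, locating the degenerate direction in the Hessian. The main obstacle I anticipate is the Hausdorff upper bound in the generic case: converting the heuristic Cauchy--Schwarz picture into a rigorous bound requires a careful global analysis respecting the full list of template constraints (convexity, quantized slopes, balanced condition), and ruling out the possibility that some exotic template shape — for instance one with slowly-varying or multi-scale perturbations — could achieve a better asymptotic trade-off than the uniform one.
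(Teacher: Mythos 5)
Your top-level plan (reduce to templates via the variational principle, explicit constructions for lower bounds, an averaging/potential argument for upper bounds) is the paper's plan in outline, but several of your concrete steps fail. Your packing lower-bound construction is inadmissible: a template with long flat stretches near equilibrium punctuated by short drops has $\what\dynexp(\ff)=\liminf_{t\to\infty}-f_1(t)/t=0$, not $\tau$, because the uniform exponent is a liminf and is destroyed by every return to equilibrium; moreover $f_1$ cannot be flat while separated from $f_2$, since the quantized slope condition forces $f_1'\in\{1/m,-1/n\}$ whenever $f_1<f_2$. The correct picture is essentially the opposite: the standard template $\ff[\tau,\lambda]$ with $\lambda\to\infty$ keeps $f_1$ at or below the line $-\tau t$ at \emph{all} times, making long deep excursions and touching the line only at sparse checkpoints, where the limsup defining $\overline\delta$ is evaluated (\S\ref{subsectionPDgeq}). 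Likewise your explanation of the square-root law is not a workable mechanism: $f_\dims(k)$ is a function of an integer $k$ attached to $k$-singular matrices (Theorem \ref{theoremkmessenger}); it has no Hessian and plays no role here, and the stated trade-off ``contraction $\Theta(\epsilon)$ at cost $\Theta(\epsilon^2)$'' would predict a deficit of order $\tau^2$ (take $\epsilon\asymp\tau$), not $\Theta(\sqrt\tau)$ --- indeed it contradicts the theorem, which says the Hausdorff deficit is much \emph{larger} than the packing deficit. The real trade-off, realized by $\ff[\tau,\lambda]$ with $\lambda=1+\Theta(\sqrt\tau)$, is between the within-period deficit $\Theta\bigl(\tau/(\lambda-1)\bigr)$ forced by returning to the line $-\tau t$ at period endpoints, and the fluctuation $\Theta(\lambda-1)$ of the running average between endpoints, which matters because $\underline\delta$ is a liminf over all $T$ (\S\ref{subsectionmainlowerbound}).

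The two hardest ingredients are missing entirely. For the Hausdorff upper bound when $(m,n)\neq(2,2)$ you only flag an obstacle; the paper's proof needs the potential $\phi=\max\bigl(\tfrac{m^2n}{m+n}|f_1|,\tfrac{mn^2}{m+n}|f_d|\bigr)$, the equality-case classification in Lemma \ref{lemmaphiprimebound}, the auxiliary quantity $\psi=\bigl|m|f_1|-n|f_d|\bigr|$, and Lemmas \ref{lemmanonstatic}--\ref{lemmanonstatic2}, which show a template confined to $x_0\le\phi\le x_1$ must spend measure $\gtrsim x_0|I|/x_1$ in the exceptional set; combined with $\phi(T)=O(zT)$, $\phi(T)=\Omega(\tau T)$ and $\lambda(E\cap[0,T])=O(zT)$ this gives $\tau=O(z^2)$, i.e.\ the $\sqrt\tau$ bound --- a rigidity statement your Cauchy--Schwarz heuristic does not supply. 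For $(2,2)$, the anomaly is not a degenerate Hessian: it is that the configuration $f_1=f_2$ constant at depth and $f_3=f_4$ constant at height has full contraction rate $\delta=3=\dimsing$ (equivalently $1-\tfrac{mn}{m+n}=0$, which is what breaks Case 2 of Lemma \ref{lemmanonstatic}), so depth is maintained at zero cost and only the transitions cost $\Theta(\tau)$; the matching upper bound is then just the packing upper bound. Computing $f_{2,2}(k)$ for $k=1,2,3$ would establish none of this; you need an explicit construction such as the exponentially equivariant template of \S\ref{subsectionhsmallHDgeq}.
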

\vspace{.05 in}
\begin{theorem}
\label{theoremN2}
Suppose that $\qdim \geq 2$. Then for all $\omega<\infty$ sufficiently large, we have\vspace{-0.23in}
\begin{align*}
\phantom{\HD(\Sing_\dims(\omega))} &\hspace{2 in}&
\phantom{\PD(\Sing_\dims(\omega))} &\hspace{1.2 in}\\
\HD(\Sing_\dims(\omega)) &= \dimprod - 2\pdim + \Theta\left(\tfrac1\omega\right)&
\PD(\Sing_\dims(\omega)) &= \dimprod - \pdim.\\
&= \dimprod - 2\pdim + \Theta\left(\tfrac1\qdim - \tau\right)
\end{align*}
\end{theorem}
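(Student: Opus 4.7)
The plan is to derive Theorem~\ref{theoremN2} from the variational principle (Theorem~\ref{theoremvariational2}), which expresses both dimensions of $\Sing_\dims(\omega)$ as suprema over templates $\ff \in \TT_\dims$ satisfying $\what\tau(\ff) = \tau$ of the lower and upper contraction functionals $\underline\delta(\ff)$ and $\overline\delta(\ff)$ respectively, where $\tau$ is related to $\omega$ via \eqref{dani}. Since $\qdim \geq 2$, as $\omega \to \infty$ we have $\tau \to 1/\qdim$ from below, so the constraint $\liminf_{t\to\infty}(-f_1(t)/t) = \tau$ pushes $f_1$ nearly to its steepest allowable slope $-1/\qdim$ on a liminf-dense set of times. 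This rigidity is the engine behind both asymptotic dimension formulas.

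For the packing dimension, the lower bound $\PD(\Sing_\dims(\omega)) \geq \dimprod - \pdim$ is obtained by constructing templates in which $f_1$ spends most of its time on a generic profile but makes sparse, sharp excursions toward $-t/\qdim$, just enough to force $\what\tau(\ff) = \tau$. Because $\overline\delta$ is a $\limsup$-average, these excursions contribute negligibly, and one can arrange $\overline\delta(\ff) = \dimprod - \pdim$; this matches the ambient dimension of the ``approximately trivially singular'' locus (matrices with one nearly-rational column). The matching upper bound follows by showing that any template with $\what\tau(\ff) \geq \tau$ close to $1/\qdim$ must shed at least $\pdim$ units of contraction on each excursion, by virtue of the convexity and quantized slope conditions on $F_1, \ldots, F_d$ together with the Minkowski saturation $F_d \equiv 0$; these losses persist in the $\limsup$.

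For the Hausdorff dimension, the $\liminf$-averaging in $\underline\delta$ compounds the packing-style loss: the template must linger near its extremal configuration on a set of full asymptotic density, imposing an additional $\pdim$ units of loss and producing the leading term $\dimprod - 2\pdim$, corrected by $\Theta(1/\qdim - \tau) = \Theta(1/\omega)$ which captures the slack when $\tau$ is strictly less than $1/\qdim$. The upper bound consists of a careful accounting of this loss under the template constraints. For the lower bound I would construct an explicit extremal template by setting $f_1(t) \equiv -\tau t$ and distributing the remaining contraction among $f_2, \ldots, f_d$ so as to maximize $\underline\delta$ subject to convexity, quantized slopes drawn from $Z(q)$, and $F_d \equiv 0$; expanding in powers of $1/\qdim - \tau$ should then recover the asymptotic $\dimprod - 2\pdim + \Theta(1/\qdim - \tau)$.

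The hardest part will be this explicit lower-bound construction for the Hausdorff dimension. Matching the implicit constants in $\Theta(1/\qdim - \tau)$ between the upper and lower bounds requires a delicate allocation of slopes on each piece of the template so as to simultaneously satisfy $F_q' \in Z(q)$, the convexity condition, the Minkowski saturation $F_d \equiv 0$, and $\what\tau(\ff) = \tau$. One must then invoke the lower-bound half of the variational principle to verify that the resulting template is actually realized (up to $\asymp_\plus$) by a set of matrices of the asserted Hausdorff dimension.
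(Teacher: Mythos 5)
Your reduction to the variational principle is of course the right starting point, but both halves of your sketch contain genuine errors. For the packing lower bound, your construction confuses the liminf with the limsup: $\what\tau(\ff)=\liminf_{t\to\infty}(-f_1(t)/t)$ is a \emph{uniform} exponent, so ``sparse, sharp excursions of $f_1$ toward $-t/\qdim$'' raise the ordinary exponent $\tau(\ff)$ but leave $\what\tau(\ff)$ at the level of the ``generic profile'' (essentially $0$), so your template does not lie in the relevant class at all. To have $\what\tau(\ff)=\tau$ you need $f_1(t)\leq -\tau t+o(t)$ for \emph{all} large $t$, which for $\tau$ near $1/\qdim$ pins $f_1'=-1/\qdim$ on a set of density $1-O(1/\qdim-\tau)$; the paper's construction is the standard template $\ff[\tau,\lambda]$ of Definition \ref{definitionstandardtemplate2parameter} with $\lambda\to\infty$ (\6\ref{subsectionPDgeq}, Claim \ref{claimuppercontractionlimit}), whose limsup average equals $\dimprod-\pdim$ because the running average is taken just after the long slope-$(-1/\qdim)$ phase, not because excursions are negligible. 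For the packing upper bound, the density argument you gesture at only yields $\overline\delta(\ff)\leq \dimprod-\pdim+O(1/\qdim-\tau)$, which is \emph{not} enough: the theorem asserts exact equality $\PD=\dimprod-\pdim$ for each fixed large finite $\omega$. Killing the error term is the real content, and in the paper it requires the second potential function $\psi$ and Lemma \ref{lemmapsiprimebound} (\6\ref{subsectionpsi}), which is also precisely where the hypothesis $\qdim\geq 2$ enters (for $\qdim=1$ the exact bound is false, cf.\ Theorem \ref{theorempacking2}); your sketch gives no mechanism for this and would ``prove'' too much.

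On the Hausdorff side there are two further gaps. Your extremal candidate $f_1(t)\equiv-\tau t$ is not a template: when $f_1<f_2$ the quantized slope condition forces $f_1'\in Z(1)=\{\tfrac1\pdim,-\tfrac1\qdim\}$, so $f_1$ must be a sawtooth touching the line $-\tau t$ along a sparse sequence of times, and it is exactly the phase-length bookkeeping of that sawtooth (in the $\lambda\to\infty$ limit shape $\gg=\ff[\tau,\lambda]$-type template of \6\ref{subsectionN2HDgeq}) that produces the constant in $\Theta(\tfrac1\qdim-\tau)$: the bulk of each period has \emph{both} $f_1$ and $f_2$ at slope $-1/\qdim$ (contraction $\dimprod-2\pdim$), with a prefix of length $\frac{\qdim(d-1)}{d}(\tfrac1\qdim-\tau)$ at contraction $\dimprod-\pdim$. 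Correspondingly, your upper-bound narrative (``liminf-averaging compounds the loss, imposing an additional $\pdim$ units'') names the right number without identifying why a \emph{second} component is forced down: the paper's argument (\6\ref{subsectionN2HDleq}) uses that for a non--trivially-singular template $f_2(T)\leq f_1(T)+C\leq-\tau T+C$ infinitely often, then a pointwise dichotomy (either $f_1'=f_2'=-\tfrac1\qdim$, whence $1,2\in S_-(\ff,t)$ and $\dimprod-\delta\geq 2\pdim$, or a quantified slope defect) which is averaged over $[0,T]$ before taking the liminf. Without some such mechanism tying the behaviour of $f_2$ to the uniform exponent of $f_1$, nothing in your sketch rules out $\underline\delta(\ff)$ being close to $\dimprod-\pdim$, so the claimed leading term $\dimprod-2\pdim$ is unsupported as written.
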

\vspace{0 in}
\begin{theorem}
\label{theoremN1}
Suppose that $\qdim = 1$ and $\pdim\geq 2$. Then for all $\omega<1$ sufficiently close to $1$, we have\vspace{-0.23in}
\begin{align*}
\phantom{\HD(\Sing_\dims(\omega))} &\hspace{2 in}&
\phantom{\PD(\Sing_\dims(\omega))} &\hspace{1.2 in}\\
\HD(\Sing_\dims(\omega)) &= \Theta\left(1 - \omega\right)&
\PD(\Sing_\dims(\omega)) &= 1.\\
&= \Theta\Big(\tfrac{\pdim-1}{2\pdim} - \tau\Big)
\end{align*}
\end{theorem}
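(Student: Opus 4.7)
The plan is to apply the variational principle (Theorem \ref{theoremvariational2}) to reduce both dimension computations to suprema over the template space $\TT_\dims$: specifically, to equate $\HD(\Sing_\dims(\omega))$ with the supremum of $\underline\delta(\ff)$ and $\PD(\Sing_\dims(\omega))$ with the supremum of $\overline\delta(\ff)$, both taken over templates $\ff \in \TT_\dims$ with $\what\dynexp(\ff) \geq \tau$, where $\tau$ is related to $\omega$ by \eqref{dani}. In the regime $\omega \nearrow 1$, the parameter $\tau$ approaches the extremal value $\tau_{\max} = \frac{\pdim-1}{2\pdim}$ dictated by the $\qdim=1$ footnote, and the difference $\tau_{\max}-\tau$ is of order $1-\omega$.

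For the packing equality $\PD(\Sing_\dims(\omega)) = 1$, I would establish the lower bound by constructing a standard template $\ff^\omega$ attaining $\overline\delta(\ff^\omega)=1$ and $\what\dynexp(\ff^\omega)=\tau$. The template alternates between two regimes along a very lacunary sequence $t_k \to \infty$: at each $t_k$ it dips to $f_1(t_k) = -\tau t_k$ (enforcing the liminf constraint on the uniform dynamical exponent), while between consecutive spikes it relaxes back to the trivial Dirichlet template over an interval whose length is a $(1-o(1))$-proportion of $t_{k+1}$. Because $\overline\delta$ is a limsup-average it detects only the generic behavior on the long relaxation intervals, which gives contraction rate $1$ when $\qdim=1$ since the contracting subspace $\vert$ is one-dimensional. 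The matching upper bound $\PD \leq 1$ comes from the template Diophantine inequalities together with $\dim\vert = 1$: the total contraction budget per unit time is capped at $\tfrac{1}{\qdim}=1$, which directly bounds $\overline\delta(\ff)$.

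For the Hausdorff dimension estimate $\HD(\Sing_\dims(\omega)) = \Theta(1-\omega)$, the lower bound would be obtained by constructing a \emph{balanced} template whose $f_1$ hugs the line $t\mapsto -\tau t$ uniformly, while $f_2,\ldots,f_d$ are perturbed in a balanced way to extract an average contraction budget proportional to $\tau_{\max}-\tau \asymp 1-\omega$. The upper bound is more delicate: since $\underline\delta$ is a liminf-average, the constraint $-f_1(t)/t \geq \tau - o(1)$ bites at every large $t$, not just along a subsequence. Using convexity of the $F_j$'s together with the quantized slope condition, I would show that forcing $f_1$ to stay near the extremal slope $-\tau_{\max}$ pins the remaining partial sums $F_j$ to essentially affine functions with slopes in a narrow sub-collection of $Z(j)$, after which a short combinatorial optimization yields $\underline\delta(\ff) = O(\tau_{\max}-\tau) = O(1-\omega)$.

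The main obstacle is this Hausdorff upper bound, which requires a sharp combinatorial analysis of templates saturating $\what\dynexp = \tau$ near the extremal value $\tau_{\max}$. Unlike the small-$\omega$ regime of Theorem \ref{theoremhsmall}, where one expands around the unperturbed Dirichlet template and so enjoys considerable slack in every coordinate, here one must perturb around a fully extremal standard template whose slopes are already saturated. I expect the argument to require a telescoping convexity estimate on the $F_j$'s together with the structural fact (special to $\qdim=1$) that $\dir_-(\ff,I,q) \in \{0,1\}$, so the only admissible slope perturbations per interval contribute additively to $\underline\delta(\ff)$ with coefficient of the correct order $1-\omega$, providing the required matching upper bound.
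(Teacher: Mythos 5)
Your reduction to the variational principle is the right starting point, and your Hausdorff lower bound is in the spirit of the paper's (which takes the standard template $\ff[\tau,\lambda]$ with $\lambda\asymp\big(\tfrac{m-1}{2m}-\tau\big)^{-1}$ and shows $\underline\delta\geq\tfrac1{2\lambda}$). But both halves of your packing argument contain genuine errors. For the upper bound, the claim that ``the total contraction budget per unit time is capped at $\tfrac1\qdim=1$'' is false: when $n=1$ the contraction rate $\delta(\ff,t)=\#\{(i_+,i_-)\in S_+\times S_-:i_+<i_-\}$ can equal $mn=m$ (e.g.\ when $S_-=\{d\}$, as on the zero template), and indeed Theorem \ref{theorempacking2} gives $\PD(\Sing_{m,1}(\tau))\geq m-1$ for small $\tau$, so no such cap exists. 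The equality $\PD=1$ is special to $\tau$ near $\tfrac{m-1}{2m}$, and the paper proves it (\S\ref{subsectionN1PDleq}) by taking a local maximum $T_1$ of $\Delta(\ff,\cdot)$ with $\Delta(\ff,T_1)>1$ and deriving a contradiction from the measure estimates of \S\ref{subsectionN1HDleq} (showing $T_1=T/2+O(xT)$ and hence $\Delta(\ff,T_1)=O(x)$); there is no soft argument here. For the lower bound, your template is inadmissible: since $\what\dynexp(\ff)=\liminf_t\frac{-f_1(t)}{t}$, the constraint $-f_1(t)\geq(\tau-o(1))t$ must hold at \emph{every} large time, so a template that ``relaxes back to the trivial Dirichlet template'' between spikes has $\what\dynexp(\ff)=0$, not $\tau$ (and on those stretches $\delta=m$, not $1$, in any case). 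The paper's construction (Case 1 of \S\ref{subsectionN1PDgeq}) instead keeps $f_1=f_2$ descending at slope $-\tfrac{m-1}{2m}$ on the long stretches, so $-f_1(t)\geq\tau t$ throughout while $S_-=\{2\}$ and $\delta(\ff,\cdot)=1$ there, giving $\overline\delta\to1$ as $\lambda\to\infty$.

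Your Hausdorff upper bound sketch also rests on a mischaracterization. The hypothesis only constrains the \emph{average} slope of $f_1$; the near-extremal templates have $f_1'\in\{-1,\tfrac1m\}$ pointwise and their partial sums $F_j$ are far from affine, so ``forcing $f_1$ to stay near the slope $-\tau_{\max}$'' and ``pinning the $F_j$ to essentially affine functions'' is not available, and the proposed combinatorial optimization has no foothold. The paper's proof (\S\ref{subsectionN1HDleq}) is instead a case analysis on whether $f_1=f_2$ and $f_2=f_3$ each occur infinitely often; in the main case it bounds the Lebesgue measure of $\{t\leq T: j(t)>2\}$ by $O(xT)$ with $x=\tfrac{m-1}{2m}-\tau$, uses convexity and minimality of $T$ to show $j=2$ on most of $[T/2,T]$ and $j=1$ on most of $[0,T/2]$, and then evaluates the liminf along the specific times $T/2$ to get $\Delta(\ff,T/2)=O(x)$; the degenerate cases ($f_1<f_2$ eventually, or $f_2<f_3$ eventually) are handled separately. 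You would need an argument of this statistical type, not a pointwise slope-pinning one, to close the gap.
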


Beyond the results above, we have a precise formula for the packing dimension when $n\geq 2$, which remains a lower bound when $n=1$.
\begin{theorem}
\label{theorempacking}
Define the function
\[
\overline\delta_\dims(\tau) \df \max\left(mn - m, \; \dimsing - \frac{mn}{m+n}(d+m)\tau, \; mn - \frac{mn}{m+n}\frac{1+m\tau}{1-\frac{mn}{m-1}\tau}\right) \cdot
\]
Then we have
\begin{equation}
\label{packinginequality}
\PD(\Sing_{m,n}(\tau)) \geq \overline\delta_\dims(\tau),
\end{equation}
with the understanding that the last piece of $\overline\delta_\dims(\tau)$ is ignored if $m=1$. If $n \geq 2$, then equality holds in \eqref{packinginequality}.
\end{theorem}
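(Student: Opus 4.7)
The plan is to invoke the variational principle (Theorem \ref{theoremvariational2}), which translates the computation of $\PD(\Sing_\dims(\tau))$ into an optimization over the space $\TT_\dims$ of templates. After this reduction it suffices to establish
\[
\sup\bigl\{\overline\delta(\ff) : \ff \in \TT_\dims,\ \what\dynexp(\ff) = \tau\bigr\} \;=\; \overline\delta_\dims(\tau),
\]
with equality replaced by $\geq$ when $n = 1$. The lower-bound half of the variational principle guarantees that $\PD(\DD(\ff)) \geq \overline\delta(\ff)$ and $\DD(\ff) \subset \Sing_\dims(\tau)$ for any template $\ff$ with $\what\dynexp(\ff) = \tau$, so for the inequality $\PD(\Sing_\dims(\tau)) \geq \overline\delta_\dims(\tau)$ one need only exhibit templates realizing each of the three quantities in the maximum.

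Concretely, I would construct three families of admissible templates, one per piece. The first, a degenerate family modeling matrices in which $m - 1$ rows are forced to be extremely well approximable by rationals, realizes the piece $mn - m$ and accounts for the ``trivially singular'' contribution. The second is the standard (sawtooth) template of Definition \ref{definitionstandardtemplate}, tuned so that $f_1(t) = -\tau t + O(1)$ while the remaining $f_j$ execute the minimal oscillation compatible with the convexity and quantized slope conditions; a direct accounting of its upper average contraction rate returns the linear piece $\dimsing - \frac{mn}{m+n}(d+m)\tau$. The third is a two-scale refinement in which auxiliary oscillations permit a controlled trade-off between the decay of $f_1$ and the transfer of mass to higher minima, whose bookkeeping produces the elaborate third term and requires $m \geq 2$ to be nonvacuous.

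For the matching upper bound when $n \geq 2$, the task is to prove that every template $\ff$ with $\what\dynexp(\ff) = \tau$ satisfies $\overline\delta(\ff) \leq \overline\delta_\dims(\tau)$. I would organize this as a linear-programming-style analysis whose constraints are the convexity of each $F_j$, the quantized slope set $Z(j)$, the trace identity $F_d(t) \asymp_\plus 0$, and the defining condition $\liminf_{t \to \infty} -f_1(t)/t = \tau$. One then maximizes $\overline\delta(\ff)$ over the resulting polytope of admissible piecewise-linear profiles and checks that the optimum is attained at one of the three template configurations constructed above.

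The main obstacle is the upper bound. Templates admit arbitrarily complicated intermediate structure, and the presence of $d - 1$ coupled functions $f_j$, each constrained by quantization and convexity, makes the optimization genuinely delicate; the argument will likely proceed by a careful case split according to which constraint is binding. The hypothesis $n \geq 2$ enters because the available slope sets $Z(j)$ are then rich enough that the third piece is genuinely extremal, whereas for $n = 1$ the quantized slopes degenerate and one loses the matching upper bound while retaining the explicit lower-bound construction, which is why only the inequality survives in that case.
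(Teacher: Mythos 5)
Your overall frame (reduce via Theorem \ref{theoremvariational2}/\ref{theoremvariational4} to computing $\sup\{\overline\delta(\ff)\}$ over templates with $\what\dynexp(\ff)=\tau$) is the paper's frame, but the proposal has a genuine gap exactly where you acknowledge the difficulty: the upper bound for $n\geq 2$. "A linear-programming-style analysis over the polytope of admissible piecewise-linear profiles" is not an argument here, because the space of templates is infinite-dimensional (arbitrarily many intervals of linearity) and $\overline\delta$ is an asymptotic limsup, so there is no finite polytope to optimize over and no mechanism in your sketch that controls an \emph{arbitrary} template's long-run behavior. The paper supplies that mechanism with a second potential function, $\psi_\ff(t)=\max\bigl(\tfrac{mn}{m+n}\bigl|(m+1)f_1(t)+(d-1)f_2(t)\bigr|,\ \tfrac{mn^2}{m+n}|f_d(t)|\bigr)$, and proves the pointwise differential inequality $\psi'(t)\leq \dimsing-\delta(\ff,t)$ for $n\geq 2$ (Lemma \ref{lemmapsiprimebound}), whose equality cases are precisely the configurations occurring in the standard template. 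Integrating gives $\Delta(\ff,T)\leq \dimsing-\psi_\ff(T)$, and a comparison $\psi_\ff(T)\geq\psi_\gg(T)$ with a standard template $\gg$ chosen at times $T$ where $\delta(\ff,T)>mn-m$ yields $\Delta(\ff,T)\leq\overline\delta_\dims(-f_1(T')/T')$, which is what closes the upper bound. Note also that $n\geq 2$ enters through this lemma (and the equality in \eqref{packinginequality} genuinely \emph{fails} for $n=1$ by Theorem \ref{theorempacking2}); it is not a matter of the slope sets $Z(j)$ being "rich enough."

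On the lower bound, your three-families plan is heavier than needed and partly mistaken in content. The paper uses a single family, the standard templates $\ff[\tau,\lambda]$ of Definition \ref{definitionstandardtemplate2parameter}, and lets $\lambda\to\infty$: since $\overline\delta$ is a limsup, Claim \ref{claimuppercontractionlimit} shows $\overline\delta(\ff_\lambda)\to\sup_{0<T\leq1}\Delta(\gg,T)$ with $\gg=\mathbf{s}[(0,0),(1,-\tau)]$, and the three pieces of $\overline\delta_\dims(\tau)$ are just the values $\Delta(\gg,t_i)$ at the breakpoints of this one template. In particular the piece $mn-m$ does not come from a "degenerate/trivially singular" family of matrices with $m-1$ very well approximable rows; it is the contraction rate on the phase of the (non-trivially-singular) standard template where $S_+=\{2,\ldots,m+1\}$, and indeed the paper stresses that none of its lower-bound templates are trivially singular (this is what makes the $\Sing^*$ refinements true). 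If you do pursue separate constructions, you must also verify for each that $\what\dynexp(\ff)=\tau$ exactly, which your first family as described does not address.
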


\begin{remark*}
The cases of the maximum correspond to $\tau\in [\tau_2,\tfrac 1n]$, $\tau\in [\tau_1,\tau_2]$, and $\tau\in [0,\tau_1]$, respectively, where $\tau_1 = \frac{m^2 - d}{mn(d+m)}$ and $\tau_2 = \frac{m}{n(m+d)}$. Note that $\tau_1 > 0$ if and only if $m^2 > d$. When $\tau_1 \leq 0$, then the second case of the maximum holds for all $\tau\in [0,\tau_2]$.
\end{remark*}

When $n=1$, the inequality \eqref{packinginequality} is strict for some values of $\tau$, as shown by the following theorem:

\begin{theorem}
\label{theorempacking2}
We have
\begin{align*}
\PD(\Sing_{m,1}(\tau)) &\geq 1 &\text{ for all } 0 &< \tau \leq \tfrac{m-1}{2m}, \text{ and}\\
\PD(\Sing_{m,1}(\tau)) &\geq m-1 &\text{ for all } 0 &< \tau \leq \tfrac{1}{m^2}.
\end{align*}
\end{theorem}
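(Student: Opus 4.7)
The strategy is to apply the variational principle (Theorem \ref{theoremvariational2}), which converts the problem of computing $\PD(\Sing_{m,1}(\tau))$ into that of constructing templates with prescribed dynamical and contraction-rate properties. Concretely, if $\ff \in \TT_\dims$ is a template with $\what\dynexp(\ff) = \tau$ and $\overline\delta(\ff) \geq s$, then $\DD(\ff) \subset \Sing_{m,1}(\tau)$ will have packing dimension at least $s$, so $\PD(\Sing_{m,1}(\tau)) \geq s$. Hence it suffices to exhibit one template for each of the two inequalities.

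For the first inequality, the motivating picture is that a matrix $\bfA = (x_1,\ldots,x_m)$ whose last $m-1$ coordinates are rationals sharing a common denominator is singular, and the single free coordinate provides a one-dimensional family. We encode this picture at the template level by splitting $[0,\infty)$ into alternating intervals $[t_k,s_k]$ and $[s_k,t_{k+1}]$ with $t_k\to\infty$. On the longer ``free-coordinate'' stretch we let $\ff$ mimic the template of a rationally-degenerate configuration, so that the contribution to $\overline\delta(\ff)$ is roughly $1$ per unit time. On the shorter ``correction'' stretch we use slopes drawn from $Z(j)$ to drive $f_1$ back down to $-\tau t + O(1)$, thereby preserving the asymptotic identity $\what\dynexp(\ff) = \tau$. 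Choosing the ratio of the two stretches suitably forces the upper average contraction rate to approach $1$ while keeping the $\liminf$ of $-f_1(t)/t$ pinned at $\tau$.

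For the second inequality we construct a template whose first coordinate satisfies $f_1(t) = -\tau t + O(1)$ throughout, and whose remaining coordinates $f_2,\ldots,f_m$ oscillate in a pattern pulled back from a generic template for an $(m-1)$-parameter system, with $f_{m+1}$ determined by unimodularity $F_d \asymp_\plus 0$. The generic behaviour of the middle coordinates contributes $m-1$ to $\overline\delta(\ff)$. The hypothesis $\tau \leq 1/m^2$ is precisely the constraint under which the slope budget coming from the discrete sets $Z(j)$ still permits such a template: any larger $\tau$ would force $f_1$ to descend so fast that the slopes remaining for $f_2,\ldots,f_m$ could no longer realize the generic $(m-1)$-dim pattern while respecting the convexity conditions of Definition \ref{definitiontemplate}. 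The threshold $1/m^2$ emerges naturally when one equates the slope ``consumed'' by $F_1$ on its contracting pieces with the slope ``available'' to the $(m-1)$-dim sub-template.

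The principal technical obstacle will be to verify that both constructions satisfy every clause of Definition \ref{definitiontemplate} --- in particular the convexity condition and the requirement that every slope of $F_j$ lie in the discrete set $Z(j)$ --- while simultaneously computing $\overline\delta(\ff)$ correctly from the piecewise definition. A secondary subtlety is that the level-set nature of $\Sing_{m,1}(\tau)$ demands $\what\dynexp(\ff) = \tau$ \emph{exactly}, not merely $\geq \tau$; this is handled either by tuning the alternation parameters so that $\liminf_{t\to\infty} -f_1(t)/t$ equals $\tau$ on the nose, or by first producing a super-level-set template and then restricting to its $\tau$-level slice, a standard maneuver once the variational machinery is in place.
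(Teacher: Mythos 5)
For the first inequality your plan is essentially the paper's: the ``rationally-degenerate'' picture is exactly the configuration $f_1=f_2$ descending together at slope $-\tfrac{m-1}{2m}$ (one up- and one down-particle merged, so $\delta=1$) with $f_3=\cdots=f_{d}$ rising at $\tfrac1m$, and the paper realizes it as an exponentially equivariant template whose long $\delta=1$ stretches occupy a fraction of time tending to $1$, with short standard-template corrections returning $-f_1(t)/t$ to exactly $\tau$. One caution about your wording: during the long stretch $f_1$ descends \emph{faster} than $\tau$ (this is where the hypothesis $\tau\le\tfrac{m-1}{2m}$ enters), so the correction must bring $-f_1(t)/t$ back \emph{down} to $\tau$ by letting $f_1$ rise; if instead you intend the long stretch to leave $f_1$ above $-\tau t$ and then ``drive it down'' afterwards, the liminf defining $\what\dynexp(\ff)$ would drop below $\tau$ and no later correction can repair that.

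For the second inequality there is a genuine gap. Since $n=1$ there is exactly one descending particle at any time, and $\delta(\ff,t)=j(t)-1$ where $j(t)$ is the top index of the merged group carrying it; the contraction rate does \emph{not} decompose into a contribution of a ``middle block'' plus whatever $f_1$ does. If, as you propose, $f_1$ is kept separate with $f_1(t)=-\tau t+O(1)$, then on intervals where $f_1<f_2$ its slope must lie in $Z(1)=\{\tfrac1m,-1\}$ and, by convexity of $F_1$, it can only turn from slope $\tfrac1m$ to slope $-1$ at a point where $f_1=f_2$; since unimodularity forces the block $f_2,\ldots,f_{m+1}$ to sit near height $+\tfrac{\tau}{m}t$, its bottom cannot repeatedly meet $f_1$ near $-\tau t$ while running an independent ``generic'' pattern, so the proposed template is not even realizable. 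Moreover the $\delta$-accounting fails: whenever the descending particle is confined to indices $\le m$ one has $\delta\le m-1$, with equality \emph{only} in the configuration $f_1=\cdots=f_m$ merged around the down-particle, which forces $f_1$ to descend at rate exactly $\tfrac1{m^2}$ (this, not a ``slope budget'' heuristic, is where the threshold $\tau\le\tfrac1{m^2}$ comes from); and whenever $f_1$ descends separately, $\delta=0$, so your construction's average is strictly below $m-1$. The paper's actual route is different and worth knowing: it takes the standard $1\times m$ template $\ff[\tau',\lambda]$ with $\tau'=\tfrac{(m-1)\tau}{1-m\tau}<\tfrac1m$, negates it to obtain an $m\times1$ template with the same upper average contraction rate, checks via the Claim in the packing lower-bound section that this rate tends to $\overline\delta_{1,m}(\tau')\ge m-1$ as $\lambda\to\infty$, and verifies by a short computation at the corner time $t_0$ that $\what\dynexp(-\ff_\lambda)=\tau$; equivalently, the resulting template spends most of its time in the bottom-$m$-merged configuration descending at rate $\tfrac1{m^2}\ge\tau$, with self-similar corrections of vanishing density pinning the liminf at $\tau$.
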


\begin{remark*}
To see that Theorem \ref{theorempacking2} implies that the inequality \eqref{packinginequality} in Theorem \ref{theorempacking} is strict for some values of $\tau$, note that $\overline\delta_{m,1}(\frac{m-1}{2m}) = \frac12 < 1$. For $m\geq 3$, we have $$\overline\delta_{m,1}\left(\frac1{m^2}\right) = m - 1 - \frac1{m^2-m-1} < m-1.$$ When $m=2$, we instead have $$\overline\delta_{m,1}\left(\frac1{m^2}\right) = \overline\delta_{m,1}\left(\frac{m-1}{2m}\right) = \frac12 < 1 =  m - 1.$$
\end{remark*}

\subsubsection{Trivially singular matrices}
\label{remarktriviallysingular}
Call a matrix $\bfA$ \emph{trivially singular} if there exists $j = 1,\ldots,d-1$ such that
\[
\log\lambda_{j+1}(g_t u_\bfA \Z^d) - \log\lambda_j(g_t u_\bfA \Z^d) \to \infty \text{ as } t\to\infty.
\]
Then all of the formulas above in Theorems \ref{theoremhsmall}-\ref{theorempacking2} remain true if $\Sing_\dims(\omega)$ is replaced by the set
\[
\Sing_\dims^*(\omega) \df \{\bfA\in \Sing_\dims(\omega) : \bfA\text{ is not trivially singular}\}.
\]
Similarly, the formulas in Theorems \ref{theoremsing} and \ref{theoremvsing} above and in Theorems \ref{theoremspecialcase}-\ref{theoremkmessenger} below remain true if we restrict to the respective sets of matrices that are not trivially singular.
The reason for this is since while proving lower bounds none of the templates (see Definition \ref{definitiontemplate}) we construct are trivially singular.

Moreover, for $\qdim \geq 2$ we have\vspace{-0.23in}
\begin{align*}
\phantom{\HD(\Sing_\dims(\infty))} &\hspace{1 in}&
\phantom{\PD(\Sing_\dims(\infty))} &\hspace{1 in}\\
\HD(\Sing_\dims^*(\infty)) &= \dimprod - 2\pdim&
\PD(\Sing_\dims^*(\infty)) &= \dimprod - \pdim
\end{align*}
and for $\qdim = 1$, $\pdim \geq 2$ we have\vspace{-0.23in}
\begin{align*}
\phantom{\HD(\Sing_\dims(\infty))} &\hspace{1 in}&
\phantom{\PD(\Sing_\dims(\infty))} &\hspace{1 in}\\
\HD(\Sing_\dims^*(1)) &= 0&
\PD(\Sing_\dims^*(1)) &= 1.
\end{align*}
Note that the class of trivially singular matrices is smaller than the class of matrices with degenerate trajectories in the sense of \cite[Definition 2.8]{Dani4}, but larger than the class considered in \cite[p.2]{BCC} consisting of matrices $\bfA$ such that the group $\bfA\Z^\qdim + \Z^\pdim$ does not have full rank. A $d\times 1$ or $1\times d$ matrix is trivially singular if and only if it is contained in a rational hyperplane of $\R^d$.

\subsection{$1\times 2$ and $2\times 1$ matrices}
Beyond our asymptotic formulas stated in the previous section, we obtain precise formulas for the Hausdorff and packing dimensions of $\Sing_\dims(\omega)$ for the cases $(\pdim,\qdim) = (1,2)$ and $(\pdim,\qdim) = (2,1)$. Our dimension formulas complete a cornucopia of bounds due to Baker, Bugeaud--Laurent, Laurent, Dodson, Yavid, Rynne, and Bugeaud--Cheung--Chevallier (1977--2016). We refer to \cite{BCC} for a detailed history of the prior results.

\begin{theorem}
\label{theoremspecialcase}
For all $\omega \in (2, \infty)$ (corresponding to $\tau \in (0,1/2)$) we have
\begin{align*}
\HD(\Sing_{1,2}(\omega)) &= \begin{cases}
\frac43 - \frac43\sqrt{\tau - 6\tau^3 + 4\tau^4} - 2\tau + \frac83 \tau^2
& \text{ if }\tau \leq \tau_0 \df \frac{3\sqrt2-2}{14}\\
\frac{1-2\tau}{1+\tau}
& \text{ if }\tau \geq \tau_0
\end{cases}\\
\PD(\Sing_{1,2}(\omega)) &= \begin{cases}
\tfrac{4-8\tau}{3} & \text{ if } \tau \leq \tau_1 \df \frac18\\
1 & \text{ if } \tau \geq \tau_1
\end{cases}
\end{align*}
(cf. Figure \ref{figurespecialcasegraph}).
\end{theorem}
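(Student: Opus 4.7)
The plan is to reduce Theorem \ref{theoremspecialcase} to a finite-dimensional optimization problem over $(1,2)$-templates by invoking the variational principle (Theorem \ref{theoremvariational2}). Writing $\tau = \tfrac12 \tfrac{\omega - 2}{\omega + 1}$ as in \eqref{dani}, the set $\Sing_{1,2}(\omega)$ corresponds, via the template correspondence, to the set of matrices whose associated successive-minima curve $\Mink_\bfA$ is approximated by some template $\ff$ with $\what\dynexp(\ff) = \tau$. Provided the variational principle applies at the level-set $\{\what\dynexp = \tau\}$ (which requires verifying an upper bound for $\Sing_{1,2}(\omega)$ as a union of template neighborhoods, together with a matching lower bound realizing every admissible template), one obtains
\[
\HD(\Sing_{1,2}(\omega)) = \sup\bigl\{\underline\delta(\ff) : \ff \in \TT_{1,2},\ \what\dynexp(\ff) = \tau\bigr\},
\]
and the analogous identity for $\PD$ with $\overline\delta$.

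The optimization can be set up explicitly since $d=3$ and $F_3\equiv 0$, so a template is determined by $f_1, f_2$ with quantized slopes $F_1' \in \{1,-\tfrac12\}$ on intervals where $f_1<f_2$, and $F_2' \in \{\tfrac12,-1\}$ on intervals where $f_2<f_3$. I would first show, by a convexification and periodization argument within the template formalism, that the supremum is attained (up to vanishing error) on eventually periodic templates whose primitive period involves only finitely many slope changes. On such a template, both $\underline\delta(\ff)$ and $\overline\delta(\ff)$ become explicit rational functions of the time fractions spent in each slope combination and of the amplitudes of the excursions of $f_2 - f_1$ and $f_3 - f_2$, subject to $\what\dynexp(\ff)=\tau$, $F_3\equiv 0$, and the convexity of $F_1, F_2$.

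For the packing dimension, the limsup nature of $\overline\delta$ allows the template's maximally contracting behavior to be concentrated on a sparse sequence of scales while still respecting the average $\tau$-constraint. This gives the universal bound $\PD(\Sing_{1,2}(\omega)) \leq 1$, which is matched by a direct construction whenever $\tau \geq \tau_1 = \tfrac18$. For $\tau < \tau_1$, the limsup objective is constrained by a linear inequality linking $\tau$ with the average slope of the deepest excursions, yielding $\overline\delta(\ff) \leq \tfrac{4-8\tau}{3}$; this bound is sharp for a one-parameter family of periodic templates, and $\tau_1$ is precisely the value at which $\tfrac{4-8\tau}{3} = 1$.

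For the Hausdorff dimension, the pointwise (liminf) constraint is far more restrictive and produces the nonlinear optimum. The search naturally splits into two regimes: when both the $F_1$- and $F_2$-slope constraints are simultaneously tight, a Lagrange multiplier analysis on the two remaining free parameters yields an algebraic condition whose admissible branch gives $\tfrac43 - \tfrac43\sqrt{\tau-6\tau^3+4\tau^4} - 2\tau + \tfrac83\tau^2$; when only the $F_2$-constraint is tight, one obtains a simpler one-parameter family with optimum $\tfrac{1-2\tau}{1+\tau}$. The crossover value $\tau_0 = \tfrac{3\sqrt2-2}{14}$ is obtained by equating the two candidate optima. The principal technical challenge will be verifying that no further class of templates --- for instance templates with more intricate amplitude profiles or slower-than-linear excursion heights --- can outperform this two-regime family; this requires a careful monotonicity argument showing that any admissible template can be modified into one in the two-regime family without decreasing $\underline\delta$, and is where most of the technical effort will be concentrated.
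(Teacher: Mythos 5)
Your Hausdorff-dimension plan is essentially the route the paper takes: invoke the variational principle in the form of Theorem \ref{theoremvariational4} to identify $\HD(\Sing_{1,2}(\omega))$ with $\sup\{\underline\delta(\ff):\ff\in\TT_{1,2},\ \what\dynexp(\ff)=\tau\}$, reduce by monotone modifications to exponentially equivariant templates with finitely many slope changes per period, and finish with explicit calculus in which the interior critical point exists exactly when $1-4\tau-14\tau^2>0$, i.e. $\tau<\tau_0$, giving the square-root branch, while for $\tau\geq\tau_0$ the objective is increasing and the supremum is the limiting value $\frac{1-2\tau}{1+\tau}$. Be aware, however, that the step you defer to ``a careful monotonicity argument'' is precisely where the paper's entire proof lives: the sequence of allowable changes (alternation of the sets where $f_1=f_2$ and $f_2=f_3$, forcing $a_k=r_k$, saturating $f_1(t_{2k})=-\tau t_{2k}$, passing to a self-similar template via the claim $\underline\delta(\ff)\leq\sup_k\underline\delta(\gg^{(k)})$, and collapsing $s_2=t_2$), after which only a one-parameter optimization over the scaling ratio remains --- not a two-parameter Lagrange-multiplier problem. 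As a blind plan this is the right approach, but it asserts rather than supplies the reduction.

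The packing-dimension paragraph contains a genuine error. You claim a ``universal bound $\PD(\Sing_{1,2}(\omega))\leq 1$,'' which contradicts the very formula you are proving: for $\tau<\tfrac18$ one has $\PD(\Sing_{1,2}(\omega))=\frac{4-8\tau}{3}>1$ (tending to $\frac43$ as $\tau\to 0$). The correct upper bound coming from the potential-function argument is $\overline\delta(\ff)\leq\max\bigl(mn-m,\ \frac{4-8\tau}{3}\bigr)=\max\bigl(1,\frac{4-8\tau}{3}\bigr)$, where the floor $mn-m=1$ arises because a $\tau$-singular template may spend almost all of its time with contraction rate $mn-m$; and concentrating the maximally contracting behavior on a sparse sequence of scales is a lower-bound construction (it produces templates with large $\overline\delta$), not a mechanism for an upper bound. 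In the paper none of this is re-derived in the $1\times 2$ case: the packing formula is quoted directly from Theorem \ref{theorempacking}, which applies with equality since $n=2\geq 2$, its upper bound proved via the potential $\psi$ and its lower bound via the standard templates $\ff[\tau,\lambda]$ with $\lambda\to\infty$. To repair your proposal you should either cite that result as the paper does, or correct the claimed bound to the two-term maximum above and supply the matching constructions in both regimes.
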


\begin{remark*} There had been a lot of partial progress towards the Hausdorff dimension part of Theorem \ref{theoremspecialcase}. In particular, the $\geq$ direction follows from \cite[Corollary 2 and Theorem 3]{BCC}. For $\tau \geq \tau_0$ the upper bound follows from \cite[Corollary 2]{BCC} and for $\tau < \tau_0$, a non-optimal upper bound is given in \cite[Theorem 1]{BCC}. 
\end{remark*}

\begin{center}
\begin{figure}
\begin{tikzpicture}[scale=1.3]
 \begin{axis}[
   xmin=0,
   ymin=0,
   ]
\addplot[domain=0.5858:1.3333, ultra thick, smooth,blue] ((6*(3*x^3-12*x^2+14*x-4)^(1/2) + 9*x-16)/(2*(12*x-25)),x);
\addplot[domain=0.16:0.5, ultra thick,smooth,blue] {2*(1/2-x)/(1+x)};
\addplot[domain=0.0:0.125, ultra thick,smooth] {(4-8*x)/3};
\addplot[domain=0.125:0.5, ultra thick,smooth] {1};
\addplot[mark=*] coordinates {(0.159,0.586)};
\addplot[mark=*] coordinates {(0.125,1.0)};
\addplot[mark=*] coordinates {(0,4/3)};
\addplot[mark=*] coordinates {(1/2,0)};
\addplot[mark=*] coordinates {(1/2,1)};
\end{axis}
\node at (2.38,0.51) {\textcolor{blue}{$f_2(\tau) =  \HD(\Sing_{1,2}(\omega))$}};
\node at (4.77,3.46) {$f_1(\tau) = \PD(\Sing_{1,2}(\omega))$};
\node at (3.46,2.5) {\textcolor{blue}{$(\frac{3\sqrt 2 - 2}{14}, 2-\sqrt 2)$}};
\node at (2,4.3) {$(\frac18,1)$};
\node at (0.67,5.3) {$(0,\frac43)$};
\end{tikzpicture}
\caption{Graphs of the dimension functions 
\[
f_1(\tau) \df \PD(\Sing_{1,2}(\omega)) ~\text{and}~ f_2(\tau) \df \HD(\Sing_{1,2}(\omega)).
\]
The packing dimension function $f_1$ is linear on the intervals $[0,1/8]$ and $[1/8,1/2]$, while the Hausdorff dimension function $f_2$ is real-analytic on the intervals $[0,\tau_0]$ and $[\tau_0,1/2]$, where $\tau_0 = (3\sqrt2-2)/14 \sim 0.1602$.}
\label{figurespecialcasegraph}
\end{figure}
\end{center}

\begin{remark*}
Identify the space of $1 \times 2$ matrices with that of $2 \times 1$ matrices using the transpose isomorphism. Then
by Jarn\'ik's identity \cite{Jarnik4} (see also \cite[Theorem A]{German2}), for all $\omega\in \CO2\infty$ we have
\[
\Sing_{1,2}(\omega) = \Sing_{2,1}(\omega')
\]
where $\omega' = 1 - \frac1\omega$~, and
\[
\Sing_{1,2}(\infty) = \Sing_{2,1}(1) \cup \Sing_{2,1}(\infty).
\]
Thus by applying an appropriate substitution to the above formulas and using the fact that $\Sing_{2,1}(\infty)$ is countable (it is the set of rational points), it is possible to get explicit formulas for $\HD(\Sing_{2,1}(\omega'))$ and $\PD(\Sing_{2,1}(\omega'))$, either in terms of $\omega'$ or in terms of 
\[
\tau' = \frac{\omega'-\frac12}{\omega'+1} = \frac{\tau}{1+2\tau} \cdot
\]
However, the resulting formulas are not very elegant so we omit them.
\end{remark*}

\begin{remark*}
The transition point $\tau_0 = (3\sqrt 2 - 2)/14$ in the above formula for Hausdorff dimension corresponds to 
\[
\omega_0 = 2+\sqrt2 ~,~ \omega'_0 = \sqrt2/2 ~,~ \tau'_0 = (4-3\sqrt2)/2 ~,~\text{and}~ \HD(\Sing_{1,2}(\omega_0)) = 2-\sqrt 2.
\]
The transition point $\tau_1 = 1/8$ for packing dimension corresponds to 
\[
\omega_1 = 3 ~,~ \omega'_1 = 2/3 ~,~ \tau'_1 = 1/10 ~,~ \text{and}~ \PD(\Sing_{1,2}(\omega_1)) = 1.
\]
\end{remark*}

\begin{remark*}
Theorem \ref{theoremspecialcase} implies that $\HD(\Sing_{1,2}(\omega)) < \PD(\Sing_{1,2}(\omega))$ for all $\omega \in (2,\infty)$. This answers the first part of \cite[Problem 7]{BCC} in the affirmative.
\end{remark*}

\subsection{Singularity on average}
A different way of quantifying the notion of singularity is the notion of \emph{singularity on average} introduced in \cite{KKLM}. Given a matrix $\bfA$, we define the \emph{proportion of time spent near infinity} to be the number
\label{cusp}
\[
\PP(\bfA) \df \lim_{\epsilon\to 0} \liminf_{T\to\infty} \frac1T \lambda\big(\big\{t\in [0,T] : \lambda_1(g_t u_\bfA \Z^d) \leq \epsilon\big\}\big) \in [0,1],
\]
where $\lambda$ denotes Lebesgue measure. The matrix $\bfA$ is said to be \emph{singular on average} if $\PP(\bfA) = 1$. Clearly, every singular matrix is singular on average.

\begin{theorem}
\label{theoremsingonaverage}
For all $p\in [0,1]$, we have
\[
\HD(\{\bfA : \PP(\bfA) = p\}) = 
\PD(\{\bfA : \PP(\bfA) = p\}) = 
p \dimsing + (1-p)\dimprod.
\]
In particular, the dimension of the set of matrices singular on average is $\dimsing$.
\end{theorem}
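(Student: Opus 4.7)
The plan is to deduce Theorem \ref{theoremsingonaverage} from the variational principle (Theorem \ref{theoremvariational2}) by recasting the condition $\PP(\bfA) = p$ as a condition on the template $\Mink_\bfA$. Since $\PP(\bfA)$ is an asymptotic average of the indicator of $\{\lambda_1(g_t u_\bfA \Z^d) \le \epsilon\}$, it depends only on the first coordinate $\mink_1$ of the template, and in the language of Section \ref{sectionvariational} the set $\{\bfA : \PP(\bfA) = p\}$ is (up to trivial modifications) equal to $\DD(\FF_p)$, where $\FF_p$ is the family of templates $\ff$ satisfying
\[
\lim_{R\to\infty}\liminf_{T\to\infty}\frac{1}{T}\,\lambda\bigl(\{t\in[0,T] : f_1(t) \le -R\}\bigr) = p.
\]
The convex combination $p\dimsing + (1-p)\dimprod$ in the theorem has a natural interpretation as an average of local contraction rates: segments of time on which $f_1$ is very negative behave like a singular template and contribute at the KKLM rate $\dimsing$, while segments on which $f_1 \asymp 0$ contribute at the trivial rate $\dimprod$.

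For the lower bound, I would construct, for each $p \in [0,1]$, a template $\ff \in \FF_p$ achieving $\underline\delta(\ff) = \overline\delta(\ff) = p\dimsing + (1-p)\dimprod$. The construction alternates two regimes on $\Rplus$: on a collection of intervals of asymptotic density $p$, the template follows the extremal singular shape already used in proving the lower bound for $\HD(\Sing(\pdim,\qdim)) = \dimsing$ in Theorem \ref{theoremsing} (which drives $f_1$ far below zero); on the complementary intervals of density $1-p$, the template is kept near the zero template, contributing at rate $\dimprod$. After regularizing the transitions using splits, mergers and transfers (Definition \ref{definitionsimple}) so that $\ff$ is a bona fide (balanced) template, the lower bounds on $\HD$ and $\PD$ follow from the lower-bound half of the variational principle. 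Choosing the singular and generic segments to have geometrically increasing length, but to respect the density $p$, guarantees that the constructed matrices $\bfA \in \DD(\ff)$ satisfy $\PP(\bfA) = p$ exactly.

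For the upper bound, I would invoke the variational principle in the opposite direction: $\HD(\DD(\FF_p)) \le \sup_{\ff\in\FF_p}\underline\delta(\ff)$ and $\PD(\DD(\FF_p)) \le \sup_{\ff\in\FF_p}\overline\delta(\ff)$. It then suffices to show that every $\ff \in \FF_p$ satisfies $\underline\delta(\ff), \overline\delta(\ff) \le p\dimsing + (1-p)\dimprod$. On any sufficiently long subinterval where $f_1$ is very negative, the local contribution to $\delta(\ff)$ is majorized by $\dimsing$ per unit time (this is a localized form of the upper-bound arithmetic already built into the variational principle, analogous to the KKLM estimate), while on intervals where $f_1 \asymp 0$ the contribution is bounded by $\dimprod$. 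Integrating these local bounds against the density $p$ prescribed by the membership $\ff \in \FF_p$ yields the desired upper bound, and taking the supremum over $\ff$ completes the argument.

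The main obstacle is reconciling the nested limits in $\PP(\bfA)$ (a $\liminf$ in $T$ following a limit in $\epsilon$) with the averaged quantities $\underline\delta(\ff)$ and $\overline\delta(\ff)$ that appear in the variational principle. In particular, achieving the exact equality $\PP = p$ rather than a one-sided inequality requires a delicate choice of segment lengths in the lower-bound construction, and the upper bound requires carefully quantifying how deeply $f_1$ must descend before the local contraction rate drops from $\dimprod$ to $\dimsing$. A secondary technical point is verifying that the constructed templates can be taken outside the trivially singular stratum (see Section \ref{remarktriviallysingular}), so that the lower-bound templates genuinely produce matrices $\bfA$ with $\PP(\bfA) = p$ and not merely matrices with degenerate minima.
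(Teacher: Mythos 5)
Your lower bound is essentially the paper's argument: the paper takes, for each small $\epsilon>0$, the exponentially equivariant template that follows the standard template on a sub-period of relative length $p\epsilon$ and is identically $\0$ on the remaining relative length $(1-p)\epsilon$, checks that $\PP=p$ for the resulting matrices, and applies the variational principle, letting $\epsilon\to0$; your alternating construction with density-$p$ ``singular'' blocks and density-$(1-p)$ zero blocks of geometrically growing length is the same idea, and it goes through.

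The upper bound, however, contains a genuine gap. Your key local claim --- that on any sufficiently long subinterval on which $f_1$ is very negative the contribution to the contraction rate is majorized by $\dimsing$ per unit time --- is false. The bound $\dimsing$ is not an interval-local bound keyed to the depth of $f_1$; it is only a long-run average. Concretely, on an interval where $f_1$ climbs back towards $0$ (say from $-R$ to $-R/2$, so $f_1$ is ``very negative'' throughout), one has $1\in S_+(\ff,\cdot)$ and $\delta(\ff,t)$ can equal $mn$ on the entire interval; this is exactly what happens on the interval $I_2$ of the basic template in Figure \ref{figurebasic}, whose caption records $\delta(\ff,I_2)=mn$ even though $f_1$ is far below $0$ there. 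Thus a template can spend a density-$p$ set of time deep below zero while running at rate $mn$ on a large fraction of that deep time, and your local accounting cannot by itself produce the coefficient $\dimsing$ on the deep part. (Grouping the deep time into complete excursions below level $-R$ does not fix this either, because the potential may also be carried by $f_d$, so individual excursions can still exceed $\dimsing$ on average; the excess is only compensated globally.)

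What makes the bound work is precisely the energy bookkeeping you would be bypassing: deep excursions must be paid for by descents, during which $\delta\leq mn-m$, and the paper encodes this with the potential $\phi$ of Section~\ref{subsectionmainupperbound} together with the refinement of Lemma \ref{lemmaphiprimebound}
\[
\phi'(t) \;\leq\; \dimsing - \delta(\ff,t) + \tfrac{mn}{m+n}\,g(t),
\qquad g(t)=\begin{cases}1 & \ff(t)=\0\\ 0 & \text{otherwise,}\end{cases}
\]
which, after integrating over $[0,T]$, using $\phi\geq0$ and $\dimsing+\tfrac{mn}{m+n}=mn$, gives $\Delta(\ff,T)\leq \dimsing+\tfrac{mn}{m+n}G(T)+o(1)$ with $G(T)$ the proportion of time in $[0,T]$ with $\ff=\0$; taking the limsup yields $\overline\delta(\ff)\leq \PP(\ff)\,\dimsing+(1-\PP(\ff))\,mn$, and one then compares $\PP(\ff)$ with $\PP(\bfA)$ for $\bfA\in\DD(\ff)$ before applying Theorem \ref{theoremvariational2}. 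Your proposal would need to be rewritten around some such global potential (or conservation-of-energy) inequality; as written, the interval-by-interval majorization at the heart of your upper bound does not hold, and the ``obstacles'' you flag (nested limits in $\PP$, the depth threshold) are not the ones that break the argument.
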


Note that the Hausdorff dimension part of this theorem proves the conjecture stated in \cite[Remark 2.1]{KKLM}, where the upper bound was proven. However, we give an independent proof of the upper bound. Also note that when $p = 1$, the lower bound for Hausdorff dimension follows from Theorem \ref{theoremsing}.

\subsection{Starkov's conjecture}
In \cite[p.213]{Starkov2}, Starkov asked whether there exists a singular vector (i.e. $\pdim\times 1$ singular matrix) which is not very well approximable. Here, we recall that a matrix $\bfA$ is called \emph{very well approximable} if for some $\omega > \dirichlet$, there exist infinitely many pairs $(\pp,\qq)\in\Z^\pdim\times\Z^\qdim$ such that
\begin{equation}
\|\bfA \qq + \pp\| \leq \|\qq\|^{-\omega},
\end{equation}
or equivalently in terms of the Dani correspondence principle, a matrix $\bfA$ is very well approximable if $\limsup_{t\to\infty} -\frac1t\log\lambda_1(g_t u_\bfA \Z^d) > 0$. This question was answered affirmatively by Cheung \cite[Theorem 1.4]{Cheung} in the case $\pdim = 2$. In fact, Cheung showed that if $\psi$ is any function such that $q^{1/2}\psi(q) \to 0$ as $q\to\infty$, then there exists a $2\times 1$ singular vector which is not $\psi$-approximable. Here, a matrix $\bfA$ is called \emph{$\psi$-approximable}\label{psiapproximable} if there exist infinitely many pairs $(\pp,\qq)\in\Z^\pdim\times\Z^\qdim$ such that $\qq\neq \0$ and
\[
\|\bfA \qq + \pp\| \leq \psi(\|\qq\|).
\]
The following theorem improves on Cheung's result both by generalizing it to the case of arbitrary $\pdim,\qdim$ (i.e. to the matrix approximation framework), and also by computing the dimension of the set of matrices with the given property:
\begin{theorem}
\label{theoremstarkov}
If $\psi$ is any function such that $q^{\qdim/\pdim} \psi(q) \to 0$ as $q\to\infty$, then the set of $\pdim\times\qdim$ singular matrices that are not $\psi$-approximable has Hausdorff dimension $\dimsing$. Equivalently, if $\phi$ is any function such that $\phi(t) \to \infty$ as $t \to \infty$, then the set of $\pdim\times\qdim$ singular matrices $\bfA$ such that $-\log\lambda_1(g_t u_\bfA \Z^d) \leq \phi(t)$ for all $t$ sufficiently large has Hausdorff dimension $\dimsing$. The same is true for the packing dimension.
\end{theorem}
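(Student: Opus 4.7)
The upper bound is immediate: the set in question is contained in $\Sing(\pdim,\qdim)$, whose Hausdorff and packing dimensions both equal $\dimsing$ by Theorem \ref{theoremsing}. The plan for the matching lower bounds is to apply the variational principle (Theorem \ref{theoremvariational2}) to a carefully chosen family of templates.

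Via Dani's correspondence, the hypothesis $q^{\qdim/\pdim}\psi(q)\to 0$ translates to the growth constraint $-\mink_1(t) \leq \phi(t) - O(1)$ for all sufficiently large $t$, while singularity forces $\mink_1(t)\to -\infty$; here $\phi\to\infty$ but at an arbitrarily slow rate. By the variational principle, it suffices to exhibit a family $\FF$ of templates such that every $\ff\in\FF$ satisfies $f_1(t)\to-\infty$ and $-f_1(t)\leq\phi(t)-C$, and $\sup_{\ff\in\FF}\underline\delta(\ff) = \dimsing$ (and separately $\sup_{\ff\in\FF}\overline\delta(\ff) = \dimsing$ for the packing statement), because then $\DD(\FF)$ will be contained in the target set and realize the desired Hausdorff (resp.\ packing) dimension.

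To build such $\FF$, I would start from the lower-bound construction underlying Theorem \ref{theoremsing}: for each $\omega>\dirichlet$ close to $\dirichlet$ it yields a template $\ff^{(\omega)}$ with $\underline\delta(\ff^{(\omega)})$ and $\overline\delta(\ff^{(\omega)})$ both approaching $\dimsing$ as $\omega\downarrow\dirichlet$, and whose first coordinate escapes linearly at rate $-f_1^{(\omega)}(t) \sim \what\tau\cdot t$ with $\what\tau = \what\tau(\omega) \downarrow 0$. I would then splice together segments of the $\ff^{(\omega_k)}$ on consecutive intervals $[T_k,T_{k+1}]$ with $\omega_k\downarrow\dirichlet$, repairing the template axioms (Definition \ref{definitiontemplate}) at the junctions via splits, mergers, and transfers (Definition \ref{definitionsimple}) together with the perturbation procedure of Lemma \ref{lemmahperturbation}. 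The parameters $\omega_k$ and $T_k$ are chosen so that (i) the growth constraint $-f_1 \leq \phi - O(1)$ holds on every $[T_k,T_{k+1}]$, and (ii) each block dominates all preceding ones by sheer length, so that the averages defining $\underline\delta$ and $\overline\delta$ on $[0,T_{k+1}]$ are controlled by the $k$th block contribution and approach $\dimsing$ as $k\to\infty$.

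The principal obstacle is reconciling (i) with (ii): the linear escape of $\ff^{(\omega_k)}$ at rate $\what\tau_k$ caps the effective block length at $\phi(T_k)/\what\tau_k$, and this window must be long enough that the contribution to the contraction rate reaches $\dimsing - o_k(1)$. Since $\phi\to\infty$ (however slowly) while the choice of $\what\tau_k\downarrow 0$ is entirely at our disposal, one can always arrange $\phi(T_k)/\what\tau_k\to\infty$, and the remaining task is the quantitative continuity claim that on a window of length $L$, the contraction rate of a piece of $\ff^{(\omega_k)}$ is $\dimsing - o_k(1)$ whenever $L\what\tau_k\to\infty$. This continuity is essentially already present in the proof of Theorem \ref{theoremsing} as the source of the limit $\underline\delta(\ff^{(\omega)}) \to \dimsing$, and should transfer to the present setting without essential modification. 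The packing statement is obtained in parallel by substituting $\overline\delta$ for $\underline\delta$ throughout.
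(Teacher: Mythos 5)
There is a genuine gap, and it is in the step you flag as the ``principal obstacle.'' You describe the building blocks $\ff^{(\omega)}$ from the lower bound of Theorem \ref{theoremsing} as having first coordinate escaping linearly at rate $\what\tau$, i.e.\ $-f_1^{(\omega)}(t)\sim\what\tau\,t$, and you conclude that the constraint $-f_1\leq\phi$ caps the usable block length at $\phi(T_k)/\what\tau_k$. But $\what\tau$ only records the $\liminf$ of $-f_1(t)/t$, attained at the check points $t_j=\lambda^j$ of the standard template $\ff[\tau,\lambda]$ of Definition \ref{definitionstandardtemplate2parameter}. Between consecutive check points, $f_1$ dips (at slope $-\frac1\qdim$) to depth comparable to the gap length, so $-f_1$ has excursions of size $\asymp\frac{(\lambda-1)t}{\pdim+\qdim}=\Theta(\sqrt{\what\tau}\,t)$, growing \emph{linearly} in $t$. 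The ``not $\psi$-approximable'' condition is exactly a bound on these excursions ($-\mink_1(t)\leq\phi(t)$ with $\phi$ growing arbitrarily slowly, say $\log\log t$), not on the $\liminf$. Consequently no window of $\ff^{(\omega_k)}$ placed at large times can satisfy the constraint: if you use the template on $[T_k,T_{k+1}]$ as is, the dip depth $\gtrsim\sqrt{\tau_k}\,T_k$ forces $\sqrt{\tau_k}\,T_k\lesssim\phi(T_k)$, whence $\tau_k T_k\lesssim\phi(T_k)^2/T_k\to0$ and singularity ($\liminf(-f_1)\to\infty$) fails; if instead you translate a fresh initial piece of $\ff^{(\omega_k)}$ to start at $T_k$, then $-f_1$ returns to $O(\tau_k)$ at the start of every block and again the outcome is not singular. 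The self-similar family ties the check-point values, the gaps, and hence the dip depths together (the ratio $\epsilon_j/\Delta t_j$ is constant and both grow linearly), and bounded perturbations as in Lemma \ref{lemmahperturbation} cannot repair a linear-in-$t$ excursion, so the splicing plan cannot be carried out with these blocks.

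The repair is to decouple the parameters, which is what the paper's proof (\6\ref{subsectionstarkov}) does: take the standard template $\ff$ determined by a freely chosen sequence of points $(t_k,-\epsilon_k)$ with $\Delta t_k\leq\frac12\phi(t_k)$ and $\epsilon_k\leq\frac12\phi(t_k)$ (so that $f_1\geq-\phi$ everywhere, since the dip depth is controlled by $\Delta t_k$), $\epsilon_k\to\infty$ (singularity), and $\epsilon_k/\Delta t_k\to0$, $\epsilon_{k+1}/\Delta t_k\to0$. The rescaling identity $\Delta(\ff,[t_k,t_{k+1}])=\Delta\big(\mbf s[(0,-\tfrac{\epsilon_k}{\Delta t_k}),(1,-\tfrac{\epsilon_{k+1}}{\Delta t_k})],1\big)$ together with the Lipschitz continuity already used in \6\ref{subsectionmainlowerbound} gives $\Delta(\ff,[t_k,t_{k+1}])\to\Delta(\mbf s[(0,0),(1,0)],1)=\dimsing$, so a \emph{single} template achieves $\underline\delta(\ff)=\dimsing$ (hence also $\overline\delta(\ff)\geq\dimsing$), and one application of Theorem \ref{theoremvariational2} finishes both the Hausdorff and packing lower bounds; the upper bound by containment in $\Sing(\pdim,\qdim)$ is as you say. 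So the variational-principle framing and the upper bound in your proposal are fine, but the construction of the witnessing templates must be redone along these lines.
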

Note that this theorem is optimal in the sense that if $\psi(q) \geq c q^{-\qdim/\pdim}$ for some constant $c$, then it is easy to check that every singular $\pdim\times\qdim$ matrix is $\psi$-approximable.

\subsection{Schmidt's conjecture}
In \cite[p.273]{Schmidt7}, Schmidt conjectured that for all $2\leq k \leq \pdim$, there exists an $\pdim\times 1$ matrix $\bfA$ such that
\begin{align}
\label{ksingular}
\lambda_{k-1}(g_t u_\bfA \Z^d) \to 0 \text{ and }
\lambda_{k+1}(g_t u_\bfA \Z^d) \to \infty \text{ as } t\to\infty.
\end{align}
This conjecture was proven by Moshchevitin \cite{Moshchevitin4}, who constructed an $\pdim\times 1$ matrix $\bfA$ satisfying \eqref{ksingular} and not contained in any rational hyperplane\Footnote{As observed by Moshchevitin \cite[Corollary 2]{Moshchevitin4}, proving Schmidt's conjecture by constructing an $m\times 1$ matrix $\bfA$ satisfying \eqref{ksingular} which is contained in a rational hyperplane is actually trivial: let $\bfA = (\xx,\0)$ where $\xx\in \R^{k-1}$ or $\xx\in\R^{k-2}$ is a badly approximable vector. We assume that if Schmidt had noticed this example, he would have included in his conjecture the requirement that $\bfA$ should not be contained in a rational hyperplane.} (see also \cite{Keita,Schleischitz3}). 
To extend this discussion to the matrix framework, we make the following definition.
\begin{definition}
\label{defksingulargeneral}
An $m\times n$ matrix $\bfA$ is \emph{$k$-singular} for $2\leq k \leq m+n-1$ if 
\begin{align}
\label{ksingulargeneral}
\lambda_{k-1}(g_t u_\bfA \Z^d) \to 0 \text{ and }
\lambda_{k+1}(g_t u_\bfA \Z^d) \to \infty \text{ as } t\to\infty.
\end{align}
(Note that any matrix satisfying \eqref{ksingulargeneral} is singular by Theorem \ref{theoremdani}.)
\end{definition}

We improve Moshchevitin's result by computing a lower bound on the Hausdorff dimension of the set of matrices witnessing Schmidt's conjecture in the matrix framework:

\begin{theorem}
\label{theoremkmessenger}
For all $(\pdim,\qdim)\neq (1,1)$ and for all $2\leq k \leq \dimsum - 1$, the Hausdorff dimension of the set of matrices $\bfA$ that satisfy \eqref{ksingulargeneral} is at least
\[
\max(f_\dims(k),f_\dims(k-1))
\]
where
\begin{equation}
\label{fkdef}
f_\dims(k) \df \dimprod - \frac{k(\dimsum-k)\dimprod}{(\dimsum)^2} - \left\{\frac{k\pdim}{\dimsum}\right\} \left\{\frac{k\qdim}{\dimsum}\right\} \cdot
\end{equation}
Here $\{x\}$ denotes the fractional part of a real number $x$. The same formula is valid for the set of matrices $\bfA$ that satisfy \eqref{ksingulargeneral} and are not trivially singular.
\end{theorem}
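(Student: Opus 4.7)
The plan is to apply the variational principle (Theorem~\ref{theoremvariational2}) to reduce the desired lower bound to a template-construction problem. Specifically, for each admissible $k$ it suffices to exhibit a family of templates $\FF \subseteq \TT_\dims$ such that (i)~every matrix $\bfA \in \DD(\FF)$ satisfies condition \eqref{ksingulargeneral} and is not trivially singular, and (ii)~the lower average contraction rate satisfies $\underline\delta(\FF) \geq \max(f_\dims(k), f_\dims(k-1))$. The variational principle then transfers (ii) into the Hausdorff dimension lower bound for the set of $k$-singular matrices.

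First I would construct a family of standard templates (in the sense of Definition~\ref{definitionstandardtemplate}) whose $k$th partial sum $F_k$ is piecewise linear and oscillating: on alternating intervals the slope $F_k'$ is taken to be one of the two values $\epsilon_\pm \in Z(k)$ that bracket zero, namely the largest element of $Z(k)$ that is $\leq 0$ and the smallest that is $\geq 0$. Between switches, the remaining components of $\ff$ are chosen to respect the ordering $f_1 \leq \cdots \leq f_d$ and the convexity and quantized slope conditions of Definition~\ref{definitiontemplate}. With the switching times tuned so that the amplitude of oscillation of $f_k$ tends to infinity but no individual gap $f_{j+1} - f_j$ does, the resulting template satisfies $f_{k-1}(t) \to -\infty$ and $f_{k+1}(t) \to +\infty$, so every $\bfA \in \DD(\ff)$ is $k$-singular in the sense of \eqref{ksingulargeneral} while the oscillation of $f_k$ between the $f_{k-1}$-level and the $f_{k+1}$-level prevents either of the neighboring gaps from tending to infinity, so $\bfA$ is not trivially singular.

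The central computation is that the lower average contraction rate of this family equals $f_\dims(k)$. The main term $mn - k(d-k)mn/d^2$ arises as the generic dimension allotment under the constraint that $F_k$ drifts at mean slope $0$ (which is what keeps $f_k$ oscillating rather than escaping to $\pm\infty$), while the arithmetic correction $\{km/d\}\{kn/d\}$ captures the defect caused by the discreteness of $Z(k)$: the nearest slopes bracketing zero are
\[
\epsilon_+ = \frac{\lceil km/d\rceil}{m} - \frac{\lfloor kn/d\rfloor}{n},\qquad \epsilon_- = \frac{\lfloor km/d\rfloor}{m} - \frac{\lceil kn/d\rceil}{n},
\]
and the time fractions required to achieve zero net drift between $\epsilon_-$ and $\epsilon_+$ produce exactly this product of fractional parts when integrated against the template dimension functional. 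The $f_\dims(k-1)$ bound is obtained by an analogous construction in which the oscillation is placed at position $k-1$ instead of $k$; here an additional constraint is imposed so that $f_{k-1}(t) \to -\infty$ is preserved (rather than $f_{k-1}$ oscillating to large values with $f_k$), so that $k$-singularity — and not merely $(k-1)$-singularity — is witnessed.

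Taking the union of the two template families and invoking the variational principle yields the bound $\max(f_\dims(k), f_\dims(k-1))$, and by the construction it simultaneously applies to the not-trivially-singular refinement, since by design no gap in either template blows up. The main obstacle is the template dimension bookkeeping: verifying that the contraction functional evaluates to exactly $f_\dims(k)$ in either regime reduces to a convex-combinatorial optimization over the discrete slope set $Z(k)$ against a linear functional measuring the dimensional cost of maintaining the oscillation and the required zero-mean drift, where the arithmetic of $m$, $n$, $k$, and $d$ is entirely encoded through the fractional parts $\{km/d\}$ and $\{kn/d\}$.
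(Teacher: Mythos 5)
Your proposal follows essentially the same route as the paper's proof: reduce via the variational principle to constructing, for each $j\in\{k-1,k\}$, a template whose bottom $j$ components are bunched below the top $d-j$ components, with $F_j'$ alternating between the two elements of $Z(j)$ bracketing zero, the time fractions $\left\{\tfrac{jm}{d}\right\}$ and $\left\{\tfrac{jn}{d}\right\}$ forced by the zero-mean-drift requirement, and short crossings of $f_k$ between the two bunches preventing trivial singularity, after which averaging the two contraction rates gives exactly $f_\dims(j)$. The only quibble is terminological: these are not ``standard templates'' in the sense of Definition \ref{definitionstandardtemplate} (that notion is the two-point construction used for the singular set), but this does not affect the substance of your argument.
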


\begin{remark*}
The function $f_\dims$ satisfies $f_\dims(\dimsum-k)=f_\dims(k)$ and $f_\dims(1) = f_\dims(\dimsum-1) = \dimsing$. Moreover, for all $1\leq k \leq \dimsum-1$ we have $f_\dims(k) \leq \dimsing$. It follows that when $k = 2$ or $\dimsum - 1$, the Hausdorff and packing dimensions of the set of matrices $\bfA$ that satisfy \eqref{ksingulargeneral} are both equal to $\dimsing$.
\end{remark*}

\begin{remark*}
When $\pdim = 1$ or $\qdim = 1$, the fractional parts appearing in \eqref{fkdef} can be computed explicitly, leading to the formula
\[
f_\dims(k) = \dimprod - \frac{k(\dimsum-k)}{\dimsum}\cdot
\]
However, this formula is not valid when $\pdim,\qdim \geq 2$.
\end{remark*}

We conjecture that the lower bound in Theorem \ref{theoremkmessenger} is optimal for both the Hausdorff and packing dimensions (see Conjecture \ref{conjecturemessenger} below).

\subsection{A conjecture of BGMRV}

After the initial version of this paper was published on arXiv, Beresnevich, Guan, Marnat, Ramirez, and Velani (BGMRV) \cite{BGMRV} studied sets of the form
\[
\FS(\pdim,\qdim) \df \DI(\pdim,\qdim) \butnot(\BA(\pdim,\qdim) \cup \Sing(\pdim,\qdim))
\]
where $\DI(\pdim,\qdim)$ and $\BA(\pdim,\qdim)$ are the set of Dirichlet improvable and badly approximable matrices, respectively. They prove that $\FS(\pdim,1)$ has the cardinality of the continuum for all $\pdim$, and then conjecture that $\FS(\pdim,\qdim)$ has full dimension $\pdim\qdim$ for all $\pdim,\qdim$. They note that an obvious barrier to applying the main result of the current paper to prove this is that the constant relating a successive minima function to a template is dependent on the template rather than uniform. In the current version of the paper, however, the constant is uniform and we can therefore prove the following theorem:

\begin{theorem}
\label{theoremconjectureBGMRV}
For all $m,n$, $\HD(\FS(m,n)) \geq \dimsing$.
\end{theorem}

However, we cannot prove that $\FS(m,n)$ has full dimension, primarily because the set $\DI(m,n)$ is too sensitive to small perturbations, and our method requires fairly large (though uniformly bounded) perturbations.

\subsection{New proofs of old results}
In addition to our new results, our techniques now provide a uniform framework to prove classical results in metric Diophantine approximation. The following result was proven in the one-dimensional setting by Jarn\'ik (1928) and in the matrix setting by Schmidt (1969).

\begin{theorem}[Jarn\'ik--Schmidt, \cite{Jarnik1,Schmidt2}]
\label{theoremJS}
The Hausdorff dimension of the set of badly approximable matrices is $mn$.
\end{theorem}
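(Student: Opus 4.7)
The plan is to derive Theorem \ref{theoremJS} as a short consequence of the variational principle (Theorem \ref{theoremvariational2}). By the Dani dictionary of Section \ref{subsectionDani}, a matrix $\bfA$ is badly approximable if and only if $\limsup_{t\to\infty}-h_{\bfA,1}(t)<\infty$. Since $h_{\bfA,1}\le\cdots\le h_{\bfA,d}$ and Minkowski's second theorem forces $\sum_i h_{\bfA,i}\asymp_\plus 0$, boundedness of $h_{\bfA,1}$ from below is equivalent to uniform boundedness of the whole successive minima function $\Mink_\bfA$. Accordingly, I set
\[
\SS := \{\ff : \Rplus \to \R^d : \|\ff\|_\infty < \infty\},
\]
which is Borel and trivially closed under finite perturbations, and satisfies $\DD(\SS) = \mathrm{BA}(\pdim,\qdim)$.

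The upper bound $\HD(\mathrm{BA}(\pdim,\qdim)) \leq \pdim\qdim$ is immediate from $\mathrm{BA}(\pdim,\qdim) \subseteq \MM$. For the matching lower bound I exhibit a single template in $\SS \cap \TT_\dims$ realizing the maximum possible value of $\underline\delta$. Consider the zero template $\ff \equiv \mathbf 0$. Conditions (I) and (II) of Definition \ref{definitiontemplate} are trivial, and condition (III) is vacuous for $j < d$ (no interval has $f_j < f_{j+1}$) and holds for $j = d$ since $F_d \equiv 0$ has slope $0 \in Z(d) = \{0\}$. On any interval of linearity, the unique interval of equality is $(0,d]_\Z$, producing $\diff_+ = \pdim$, $\diff_- = \qdim$, so $S_+ = \{1,\ldots,\pdim\}$ and $S_- = \{\pdim+1,\ldots,d\}$; every pair $(i_+,i_-) \in S_+\times S_-$ satisfies $i_+ < i_-$, giving $\delta(\ff,I) = \pdim\qdim$. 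Hence $\underline\delta(\ff) = \pdim\qdim$, and Theorem \ref{theoremvariational2} yields
\[
\HD(\mathrm{BA}(\pdim,\qdim)) = \sup_{\gg \in \SS \cap \TT_\dims} \underline\delta(\gg) \ \ge\ \underline\delta(\ff) = \pdim\qdim.
\]

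There is no genuine obstacle here: the entire weight of the argument is carried by the variational principle. The only point deserving care is the translation step from the Diophantine condition to the template condition $\|\ff\|_\infty < \infty$, and the verification that the constant template is a legitimate element of $\TT_\dims$ (the quantized slope condition could in principle fail at $j = d$, but $Z(d) = \{0\}$ accommodates it). This minimal-effort deduction is characteristic of how the variational principle is meant to be applied: the classical Jarn\'ik--Schmidt theorem reduces to constructing one trivial template, in contrast to the singular and $k$-singular cases which require delicate oscillating template constructions.
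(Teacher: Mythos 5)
Your proposal is correct and follows essentially the same route as the paper: the paper likewise translates badly approximable into boundedness of the successive minima function, applies Theorem \ref{theoremvariational2} to the class of bounded functions, and notes that $\underline\delta(\0)=mn$ while $\underline\delta(\ff)\leq mn$ for every template. Your extra verifications (closure under finite perturbations, that $\0\in\TT_\dims$, and the computation $\delta(\0,I)=mn$) are just the details the paper leaves implicit.
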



Recall that for each $\omega > \dirichlet$, we say that a matrix $A$ is $\omega$-approximable if
\[
\limsup_{|\qq|\to\infty} \sup_{\pp\in\Z^m} \frac{-\log\|A\qq-\pp\|}{\log\|\qq\|} \geq \omega.
\]
It follows from the Dani correspondence principle that $A$ is $\omega$-approximable if and only if
\[
\limsup_{t\to\infty} \frac{-h_{A,1}(t)}{t}  \geq \tau
\]
where $\tau$ is as in \eqref{dani}.

The following theorem was proven in the one-dimensional case independently by Jarn\'ik (1929) and Besicovitch (1934), and in the matrix case by Bovey and Dodson (1986).

\begin{theorem}[Jarn\'ik--Besicovitch--Bovey--Dodson, \cite{Jarnik3, Besicovitch, BoveyDodson}]
\label{theoremBD}
The Hausdorff dimension of the set of $\omega$-approximable matrices is $mn(1 - \tau)$. In particular, the Hausdorff dimension of the very well approximable matrices is $mn$.
\end{theorem}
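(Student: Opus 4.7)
The plan is to derive this from the variational principle (Theorem \ref{theoremvariational2}). An adaptation of the argument of Theorem \ref{theoremdani2} to the non-uniform exponent shows that $\bfA$ is $\omega$-approximable if and only if $\limsup_{t\to\infty} -h_{\bfA,1}(t)/t \geq \tau$, with $\tau$ as in \eqref{dani}. Setting
$$\SS_\tau \df \bigl\{\ff:\Rplus\to\R^d \,:\, \limsup_{t\to\infty} -f_1(t)/t \geq \tau\bigr\},$$
which is closed under finite perturbations, Theorem \ref{theoremvariational2} identifies $\HD(\DD(\SS_\tau))$ with $\sup_{\ff\in\SS_\tau\cap\TT_\dims}\underline\delta(\ff)$. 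I aim to show this supremum equals $mn(1-\tau)$; the statement for $\mathrm{VWA}(m,n)=\bigcup_{\omega>n/m}\{\omega\text{-approximable}\}$ then follows by taking the supremum over the nested family as $\omega\to (n/m)^+$, so that $\tau\to 0^+$.

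The upper bound hinges on the pointwise estimate
$$mn - \delta(\ff,I) \;\geq\; mn\cdot\bigl(-f_1'(I)\bigr)^+$$
valid on every interval of linearity $I$ of every template. To prove it, let $(0,q]_\Z$ be the interval of equality of $\ff$ on $I$ containing the index $1$ and write $L_\pm = L_\pm(\ff,I,q)$, so that $-f_1'(I) = (L_- m - L_+ n)/(qmn)$. A direct reading of \eqref{Splusdef1}--\eqref{dimfI} gives $\delta(\ff,I) \leq L_+ n + (m-L_+)(n-L_-)$, i.e.\ $mn - \delta(\ff,I)\geq L_-(m-L_+)$. The desired inequality $qL_-(m-L_+) \geq (L_- m - L_+ n)^+$ then rearranges to $L_- m(q-1) + L_+(n - qL_-) \geq 0$, which holds by splitting on the sign of $n - qL_-$ and using $L_+ + L_- = q \leq d$. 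Combining this estimate with the lower bound $f_1'\geq -1/n$ from Definition \ref{definitiontemplate}(II), any $T$ satisfying $-f_1(T) \geq \tau T - o(T)$ obeys
$$\int_0^T (mn - \delta(\ff,t))\,dt \;\geq\; mn\int_0^T (-f_1')^+\,dt \;\geq\; mn\cdot(-f_1(T)) \;\geq\; mn\tau T - o(T),$$
so $\Delta(\ff,T)\leq mn(1-\tau) + o(1)$. Applying this along a sequence $T_k\to\infty$ witnessing the $\limsup$ yields $\underline\delta(\ff)\leq mn(1-\tau)$.

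For the matching lower bound, for each $\rho > (1+m\tau)/(1-n\tau)$ I construct a balanced template $\ff_\rho\in \SS_\tau\cap\TT_\dims$. Setting $t_k \df \rho^k$, on each cycle $(t_{k-1},t_k]$ concatenate three phases: an \emph{ascent} of length $m\tau t_{k-1}$ with $f_1'=1/m$ and $f_i'=-1/(m(d-1))$ for $i\geq 2$; a \emph{rest} during which all $f_i$ are constant; and a terminal \emph{descent} of length $n\tau t_k$ with $f_1'=-1/n$ and $f_i'=1/(n(d-1))$ for $i\geq 2$. A direct check shows all conditions of Definition \ref{definitiontemplate} hold (the slopes lie in $Z(1)$ resp.\ $Z(d)$, $F_d\equiv 0$, and $F_1$ is convex across each breakpoint since $-1/n < 1/m$), and that $f_1(t_k) = -\tau t_k$, so $\ff_\rho\in\SS_\tau$. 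The dimension $\delta(\ff_\rho,t)$ equals $mn$ on ascent and rest phases and $mn - m$ on descent phases, so aggregating the descent contributions gives
$$\Delta(\ff_\rho, t_K) \;=\; mn \;-\; \frac{m \sum_{k=1}^K n\tau t_k}{t_K} \;\longrightarrow\; mn - \frac{mn\tau\rho}{\rho - 1} \qquad (K\to\infty).$$
A routine check confirms that $\underline\delta(\ff_\rho)$ is attained along $\{t_K\}$: at any intermediate $T$ in the ascent or rest of cycle $K{+}1$, the dimension loss is unchanged from $t_K$ while $T$ is larger, so $\Delta(\ff_\rho,T)$ is strictly greater than $\Delta(\ff_\rho,t_K)$. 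Letting $\rho\to\infty$ gives $\sup_\rho \underline\delta(\ff_\rho) = mn(1-\tau)$, matching the upper bound.

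The main technical obstacle is the combinatorial inequality in the upper bound: the naive guess that ``$f_1$ strictly decreasing on $I$ forces $1\in S_-(\ff,I)$'' is false (e.g.\ in an $f_1=f_2$ cluster with $m>n$, one can have $1\in S_+$ while the cluster slope is negative), so the bound on $\delta$ must be derived from $\delta \leq L_+ n + (m-L_+)(n-L_-)$ combined with the algebraic rearrangement noted above. By contrast, the template construction and its dimension computation are entirely mechanical.
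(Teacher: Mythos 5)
Your proof is correct and follows essentially the same route as the paper: your pointwise bound $mn-\delta(\ff,I)\geq mn\,(-f_1'(I))^+$ is precisely the content of the paper's potential-function lemma for $\phi=mn|f_1|$ in Section \ref{sectionBD} (proved there via $mn-\delta(I)\geq L_-(m-L_+)$ together with $\tfrac1j[mL_--nL_+]\leq L_-(m-L_+)$, an equivalent rearrangement of your case-split inequality), and integrating and passing to the limit along times witnessing the $\limsup$ gives the upper bound exactly as the paper does. For the lower bound the paper only asserts that equality is attained by some template, so your explicit ascent/rest/descent cycle construction is a correct filling-in of that assertion; the one point your "routine check" glosses over is that times in the descent phase must also be handled, which is immediate since there the loss rate $m$ exceeds the running average loss (for $\rho>1/(1-n\tau)$), so $\Delta(\ff_\rho,\cdot)$ is monotone on each descent and the infimum is indeed realized along $\{t_K\}$.
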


We provide proofs of these theorems in Sections \ref{sectionJS} and \ref{sectionBD} respectively.

%

\section{The variational principle}
\label{sectionvariational}

\subsection{Successive minima functions and templates}
All the theorems in the previous section (with the exception of Theorems \ref{theoremdani} and \ref{theoremdani2}) are consequences of a single \emph{variational principle} in the parametric geometry of numbers. This variational principle is a quantitative analogue of theorems due to Schmidt and Summerer \cite[\62]{SchmidtSummerer3} and Roy \cite[Theorem 1.3]{Roy3}. However, we will state their results in language somewhat different from the language used in their papers, due to the fact that the fundamental object we consider is the one-parameter family of unimodular lattices $(g_t u_\bfA \Z^d)_{t\geq 0}$ used by the Dani correspondence principle, rather than a one-parameter family of (non-unimodular) convex bodies as is done in \cite{SchmidtSummerer3,Roy3}. We leave it to the reader (see Appendix \ref{appendix}) to verify that the theorems we attribute below to \cite{SchmidtSummerer3} and \cite{Roy3} are indeed faithful translations of their results to our setting.
\comdavid{From David's email:\\
I am looking at Schmidt--Summerer's paper and recording the notation they use:\\
-- they are doing simultaneous approximation, so $n=1$\\
-- their $n$ is our $d$, their $y$ is our $r$, their $\xi$ is our $A$\\
-- however, they have $r = (q,p)$ instead of $r = (p,q)$\\
-- what they call $\Lambda(\xi)$ is what we would call $u_A \Z^d$\\
-- what they call $\KK(Q)$ is what we would call $g_{-t} B$, where
$Q = e^t$ and $B = [-1,1]^d$\\
-- their $T$ is our $g_{-1}$\\
Now, the fundamental reason that our theorems are the same as Schmidt--Summerer's is that $\lambda_i(g_t u_A \Z^d, B) = \lambda_i(u_A \Z^d, g_{-t} B)$. In their notation the right-hand side is $\lambda_i(\Lambda(\xi),\KK(Q))$. Thus $L_i(q)$ in their notation is the same as $h_i(t)$ in our notation, where $q = e^t$.\\
Roy's paper uses different notation, but I think Schmidt--Summerer's notation is closer to ours so it will be easier to just compare against that.}\\
\comtushar{From Damien Roy's email:\\
The first one is that the approach of Schmidt and Summerer in $[18]$ is quite close to yours.  If you look at their paper on page 52, you will see (equation $(1.3')$) that although they vary the convex body as a function of the parameter q, their family convex bodies depends only on $n$, and they have volume $1$ independently of $q$. The lattice is fixed but arbitrary. In your notation, the situation that they consider is $m=d-1$ and $n=1$.\\
I wish I could use the same setting as Schmidt and Summerer but, because of my training in transcendental number theory, it was much more natural for me to work with the integer lattice and with the dual setting involving one linear form.  I also changed the parametrization to get only slopes $0$ and $1$.  That helped me quite a lot in the combinatorial aspect of the work.  So, the confusion is partly my fault.\\
So Schmidt and Summerer have $m=d-1$ and $n=1$, while I have $m=1$ and $n=d-1$.  In a previous paper (Acta Arithmetica 140 (2009)), Schmidt and Summerer considered both situations with $d$ parameters (in fact their $d$ is $n$).\\
In Section 2 of their paper $[18]$, on pages 57-60, Schmidt and Summerer show that the map $L$ (which you denote $h$) has bounded difference with what they call an $(n,\gamma)$-system.  The quantity $\gamma$ accounts for the fact that Minkowski's theorem and Mahler's theory of compound bodies both involve bounded factors. I quote their result in my paper $[14]$ as Theorem 2.9 with the definition of an $(n,\gamma)$-system stated as Definition 2.8.  I use the same notation as them although their specific result is essentially dual to what I present.\\
In Section 3 of their paper, Schmidt and Summerer consider the limit case of an $(n,0)$-system and, at the beginning of Section 4, on page 62, they conjecture that the study of these systems should suffice to determine the spectra of the family of exponents of approximation that they are considering.  In the rest of their paper they develop a theory of cover of an $(n,\gamma)$-system and apply it to prove relations between several exponents of approximation.\\ 
I think that, your $(n-1) \times 1$ templates are exactly the $(n,0)$-systems of Schmidt and Summerer with the variable $q$ replaced by $t/(n-1)$. However, what I call an $(n,0)$-system (based on Definition 2.8 of $[14]$) is a map $(P_1(q),...,P_n(q))$ such that $(P_1(nt/(n-1))-t/(n-1),...,P_n(nt/(n-1))-t/(n-1))$ is a $1 \times (n-1)$ template.  I hope that I am giving you the correct transformation.\\
The rigid systems $P$ that I define in the introduction of $[14]$ are those for which I was able to construct a vector $u$ such that $P-L_u$ is bounded.  Once I had done this, it remained to show that every $(n,\gamma)$-system can be approximated by a rigid system (a special case of $(n,0)$-systems) up to bounded difference.  
}

The fundamental question of our version of the parametric geometry of numbers will be as follows: given a matrix $\bfA$, what does the function $\Mink = \Mink_\bfA = (\mink_1,\ldots,\mink_d) : \Rplus \to \R^d$ defined by the formula
\begin{equation}
\label{hitdef}
\mink_i(t) \df \log\lambda_i(g_t u_\bfA \Z^d)
\end{equation}
look like? The function $\Mink_\bfA$ will be called the \emph{successive minima function} of the matrix $\bfA$. The Dani correspondence principle shows that many interesting Diophantine questions about the matrix $\bfA$ are equivalent to questions about its successive minima function. Thus the dictionary in \6\ref{subsectionDani} may be translated as follows.

\begin{center}
\begin{tabular}{|c|c|}
\hline
\spc{{\bf Diophantine properties of $\bfA$}}&
\spc{{\bf Asymptotic properties of $h_{A,1}$}}\\
\hline
$\bfA$ is \emph{badly approximable}&
$\displaystyle\limsup_{t\to\infty} -h_{\bfA,1}(t) < \infty$\\
\hline
$\bfA$ is \emph{singular}&
$\displaystyle\liminf_{t\to\infty} -h_{\bfA,1}(t) = \infty$\\
\hline
$\bfA$ is \emph{very well approximable}&
$\displaystyle \limsup_{t\to\infty} \frac{-h_{\bfA,1}(t) }{t} > 0$\\
\hline
\end{tabular}
\end{center}

\medskip
The main restriction on the successive minima function comes from an application of Minkowski's second theorem on successive minima (see Theorem \ref{mink2} below) to certain subgroups of the lattice $g_t u_\bfA \Z^d$. Specifically, fix $j = 1,\ldots,d-1$ and let $I$ be an interval such that $\mink_j(t) < \mink_{j+1}(t)$ for all $t\in I$. 
For each $t\in I$, let\footnote{Here, $V_{j,t}$ is the smallest subspace containing $\{\rr\in \Z^d : \|g_t u_\bfA \rr\| \leq \lambda_j(g_t u_\bfA \Z^d)\}$. See Convention \ref{spanconvention}.} 
\label{Vjt}
\[
V_{j,t} \df \lb \rr\in \Z^d : \|g_t u_\bfA \rr\| \leq \lambda_j(g_t u_\bfA \Z^d) \rb \subset \R^d.
\]
Then the map $t\mapsto V_{j,t}$ is continuous, and therefore constant, on $I$. By Minkowski's second theorem (Theorem \ref{mink2}), we have
\label{FjI}
\[
\sum_{i\leq j} \mink_i(t) \asymp_\plus F_{j,I}(t) \df \log\|g_t u_\bfA (V_{j,t}\cap \Z^d)\|,
\]
where $\|\Gamma\|$ denotes the covolume of a discrete group $\Gamma \subset \R^d$ (relative to its linear span). Now an argument based on the exterior product formula for covolume and the definition of $g_t$ (see Lemma \ref{lemmaapproximatetemplate}) shows that $F_{j,I} \asymp_\plus G_{j,I}$ for some convex, piecewise linear function $G_{j,I}$ whose slopes are in the set
\begin{equation}
\label{slopeset}
Z(j) \df \left\{\tfrac{\dir_+}{\pdim} - \tfrac{\dir_-}{\qdim}: \dir_\pm \in [0,d_\pm]_\Z,\;\;\dir_+ + \dir_- = j\right\},
\end{equation}
where for convenience we write 
\begin{align*}
d_+ &\df \pdim, &
d_- &\df \qdim, &
[a,b]_\Z &\df [a,b]\cap\Z. 
\end{align*} 
This suggests that $\Mink$ can be approximated by a piecewise linear function $\ff$ such that whenever $f_j < f_{j+1}$ on an interval $I$, the function $F_j \df \sum_{i\leq j} f_i$ is convex and piecewise linear on $I$ with slopes in $Z(j)$. Moreover, it is obvious that $\mink_1\leq \cdots \leq \mink_d$, and the formula for $g_t$ implies that for all $i$, we have $-\frac1\qdim \leq \mink_i' \leq \frac1\pdim$ wherever $\mink_i$ is differentiable. We therefore make the following definition:

\begin{definition}
\label{definitiontemplate}
An $\pdim\times\qdim$ \emph{template} is a piecewise linear\Footnote{In this paper, a \emph{piecewise linear} function is assumed to be continuous, and to be linear on a locally finite collection of intervals whose union is its domain.} map $\ff:\Rplus\to\R^d$ with the following properties:
\begin{itemize}
\item[(I)] $f_1 \leq \cdots \leq f_d$.
\item[(II)] $-\frac1\qdim \leq f_i' \leq \frac1\pdim$ for all $i$.
\item[(III)] For all $j = 0,\ldots,d$ and for every interval $I$ such that $f_j < f_{j+1}$ on $I$, the function \[F_j \df \sum_{0 < i\leq j} f_i\] is convex and piecewise linear on $I$ with slopes in $Z(j)$. Here we use the convention that $f_0 = -\infty$ and $f_{d+1} = +\infty$. We will call the assertion that $F_j$ is convex the \emph{convexity condition}, and the assertion that its slopes are in $Z(j)$ the \emph{quantized slope condition}.
\end{itemize}
When $\pdim = 1$, templates are a slight generalization of reparameterized versions of the \emph{rigid systems} of \cite{Roy3}. We denote the space of $\pdim\times\qdim$ templates by $\TT_{\dims}$.

A template $\ff$ will be called \emph{balanced} if $F_d = f_1 + \ldots + f_d = 0$. Note that every template is equal to a constant plus a balanced template, since by condition (III), $F_d$ is piecewise linear with slopes in $Z(d) = \{0\}$, and thus constant. So for most purposes the distinction between balanced and unbalanced templates is irrelevant, but in some places it will make a difference.
A \emph{partial template} is a piecewise linear map $\ff$ satisfying (I)-(III) whose domain is a closed, possibly infinite, subinterval of $\Rplus$.
An example of a (partial) template is shown in Figure \ref{figuretemplate1}.
\end{definition}

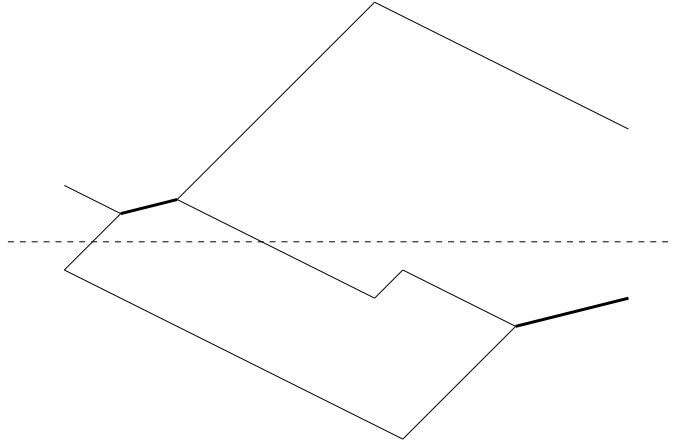
\begin{figure}
\scalebox{0.75}{
\begin{tikzpicture}
\clip(-1,-4) rectangle (11,5);
\draw[dashed] (-1,0) -- (11,0);
\draw (0,1)--(1,0.5);
\draw (0,-0.5)--(1,0.5);
\draw (0,-0.5)--(6,-3.5);
\draw[line width=1.5] (1,0.5) -- (2,0.75);
\draw (2,0.75) -- (5.5,4.25);
\draw (5.5,4.25) -- (10,2);
\draw (2,0.75) -- (5.5,-1);
\draw (5.5,-1) -- (6,-0.5);
\draw (6,-0.5) -- (8,-1.5);
\draw (6,-3.5) -- (8,-1.5);
\draw[line width=1.5] (8,-1.5) -- (10,-1);
\end{tikzpicture}
}
\caption{The joint graph of a $1\times 2$ partial template $\ff = (f_1,f_2,f_3)$, where the \emph{joint graph} of a template is the union of the graphs of its component functions.}
\label{figuretemplate1}
\end{figure}

The fundamental relation between templates and successive minima functions is given as follows:
\begin{theorem}
\label{theoremSSR}
~
\begin{itemize}
\item[(i)] For every $\pdim\times \qdim$ matrix $\bfA$, there exists an $\pdim\times \qdim$ template $\ff$ such that $\Mink_\bfA \asymp_\plus \ff$.
\item[(ii)] For every $\pdim\times \qdim$ template $\ff$, there exists an $\pdim\times \qdim$ matrix $\bfA$ such that $\Mink_\bfA \asymp_\plus \ff$.
\end{itemize}
\end{theorem}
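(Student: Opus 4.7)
\medskip

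\noindent\textbf{Plan for part (i).} The upper-bound direction is essentially a formalization of the heuristic given in the passage preceding the statement. For each $j=1,\ldots,d-1$ and each open interval $I$ on which $h_j<h_{j+1}$, Minkowski's second theorem yields
\[
\sum_{i\leq j} h_i(t) \asymp_\plus F_{j,I}(t) = \log\|g_t u_\bfA (V_{j,t}\cap\Z^d)\|,
\]
and continuity of minima forces $V_{j,t}$ to be constant on $I$, equal to some $\Z^d$-rational subspace $V$. Expanding $\|g_t u_\bfA(V\cap\Z^d)\|$ as the length of the exterior product $\wedge^j g_t u_\bfA(v_1\wedge\cdots\wedge v_j)$, and noting that $\wedge^j g_t$ acts diagonally on the coordinate basis of $\wedge^j\R^d$ with eigenvalues $e^{t(d_+/m - d_-/n)}$ for $d_++d_-=j$, one obtains that $F_{j,I}$ coincides up to $O(1)$ with the maximum of finitely many linear functions of slopes in $Z(j)$, hence is convex and piecewise linear with slopes in $Z(j)$. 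To assemble a global template I would set
\[
F_j(t) \df \min_{V\in\VV_j(u_\bfA\Z^d)} \log\|g_t u_\bfA(V\cap\Z^d)\|,
\]
which by Minkowski is within $O(1)$ of $\sum_{i\leq j} h_i$. Only finitely many subspaces contribute on any compact interval, so $F_j$ is piecewise linear. Finally I would define $f_j\df F_j-F_{j-1}$ and verify axioms (I)--(III): (I) follows from $\lambda_i\leq\lambda_{i+1}$, (II) from $-1/n\leq\log\|g_t v\|'\leq 1/m$ for any fixed $v$, and (III) from the analysis above applied on each interval where $f_j<f_{j+1}$.

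\medskip

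\noindent\textbf{Plan for part (ii).} This is the substantially harder direction and is the central technical theorem of Part \ref{partproofvariational}. The strategy is to construct $\bfA$ as the limit of matrices $\bfA_k$ determined by progressively more Diophantine constraints, each constraint corresponding to a specific integer vector that must realize a prescribed successive minimum at a prescribed time. The first step is a \emph{reduction}: show that every template $\ff$ may be approximated (up to a bounded additive error) by a simpler template built combinatorially from the elementary operations of splits, mergers, and transfers (Definition \ref{definitionsimple}), so that its joint graph is assembled from a countable sequence of pieces meeting at isolated transition times $t_1<t_2<\cdots$. For such a combinatorially simple template, the quantized slope condition guarantees that each linear segment has rational slope in $Z(j)$ and hence predicts an integer vector (or integer flag) that must appear among the first $j$ minima of $g_t u_\bfA\Z^d$ on that segment.

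\medskip

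\noindent\textbf{Inductive construction and the main obstacle.} I would then construct $\bfA$ inductively across the transition times. Assume $\bfA_k$ has been chosen so that on $[0,t_k]$ the successive minima function of $g_t u_{\bfA_k}\Z^d$ matches $\ff$ up to the desired error; suppose that at $t_k$ the template predicts the appearance (or disappearance) of a new short integer vector $\rr_{k+1}$. The requirement that $\|g_{t_{k+1}}u_{\bfA_{k+1}}\rr_{k+1}\|$ take the prescribed value translates into a linear equation in the entries of $\bfA_{k+1}$, i.e.\ an affine constraint saying $\bfA_{k+1}\qq_{k+1}+\pp_{k+1}$ lies in a small ball of a prescribed radius. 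The $\Pert$-perturbation machinery (Lemma \ref{lemmahperturbation}) is what permits us to modify $\bfA_k$ on a smaller and smaller scale so as to satisfy the $(k+1)$st constraint without destroying the previous $k$ constraints. The resulting Cauchy sequence $(\bfA_k)$ converges to the desired matrix $\bfA$, and one checks $\Mink_\bfA\asymp_\plus\ff$ by verifying both (a) upper bounds on the $\lambda_i$'s from the explicitly constructed short vectors, and (b) matching lower bounds from the fact that no \emph{other} integer vector can be shorter, since to do so would violate the covolume asymptotics forced by Minkowski's second theorem applied to the template $\ff$.

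\medskip

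\noindent\textbf{Where the difficulty sits.} The main obstacle is the step immediately above: guaranteeing that the perturbation constructing $\bfA_{k+1}$ from $\bfA_k$ is genuinely small enough not to spoil any earlier constraint, while still being large enough to satisfy the new one. This is where the quantized slope condition is indispensable, because the discreteness of $Z(j)$ ensures a uniform separation between competing slope values and hence a uniform lower bound on how much "room" there is at each transition. Relatedly, one must handle mergers — where two formerly-distinct minima collide — with particular care, since here it is essential that the integer vector being added is truly independent of those previously selected; this is controlled by the requirement in (III) that each $F_j$ be convex on its active interval, preventing degenerate configurations.
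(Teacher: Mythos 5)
Your plan for part (i) follows the paper's route: Minkowski's second theorem together with the exterior-product computation shows that on each interval where $h_j<h_{j+1}$ the partial sum $\sum_{i\leq j}h_i$ is within $O(1)$ of a convex piecewise linear function with slopes in $Z(j)$; this is exactly Lemma \ref{lemmaapproximatetemplate} applied to $\Lambda=u_\bfA\Z^d$. However, your closing step ``define $f_j\df F_j-F_{j-1}$ and verify axioms (I)--(III)'' hides a genuine lemma. The function $F_j(t)=\min_{V}\log\|g_t u_\bfA(V\cap\Z^d)\|$ is a minimum of log-sums-of-exponentials: it is not piecewise linear, its slopes are not in $Z(j)$, and a minimum of convex functions need not be convex, so the template axioms hold only up to additive constants, not exactly. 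Converting a function satisfying these approximate conditions into an honest template is precisely the content of the paper's Lemma \ref{lemmafindtemplate} (built around the convex hull functions of the points $(q,G_{q,\ast}(t))$, with corner points moved onto a lattice), and it is a nontrivial patching argument across the times where the minimizing subspace changes, not a one-line verification.

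For part (ii) you genuinely diverge from the paper, and this is where the substantive gap sits. The paper does not construct $\bfA$ directly: part (ii) is deduced in a few lines from the variational principle (Theorem \ref{theoremvariational2}) --- if $\overline\delta(\ff)>0$ then $\DD(\ff)$ has positive packing dimension and hence is nonempty, while if $\overline\delta(\ff)=0$ then $\ff$ is a bounded perturbation of the trivial template $t\mapsto(-\tfrac1n,\ldots,-\tfrac1n,\tfrac1m,\ldots,\tfrac1m)t$, which is realized by rational matrices --- and all of the real work is in the game-theoretic proof of that principle. Your proposal is instead a direct Roy-style inductive construction (the route taken in \cite{Roy3} when $\min(m,n)=1$), but as written it omits the essential mechanism: you explain how to force designated integer vectors to be short at designated times (an affine condition on $\bfA$), but not how to obtain the matching lower bounds, i.e.\ that no other integer vectors or rational subspaces become unexpectedly short, simultaneously for all indices $j$ and on unbounded time intervals. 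In the paper this is exactly what the notions of $C$-match, ``good'' matrices, and Lemma \ref{lemmapushoffvert} (pushing the flag of subspaces away from the contracted subspace $\{\0\}\times\R^n$) accomplish inside the proof of Lemma \ref{lemmaministrategy}, together with the bookkeeping of Claim \ref{claimlocalmatch}. Moreover, Lemma \ref{lemmahperturbation} does not do what you assign to it: it perturbs templates (to re-synchronize the target with the actual successive minima), not matrices; in the paper the small-scale modifications of the matrix come from the nested-ball structure of the game, not from that lemma. So while a direct construction could in principle be carried out, your sketch does not yet contain the ideas needed to complete it in the general matrix setting.
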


In the case $\pdim = 1$, Theorem \ref{theoremSSR} follows from \cite[Theorem 1.3]{Roy3} (cf. \cite[Corollary 4.7]{Roy2} for part (ii)). 

Theorem \ref{theoremSSR}(ii) asserts that for every template $\ff$, the set
\begin{equation*}\label{Df}
\DD(\ff) \df \{\bfA : \Mink_\bfA \asymp_\plus \ff\}
\end{equation*}
is nonempty.\Footnote{To clarify the notation, $\DD(\ff)$ is the set of all $\bfA$ such that there exists a constant $C > 0$ such that $\|\Mink_\bfA(t) - \ff(t)\| \leq C$ for all $t\geq 0$.} It is natural to ask how big this set is in terms of Hausdorff and packing dimension. Moreover, given a collection of templates $\FF$, we can ask the same question about the set
\begin{equation*}\label{DF}
\DD(\FF) \df \bigcup_{\ff\in\FF} \DD(\ff).
\end{equation*}
It turns out to be easier to answer the second question than the first, assuming that the collection of templates $\FF$ is closed under finite perturbations. Here, $\FF$ is said to be \emph{closed under finite perturbations} if whenever $\gg \asymp_\plus \ff\in\FF$, we have $\gg\in \FF$.

\begin{theorem}[Variational principle, version 1]
\label{theoremvariational1}
Let $\FF$ be a (Borel) collection of templates closed under finite perturbations. Then
\begin{align}
\label{variational1}
\HD(\DD(\FF)) &= \sup_{\ff\in\FF} \underline\delta(\ff),&
\PD(\DD(\FF)) &= \sup_{\ff\in\FF} \overline\delta(\ff),
\end{align}
where the functions $\underline\delta,\overline\delta:\TT_\dims\to [0,\dimprod]$ are as in Definition \ref{definitiondimtemplate} below.
\end{theorem}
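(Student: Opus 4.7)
The plan is to prove the two equalities in \eqref{variational1} by splitting each into a matching upper and lower bound. Since $\FF$ is closed under finite perturbations, the values $\underline\delta(\ff)$ and $\overline\delta(\ff)$ depend only on the $\asymp_\plus$-equivalence class of $\ff$, which gives the flexibility to work with conveniently normalized templates throughout.

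For the upper bound $\HD(\DD(\FF))\leq \sup_{\ff\in\FF}\underline\delta(\ff)$ (and its packing analogue), I would work template-by-template: fix $\ff\in\FF$ and produce efficient covers of $\DD(\ff)$. If $\bfA\in\DD(\ff)$ then for every $t$ the successive minima of $g_t u_\bfA\Z^d$ are prescribed up to a bounded additive error by $\ff(t)$, and Minkowski's second theorem translates this into saying that $\bfA$ lies in one of finitely many translates of a box in $\MM$ whose sidelengths are read off from $\ff$. Sampling at a geometric time grid $t_k = k\tbeta$ therefore yields a nested sequence $\AA = (A_k)_{k\geq 0}$ of covers of $\DD(\ff)$ whose cardinalities $\#(A_k)$ are controlled by the number of lattice configurations compatible with $\ff$ on $[t_k,t_{k+1}]$. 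The functionals $\underline\delta(\ff)$ and $\overline\delta(\ff)$ of Definition \ref{definitiondimtemplate} are designed precisely so that such covers yield $\HD(\DD(\ff))\leq \underline\delta(\ff)$ and $\PD(\DD(\ff))\leq \overline\delta(\ff)$, after which a Borel selection argument transfers the bound from individual templates to $\DD(\FF)$.

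For the lower bound, the idea is to construct, for each $\ff\in\FF$, a Cantor-like subset $K\subset \DD(\ff)$ realizing the prescribed dimension by invoking the variant of Schmidt's game developed in Section \ref{sectiongame}. At each round, Alice will have committed to a matrix $\bfA$ whose successive minima function matches $\ff$ on $[0,t_k]$ up to controlled error; she then selects among a large family of refined candidates, each corresponding to a way of continuing $\ff$ to $[t_k,t_{k+1}]$ via a simple perturbation (splits, mergers, transfers, in the sense of Definition \ref{definitionsimple}). The realization of each such perturbation on the Diophantine side is guaranteed by the perturbation lemma (Lemma \ref{lemmahperturbation}), together with control over the contracting subspace $\vert$ and its conical neighborhoods $\CC(V,\epsilon)$. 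Since the branching counts at stage $k$ match the covering counts from the upper bound by construction, a winning strategy in the game produces a set whose Hausdorff dimension is at least $\underline\delta(\ff)$ and whose packing dimension is at least $\overline\delta(\ff)$ (the two are handled in parallel because the game is engineered to compute both simultaneously on any Borel set).

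The hard part will be the lower bound, and specifically two entangled tasks: justifying that arbitrary templates in $\FF$ can be approximated by \emph{simple} templates with explicit perturbation structure, and executing the corresponding Schmidt game moves at the Diophantine level without losing track of the accumulated additive errors across infinitely many rounds. This will require the full machinery of Part \ref{partproofvariational}, including the convex-hull construction of Definition \ref{definitionconvexhull} and the integral-template formalism of Definition \ref{definitionintegral}. By contrast, the upper bound reduces to a Minkowski-style volume computation driven directly from the template data, which is why the authors flag the lower bound as the substantially longer half of the argument.
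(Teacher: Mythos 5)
Your lower-bound sketch is essentially the paper's route (the paper proves Theorem \ref{theoremvariational1} by reducing it to Theorem \ref{theoremvariational2} and then, for the lower bound, runs Alice through the Hausdorff/packing game of Theorem \ref{theoremHPgame}, following a simple integral approximation of $\ff$ and correcting errors via the perturbation machinery of Lemma \ref{lemmahperturbation}). The genuine gap is in your upper bound. You propose to prove $\HD(\DD(\ff))\leq\underline\delta(\ff)$ template-by-template by a covering argument and then transfer the bound to $\DD(\FF)$ by ``a Borel selection argument''. Both steps fail. First, the per-template inequality is false in general: $\DD(\ff)$ depends only on the $\asymp_\plus$-class of $\ff$, but $\underline\delta$ and $\overline\delta$ are \emph{not} invariant under bounded perturbations (a bounded change can alter which coordinates are equal, hence $\delta(\ff,t)$, on a set of positive density); this is exactly why the paper remarks, right after the theorem, that $\HD(\DD(\ff))=\underline\delta(\ff)$ cannot hold for all $\ff$. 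If $\gg\asymp_\plus\ff$ has $\underline\delta(\gg)>\underline\delta(\ff)$, then $\HD(\DD(\ff))=\HD(\DD(\gg))\geq\underline\delta(\gg)>\underline\delta(\ff)$, so your covering bound cannot be correct as stated; at best one can hope for $\HD(\DD(\ff))\leq\sup_{\gg\asymp_\plus\ff}\underline\delta(\gg)$, which is where closure under finite perturbations must enter. Second, even granting per-template bounds, $\DD(\FF)=\bigcup_{\ff\in\FF}\DD(\ff)$ is typically an uncountable union, and Hausdorff and packing dimensions are only countably stable; there is no selection argument that bounds the dimension of the union by the supremum of the individual bounds. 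The identity $\HD(\DD(\FF))=\sup_{\ff}\HD(\DD(\ff))$ is Corollary \ref{corollaryvariational}, a \emph{consequence} of the theorem, not an available input.

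What the paper does instead is prove the upper bound inside the same game: Bob is given an explicit strategy (choosing his ball so as to push the Minkowski-reduced flag of $\Lambda_k$ away from the contracting subspace $\LL$ as far as Alice's move allows), and the key counting step is Claim \ref{claimAkupper}, which bounds $\#(A_k)$ by $\beta^{-\delta(k,\bfB_k)}$ via covering numbers of neighborhoods of algebraic subsets of the Grassmannian — not by a Minkowski box count in $\MM$, which would not by itself control how many $3\rho_k$-separated choices are compatible with the template data. Then, for \emph{whatever} outcome $\bfA$ arises with $\Mink_\bfA\in\SS$, one approximates $\Mink_\bfA$ by a genuine template (Lemmas \ref{lemmaapproximatetemplate} and \ref{lemmafindtemplate}); closure under finite perturbations puts this template back in the class, and its contraction rate dominates Alice's score up to $O(1/\log\beta)$. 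Combined with the determinacy underlying Theorem \ref{theoremHPgame}, this bounds $\HD(\DD(\SS))$ and $\PD(\DD(\SS))$ by the supremum over all templates in the class simultaneously, sidestepping both the false per-template bound and the uncountable-union problem. If you want to salvage a covering-style proof you would have to reproduce this ``uniform over the class'' structure; as written, the upper-bound half of your proposal does not go through.
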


\begin{corollary}
\label{corollaryvariational}
With $\FF$ as above, we have
\begin{align}
\label{variational}
\HD(\DD(\FF)) &= \sup_{\ff\in \FF} \HD(\DD(\ff)),&
\PD(\DD(\FF)) &= \sup_{\ff\in \FF} \PD(\DD(\ff)).
\end{align}
\end{corollary}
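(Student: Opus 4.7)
The plan is to derive the corollary directly from Theorem \ref{theoremvariational1} by cleverly applying it twice: once to $\FF$ itself, and once to the equivalence class of each template $\ff \in \FF$ under the relation $\asymp_\plus$.

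First I would handle the easy inequality. For each $\ff \in \FF$, we have $\DD(\ff) \subseteq \DD(\FF)$ by definition of $\DD(\FF)$, and both Hausdorff and packing dimension are monotone under inclusion. Taking the supremum yields
\[
\sup_{\ff \in \FF} \HD(\DD(\ff)) \leq \HD(\DD(\FF)) \quad \text{and} \quad \sup_{\ff \in \FF} \PD(\DD(\ff)) \leq \PD(\DD(\FF)).
\]

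For the reverse inequalities, the key observation is that for any individual $\ff \in \FF$, the equivalence class $[\ff] \df \{\gg \in \TT_\dims : \gg \asymp_\plus \ff\}$ is itself a legitimate input for Theorem \ref{theoremvariational1}. Specifically, I would verify three things: (a) $[\ff]$ is closed under finite perturbations, which is immediate from the transitivity of $\asymp_\plus$; (b) $[\ff]$ is Borel, since $[\ff] = \bigcup_{n \in \N} \{\gg : \sup_t \|\gg(t) - \ff(t)\|_\infty \leq n\}$ is an $F_\sigma$ set in a suitable topology on $\TT_\dims$; and (c) $\DD([\ff]) = \DD(\ff)$, directly from the definition of $\DD(\ff)$ as a union over matrices $\bfA$ with $\Mink_\bfA \asymp_\plus \ff$. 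Since $\FF$ is closed under finite perturbations, we also have $[\ff] \subseteq \FF$.

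Applying Theorem \ref{theoremvariational1} to $[\ff]$ gives
\[
\HD(\DD(\ff)) = \HD(\DD([\ff])) = \sup_{\gg \in [\ff]} \underline\delta(\gg) \geq \underline\delta(\ff),
\]
where the last inequality holds simply because $\ff \in [\ff]$. Taking the supremum over $\ff \in \FF$ and combining with Theorem \ref{theoremvariational1} applied to $\FF$ itself gives
\[
\HD(\DD(\FF)) = \sup_{\ff \in \FF} \underline\delta(\ff) \leq \sup_{\ff \in \FF} \HD(\DD(\ff)),
\]
completing the first equality of \eqref{variational}. The argument for packing dimension is verbatim the same, replacing $\underline\delta$ by $\overline\delta$ throughout.

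There is no serious obstacle here: the entire content of the corollary is in Theorem \ref{theoremvariational1}, and the only subtle points are the Borel measurability of $[\ff]$ and the trivial remark that $\ff$ itself lies in $[\ff]$ so the supremum dominates $\underline\delta(\ff)$. One small subtlety worth flagging is that the inequality $\sup_{\gg \in [\ff]} \underline\delta(\gg) \geq \underline\delta(\ff)$ is used rather than equality; it is in fact likely (and provable from the definition of $\underline\delta$ via lower average contraction rates) that $\underline\delta$ is constant on equivalence classes of $\asymp_\plus$, but this stronger fact is not needed for the corollary.
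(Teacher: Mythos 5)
Your proof is correct and is essentially the paper's intended derivation: the corollary is stated without a separate argument precisely because one applies Theorem \ref{theoremvariational1} both to $\FF$ and to each equivalence class $\{\gg\in\TT_\dims : \gg\asymp_\plus\ff\}$ (which is closed under finite perturbations and satisfies $\DD([\ff])=\DD(\ff)$), then combines with monotonicity of $\HD$ and $\PD$, exactly as you do. One small caveat: your closing speculation that $\underline\delta$ is likely constant on $\asymp_\plus$-equivalence classes is actually false --- the paper remarks immediately after the corollary that $\underline\delta$ is sensitive to finite perturbations, which is why $\HD(\DD(\ff))=\underline\delta(\ff)$ can fail for an individual template --- but since you only use the inequality $\sup_{\gg\in[\ff]}\underline\delta(\gg)\geq\underline\delta(\ff)$, this does not affect your argument.
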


However, note that Theorem \ref{theoremvariational1} does not imply that $\HD(\DD(\ff)) = \underline\delta(\ff)$ for an individual template $\ff$, since the family $\{\ff\}$ is not closed under finite perturbations. And indeed, since the function $\underline\delta$ is sensitive to finite perturbations, the formula $\HD(\DD(\ff)) = \underline\delta(\ff)$ cannot hold for all $\ff\in\TT_\dims$.

\begin{definition}
\label{definitiondimtemplate}
We define the lower and upper average contraction rate of a template $\ff$ as follows. Let $I$ be an open interval on which $\ff$ is linear. For each $q = 1,\ldots,d$ such that $f_q < f_{q + 1}$ on $I$, let $\dir_\pm = \dir_\pm(\ff,I,q) \in [0,d_\pm]_\Z$ be chosen to satisfy $\dir_+ + \dir_- = q$ and
\begin{equation}
\label{Lqdef}
F_q' = \sum_{i=1}^q f_i' = \frac{\dir_+}{\pdim} - \frac{\dir_-}{\qdim} \text{ on } I,
\end{equation}
as guaranteed by (III) of Definition \ref{definitiontemplate}. An \label{definitionintervalequality}\emph{interval of equality} for $\ff$ on $I$ is an interval $\OC pq_\Z$, where $0 \leq p < q \leq d$ satisfy
\begin{equation}
\label{pqdef}
f_p < f_{p+1} = \cdots = f_q < f_{q+1} \text{ on } I.
\end{equation}
As before, we use the convention that $f_0 = -\infty$ and $f_{d+1} = +\infty$. Note that the collection of intervals of equality forms a partition of $[1,d]_\Z$. If $\OC pq_\Z$ is an interval of equality for $\ff$ on $I$, then we let $\diff_\pm(p,q) = \diff_\pm(\ff,I,p,q)$, where
\begin{equation}
\label{Mpqdef}
\diff_\pm(\ff,I,p,q) = \dir_\pm(\ff,I,q) - \dir_\pm(\ff,I,p),
\end{equation}
or equivalently, $\diff_\pm(p,q)$ are the unique integers such that
\[
\diff_+ + \diff_- = q - p \text{ and } \sum_{i = p + 1}^q f_i' = \frac{\diff_+}{m} - \frac{\diff_-}{n} \text{ on } I.
\]
Note that we have $\diff_\pm \geq 0$ by (II) of Definition \ref{definitiontemplate}.\Footnote{Indeed, we have \[\frac{m + n}{mn} \diff_+ - \frac{q - p}{n} = \sum_{i = p + 1}^q f_i' \geq - \frac{q - p}{n}\] on $I$, and thus $\diff_+ \geq 0$, and similarly $\diff_- \geq 0$.} Next, let
\begin{align} 
\label{Splusdef1}
S_+ = S_+(\ff,I) &= \bigcup_{\OC pq_\Z} \bigOC p{p+\diff_+(p,q)}_\Z\\ 
\label{Sminusdef1}
S_- = S_-(\ff,I) &= \bigcup_{\OC pq_\Z} \bigOC{p+\diff_+(p,q)}q_\Z
\end{align}
where the unions are taken over all intervals of equality for $\ff$ on $I$. Note that $S_+$ and $S_-$ are disjoint and satisfy $S_+\cup S_- = [1,d]_\Z$, and that $\#(S_+) = \pdim$ and $\#(S_-) = \qdim$. Next, let
\begin{equation}
\label{dimfI}
\delta(\ff,I) = \#\{(i_+,i_-)\in S_+\times S_- : i_+ < i_-\} \in [0,\dimprod]_\Z,
\end{equation}
and note that
\begin{equation}
\label{codimfI}
\dimprod - \delta(\ff,I) = \#\{(i_+,i_-)\in S_+\times S_- : i_+ > i_-\}.
\end{equation}

The \emph{lower and upper average contraction rates} of $\ff$ are the numbers
\begin{align}
\label{deltaFH}
\underline\delta(\ff) &\df \liminf_{T\to\infty} \Delta(\ff,T),&
\overline\delta(\ff) &\df \limsup_{T\to\infty} \Delta(\ff,T),
\end{align}
where
\[
\label{DeltafT}
\Delta(\ff,T) \df \frac1T \int_0^T \delta(\ff,t) \;\dee t.
\]
Here we abuse notation by writing $\delta(\ff,t) = \delta(\ff,I)$ for all $t\in I$ (this will cause $\delta(\ff,t)$ to be well-defined for all $t$ outside of a discrete set of corner points for $\ff$). We will also have occasion later to use the notations
\[
\label{DeltaT1T2}
\Delta(\ff,[T_1,T_2]) = \frac{1}{T_2 - T_1} \int_{T_1}^{T_2} \delta(\ff,t) \;\dee t
\]
and
\begin{equation}
\label{deltaT+T-}
\delta(T_+,T_-) = \#\{(i_+,i_-)\in T_+\times T_- : i_+ < i_-\} \in [0,\dimprod]_\Z.
\end{equation}
Note that according to \eqref{deltaT+T-}, $\delta(\ff,I) = \delta(S_+,S_-)$.
\end{definition}

\begin{figure}
\scalebox{0.75}{
\begin{tikzpicture}
\clip(-1,-5) rectangle (11,5);
\draw[dashed] (-1,0) -- (11,0);
\draw (0,-4.5) -- (2,-4.5);
\draw (5.5,-4.5) -- (6,-4.5);
\draw[line width=1.5] (6,-4.5) -- (10,-4.5);
\node at (0.5,-4) {1};
\node at (1.5,-4) {1};
\node at (3.75,-4) {0};
\node at (5.75,-4) {1};
\node at (7,-4) {2};
\node at (9,-4) {2};
\draw (0,1)--(1,0.5);
\draw (0,-0.5)--(1,0.5);
\draw (0,-0.5)--(6,-3.5);
\draw[line width=1.5] (1,0.5) -- (2,0.75);
\draw (2,0.75) -- (5.5,4.25);
\draw (5.5,4.25) -- (10,2);
\draw (2,0.75) -- (5.5,-1);
\draw (5.5,-1) -- (6,-0.5);
\draw (6,-0.5) -- (8,-1.5);
\draw (6,-3.5) -- (8,-1.5);
\draw[line width=1.5] (8,-1.5) -- (10,-1);
\node at (0.6,1.2) {\scalebox{1.5}{$\downarrow$}};
\node at (0.6,-0.35) {\scalebox{1.5}{$\updownarrow$}};
\node at (1.5,1) {\scalebox{1.5}{$\downarrow$}};
\node at (1.5,0.2) {\scalebox{1.5}{$\uparrow$}};
\node at (1.5,-0.85) {\scalebox{1.5}{$\downarrow$}};
\node at (4,2.2) {\scalebox{1.5}{$\uparrow$}};
\node at (4,-0.6) {\scalebox{1.5}{$\downarrow$}};
\node at (4,-2.1) {\scalebox{1.5}{$\downarrow$}};
\draw[dashed] (1,-4.5) -- (1,5);
\draw[dashed] (2,-4.5) -- (2,5);
\draw[dashed] (5.5,-4.5) -- (5.5,5);
\draw[dashed] (6,-4.5) -- (6,5);
\draw[dashed] (8,-4.5) -- (8,5);
\end{tikzpicture}
}
\caption{The joint graph in Figure \ref{figuretemplate1}, with an illustration of the sets $S_\pm(\ff,I)$ and the contraction rates $\delta(\ff,I)$ for each interval of linearity $I$. The ``one-dimensional physics'' interpretation of templates can be seen in this picture as follows: first one particle is going up while two are going down; then the top two collide into each other and their new velocity is determined by conservation of momentum; then they split apart again. 
Given this interpretation of the motion occurring in $I$ as being the result of ``collisions'' between $m$ particles going up and $n$ particles going down, $\delta(\ff,I)$ counts the number of particle pairs that are ``moving towards'' each other (including particles ``colliding'' with each other). 
}
\label{figuredimtemplate}
\end{figure}
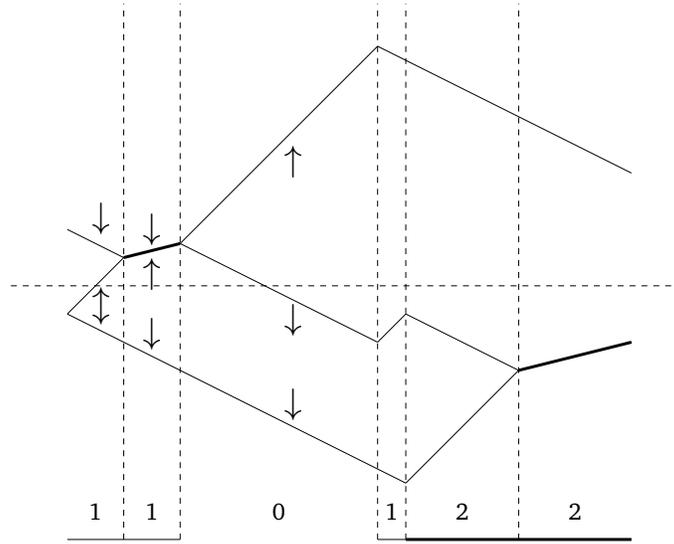

Definition \ref{definitiondimtemplate} can be understood intuitively in terms of a simple version of one-dimensional physics with sticky collisions and conservation of momentum; cf. Figure \ref{figuredimtemplate}. Suppose that we observe particles $P_1,\ldots,P_d$ travelling along trajectories $f_1,\ldots,f_d$ during a time interval $I$ along which $\ff$ is linear, and we want to infer the velocities of these particles before they collided, based on the following background information: before the collision $\pdim$ of the particles were travelling upwards at a speed of $\frac1\pdim$, and $\qdim$ of the particles were travelling downwards at a speed of $\frac1\qdim$. When particles collide (that is, when the velocities of the particles of lower index are more upwards than the velocities of the particles of higher index at the same location), they join forces to move as a unit, and their new velocity is determined by conservation of momentum. However, we can still think of the group as being composed of a certain number of ``upwards'' particles and a certain number of ``downwards'' particles.

The equations \eqref{Splusdef1} and \eqref{Sminusdef1} can be understood as suggesting a particular solution to this problem of inference: assume that within each group, all of the upwards-travelling particles started out below all of the downwards-travelling particles. This is not the only possible solution but it is the nicest one for certain purposes. Specifically, we can imagine a force of ``gravity'' attempting to bring all of the particles together, which acts between any two particles by imposing a fixed energy cost if the two particles are travelling away from each other.\Footnote{This is of course unlike real gravity, which imposes an energy cost that varies with respect to distance.} The total energy cost is then the codimension $\dimprod - \delta(\ff,I)$ defined by \eqref{codimfI}. The equations \eqref{Splusdef1} and \eqref{Sminusdef1} can then be thought of as giving the solution that minimizes this cost.

The idea of codimension as an energy cost is also useful for computing the suprema \eqref{variational1} in certain circumstances, since it suggests principles like the conservation of energy. However, one needs to be careful since the stickiness of collisions means that some naive formulations of conservation of energy are violated.\\

In most cases of interest, the collection $\FF$ in Theorem \ref{theoremvariational1} is defined by some Diophantine condition. In this case, generally rather than $\DD(\FF)$ the set we are really interested in is the set of all matrices whose corresponding successive minima functions satisfy the same Diophantine condition. Now Theorem \ref{theoremSSR}(i) implies that these two sets are the same and thus Theorem \ref{theoremvariational1} is equivalent modulo Theorem \ref{theoremSSR}(i) to the following:

\begin{theorem}[Variational principle, version 2]
\label{theoremvariational2}
Let $\SS$ be a (Borel) collection of functions from $\Rplus$ to $\R^d$ which is closed under finite perturbations, and let
\begin{equation}
\label{MSdef}
\DD(\SS) \df \{\bfA \in \MM : \Mink_\bfA \in \SS\}.
\end{equation}
Then
\begin{align}
\label{variational2}
\HD(\DD(\SS)) &= \sup_{\ff\in\SS\cap \TT_\dims} \underline\delta(\ff),&
\PD(\DD(\SS)) &= \sup_{\ff\in\SS\cap \TT_\dims} \overline\delta(\ff)
\end{align}
with the understanding that $\HD(\emptyset) = \PD(\emptyset) = \sup(\emptyset) = -\infty$ (or $0$ if desired).
\end{theorem}
\begin{proof}[Proof of equivalence]
Theorem \ref{theoremvariational2} implies Theorem \ref{theoremvariational1} since we can take $\SS = \{\gg : \gg \asymp_\plus \ff \in \FF\}$. Conversely, Theorem \ref{theoremvariational1} implies Theorem \ref{theoremvariational2} modulo Theorem \ref{theoremSSR}(i) since we can take $\FF = \SS\cap\TT_\dims$.
\end{proof}

In fact, we will prove a uniform version of Theorem \ref{theoremvariational2}. For each $C > 0$ and collection of functions $\SS$ let
\begin{align} 
\label{NSC}
\NN(\SS,C) &= \{\gg:\Rplus\to\R^d : \|\gg - \ff\| \leq C \text{ for some } \ff\in \SS\},\\ 
\label{DS}
\DD(\SS) &= \{\bfA \in \MM : \Mink_\bfA \in \SS\}
\end{align}
(i.e. $\DD(\SS)$ is as in \eqref{MSdef}).

\begin{theorem}
\label{theoremvariationaluniform}
For all $\epsilon > 0$, there exists $C > 0$ such that for every template $\ff$,
\begin{align*}
\HD(\DD(\NN(\ff,C))) &\geq \underline\delta(\ff) - \epsilon\\
\PD(\DD(\NN(\ff,C))) &\geq \overline\delta(\ff) - \epsilon,
\end{align*}
and for every Borel collection $\SS$ of functions from $\Rplus$ to $\R^d$,
\begin{align*}
\HD(\DD(\SS)) &\leq \sup_{\ff\in \NN(\SS,C)\cap \TT_{\pdim,\qdim}} \underline\delta(\ff) + \epsilon\\
\PD(\DD(\SS)) &\leq \sup_{\ff\in \NN(\SS,C)\cap \TT_{\pdim,\qdim}} \overline\delta(\ff) + \epsilon.
\end{align*}
\end{theorem}

Theorem \ref{theoremvariational2} can be thought of as a quantitative strengthening of Theorem \ref{theoremSSR}, as shown by the following equivalent formulation:

\begin{theorem}[Variational principle, version 3]
\label{theoremvariational3}
~
\begin{itemize}
\item[(i)] Let $S$ be a (Borel) set of $\pdim\times\qdim$ matrices of Hausdorff (resp. packing) dimension $>\delta$. Then there exist a matrix $\bfA\in S$ and a template $\ff \asymp_\plus \Mink_\bfA$ whose lower (resp. upper) average contraction rate is $> \delta$.
\item[(ii)] Let $\ff$ be a template whose lower (resp. upper) average contraction rate is $> \delta$. Then there exists a (Borel) set $S$ of $\pdim\times\qdim$ matrices of Hausdorff (resp. packing) dimension $>\delta$, such that $\Mink_\bfA \asymp_\plus \ff$ for all $\bfA\in S$.
\end{itemize}
\end{theorem}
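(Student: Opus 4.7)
The plan is to derive Theorem \ref{theoremvariational3} as an essentially formal consequence of Theorem \ref{theoremvariational2} by a judicious choice of the family $\SS$ in each direction. Notably, Theorem \ref{theoremSSR}(i) is not needed here, because version 2 already operates directly at the level of successive-minima functions.

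For part (ii), I would begin from the given template $\ff$ with $\underline\delta(\ff) > \delta$ and set
\[
\SS \df \{\gg : \Rplus \to \R^d \;:\; \gg \asymp_\plus \ff\}.
\]
This family is Borel (being a countable union $\bigcup_{c\in\N}\{\gg : \sup_t \|\gg(t)-\ff(t)\| \leq c\}$) and closed under finite perturbations by transitivity of $\asymp_\plus$. Since $\ff \in \SS \cap \TT_\dims$, the supremum in \eqref{variational2} is at least $\underline\delta(\ff) > \delta$, so Theorem \ref{theoremvariational2} gives $\HD(\DD(\SS)) > \delta$. Taking $S \df \DD(\SS)$ then finishes the job: $S$ is Borel (preimage of a Borel set under the Borel map $\bfA\mapsto\Mink_\bfA$), has Hausdorff dimension $>\delta$, and every $\bfA\in S$ satisfies $\Mink_\bfA \asymp_\plus \ff$ by construction.

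For part (i), given a Borel set $S$ with $\HD(S)>\delta$, I would let $\SS$ be the $\asymp_\plus$-saturation
\[
\SS \df \{\gg : \exists \bfA \in S,\; \gg\asymp_\plus\Mink_\bfA\},
\]
which is tautologically closed under finite perturbations. Since $S \subseteq \DD(\SS)$, we have $\HD(\DD(\SS)) \geq \HD(S) > \delta$, so Theorem \ref{theoremvariational2} yields some $\ff\in\SS\cap\TT_\dims$ with $\underline\delta(\ff)>\delta$; by definition of $\SS$, this template satisfies $\ff \asymp_\plus \Mink_\bfA$ for some $\bfA\in S$. The packing-dimension halves of (i) and (ii) then follow from identical arguments with $\underline\delta$ and $\HD$ replaced by $\overline\delta$ and $\PD$ throughout.

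The main obstacle will be a measurability issue in part (i): the $\asymp_\plus$-saturation of a Borel set is in general only analytic, so the $\SS$ just constructed may not literally satisfy the Borel hypothesis of Theorem \ref{theoremvariational2}. I would address this either by extending Theorem \ref{theoremvariational2} to analytic families $\SS$ (which should follow from inner regularity of Hausdorff and packing measures together with the standard exhaustion of analytic sets by compact subsets of close dimension), or by exhausting $\DD(\SS)$ from inside by an increasing sequence of Borel sub-families $\SS_n \subseteq \SS$ with $\sup_n \HD(\DD(\SS_n)) > \delta$, thereby reducing to the genuinely Borel case of Theorem \ref{theoremvariational2}.
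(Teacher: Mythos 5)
Your proposal is correct and is essentially the paper's own argument: the paper proves Theorem \ref{theoremvariational3} as a formal equivalence with Theorem \ref{theoremvariational2}, using exactly your choices $\SS = \{\gg : \gg \asymp_\plus \ff\}$ (with $S = \DD(\ff)$) for part (ii) and $\SS = \{\gg : \gg \asymp_\plus \Mink_\bfA,\ \bfA\in S\}$ for part (i), and, like you, without invoking Theorem \ref{theoremSSR}(i). The measurability caveat you raise about the $\asymp_\plus$-saturation is not addressed in the paper's two-line proof of equivalence (the Borel hypotheses are stated only parenthetically), so your remark is a reasonable refinement rather than a deviation from its approach.
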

\begin{proof}[Proof of equivalence]
Part (i) is equivalent to the $\leq$ direction of \eqref{variational2}, and part (ii) to the $\geq$ direction. For the first equivalence, for the forwards direction take $S = \{\bfA : \Mink_\bfA \in \SS\}$, and for the backwards direction take $\SS = \{\gg : \gg\asymp_\plus \Mink_\bfA, \; \bfA \in S\}$. For the second equivalence, for the backwards direction take $S = \DD(\ff)$ and $\SS = \{\gg : \gg\asymp_\plus \ff\}$.
\end{proof}

It is worth stating the special case of Theorem \ref{theoremvariational2} that occurs when the collection $\SS$ is defined by the Diophantine conditions defining $\Sing_\dims(\omega)$ and $\Sing_\dims^*(\omega)$ for some $\omega\geq \dirichlet$. Thus, we define the \emph{uniform dynamical exponent} of a map $\ff:\Rplus\to\R^d$ to be the number
\begin{equation*}\label{UDE}
\what\dynexp(\ff) \df \liminf_{t\to\infty} \frac{-1}t f_1(t).
\end{equation*}
Moreover, $\ff$ is said to be \emph{trivially singular} if $f_{j+1}(t) - f_j(t) \to \infty$ as $t\to\infty$ for some $j = 1,\ldots,d-1$. Letting $\SS = \{\ff : \what\dynexp(\ff) = \dynexp\}$ or $\SS = \{\ff : \what\dynexp(\ff) = \dynexp, \; \ff \text{ not trivially singular}\}$ in Theorem \ref{theoremvariational2} yields the following result:

\begin{theorem}[Special case of variational principle]
\label{theoremvariational4}
For all $\omega \geq \dirichlet$, we have
\begin{align*}
\HD(\Sing_\dims(\omega)) &= \sup\{\underline\delta(\ff): \ff\in\TT_\dims,\;\;\what\dynexp(\ff) = \dynexp\}\\
\PD(\Sing_\dims(\omega)) &= \sup\{\overline\delta(\ff): \ff\in\TT_\dims,\;\;\what\dynexp(\ff) = \dynexp\}\\
\HD(\Sing_\dims^*(\omega)) &= \sup\{\underline\delta(\ff): \ff\in\TT_\dims,\;\;\what\dynexp(\ff) = \dynexp, \; \ff \text{ not trivially singular}\}\\
\PD(\Sing_\dims^*(\omega)) &= \sup\{\overline\delta(\ff): \ff\in\TT_\dims,\;\;\what\dynexp(\ff) = \dynexp, \; \ff \text{ not trivially singular}\}\\
\end{align*}
where $\dynexp$ is as in \eqref{dani}. 
\end{theorem}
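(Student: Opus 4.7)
The plan is to derive Theorem \ref{theoremvariational4} directly from Theorem \ref{theoremvariational2} by choosing $\SS$ appropriately and then translating the Diophantine conditions defining $\Sing_\dims(\omega)$ and $\Sing_\dims^*(\omega)$ into conditions on the successive minima function $\Mink_\bfA$.

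First I would set
\[
\SS_1 \df \{\ff:\Rplus\to\R^d : \what\dynexp(\ff) = \dynexp\}, \qquad \SS_2 \df \{\ff \in \SS_1 : \ff \text{ is not trivially singular}\}.
\]
Both $\SS_1$ and $\SS_2$ depend on $\ff$ only through the first coordinate $f_1$ (for the $\what\dynexp$-condition) and through the gaps $f_{j+1}-f_j$ (for the trivial singularity condition). Each of these is insensitive to bounded perturbations: if $\gg \asymp_\plus \ff$ then $|g_i - f_i|$ is uniformly bounded for each $i$, so $-g_1(t)/t$ agrees with $-f_1(t)/t$ up to an $O(1/t)$ error and hence has the same liminf, while $g_{j+1}-g_j$ differs from $f_{j+1}-f_j$ by $O(1)$ and hence diverges to $+\infty$ if and only if the latter does. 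Therefore $\SS_1$ and $\SS_2$ are closed under finite perturbations, as required to invoke Theorem \ref{theoremvariational2}.

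Next I would check the Borel measurability of $\SS_i$. Equipping the space of maps with pointwise convergence (or, for the continuous representatives relevant to us, the topology of uniform convergence on compacts), the functional $\ff \mapsto \what\dynexp(\ff)$ is a liminf of Borel functionals, and the trivially singular condition is a countable intersection/union of open conditions on $\{f_{j+1}-f_j\}$, so both $\SS_1$ and $\SS_2$ are Borel.

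I would then identify $\DD(\SS_i)$ with the advertised Diophantine sets. Directly from the definitions,
\[
\what\dynexp(\Mink_\bfA) = \liminf_{t\to\infty} \frac{-h_{\bfA,1}(t)}{t} = \liminf_{t\to\infty}\frac{-\log \lambda_1(g_t u_\bfA \Z^d)}{t} = \what\dynexp(\bfA),
\]
and Theorem \ref{theoremdani2}, equation \eqref{dani}, gives the equivalence $\what\dynexp(\bfA) = \dynexp \iff \what\omega(\bfA) = \omega$. The trivial singularity condition on $\ff = \Mink_\bfA$ matches the definition of trivial singularity for $\bfA$ given in \6\ref{remarktriviallysingular}. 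Hence $\DD(\SS_1) = \Sing_\dims(\omega)$ and $\DD(\SS_2) = \Sing_\dims^*(\omega)$.

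Finally I would apply Theorem \ref{theoremvariational2} to $\SS_1$ and $\SS_2$ separately; using \eqref{variational2}, the four identities of Theorem \ref{theoremvariational4} follow immediately, together with the observation that $\SS_i \cap \TT_\dims$ equals the set of templates $\ff$ with $\what\dynexp(\ff) = \dynexp$ (and not trivially singular, in the case of $\SS_2$). No step of the argument poses a genuine obstacle once Theorem \ref{theoremvariational2} is available: the only substantive point is the translation between Diophantine properties of $\bfA$ and asymptotic properties of $\Mink_\bfA$, which is supplied by Theorem \ref{theoremdani2} and the definitions.
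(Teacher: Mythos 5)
Your proposal is correct and follows the same route as the paper: the paper's proof is precisely to apply Theorem \ref{theoremvariational2} with $\SS = \{\ff : \what\dynexp(\ff) = \dynexp\}$ (resp.\ with the additional non-trivially-singular condition), using the definitions of $\what\dynexp$ and of trivial singularity for maps together with \eqref{dani}. The verifications you spell out (closure under finite perturbations, Borelness, and the identification $\DD(\SS_i) = \Sing_\dims(\omega)$, $\Sing_\dims^*(\omega)$ via Theorem \ref{theoremdani2}) are exactly the steps the paper leaves implicit.
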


Theorem \ref{theoremvariational2} can also be used to compute the dimensions of the set
\label{super}
\[
\w\Sing_\dims^*(\omega) \df \{\bfA:\what\omega(\bfA) \geq \omega,\;\bfA\text{ not trivially singular}\} = \bigcup_{\omega'\geq \omega} \Sing_\dims^*(\omega').
\]
\begin{theorem}[Special case of variational principle]
\label{theoremvariational5}
For all $\omega \geq \dirichlet$, we have
\begin{align*}
\HD(\w\Sing_\dims^*(\omega)) &= \sup_{\omega' \geq \omega} \HD(\Sing_\dims^*(\omega'))\\
\PD(\w\Sing_\dims^*(\omega)) &= \sup_{\omega' \geq \omega} \PD(\Sing_\dims^*(\omega')).
\end{align*}
\end{theorem}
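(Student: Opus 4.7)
The plan is to derive Theorem \ref{theoremvariational5} as a direct corollary of the general variational principle (Theorem \ref{theoremvariational2}), combined with its level-set specialization Theorem \ref{theoremvariational4}. The $\geq$ direction is immediate from monotonicity of Hausdorff and packing dimension under inclusion, since $\w\Sing_\dims^*(\omega) \supseteq \Sing_\dims^*(\omega')$ for every $\omega' \geq \omega$. All of the content lies in the $\leq$ direction, and it is essentially formal once the variational principle is in hand.

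For the $\leq$ direction, I would apply Theorem \ref{theoremvariational2} to the collection
$$\SS \df \{\ff:\Rplus\to\R^d : \what\dynexp(\ff) \geq \dynexp,\; \ff \text{ not trivially singular}\},$$
where $\dynexp$ corresponds to $\omega$ via \eqref{dani}. The first step is to confirm that $\SS$ is closed under finite perturbations: if $\ff \asymp_\plus \gg$, then $f_1 - g_1$ is bounded, so $\what\dynexp(\ff) = \what\dynexp(\gg)$, and for each $j$ the difference $(f_{j+1}-f_j) - (g_{j+1}-g_j)$ is bounded, so the divergence condition defining trivial singularity is preserved. The second step is to identify $\DD(\SS)$ with $\w\Sing_\dims^*(\omega)$: by Theorem \ref{theoremdani2} applied to $\ff = \Mink_\bfA$, the condition $\what\omega(\bfA) \geq \omega$ is equivalent to $\what\dynexp(\Mink_\bfA) \geq \dynexp$, and the template notion of trivial singularity agrees with the matrix notion from Section \ref{remarktriviallysingular} since both are phrased as divergence of $h_{j+1} - h_j$. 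Theorem \ref{theoremvariational2} then yields
$$\HD(\w\Sing_\dims^*(\omega)) = \sup_{\ff \in \SS \cap \TT_\dims} \underline\delta(\ff),$$
together with the analogous identity for $\PD$ and $\overline\delta$.

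To finish, I would stratify the supremum by the exact value of $\what\dynexp(\ff)$. Every template $\ff \in \SS$ satisfies $\what\dynexp(\ff) = \dynexp'$ for some $\dynexp' \geq \dynexp$, corresponding via \eqref{dani} to some $\omega' \geq \omega$, and conversely every such template lies in $\SS$. Thus
$$\sup_{\ff \in \SS \cap \TT_\dims} \underline\delta(\ff) \;=\; \sup_{\omega' \geq \omega}\; \sup\{\underline\delta(\ff) : \ff \in \TT_\dims,\; \what\dynexp(\ff) = \dynexp',\; \ff \text{ not trivially singular}\},$$
and Theorem \ref{theoremvariational4} identifies the inner supremum with $\HD(\Sing_\dims^*(\omega'))$. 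The packing case is identical with $\overline\delta$ in place of $\underline\delta$ and $\PD$ in place of $\HD$.

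There is no genuine obstacle here beyond the two invariance checks above; what makes the argument work is precisely that closure under finite perturbations allows the variational principle to apply to the \emph{uncountable} union $\bigcup_{\omega' \geq \omega} \Sing_\dims^*(\omega')$, for which the usual countable stability of Hausdorff dimension would not suffice. Said differently, the variational principle converts the dimension of the superlevel set into a supremum of template contraction rates, where supremizing over an uncountable union is harmless.
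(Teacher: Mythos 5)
Your proposal is correct and follows exactly the route the paper intends: Theorem \ref{theoremvariational5} is presented there as an immediate special case of Theorem \ref{theoremvariational2} applied to $\SS = \{\ff : \what\dynexp(\ff) \geq \dynexp,\ \ff \text{ not trivially singular}\}$, and your write-up simply makes explicit the steps the paper leaves tacit (closure of $\SS$ under finite perturbations, the identification $\DD(\SS) = \w\Sing_\dims^*(\omega)$ via Theorem \ref{theoremdani2}, and the stratification of the supremum by the exact value of $\what\dynexp$, identified with $\HD(\Sing_\dims^*(\omega'))$ and $\PD(\Sing_\dims^*(\omega'))$ by Theorem \ref{theoremvariational4}). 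No discrepancy with the paper's argument.
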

(Theorem \ref{theoremvariational5} is also true with the stars removed, but in that case it is not as interesting because $\HD(\Sing_\dims(\infty))$ is ``too large'', whereas $\HD(\Sing_\dims^*(\infty))$ is the ``correct'' size according to \6\ref{remarktriviallysingular}.)

It is natural to expect that the map $\omega\mapsto \HD(\Sing_\dims^*(\omega))$ is monotonically decreasing, in which case Theorem \ref{theoremvariational5} would imply that
\[
\HD(\w\Sing_\dims^*(\omega)) = \HD(\Sing_\dims^*(\omega)).
\]

\section{Directions to further research}
\label{sectionfuture}
We conclude our introduction with a small sample of problems and research directions, which we hope will illustrate the wide scope awaiting future exploration.

\subsection{Exact Hausdorff and packing dimensions}
Determine whether an appropriate gauge function exists with respect to which the Hausdorff measure of the singular matrices have positive and finite measure. The same question for packing measures is also open. It would be natural to expect that the $\dimsing$-dimensional Hausdorff measure of $\Sing(\pdim,\qdim)$ is zero, and that the $\dimsing$-dimensional packing measure of $\Sing(\pdim,\qdim)$ is infinite. In general, determining exact dimensions for any of the sets we have studied in this paper would be an interesting challenge.



\subsection{Quantitative Schmidt's conjecture}
We conjecture that the inequality in Theorem \ref{theoremkmessenger} is actually an equality:
\begin{conjecture}
\label{conjecturemessenger}
For $2\leq k \leq m+n-1$, the Hausdorff and packing dimensions of the set of $k$-singular $m\times n$ matrices (see Definition \ref{defksingulargeneral}) are both equal to
\[
\max(f_{m,n}(k),f_{m,n}(k-1)), \text{ where}
\]

\[
f_\dims(k) \df \dimprod - \frac{k\dimprod}{\dimsum}\left(1 - \frac k\dimsum\right) - \left\{\frac{k\pdim}{\dimsum}\right\} \left\{\frac{k\qdim}{\dimsum}\right\} \cdot
\]
Here, $\{x\}$ denotes the fractional part of a real number $x$.
\end{conjecture}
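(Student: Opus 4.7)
The plan is to apply the variational principle (Theorem \ref{theoremvariational2}) with $\SS$ taken to be the (Borel) set of maps $\ff\colon\Rplus\to\R^d$ satisfying the $k$-singular condition $f_{k-1}(t)\to-\infty$ and $f_{k+1}(t)\to+\infty$ as $t\to\infty$; this set is closed under finite perturbations since the condition is asymptotic. The task then reduces to constructing, for each $\ell\in\{k-1,k\}$ and each $\epsilon>0$, a template $\ff_{\ell,\epsilon}\in\SS\cap\TT_\dims$ with $\underline\delta(\ff_{\ell,\epsilon})\ge f_\dims(\ell)-O(\epsilon)$. The supremum in the variational principle, maximized over $\ell$, then delivers the lower bound $\max(f_\dims(k),f_\dims(k-1))$.

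I would build $\ff_{\ell,\epsilon}$ as a periodic piecewise linear template organized around partitioning the $d$ particles into a lower block of size $\ell$ and an upper block of size $\dimsum-\ell$. Set $L_*\df\lfloor\ell\pdim/\dimsum\rfloor$ and $\xi\df\{\ell\pdim/\dimsum\}$. The template alternates between two configurations: in configuration~I, the lower block carries $L_*$ up-type and $\ell-L_*$ down-type particles, giving it a non-positive drift of slope $(L_*\dimsum-\ell\pdim)/(\pdim\qdim)\le 0$, while the upper block, carrying the complementary particles, drifts upward; in configuration~II the lower block instead carries $L_*+1$ up-type particles (or $L_*-1$ in the degenerate case $\xi=0$), reversing both block drifts. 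Allocating proportion $(1-\xi)+\epsilon$ of each period to configuration~I and the remainder to configuration~II yields a small net downward drift of the lower block of order $\epsilon$, whence $f_\ell(t)\to-\infty$ (so $f_{k-1}\to-\infty$ since $k-1\le\ell$) and $f_{\ell+1}(t)\to+\infty$ (so $f_{k+1}\to+\infty$ since $k+1\ge\ell+1$). Between configurations the two blocks, having drifted toward each other, meet and re-split with the new particle assignment through a short transition segment that is engineered to satisfy the convexity and quantized-slope requirements of Definition \ref{definitiontemplate}.

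The contraction rate within each configuration is a direct count from Definition \ref{definitiondimtemplate}: the sets $S_\pm$ decompose along the two blocks, yielding $\delta_j=\ell_+^{(j)}\qdim+(\pdim-\ell_+^{(j)})(\qdim-\ell+\ell_+^{(j)})$ for $j\in\{\mathrm{I},\mathrm{II}\}$, with $\ell_+^{(\mathrm I)}=L_*$ and $\ell_+^{(\mathrm{II})}=L_*+1$ in the generic case. An algebraic simplification, using the identity $\{\ell\qdim/\dimsum\}=1-\xi$ whenever $\dimsum\nmid\ell\pdim$ (and both fractional parts vanishing otherwise), gives the time-averaged value
\[
(1-\xi)\,\delta_{\mathrm I}+\xi\,\delta_{\mathrm{II}} \;=\; \dimprod - \frac{\ell(\dimsum-\ell)\dimprod}{\dimsum^2} - \xi(1-\xi) \;=\; f_\dims(\ell).
\]
The $\epsilon$-bias perturbs this by $O(\epsilon)$, and the bounded-length transition segments contribute $o(1)$ to the time average once each configuration segment is made sufficiently long. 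Hence $\underline\delta(\ff_{\ell,\epsilon})\ge f_\dims(\ell)-O(\epsilon)$, and letting $\epsilon\to 0^+$ completes the argument.

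The principal obstacle is the explicit, template-compatible construction of the transition segments: one must exhibit a short piecewise-linear bridge along which a single up-type particle migrates between the two blocks, with each $F_j$ remaining convex and having slopes in $Z(j)$, and verify that the contribution of these bridges to the time-averaged $\delta$ vanishes. A secondary technicality is the degenerate case in which the required integer $\ell_+$ falls outside the admissible range $[\pdim(\ell-1)/\dimsum,(\ell\pdim+\qdim)/\dimsum]$ imposed by condition~(II): in that situation the plain two-block construction for that value of $\ell$ is infeasible and one must either fall back on the other value in $\{k-1,k\}$ (which always yields a valid template, explaining the $\max$ in the statement) or enrich the construction with an additional micro-oscillation. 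Finally, the refinement to matrices that are not trivially singular is immediate from the construction, since every gap $f_{j+1}-f_j$ with $j\ne\ell$ stays uniformly bounded, and the gap at $j=\ell$ grows only linearly at rate $O(\epsilon)$, so no gap diverges to infinity.
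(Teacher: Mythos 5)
The statement you are proving is Conjecture \ref{conjecturemessenger}, which asserts an \emph{equality}: the Hausdorff and packing dimensions of the set of $k$-singular matrices are both equal to $\max(f_\dims(k),f_\dims(k-1))$. Your proposal only addresses the lower bound. Via Theorem \ref{theoremvariational2} (or Theorem \ref{theoremvariational1}), constructing templates $\ff_{\ell,\epsilon}$ in $\SS\cap\TT_\dims$ with $\underline\delta(\ff_{\ell,\epsilon})\ge f_\dims(\ell)-O(\epsilon)$ gives $\HD \ge \max(f_\dims(k),f_\dims(k-1))$, and since packing dimension dominates Hausdorff dimension the same lower bound holds for $\PD$. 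But this is exactly the content of Theorem \ref{theoremkmessenger}, whose proof in \S\ref{subsectionkmessenger} already uses essentially your construction: a template spending asymptotically full time alternating between two configurations $S_j^{\pm}$ with $(L_+,L_-)=(\lceil jm/d\rceil,\lfloor jn/d\rfloor)$ and $(\lfloor jm/d\rfloor,\lceil jn/d\rceil)$, occupied with frequencies $\{jm/d\}$ and $\{jn/d\}$, together with short bridging segments on which $f_k$ crosses over; the time-average computation there is the same $\{x\}$-identity bookkeeping you describe.

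What your argument does not supply — and what makes the statement a conjecture rather than a theorem in the paper — is the matching upper bound. By the variational principle, proving the conjecture would require showing that \emph{every} template $\ff$ with $f_{k-1}(t)\to-\infty$ and $f_{k+1}(t)\to+\infty$ satisfies $\underline\delta(\ff)\le\max(f_\dims(k),f_\dims(k-1))$ (and likewise $\overline\delta(\ff)\le\max(f_\dims(k),f_\dims(k-1))$ for the packing statement). This is an extremal problem over all admissible templates, of the kind the paper handles in other cases by constructing a potential function (as in Lemmas \ref{lemmaphiprimebound} and \ref{lemmapsiprimebound}) whose derivative dominates $\dimsing-\delta(\ff,t)$, and no such argument is given here — neither by you nor by the authors, who explicitly leave it open. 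So your proposal, even with the transition segments and the degenerate range of $\ell_+$ fully worked out, reproves the known lower bound (Theorem \ref{theoremkmessenger}) but does not prove the conjecture; the essential missing idea is an upper-bound mechanism (e.g.\ a suitable potential/energy function adapted to the $k$-singular constraint) controlling $\delta(\ff,\cdot)$ for arbitrary $k$-singular templates.
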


\begin{remark}
When $k = 2$ or $\dimsum - 1$, the Hausdorff and packing dimensions of the set of $k$-singular matrices are both equal to $\dimsing$.
\end{remark}

\subsection{Regularity of dimension functionals}
\begin{problem}
Determine when/whether the functions
\begin{align*}
\omega &\mapsto \HD(\Sing_\dims(\omega)),&
\omega &\mapsto \PD(\Sing_\dims(\omega))
\end{align*}
are decreasing and continuous. 
\end{problem}

Although it is natural to suspect that these functions are in fact decreasing and continuous for all $(\dims) \neq (1,1)$, Theorem \ref{theorempacking2} seems to suggest otherwise: it suggests that the function $\tau\mapsto\PD(\Sing_{m,1}(\tau))$ may have a discontinuity at $\tau = 1/m^2$ for all $m\geq 3$. Indeed, the proof of Theorem \ref{theorempacking2} gives us no reason to suspect that the inequality is strict in Theorem \ref{theorempacking} for $\tau$ slightly greater than $1/m^2$. If in fact equality holds for such $\tau$, then there \emph{is} a discontinuity! If this were the case, it would show that the conjecture we made in the announcement of this paper \cite[Conjecture 2.10]{DFSU_singular_announcement} was too optimistic.

\subsection{Intersecting standard and uniform exponent level sets}
Let $\omega(\bfA)$ and $\what\omega(\bfA)$ denote the standard and uniform exponents of irrationality of a matrix $\bfA$, respectively:
\[
\what\omega(\bfA) \df \liminf_{Q\to\infty}\;\; \sup_{0 < \|\qq\| \leq Q} \sup_\pp \frac{-\log\|A\qq + \pp\|}{\log Q}
\]
\[
\omega(\bfA) \df \limsup_{Q\to\infty}\;\; \sup_{0 < \|\qq\| \leq Q} \sup_\pp \frac{-\log\|A\qq + \pp\|}{\log Q}
\]
The Hausdorff dimensions of the levelsets of $\omega$ are well-known, and we have provided many results on the Hausdorff dimensions of the levelsets of $\what\omega$. However, it is natural to ask about the dimension of the intersection of two such sets:
\begin{question}
What is the behavior of the function
\[
(\omega,\what\omega) \mapsto \HD(\{\bfA : \omega(\bfA) = \omega,\; \what\omega(\bfA) = \what\omega\})?
\]
\end{question}

\subsection{Precise dimension formulas for uniform exponent level sets}
As mentioned previously, it is very challenging to obtain precise formulas for the Hausdorff and packing dimensions of $\Sing_\dims(\omega) = \{\bfA : \what\omega(\bfA) = \omega\}$ in terms of $\omega$, $\pdim$, and $\qdim$. Though we have completely solved (see Theorems \ref{theorempacking} and \ref{theoremspecialcase} for details) this problem in the cases $(\pdim,\qdim) = (1,2)$ and $(\pdim,\qdim) = (2,1)$, and for packing dimension in the case where $n \geq 2$, it is plausible that finding a closed form expression in all scenarios is hopeless.
To express the limit of our current understanding, note that we do not have conjectural formulas for Hausdorff dimension even for the cases when $(\pdim,\qdim) \in \{ (1,3), (3,1), (2,2) \}$ at present.

\subsection{Metric theory for $\epsilon$-Dirichlet improvable matrices}
Given $0 < \epsilon < 1$, an $m\times n$ matrix $A$ is called \emph{$\epsilon$-Dirichlet improvable} (see \cite{DavenportSchmidt4}) if for all sufficiently large $Q$, there exists $(\pp,\qq)\in\Z^{m+n}$ such that
\[
\|A \qq - \pp\| \leq \epsilon Q^{-n/m}
\;\;\; \text{and} \;\;\; 0 < \|\qq\| < Q .
\]
An $m\times n$ matrix $A$ is \emph{Dirichlet improvable} if it is $\epsilon$-Dirichlet improvable for some $0 < \epsilon < 1$. Singular matrices are $\epsilon$-Dirichlet improvable for \emph{all} $0 < \epsilon < 1$.

\begin{question}
How do the Hausdorff and packing dimensions of the set of $\epsilon$-Dirichlet improvable $m\times n$ matrices vary as functions of $\epsilon$? It would already be interesting just to give estimates on these dimensions, if not precisely determine them.
\end{question}

\subsection{Weighted singular matrices and general diagonal flows}
In the parametric geometry of numbers and the Dani correspondence principle we are generally concerned with the $(g_t)$ flow as defined in \6\ref{subsectionDani}. What happens if the $(g_t)$ flow is replaced by some other diagonal flow $(h_t)$, for example
\[
h_t = \diag(e^{a_1 t},\ldots,e^{a_m t}, e^{-b_1 t},\ldots, e^{-b_n t}) \in \SL_{m+n}(\R)
\]
where $a_1,\ldots,a_m,b_1,\ldots,b_n$ are positive real numbers? For example, is it possible to compute the Hausdorff and packing dimensions of the set of $m\times n$ matrices $A$ such that the trajectory $(h_t u_A \Z^{m+n})_{t\geq 0}$ is divergent as a function of $a_1,\ldots,a_m,b_1,\ldots,b_n$? When $m=2$ and $n=1$, this question in case of the Hausdorff dimension has been addressed by Liao, Shi, Solan, and Tamam \cite{LSST}. Without obtaining dimension formulas, Guan and Shi proved that the Hausdorff dimension of the set of divergent-on-average trajectories for a one-parameter subgroup action on a finite-volume homogeneous space is not full, \cite{GuanShi}. 
The recent work of Solan \cite{Solan} made great progress in the attempt to extend our results to the setting of general diagonal flows, proving a variational principle for the Hausdorff dimension with respect to a certain modified metric. However, significant work remains to be done in this vein; in particular, the challenge to obtain exact formulas (instead of bounds) for the variational principle for the standard metric remains.

\subsection{Inhomogeneous Diophantine approximation}
Our results fall within the domain of homogeneous Diophantine approximation. It would be of interest to investigate analogues of our results in the frameworks of inhomogeneous approximation, see \cite{BugeaudLaurent, Cassels, Laurent}.
In this setting, given an $\pdim\times \qdim$ matrix $\bfA$ and $\xx \in \R^m$, the pair $(\bfA,\xx)$ is called \emph{singular} if for all $\epsilon > 0$, there exists $Q_\epsilon$ such that for all $Q \geq Q_\epsilon$, there exist integer vectors $\pp\in \Z^\pdim$ and $\qq\in \Z^\qdim$ such that
\begin{align*}
\|\bfA \qq + \pp + \xx\| \leq \epsilon Q^{-\qdim/\pdim}\;\;\;\; \text{ and } \;\;\;\;
0 < \|\qq\| \leq Q.
\end{align*}
It is also natural to study the inhomogeneous approximation frameworks where we fix one coordinate of the pair $(\bfA,\xx)$ and let the other vary. Extending our variational principle (Theorem \ref{theoremvariational2}) and its corollaries to such inhomogeneous frameworks would be a natural next step. When $m=n=1$, this question in case of the Hausdorff dimension has been recently investigated by Kim and Liao \cite{KimLiao}.

\subsection{Parametric geometry of numbers in arbitrary characteristic}
It would be of interest to develop the technology introduced in this work to study questions of Diophantine approximation in the function field setting, see Roy and Waldschmidt \cite{RoyWaldschmidt}.

\section{Acknowledgements}
\label{sectionacknowledgements} 
This research began on 28$^{th}$ November 2016 when the authors met at the American Institute of Mathematics in San Jose, California, via their SQuaRE program. We thank the institute and their staff for their hospitality and excellent working conditions. In particular, we thank Estelle Basor for her singular encouragement and support.
The first-named author was supported in part by a 2017-2018 Faculty Research Grant from the University of Wisconsin-La Crosse. The second-named author was supported in part by the Simons Foundation grant \#245708. The third-named author was supported in part by the EPSRC Programme Grant EP/J018260/1, and is currently supported by a Royal Society University Research Fellowship URF\tbs R1\tbs180649. The fourth-named author was supported in part by the NSF grant DMS-1361677. We thank Pieter Allaart, Val\'erie Berth\'e, Nicolas Chevallier, Seonhee Lim, Antoine Marnat, Damien Roy, Johannes Schleischitz, Andreas Wieser, and Hao Xing for helpful comments and clarifying questions. In particular, we thank Damien Roy for his meticulous reading and criticism, as well as for pointing out several translations between the notation in his papers and those of Schmidt--Summerer and ours leading to the inclusion of Appendix \ref{appendix}. 
We thank Lingmin Liao for their punctilious reading and several discussions that greatly helped improve the exposition. 
We thank Barak Weiss for leading a semester-long study of the variational principle during the \href{http://www.math.tau.ac.il/~barakw/seminar/}{Fall 2021 {\it Seminar on homogeneous dynamics and applications}} at Tel-Aviv University, which helped elicit excellent questions that in turn helped us improve the exposition at several points. 
Finally, we thank the anonymous referees for their extremely scrupulous reports, which helped us improve several points throughout the paper, and pushed us to clarify many facts that were previously ``tacitly assumed and never spelled out''. The quest of refereeing a long and at times necessarily arduous paper is largely a thankless endeavor for which we are greatly appreciative.
We dedicate this paper to S. G. Dani, G. M. Margulis, and W. M. Schmidt -- for their pioneering perspectives that persist in persuading us to persevere in building bridges between Diophantine and dynamical worlds.

\draftnewpage
\part{Proof of main theorems using the variational principle}
\label{partmainproofs}

\section{Leitfaden to Part \ref{partmainproofs}}
\label{sectionleitfaden}
In this part we prove all the theorems of Section \ref{sectionmain} (with the exception of Theorems \ref{theoremdani} and \ref{theoremdani2}) as well as Theorem \ref{theoremSSR} from Section \ref{sectionvariational}, making full use of the variational principle whose involved proof we have deferred to Part \ref{partproofvariational}. 

For reference, the following theorems are proven in the following subsections:
\begin{itemize}
\item \fbox{Theorems \ref{theoremsing} and \ref{theoremvsing} are proved in \6\ref{subsectionmainupperbound} and \6\ref{subsectionmainlowerbound}.}\\ 
To prove Theorems \ref{theoremsing} and \ref{theoremvsing} it suffices\footnote{This follows from the monotonicity of the Hausdorff and packing dimensions, and the fact that the latter is bounded below by the former (see Section \ref{sectiondimensionprelim}).} to show that
\begin{align} \label{mainupperbound}
\PD(\Sing(\dims)) &\leq \dimsing,\\ \label{mainlowerbound}
\HD(\VSing(\dims)) &\geq \dimsing.
\end{align}
We prove these inequalities first (in \6\ref{subsectionmainupperbound} and \6\ref{subsectionmainlowerbound} respectively), since their proofs provide the best basic illustration of our techniques. 
\item \fbox{Theorem \ref{theorempacking} is proven in \6\ref{subsectionpsi} and \6\ref{subsectionPDgeq}.}\\
The packing dimension upper bound (valid for $n \geq 2$) is proven in \6\ref{subsectionpsi}. The packing dimension lower bound is proven in \6\ref{subsectionPDgeq}.
\item \fbox{Theorem \ref{theorempacking2} is proven in \6\ref{subsectionN1PDgeq}.}
\item \fbox{Theorem \ref{theoremhsmall} is proven in \6\ref{subsectionmainupperbound}, \6\ref{subsectionmainlowerbound}, \6\ref{subsectionhsmallHDleq}, \6\ref{subsectionhsmallHDgeq}, and \6\ref{subsectionPDgeq}.}\\
In \6\ref{subsectionmainupperbound}, after proving \eqref{mainupperbound}, we obtain the upper bound for packing dimension in Theorem \ref{theoremhsmall}. 
The packing dimension lower bound in Theorem \ref{theorempacking} (proven in \6\ref{subsectionPDgeq}) implies the packing dimension lower bound in Theorem \ref{theoremhsmall}. This completes the proof for the packing dimension asymptotic formula.
Regarding the Hausdorff dimension, there are two asymptotic formulas that have to be proved.
For the first case of Theorem \ref{theoremhsmall}: the lower bound for Hausdorff dimension is obtained in \6\ref{subsectionmainlowerbound}, after proving \eqref{mainlowerbound}; and the upper bound for Hausdorff dimension is proven in \6\ref{subsectionhsmallHDleq}. 
For the second case of Theorem \ref{theoremhsmall}: the lower bound for Hausdorff dimension is proven in \6\ref{subsectionhsmallHDgeq}; and the upper bound for Hausdorff dimension follows from that for packing dimension (proven in \6\ref{subsectionmainupperbound}).
\item \fbox{Theorem \ref{theoremN2} is proven in \6\ref{subsectionpsi} \6\ref{subsectionPDgeq}, \6\ref{subsectionN2HDgeq}, and \6\ref{subsectionN2HDleq}.}\\ 
Theorem \ref{theorempacking} (proven in \6\ref{subsectionpsi} and \6\ref{subsectionPDgeq}) implies the packing dimension formula in Theorem \ref{theoremN2}. The upper and lower bounds for the Hausdorff dimension formula in Theorem \ref{theoremN2} are proven in \6\ref{subsectionN2HDgeq} and \6\ref{subsectionN2HDleq}, respectively.
\item \fbox{Theorem \ref{theoremN1} is proven in \6\ref{subsectionN1PDgeq},\6\ref{subsectionN1HDgeq}, \6\ref{subsectionN1HDleq}, and \6\ref{subsectionN1PDleq}}.\\ 
The packing dimension upper bound in Theorem \ref{theoremN1} is proven in \6\ref{subsectionN1PDleq}. The packing dimension lower bound is implied by Theorem \ref{theorempacking2} (proven in \6\ref{subsectionN1PDgeq}).
The lower and upper bounds for the Hausdorff dimension formula in Theorem \ref{theoremN1} are proven in \6\ref{subsectionN1HDgeq} and \6\ref{subsectionN1HDleq}, respectively.
\item \fbox{Theorem \ref{theoremspecialcase} is proven in \6\ref{subsectionspecialcase}.}
\item \fbox{Theorem \ref{theoremsingonaverage} is proven in \6\ref{subsectionsingonaverage}.}
\item \fbox{Theorem \ref{theoremstarkov} is proven in \6\ref{subsectionstarkov}.}
\item \fbox{Theorem \ref{theoremkmessenger} is proven in \6\ref{subsectionkmessenger}.}
\item \fbox{Theorem \ref{theoremSSR} is proven in \6\ref{subsectionSSR}.}
\end{itemize}

\section{Proof of \eqref{mainupperbound} + Theorem \ref{theoremhsmall}, upper bound for packing dimension}
\label{subsectionmainupperbound}
In some sense, the variational principle means that it is harder to prove upper bounds on dimension than lower bounds: for a lower bound one only needs to exhibit a template or sequence of templates with the appropriate dimension properties, while for an upper bound one needs to prove something about all possible templates. This is in contrast to the usual situation in which it is easier to prove upper bounds. Our technique for proving upper bounds is based on continuing the analogy with physics (cf. Figure \ref{figuredimtemplate} and the three paragraphs following Definition \ref{definitiondimtemplate}) by defining a function that measures the ``potential energy'' of any configuration of particles: the potential energy is larger the farther apart the particles are. We then prove an inequality relating the change in potential energy and the contraction rate. Integrating this inequality gives a relation between the potential energy at a given point in time, which is always positive, and the average contraction rate up to that time. This then yields a bound on the average contraction rate up to any point in time.

Let $\ff:\Rplus\to\R^d$ be a balanced\Footnote{Since any template can be written as a translation of a balanced template, we can without loss of generality consider only balanced templates in what follows.} template (cf. Definition \ref{definitiontemplate}). We define the ``potential energy of $\ff$ at time $t$'' to be the number
\begin{equation}
\label{phidef}
\phi(t) = \phi_\ff(t) = \max\left(\frac{\pdim^2 \qdim}{\dimsum}|f_1(t)|,\frac{\pdim \qdim^2}{\dimsum}|f_d(t)|\right).
\end{equation}
Note that $\phi(t) \geq 0$ for all $t \geq 0$. The motivation for the definition of $\phi$ is the following lemma:
\begin{lemma}
\label{lemmaphiprimebound}
Let $I$ be an interval of linearity\Footnote{\label{definitionintervallinearity}I.e. an interval on which $\ff$ is linear. If $I$ is an interval of linearity for $\ff$, we will denote the constant value of $\ff'$ on $I$ by $\ff'(I)$.} for $\ff$ such that $\phi'(t)$ is well-defined for all $t\in I$, and such that $\ff(t) \neq \0$ for all $t\in I$. Then
\begin{equation}
\label{phiprimebound}
\phi'(t) \leq \dimsing - \delta(\ff,I)
\end{equation}
for $t\in I$. Equality holds in precisely the following cases:
\begin{itemize}
\item[1.] $S_+(\ff,I) = \{1,\ldots,m\}$;
\item[2.] $S_+(\ff,I) = \{1,\ldots,m-1,m+1\}$, and $f_1 = \ldots = f_m$ and $f_{m+1} = \ldots = f_{m+n}$ on $I$ (and in particular since $\ff$ is balanced we have $\pdim |f_1| = \qdim |f_d|$ on $I$);
\item[3a.] $S_+(\ff,I) = \{2,\ldots,m+1\}$, and $\pdim |f_1| \geq \qdim |f_d|$ on $I$;
\item[3b.] $S_+(\ff,I) = \{1,\ldots,m-1,m+n\}$, and $\qdim |f_d| \geq \pdim |f_1|$ on $I$.
\end{itemize}
If equality does not hold, then the difference between the two sides of \eqref{phiprimebound} is at least $1/\max(m,n)$.
\end{lemma}
Note that when $m=1$, cases 2 and 3a are equivalent, and when $n=1$, cases 2 and 3b are equivalent.
\begin{proof}
Note that the cases 3a and 3b are symmetric with respect to the operation of replacing the $m\times n$ template $\ff$ by the $n\times m$ template $-\ff$, while the other two cases are individually symmetric with respect to this operation. Thus, we may without loss of generality suppose that
\begin{equation}
\label{WLOGphi}
\phi = \frac{\pdim^2 \qdim}{\dimsum}|f_1|
\;\;\;\text{ i.e. }\;\;\;
\pdim |f_1| \geq \qdim |f_d|
\end{equation}
on $I$. Let $j \geq 1$ be the largest number such that
\[
f_j = f_1 \text{ on } I.
\]
Note that since $\ff$ is balanced and $\ff(t) \neq \0$ for all $t\in I$, \eqref{WLOGphi} implies that $j\leq m$. Since $I$ is an interval of linearity for $\ff$, it follows that $f_j < f_{j+1}$ on $I$. Accordingly, let $\dir_\pm = \dir_\pm(\ff,I,j)$ and $S_\pm = S_\pm(\ff,I)$. Then by \eqref{WLOGphi} and \eqref{Lqdef} we have
\[
\phi'(t) = \frac{\pdim^2 \qdim}{\dimsum} \frac{-F_j'(t)}{j} = \frac{\pdim^2 \qdim}{(\dimsum)j}\left[\frac{\dir_-}{\qdim} - \frac{\dir_+}{\pdim}\right]
\]
and on the other hand, by \eqref{codimfI} we have
\begin{equation}
\label{S-0jS+jd}
\dimprod - \delta(\ff,I) \geq \#\big(S_-\cap \OC 0j\big) \cdot\#\big(S_+\cap \OC jd\big) = \dir_- (\pdim - \dir_+)
\end{equation}
and thus
\[
\dimsing - \delta(\ff,I) \geq \dir_- (\pdim - \dir_+) - \frac{\dimprod}{\dimsum}\cdot
\]
So to demonstrate \eqref{phiprimebound} it suffices to show that
\[
\frac{\pdim^2 \qdim}{(\dimsum)j}\left[\frac{\dir_-}{\qdim} - \frac{\dir_+}{\pdim}\right] \leq \dir_- (\pdim - \dir_+) - \frac{\dimprod}{\dimsum}\cdot
\]
Indeed, since $\dir_+ + \dir_- = j$, we have
\begin{align*}
\frac{\pdim^2 \qdim}{(\dimsum)j}\left[\frac{\dir_-}{\qdim} - \frac{\dir_+}{\pdim}\right]
&= \frac{\pdim^2 \qdim}{(\dimsum)j}\left[\dir_-\left(\frac1{\pdim}+\frac1{\qdim}\right) - \frac{j}{\pdim}\right]
= \frac{\dir_- \pdim}{j} - \frac{\dimprod}{\dimsum}
\end{align*}
so we need to show that
\begin{equation}
\label{NTS1}
\frac{\dir_- \pdim}{j} \leq \dir_- (\pdim - \dir_+).
\end{equation}
If $\dir_- = 0$, then this inequality is trivial (and equality holds). So suppose that $\dir_- > 0$. Since $j = \dir_- + \dir_+ \leq \pdim$, we have $\dir_+ < j \leq \pdim$, so $(j-1) (m-L_+ - 1) \geq 0$, and thus
\begin{equation}
\label{mjmL1}
m \leq j + (m - \dir_+) - 1 \leq j(m-\dir_+),
\end{equation}
and rearranging yields \eqref{NTS1}. This completes the proof of \eqref{phiprimebound}.

Now suppose that equality holds in \eqref{phiprimebound}. The equality in \eqref{S-0jS+jd} implies that
\[
S_+ = \{1,\ldots,\dir_+\} \cup \{j+1,\ldots,j+m-\dir_+\}.
\]
On the other hand, the equality in \eqref{NTS1} implies that either $\dir_- = 0$, or equality holds in \eqref{mjmL1}. In the latter case we have $\dir_- = j - \dir_+ = 1$, and either $j = 1$ or $m-\dir_+ = 1$, from the left and right hand sides of \eqref{mjmL1}, respectively. So there are three cases:
\begin{itemize}
\item[1.] If $\dir_- = 0$, then $S_+ = \{1,\ldots,m\}$.
\item[2.] If $\dir_- = 1$ and $m-\dir_+ = 1$, then $S_+ = \{1,\ldots,m-1,m+1\}$. In this case $j=m$, i.e. $f_1 = \ldots = f_m$ on $I$. Combining with \eqref{WLOGphi} and using the fact that $\ff$ is balanced shows that $f_{m+1} = \ldots = f_{m+n}$ on $I$.
\item[3a.] If $\dir_- = 1$ and $j = 1$, then $S_+ = \{2,\ldots,m+1\}$.
\end{itemize}
Note that the case 3b does not appear in this list due to the fact that we made the assumption \eqref{WLOGphi} without loss of generality, using the fact that 3a and 3b are symmetric. The converse direction can be proved similarly.

Finally, suppose that equality does not hold in \eqref{phiprimebound}. Note that the difference between the two sides of \eqref{phiprimebound} is the sum of the difference between the two sides of \eqref{S-0jS+jd} and those of \eqref{NTS1}, i.e. $mn-\delta(\ff,I) - \dir_- m/j$. Since this is a positive rational number with denominator $j$, it must be $\geq 1/j \geq 1/\max(m,n)$.
\end{proof}

Now suppose that the template $\ff$ is singular, i.e. satisfies $f_1(t) \to -\infty$ as $t\to\infty$. Then $\ff(t) \neq \0$ for all sufficiently large $t$. So by Lemma \ref{lemmaphiprimebound}, \eqref{phiprimebound} holds for almost all sufficiently large $t$, and thus for all sufficiently large $T$ we have
\[
0 \lesssim_\plus \phi(T) - \phi(0) \lesssim_\plus \int_0^T [\dimsing - \delta(\ff,t)] \;\dee t = T [\dimsing - \Delta(\ff,T)].
\]
It follows that
\[
\overline\delta(\ff) = \limsup_{T\to\infty} \Delta(\ff,T) \leq \limsup_{T\to\infty} \left[\dimsing + O(1/T) - \frac{\phi(T)}{T}\right] = \dimsing - \liminf_{T\to\infty} \frac{\phi(T)}{T} \leq \dimsing,
\]
and applying Theorem \ref{theoremvariational2} to the set
\[
\SS = \{\ff: \Rplus \to \R^d ~|~ f_1(t) \to -\infty \text{ as } t\to\infty\}
\]
yields \eqref{mainupperbound}. Note that if $\ff$ is $\tau$-singular, i.e. $|f_1(t)| \geq \tau t$ for all sufficiently large $t$, then 
\[
\phi(T) \geq \frac{m^2 n}{m+n}\tau T
\]
for all sufficiently large $T$, and thus
\[
\overline\delta(\ff) \leq \dimsing - \frac{m^2 n}{m+n}\tau.
\]
Applying Theorem \ref{theoremvariational4} yields the upper bound of the packing dimension assertion of Theorem \ref{theoremhsmall}.

\section{Proof of \eqref{mainlowerbound} + Theorem \ref{theoremhsmall}, first formula, lower bound for Hausdorff dimension}
\label{subsectionmainlowerbound}

Lemma \ref{lemmaphiprimebound} provides motivation for how to construct a template yielding the lower bound \eqref{mainlowerbound}. Namely, the template $\ff$ should be constructed in a way such that most of the time, one of the four cases for the possible value of $S_+(\ff,I)$ listed in Lemma \ref{lemmaphiprimebound} holds. For example, there may be two consecutive intervals of linearity $I_1$ and $I_2$ such that $S_+(\ff,I_1) = \{2,\ldots,m+1\}$ and $S_+(\ff,I_2) = \{1,\ldots,m\}$; cf. Figure \ref{figurebasic}.

\begin{figure}[h!]
\scalebox{0.5}{
\begin{tikzpicture}
\clip (-2,-8) rectangle (14,2);
\draw[dashed] (-2,0) -- (14,0);
\draw (0,0) -- (8,-5) -- (12,0);
\draw[line width=3] (0,0) -- (8,1) -- (12,0);
\draw[line width=1.5] (0,-6) -- (8,-6);
\draw[line width=3] (8,-6) -- (12,-6);
\node at (0,-5) {\scalebox{2}{$\delta(\ff,I_i)$}};
\node at (4,-5) {\scalebox{2}{$mn-m$}};
\node at (10,-5) {\scalebox{2}{$mn$}};
\node at (0,-7) {\scalebox{2}{$|I_i|/|I|$}};
\node at (4,-7) {\scalebox{2}{$\tfrac{n}{m+n}$}};
\node at (10,-7) {\scalebox{2}{$\tfrac{m}{m+n}$}};
\node at (0,-1) {\scalebox{2}{$t_0$}};
\node at (8,-1) {\scalebox{2}{$t_1$}};
\node at (12,-1) {\scalebox{2}{$t_2$}};
\end{tikzpicture}
}
\caption{The joint graph of a partial template $\ff$ such that $S_+(\ff,I_1) = \{2,\ldots,m+1\}$ and $S_+(\ff,I_2) = \{1,\ldots,m\}$, where $I_1 = (t_0,t_1)$ and $I_2 = (t_1,t_2)$. In this picture we have $\ff(t_0) = \ff(t_2) = \0$, and thus $|I_1| = \tfrac{n}{m+n}|I|$ and $|I_2| = \tfrac{m}{m+n}|I|$, where $I = (t_0,t_2)$. Consequently,
\[
\frac1{|I|} \int_I \delta(\ff,t) \;\dee t = \frac{n}{m+n}(mn-m) + \frac{m}{m+n}(mn) = \dimsing
\]
i.e. the average contraction rate of $\ff$ over $I$ is $\dimsing$. Note that this partial template is exactly the standard template defined by the points $(t_0,0)$ and $(t_2,0)$ (cf. Definition \ref{definitionstandardtemplate}).}
\label{figurebasic}
\end{figure}

In contrast to the picture in Figure \ref{figurebasic}, if we want the template $\ff$ to be singular then we need $\ff(t) \neq \0$ for all $t$, so we will need to ``cut off'' a small part of the picture. By ``gluing'' infinitely many of these pictures together we will get a singular template of large Hausdorff dimension; cf. Figure \ref{figuremainlowerbound}.

\begin{figure}[h!]
\begin{tikzpicture}
\clip(-1,-3) rectangle (8,1);
\draw[dashed] (-1,0) -- (8,0);
\draw[line width=2] (-0.3,0.26) -- (0.5,0.05) -- (2,0.35) -- (3,0.06) -- (4.8,0.42) -- (6,0.07) -- (7.9,0.47);
\draw (-0.3,-1.3) -- (0.5,-0.25) -- (2,-1.75) -- (3,-0.3) -- (4.8,-2.1) -- (6,-0.35) -- (7.9,-2.35);
\fill[color=gray] (0.3,-0.8) rectangle (0.7,0.4);
\fill[color=gray] (2.75,-0.9) rectangle (3.25,0.45);
\fill[color=gray] (5.7,-1) rectangle (6.3,0.5);
\end{tikzpicture}
\caption{The joint graph of a template $\ff$ designed to be a singular template of large Hausdorff dimension. The gray regions represent intervals where the precise value of the template is irrelevant; what matters is that the template stays away from $\0$ on these regions.}
\label{figuremainlowerbound}
\end{figure}
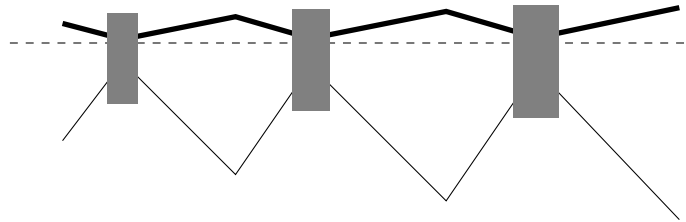

To make the idea conveyed in Figure \ref{figuremainlowerbound} rigorous, we introduce the notion of the \emph{standard template} defined by two points $(t_k,-\epsilon_k)$ and $(t_{k+1},-\epsilon_{k+1})$. The idea is that $\ff:[t_k,t_{k+1}]\to\R^d$ should satisfy $f_1(t_i) = f_{2}(t_i) = -\epsilon_i$ for $i=k,k+1$, and $f_1$ should be as small as possible given this restriction. Finally, the template should be chosen so that $f_d$ is as small as possible, given the previous restrictions. Formally we make the following definition:

\begin{definition}
\label{definitionstandardtemplate}
Fix $0\leq t_k < t_{k+1}$ and $\epsilon_k,\epsilon_{k+1}\geq 0$ and let $\Delta t = \Delta t_k = t_{k+1} - t_k$ and $\Delta\epsilon = \Delta\epsilon_k = \epsilon_{k+1} - \epsilon_k$. Assume that the following formulas hold:
\begin{equation}
\label{standardtemplate}
-\tfrac1m \Delta t \leq \Delta\epsilon \leq \tfrac1n \Delta t,
\end{equation}
\begin{equation}
\label{standardtemplate2}
\Delta\epsilon \geq -\tfrac{n-1}{2n}\Delta t \text{ if } m = 1 \text{ and }
\Delta\epsilon \leq \tfrac{m-1}{2m}\Delta t \text{ if } n = 1,
\end{equation}
\begin{equation}
\label{standardtemplate3}
\text{either }
(n-1)\left(\tfrac1n\Delta t - \Delta\epsilon\right) \geq d\epsilon_k \text{ or }
(m-1)\left(\tfrac1m\Delta t + \Delta\epsilon\right) \geq d\epsilon_{k+1}.
\end{equation}
Then the \emph{standard template} defined by the two points  $(t_k,-\epsilon_k)$ and $(t_{k+1},-\epsilon_{k+1})$ is the partial template $\ff:[t_k,t_{k+1}]\to\R^d$ defined as follows:
\begin{itemize}
\item Let $g_1,g_2:[t_k,t_{k+1}]\to\R$ be piecewise linear functions such that $g_i(t_j) = -\epsilon_j$, and $g_i$ has two intervals of linearity: one on which $g_i' = \frac1m$ and another on which $g_i' = -\frac1n$. For $i=1$ the latter interval comes first while for $i=2$ the former interval comes first; cf. Figure \ref{figurediamond}. The existence of such functions $g_1$ and $g_2$ is guaranteed by \eqref{standardtemplate}. Finally, let $g_3 = \ldots = g_d$ be chosen so that $g_1 + \ldots + g_d = 0$.
\item For each $t\in [t_k,t_{k+1}]$ let $\ff(t) = \gg(t)$ if $g_2(t) \leq g_3(t)$; otherwise let $f_1(t) = g_1(t)$ and let $f_2(t) = \ldots = f_d(t)$ be chosen so that $f_1 + \ldots + f_d = 0$.
\end{itemize}
We will sometimes denote the standard template defined by $(t_k,-\epsilon_k)$ and $(t_{k+1},-\epsilon_{k+1})$ by $\mbf s[(t_k,-\epsilon_k),(t_{k+1},-\epsilon_{k+1})]$.
\end{definition}

\begin{figure}[h!]
\begin{tikzpicture}
\clip(-1,-3) rectangle (8,2);
\draw[dashed] (-1,0) -- (8,0);
\draw (0,-0.5) -- (5,-3) -- (7,-1);
\draw (0,-0.5) --  (0.8,0.3);
\draw[dotted] (0.8,0.3) -- (2,1.5) -- (3.5,0.75);
\draw (3.5,0.75) -- (7,-1);
\draw[line width = 1.5] (0,0.5) --  (0.8,0.3);
\draw[line width = 1.5,dotted] (0.8,0.3) -- (2,0) -- (3.5,0.75);
\draw[line width = 1.5] (3.5,0.75) -- (5,1.5) -- (7,1);
\draw[line width = 2.5] (0.8,0.3) -- (3.5,0.75);
\fill (0,-0.5) circle (0.05);
\node at (0,-1) {$(t_1,-\epsilon_1)$};
\fill (7,-1) circle (0.05);
\node at (7,-0.3) {$(t_2,-\epsilon_2)$};
\end{tikzpicture}
\caption{The joint graphs of $\ff$ and $\gg$ on the interval $[t_1,t_2]$, where $\ff$ is the standard template $\mbf s[(t_1,-\epsilon_1),(t_{2},-\epsilon_{2})]$ as in Definition \ref{definitionstandardtemplate}. The map $\gg$ is shown dotted while $\ff$ is shown solid.}
\label{figurediamond}
\end{figure}
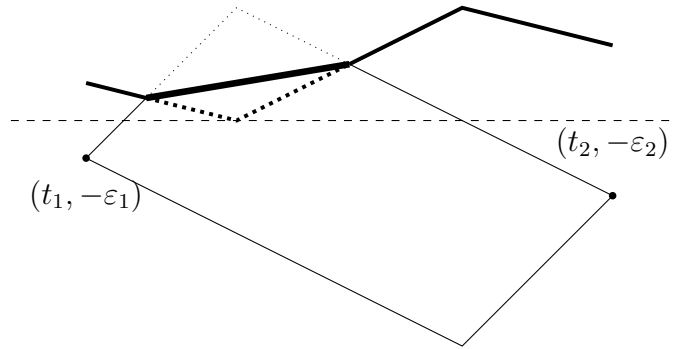

\begin{lemma}
A standard template is indeed a balanced partial template.
\end{lemma}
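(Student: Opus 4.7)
The plan is to verify, in turn, consistency of the two case-definitions, balancedness, the ordering (I), the slope bounds (II), and the convexity plus quantized-slope condition (III). First, the two case-definitions agree on their boundary: when $g_2(t) = g_3(t)$, the identity $g_1 + g_2 + (d-2)g_3 = 0$ forces $g_3 = -g_1/(d-1)$, which coincides with the regime-B prescription, so $\ff$ is continuous. Balancedness ($F_d = 0$) is then immediate, since both cases enforce $\sum_i f_i = 0$ by construction. For (I), note that $g_1$ is piecewise linear and convex on $[t_k, t_{k+1}]$ with endpoint values $-\epsilon_k, -\epsilon_{k+1} \leq 0$, so its maximum is attained at an endpoint and $g_1 \leq 0$ throughout; moreover the chord joining the endpoints lies above $g_1$ (by convexity) and below $g_2$ (by concavity of $g_2$), yielding $g_1 \leq g_2$. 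Hence in regime A we have $g_1 \leq g_2 \leq g_3 = \cdots = g_d$ and in regime B the bound $g_1 \leq -g_1/(d-1)$ follows from $g_1 \leq 0$.

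For (II), the slopes of $g_1,g_2$ are in $\{-1/n, 1/m\}$ by construction, and in regime B we have $f_i' = -g_1'/(d-1) \in \{-1/(m(d-1)),\, 1/(n(d-1))\}$ for $i \geq 2$, both safely in $[-1/n, 1/m]$. The only delicate case is the auxiliary $g_3 = -(g_1 + g_2)/(d-2)$ in regime A when $g_1' = g_2'$: then $g_3' = -2/(m(d-2))$ or $2/(n(d-2))$, which exits $[-1/n, 1/m]$ precisely when $m = 1$ or $n = 1$ respectively. I would show that condition \eqref{standardtemplate2} constrains the relative order of the two corners $s_1$ of $g_1$ and $s_2$ of $g_2$ in exactly the manner needed to exclude these offending configurations.

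The main obstacle is (III). Because $f_3 = \cdots = f_d$ in regime A and $f_2 = \cdots = f_d$ in regime B, the only non-vacuous conditions are on $F_1$ and on $F_2$ restricted to regime-A sub-intervals. The case of $F_1 = g_1$ is immediate: its slopes $-1/n$ then $1/m$ lie in $Z(1)$ and it is convex. For $F_2 = g_1 + g_2$, I would split into Case A ($s_1 < s_2$) and Case B ($s_1 > s_2$). A direct slope computation shows $F_2'$ equals $1/m - 1/n$ on the two outer sub-intervals and takes a middle value of $2/m$ in Case A or $-2/n$ in Case B, both lying in $Z(2)$ whenever the degenerate cases ruled out by \eqref{standardtemplate2} are avoided. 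This non-monotone slope profile prevents global convexity on $[t_k, t_{k+1}]$, so the key observation is that regime B must cover the problematic middle sub-interval, leaving two disjoint regime-A sub-intervals on each of which $F_2' \equiv 1/m - 1/n$ is constant, hence $F_2$ is trivially convex.

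The coverage claim reduces to showing that the piecewise linear function $h := g_1 + (d-1)g_2 = (d-2)(g_2 - g_3)$ is positive throughout the middle sub-interval. By slope analysis, $h$ is piecewise monotone and attains its maximum at $s_2$ in Case A (resp.\ at $s_1$ in Case B), with endpoint values $h(t_k) = -d\epsilon_k$ and $h(t_{k+1}) = -d\epsilon_{k+1}$. A short computation of $h(s_2) - h(t_k)$ (resp.\ $h(s_1) - h(t_{k+1})$) identifies condition \eqref{standardtemplate3} as exactly the assertion that $h(s_2) \geq 0$ or $h(s_1) \geq 0$; combined with the monotonicity structure of $h$, this guarantees $h > 0$ on the entire problematic middle. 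I expect the bookkeeping of precisely matching the two disjunctive options in \eqref{standardtemplate3} to the two cases $s_1 \lessgtr s_2$ to be the most fiddly part, but once it is in place both the convexity condition and the quantized slope condition for $F_2$ on the regime-A intervals follow at once.
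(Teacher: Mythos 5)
Most of your outline is sound and matches what the paper intends: continuity across the two regimes, balancedness, condition (I), condition (II) together with your observation that \eqref{standardtemplate2} forbids the ``both slopes $1/m$'' (resp.\ ``both slopes $-1/n$'') middle configuration exactly when $m=1$ (resp.\ $n=1$), and the reduction of (III) to $F_1$ and to $F_2$ on intervals where $f_2<f_3$. The gap is in your treatment of $F_2$. First, the auxiliary function $h=g_1+(d-1)g_2$ attains its maximum at $s_2$ (the corner of $g_2$) in \emph{both} of your cases, not at $s_1$ in Case B: $h'=g_1'+(d-1)g_2'$ is nonnegative on $[t_k,s_2]$ and nonpositive on $[s_2,t_{k+1}]$ no matter where $s_1$ sits. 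Correspondingly, \eqref{standardtemplate3} is not ``$h(s_2)\ge 0$ or $h(s_1)\ge 0$'': its two disjuncts are the value $h(s_2)$ computed from the left endpoint (the correct expression when $s_2\le s_1$) and from the right endpoint (correct when $s_2\ge s_1$), so the condition amounts to the single inequality $h(s_2)\ge 0$, i.e.\ $g_2\ge g_3$ at the maximum of $g_2$.

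Second, and more seriously, your coverage claim --- that regime B must contain the whole middle sub-interval, so that every regime-A interval has constant slope $F_2'\equiv \tfrac1m-\tfrac1n$ and convexity is ``trivial'' --- is false in general and is not implied by \eqref{standardtemplate}--\eqref{standardtemplate3}. Take $m=n=2$, $\Delta t=1$, $\epsilon_k=\tfrac18$, $\epsilon_{k+1}=\tfrac1{16}$: all three hypotheses hold (the first disjunct of \eqref{standardtemplate3} reads $\tfrac9{16}\ge\tfrac12$), yet $h(s_1)=-d\epsilon_k+(n-1)\bigl(\tfrac{\Delta t}m+\Delta\epsilon\bigr)=-\tfrac1{16}<0$, so a neighborhood of $s_1$ inside the middle interval lies in regime A, and there $F_2'$ takes the values $\tfrac1m-\tfrac1n$ and then $\tfrac2m$, not a constant. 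The lemma survives, but for a weaker reason than the one you give: the only concave corner of $F_2=g_1+g_2$ is at $s_2$, and $h(s_2)\ge0$ (i.e.\ \eqref{standardtemplate3}) guarantees $s_2\notin\{f_2<f_3\}$, so $F_2$ is convex on every interval where $f_2<f_3$ even when such an interval meets the middle; the quantized slope condition there only requires $\tfrac2m\in Z(2)$ (resp.\ $-\tfrac2n\in Z(2)$), which holds once $m\ge2$ (resp.\ $n\ge2$) and is supplied by \eqref{standardtemplate2} in the remaining cases $m=1$, $n=1$. Replacing your coverage argument by this ``exclude the corner $s_2$'' argument closes the gap, and is exactly the role the paper assigns to \eqref{standardtemplate3}.
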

\begin{proof}
We show where the formulas \eqref{standardtemplate2} and \eqref{standardtemplate3} are needed, leaving the rest of the proof as an exercise to the reader. The condition \eqref{standardtemplate3} is equivalent to the assertion that $g_2(t) \geq g_3(t)$ where $t$ is the location of the maximum of $g_2$. This implies that $f_2(t) = f_3(t)$, guaranteeing that the convexity condition (cf. Definition \ref{definitiontemplate}) is satisfied at $t$. The condition \eqref{standardtemplate2} is equivalent to the assertion that there is no interval on which $f_1' = f_2' = \pm 1$. If such an interval exists, then $\ff$ cannot be a template because if it were, we would have $\{1,2\} \subset S_\pm$ but $\#(S_\pm) = d_\pm = 1$, a contradiction. Conversely, if there is no such interval then the sets $S_\pm$ can be computed in a consistent way on any interval of linearity for $\ff$.
\end{proof}

\begin{example}
The inequalities \eqref{standardtemplate}-\eqref{standardtemplate3} always hold when $\epsilon_k = \epsilon_{k+1} = 0$. In this case, the standard template $\ff$ defined by the points $(t_k,0)$ and $(t_{k+1},0)$ has only two intervals of linearity, and the average value of $\delta(\ff,\cdot)$ on $[t_k,t_{k+1}]$ is equal to $\dimsing$; see Figure \ref{figurebasic}.
\end{example}

\begin{definition}
\label{definitionstandardtemplate2parameter}
Let $(t_k)_0^\infty$ be an increasing sequence of nonnegative real numbers, and let $\epsilon_k\geq 0$ for each $k$. The \emph{standard template} defined by the sequence of points $(t_k,-\epsilon_k)$ is the partial template produced by gluing together the standard templates defined by the pairs of points $(t_k,-\epsilon_k)$ and $(t_{k+1},-\epsilon_{k+1})$ for each $k$. The \emph{standard template} defined by two parameters $\tau\geq 0$ and $\lambda > 1$, denoted $\ff[\tau,\lambda]$, is the one defined by the sequence of points $(t_k,\epsilon_k)_{k\in\Z}$, where $t_k = \lambda^k$ and $\epsilon_k = \tau t_k$ for all $k$. Note that in this case, \eqref{standardtemplate}-\eqref{standardtemplate3} become
\begin{equation}
\label{standardtemplateB}
\tau \leq \tfrac{1}{n},
\end{equation}
\begin{equation}
\label{standardtemplate2B}
\tau \leq \tfrac{m-1}{2m} \text{ if } n = 1,
\end{equation}
\begin{equation}
\label{standardtemplate3B}
\text{either }
(n-1)\left(\tfrac1n - \tau\right) \geq \tfrac1{\lambda - 1} d\tau \text{ or }
(m-1)\left(\tfrac1m + \tau\right) \geq \tfrac\lambda{\lambda - 1}d\tau .
\end{equation}
We refer to $\ff[\tau,\lambda]$ as being \label{expequivariance}\emph{exponentially $\lambda$-equivariant}, viz. that $\ff(\lambda t) = \lambda \ff(t)$ for all $t \geq 0$.
\end{definition}

Now fix $\tau > 0$ small and let $\lambda = 1+\sqrt\tau$ (or more generally $\lambda = 1 + \Theta(\sqrt\tau)$), and note that \eqref{standardtemplateB}-\eqref{standardtemplate3B} hold. Let $\ff = \ff[\tau,\lambda]$ and $t_k,\epsilon_k$ be as above. Now since the map $(\epsilon_1,\epsilon_2) \mapsto \Delta(\mbf s[(0,-\epsilon_1),(1,-\epsilon_2)],1)$ is Lipschitz continuous, it follows that
\begin{align*}
\Delta(\ff, [t_k,t_{k+1}])
&= \Delta(\mbf s[(t_k,-\epsilon_k),(t_{k+1},-\epsilon_{k+1})],[t_k,t_{k+1}])\\
&= \Delta\left(\mbf s\left[\left(0,-\tfrac{\epsilon_k}{\Delta t_k}\right),\left(1,-\tfrac{\epsilon_{k+1}}{\Delta t_k}\right)\right],1\right)\\
&= \Delta(\mbf s[(0,0),(1,0)],1) - O\left(\tfrac{\max(\epsilon_k,\epsilon_{k+1})}{\Delta t_k}\right)\\
&= \dimsing - O\left(\tfrac{\tau}{\lambda-1}\right) = \dimsing - O(\sqrt\tau)
\end{align*}
and thus for sufficiently large $k$
\[
\Delta(\ff,t_k) = \dimsing - O(\sqrt\tau).
\]
Given $T$ large, let $k$ be chosen so that $t_k \leq T < t_{k+1}$. Then
\[
\Delta(\ff,T) - \Delta(\ff,t_k) = O\left(\tfrac{T-t_k}{t_k}\right) = O(\lambda - 1) = O(\sqrt\tau)
\]
and thus
\[
\Delta(\ff,T) = \dimsing - O(\sqrt\tau).
\]
Taking the limit as $T\to\infty$ shows that
\[
\HD(\Sing_\dims(\tau)) \geq \underline\delta(\ff) =  \dimsing - O(\sqrt\tau),
\]
and taking the limit as $\tau\to 0$ completes the proof of \eqref{mainlowerbound}, as well as of the lower bound for Hausdorff dimension in the first case of Theorem \ref{theoremhsmall}.

\begin{remark}
The $O(\sqrt\tau)$ term in the above proof comes from combining two sources of error: one of size $O(\lambda-1)$ and another of size $O(\tfrac{\tau}{\lambda-1})$. We chose $\lambda = 1 + \Theta(\sqrt\tau)$ so as to minimize the sum of these two error terms.
\end{remark}

\begin{remark}
Via a more careful argument one could exactly compute $\underline\delta(\ff[\tau,\lambda])$ in terms of $\tau$ and $\lambda$ for the template $\ff$ described above. Using calculus one could then optimize over the variable $\lambda$ to get a lower bound which is the best possible using this technique.
\end{remark}

\section{Proof of Theorem \ref{theoremhsmall}, upper bound for Hausdorff dimension}
\label{subsectionhsmallHDleq}

Let $\ff$ be a $\tau$-singular template such that $\underline\delta(\ff) > \dimsing - z$, where $z > 0$ is small. We aim to show that $\dynexp = O(z^2)$ if $(m,n)\neq (2,2)$. Indeed, let $\phi$ be as in \eqref{phidef}, and let
\begin{equation}
\label{exceptional}
E \df \{t\geq 0 : \phi'(t) < \dimsing - \delta(\ff,t)\}.
\end{equation}
By Lemma \ref{lemmaphiprimebound}, we have
\[
\phi'(t) \leq \dimsing - \delta(\ff,t) - \max(m,n)^{-1} \big[t\in E\big]
\]
for all $t$ sufficiently large. Here $[t\in E]$ denotes $1$ if $t\in E$ and $0$ otherwise.
Integrating over $[0,T]$ gives
\[
\phi(T) - \phi(0) \leq T \big(\dimsing - \Delta(\ff,T)\big) - \max(m,n)^{-1} \lambda\big(E\cap [0,T]\big),
\]
where $\lambda$ denotes Lebesgue measure. On the other hand, since $\underline\delta(\ff) > \dimsing - z$, we have $\Delta(\ff,T) \geq \dimsing - z$ for all sufficiently large $T$, and thus rearranging the previous equation and using the fact that $\phi(T) \geq 0$ gives
\begin{equation}
\label{exceptionalsmall}
\lambda\big(E\cap [0,T]\big) = O(zT)
\end{equation}
and
\begin{equation}
\label{phibound}
\phi(T) = O(zT)
\end{equation}
assuming $T$ is sufficiently large. The trick now is that we also know $\phi(T) = \Omega(\tau T)$ since $\ff$ is $\tau$-singular (which means that $|f_1(t)| \geq \tau t$ for all sufficiently large $t$). So the question is what kind of templates satisfy both an upper bound and a lower bound for $\phi$, but for which the exceptional set $E$ is not large. The answer is given by the following lemma, in which the problem has been rescaled so that the upper bound for $\phi$ is just $1$:

\begin{lemma}
\label{lemmanonstatic}
Suppose that $(m,n)\neq (2,2)$, and fix $x > 0$. Let $\ff : I \df [t_-,t_+]\to\R^d$ be a partial template such that $x\leq \phi(t) \leq 1$ for all $t\in I$. Then if $|I|$ is sufficiently large depending on $m,n$, then
\[
\lambda(E) \gtrsim x,
\]
where the exceptional set $E$ is as in \eqref{exceptional}.
\end{lemma}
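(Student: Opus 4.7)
The plan is to trace the evolution of $\phi$ on $I$ under the constraints $x \le \phi \le 1$, and to show that these force the exceptional set $E$ to have measure at least $\Omega(x)$.

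First I would pin down the value of $\phi'$ in each equality case of Lemma \ref{lemmaphiprimebound}. Using the identity $\phi' = L_- m/j - mn/(m+n)$ established in the proof of that lemma (and its symmetric analogue when $\phi = (mn^2/(m+n))|f_d|$), one computes $\phi' = -mn/(m+n)$ in Case 1, $\phi' = (m+n-mn)/(m+n)$ in Case 2, $\phi' = m^2/(m+n)$ in Case 3a, and $\phi' = n^2/(m+n)$ in Case 3b. When $(m,n) \ne (2,2)$, all four values are nonzero, so there exists $c_0 = c_0(m,n) > 0$ such that $|\phi'(t)| \ge c_0$ for every $t \in I\setminus E$. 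Since $\phi \in [x,1]$, the template cannot stay in cases with the same sign of $\phi'$ for longer than time $1/c_0$; once $|I|$ is much larger than $1/c_0$, the trajectory of $\phi$ must include a local minimum at some time $t^*$, and by restricting to a sub-interval if needed, I may assume $\phi(t^*) \le 2x$.

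The crux of the argument is then to show that such a local minimum near the floor $\phi=x$ is surrounded by a sojourn in $E$ of length $\Omega(x)$. At $t^*$, the slope of the driving coordinate (either $f_1$ or $f_d$) must pass from positive to negative in order to produce the turnaround. Within the equality cases, these slopes take case-specific values that are incompatible across a single direct transition: the convexity condition in part (III) of Definition \ref{definitiontemplate} forbids, for instance, the jump $f_1' : 1/m \mapsto -1/n$ required to pass from Case 1 to Case 3a on any interval where $f_1<f_2$, as this would strictly decrease $F_1'$. Hence the template must traverse a configuration belonging to no equality case -- i.e., it must enter $E$ -- to reconcile the required slope change with convexity. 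The duration of this sojourn is controlled from below by geometry: transitions between canonical configurations are effected by merger and split events of intervals of equality, which are in turn governed by the spacings between $f_1<f_2<\cdots<f_d$. When $\phi(t^*)\le 2x$ all relevant spacings are $O(x)$, and since template slopes are uniformly bounded in $(m,n)$, such a passage takes time $\Omega(x)$, all of which must lie in $E$.

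The main obstacle will be to make this structural analysis rigorous: one must enumerate the geometrically distinct ways the template can effect the transition between decreasing and increasing equality cases, verify that no such transition is both instantaneous and compatible with $\phi \ge x > 0$, and quantify the dwelling time in $E$ uniformly in $(m,n)$. The hypothesis $(m,n) \ne (2,2)$ enters precisely because Case 2 has $\phi' = 0$ when $(m,n)=(2,2)$, so the template may linger indefinitely in the configuration $f_1=f_2,\; f_3=f_4$ at constant $\phi$, sidestepping the forced turnaround entirely.
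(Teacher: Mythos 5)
Your computation of the four possible values of $\phi'$ off the exceptional set (namely $-\tfrac{mn}{m+n}$, $\tfrac{m+n-mn}{m+n}$, $\tfrac{m^2}{m+n}$, $\tfrac{n^2}{m+n}$ in cases 1, 2, 3a, 3b of Lemma \ref{lemmaphiprimebound}) is correct, as is your identification of why $(m,n)=(2,2)$ is excluded. But the crux of your argument --- that a local minimum of $\phi$ forces the template to traverse $E$ for a time $\gtrsim\phi(t^*)$ --- is false whenever $m,n\geq 2$. In that regime case 2 also has $\phi'<0$, and a turnaround can be a \emph{direct} transition from case 2 to case 3a, instantaneous and at arbitrary height. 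Concretely, for $(m,n)=(2,3)$ take $f_1=f_2<0<f_3=f_4=f_5$ with slopes $\tfrac1{12},\tfrac1{12},-\tfrac1{18},-\tfrac1{18},-\tfrac1{18}$ (case 2, $\phi'=-\tfrac15$), and at $t^*$ let $f_1,f_2$ split with $f_1'=-\tfrac13$, $f_2'=\tfrac12$ while $f_3=f_4=f_5$ keep slope $-\tfrac1{18}$ (case 3a, $\phi'=\tfrac45$). The convexity and quantized slope conditions hold across $t^*$ (e.g.\ $F_2'$ stays equal to $\tfrac16$), equality holds in \eqref{phiprimebound} on both sides, so no time near $t^*$ lies in $E$, yet $\phi$ has a genuine local minimum there of whatever height you like. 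Your convexity obstruction (no downward jump of $f_1'$ on an interval where $f_1<f_2$) only rules out instantaneous exits from case 1, not from case 2; and the reduction ``I may assume $\phi(t^*)\le 2x$'' is also unjustified, since nothing forces $\phi$ to visit a neighborhood of its floor.

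The paper's proof is organized around exactly this difficulty. It splits according to whether some case-1 interval lies to the left of a case-2/3a/3b interval. If so, the conclusion drawn is not a long sojourn in $E$ but rather that $\phi$ at the transition is $\lesssim\lambda(E)$: the convexity condition forces each consecutive pair $f_j,f_{j+1}$ (for the relevant indices) to have been equal somewhere in the gap between the two intervals, and that gap is contained in $E$; since $\phi\ge x$, this yields $\lambda(E)\gtrsim x$. If not --- which is precisely the scenario where case-2 and case-3 stretches can alternate and your turnaround claim breaks --- the paper introduces the auxiliary quantity $\psi=\bigl|m|f_1|-n|f_d|\bigr|$, which vanishes on case-2 intervals and grows at a definite rate on case-3a/3b intervals when $m,n\ge2$, to show that the case-3 time preceding the last case-2 time has measure $O(\lambda(E))$; then the nonvanishing of $\phi'$ on case-2 intervals (this is where $(m,n)\neq(2,2)$ enters) bounds $|I|$ by $O(1)+O(\lambda(E))$, which again gives the lemma for $|I|$ large. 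Your proposal contains no counterpart to this second mechanism, and without it the argument cannot handle templates that zigzag between case 2 and case 3 configurations without ever entering $E$ or dipping near $\phi=x$.
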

\begin{proof}
Let $y \df \lambda(E)$; we need to show that either $\phi(t) \lesssim y$ for some $t\in I$, or else $|I| = O(1)$.

Throughout this proof, we will call an interval $J$ a Type 1 interval if case 1 of Lemma \ref{lemmaphiprimebound} holds along it; we define Type 2/3a/3b intervals similarly. The basic idea is to reduce to the case of a Type 2 interval to the left of a Type 3 interval to the left of a Type 1 interval, modulo a small perturbation. Since $\ff$ cannot be static on any interval of fixed Type, the bound on $\phi$ implies a bound on the length of each interval and thus on the length of the whole interval $I$. The proof now splits into two cases.

{\bf Case 1}: Suppose first that there is some Type 1 interval which is to the left of a Type 2/3a/3b interval. Without loss of generality, we may assume that there are no Type (1/2/3a/3b) intervals between them. It follows that if the two intervals are $I_1 = (t_1,t_2)$ and $I_2 = (t_3,t_4)$, respectively, then we have $0 \leq t_3 - t_2 \leq y$.

If $I_2$ is Type 2, then $f_1(t_3) = \ldots = f_m(t_3)$ and $f_{m+1}(t_3) = \ldots = f_{m+n}(t_3)$. On the other hand, by the convexity condition we have $f_m(s) = f_{m+1}(s)$ for some $s\in [t_2,t_3]$. It follows that $|f_{m+1}(t_3) - f_m(t_3)| \lesssim y$ and thus $\phi(t_3) \asymp |\ff(t_3)| \lesssim y$.

If $I_2$ is Type 3a, then $m|f_1(t_3)| \geq n|f_d(t_3)|$. On the other hand, by the convexity condition, for each $j = 1,\ldots,m$ there exists $s_j\in [t_2,t_3]$ such that $f_j(s_j) = f_{j+1}(s_j)$. It follows that $|f_{j+1}(t_3) - f_j(t_3)| \lesssim y$, so
\begin{align*}
(m+1)|f_1(t_3)| &= -\sum_{i=1}^{m+1} f_i(t_3) + O(y) = \sum_{i=m+2}^{m+n} f_i(t_3) + O(y)\\
& \leq n|f_d(t_3)| + O(y) \leq m|f_1(t_3)| + O(y)
\end{align*}
and thus $\phi(t_3) \asymp |\ff(t_3)| \lesssim y$. A similar argument applies if $I_2$ is Type 3b.

{\bf Case 2}: On the other hand suppose that no Type 1 interval is to the left of any Type 2/3a/3b interval. Now let
\[
\psi(t) \df \big|m|f_1(t)| - n|f_d(t)|\big|
\]
and let $J$ be a Type 2/3a/3b interval. If $J$ is Type 2, then $\psi=0$ on $J$ and thus $\psi' = 0$. Suppose that $J$ is Type 3a. Then on $J$ we have $m|f_1| \geq n|f_d|$, $f_1' = -\frac1n$, and $f_d' \leq \frac1{n(m+n-1)}$, from which it follows that
\[
\psi' \geq c_{m,n} \df \frac mn - \frac{1}{m+n-1} \cdot
\]
Note that $c_{m,n} > 0$ unless $m=1$, in which case $c_{m,n} = 0$. Similar logic shows that if $J$ is Type 3b, then $\psi' \geq c_{n,m}$ on $J$.

Now let $A_i$ denote the union of the Type $i$ intervals in $I$. Note that $A_1\cup A_2\cup A_3 \cup E = I$ except for finitely many points. We can assume that $t_0 \df \sup(A_2) \leq t_1 \df \inf(A_1)$ and $\sup(A_3) \leq t_1$, as otherwise we are in Case 1 and we are done by the preceding argument. Since $t_0$ is the endpoint of a Type 2 interval, we have $\psi(t_0) = 0$. On the other hand, we have
\[
\psi(t_0) \geq \int_{t_-}^{t_0} \psi'(t) \;\dee t \geq c_{m,n}\lambda\big([t_-,t_0]\cap A_{3\mathrm a}\big) + c_{n,m}\lambda\big([t_-,t_0]\cap A_{3\mathrm b}\big) - O(y)
\]
so we have $\lambda\big([t_-,t_0]\cap A_{3\mathrm a}\big) = O(y)$ if $m\geq 2$ and $\lambda\big([t_-,t_0]\cap A_{3\mathrm b}\big) = O(y)$ if $n\geq 2$, respectively. On the other hand, if $m=1$ then $A_{3\mathrm a} = A_2$ and if $n=1$ then $A_{3\mathrm b} = A_2$. Consequently
\[
\lambda\big([t_-,t_0]\cap A_3\butnot A_2\big) = O(y)
\]
and thus since $\phi' = \dimsing - (mn-1)$ on $A_2$, we have
\begin{align*}
0&\asymp_\plus \phi(t_0) - \phi(t_-) = \int_{t_-}^{t_0} \phi'(t) \;\dee t\\
&= [\dimsing - (mn-1)]\lambda\big([t_-,t_0]\cap A_2\big) - O\left(\lambda\big([t_-,t_0]\cap E \cup A_3 \butnot A_2\right)\\
&= \left(1 - \frac{mn}{m+n}\right) (t_0 - t_-) - O(y)
\end{align*}
Since $(m,n)\neq (2,2)$ by assumption, we have $(m-1)(n-1)\neq 1$ and thus
\[
1 - \frac{mn}{m+n} \neq 0
\]
and thus $t_0 - t_- = O(1)$. Similarly, since $\phi' = \dimsing - (mn-m) = \frac{m^2}{m+n}$ on $A_{3\mathrm a}$ and $\phi' = \dimsing - (mn-n) = \frac{n^2}{m+n}$ on $A_{3\mathrm b}$, and since $(t_0,t_1) \subset A_3\cup E$, we have
\[
0 \gtrsim_\plus \frac{\min(m,n)^2}{m+n} (t_1 - t_0) - O(y).
\]
Since $\phi' = \dimsing - mn = -\frac{mn}{m+n}$ on $A_1$, and since $(t_1,t_+) \subset A_1\cup E$, we have
\[
0 \asymp_\plus -\frac{mn}{m+n} (t_+-t_1) + O(y).
\]
Thus $t_1 - t_0 = O(1)$ and $t_+ - t_1 = O(1)$, so combining gives $|I| = t_+ - t_- = O(1)$.
\end{proof}

Let $C > 0$ be the constant such that Lemma \ref{lemmanonstatic} is true whenever $|I| \geq C$. Notice that any partial template whose domain has length $\geq C$ can be split up into partial templates whose domains have length $=C$ which cover the majority of the original domain. It follows that in the context of Lemma \ref{lemmanonstatic}, in general we have
\[
\lambda(E) \gtrsim x |I| \text{ as long as } |I| \geq C,
\]
where $I$ is the domain of a partial template $\ff$ satisfying $x \leq \phi \leq 1$. Applying a scaling argument yields:

\begin{lemma}
\label{lemmanonstatic2}
Suppose that $(m,n)\neq (2,2)$, and fix $0 < x_0 \leq x_1$ and $I\subset \R$ such that $|I| \geq C x_1$. Let $\ff:I\to\R^d$ be a partial template such that $x_0 \leq \phi(t) \leq x_1$ for all $t\in I$. Then
\[
\lambda(E) \gtrsim \frac{x_0 |I|}{x_1}\cdot
\]
\end{lemma}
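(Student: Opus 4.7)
My plan is to deduce Lemma \ref{lemmanonstatic2} from the strengthened form of Lemma \ref{lemmanonstatic} (valid for any $|I|\geq C$, not only $|I|=C$) by a direct rescaling argument. The idea is to rescale both the time variable and the template values by the factor $x_1$: given $\ff:I\to\R^d$ as in the statement, I would define
\[
\tilde\ff(t) \df \tfrac{1}{x_1} \ff(x_1 t), \qquad t\in \tilde I \df \tfrac{1}{x_1} I.
\]
The interval $\tilde I$ has length $|\tilde I|=|I|/x_1\geq C$, so it is long enough that the strengthened version of Lemma \ref{lemmanonstatic} applies to $\tilde\ff$ as long as $\tilde\ff$ is a partial template with the right bounds on its potential.

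First I would check that the rescaling is template-preserving. Since $\tilde f_i'(t) = f_i'(x_1 t)$, the derivatives are unchanged, so conditions (I) and (II) of Definition \ref{definitiontemplate} transfer from $\ff$ to $\tilde\ff$. For condition (III), I would observe that $\tilde F_j(t) = \tfrac{1}{x_1} F_j(x_1 t)$ is convex whenever $F_j$ is, and the slopes of $\tilde F_j$ coincide with those of $F_j$ and so still lie in $Z(j)$. Hence $\tilde\ff$ is a partial template on $\tilde I$.

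Next I would compare potentials and exceptional sets. From the definition \eqref{phidef}, since $\phi$ is homogeneous of degree one in the components of $\ff$, we get $\tilde\phi(t) = \tfrac{1}{x_1}\phi(x_1 t)$. The hypothesis $x_0\leq \phi\leq x_1$ on $I$ thus becomes $x_0/x_1\leq \tilde\phi\leq 1$ on $\tilde I$. Similarly, because the combinatorial quantity $\delta(\ff,t)$ depends only on the pattern of derivatives of $\ff$, we have $\delta(\tilde\ff,t)=\delta(\ff,x_1 t)$, and combined with $\tilde\phi'(t)=\phi'(x_1 t)$ this gives $\tilde E = \tfrac{1}{x_1} E$ (for the exceptional set defined in \eqref{exceptional}), whence $\lambda(\tilde E)= \lambda(E)/x_1$.

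Finally I would apply the strengthened Lemma \ref{lemmanonstatic} to $\tilde\ff$ with parameter $x=x_0/x_1$ on $\tilde I$, obtaining
\[
\lambda(\tilde E) \gtrsim \frac{x_0}{x_1}\,|\tilde I| = \frac{x_0\,|I|}{x_1^2},
\]
and multiplying through by $x_1$ yields $\lambda(E)\gtrsim x_0|I|/x_1$, as required. The one point that needs a careful sentence is the strengthened form of Lemma \ref{lemmanonstatic}: the paper indicates this comes from partitioning the domain into subintervals of length exactly $C$ (discarding at most one short remainder), applying the original lemma on each, and summing, which is the only step that needs spelling out; the scaling itself is otherwise routine.
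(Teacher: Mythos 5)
Your proposal is correct and follows essentially the same route as the paper, which obtains the strengthened bound $\lambda(E)\gtrsim x|I|$ for $|I|\geq C$ by splitting the domain into length-$C$ pieces and then simply invokes ``a scaling argument''; your rescaling $\tilde\ff(t)=\tfrac1{x_1}\ff(x_1t)$ is exactly that argument, and your checks (slopes, hence template conditions and $\delta(\cdot,\cdot)$, are scale-invariant; $\phi$ is $1$-homogeneous; $\tilde E=\tfrac1{x_1}E$) are the right ones.
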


Now fix $T$ large, let $I = [T/2,T]$, and let $x_0 = \inf_I \phi$, $x_1 = \sup_I \phi$. Since $\ff$ is $\tau$-singular we have $x_0 \gtrsim \tau T >0$, while by \eqref{phibound} we have $x_1 = O(zT)$. In particular, if $z$ is sufficiently small then $T \geq Cx_1$. Consequently, by Lemma \ref{lemmanonstatic2} and \eqref{exceptionalsmall},
\[
\tau T^2 = O(x_0 T) = O\big(x_1 \lambda(E\cap I)\big) = O(zT)^2,
\]
which implies $\tau = O(z^2)$.

It follows that if $\ff$ is a $\tau$-singular template, then $\underline\delta(\ff) \leq \dimsing - \Theta(\sqrt\tau)$, since otherwise we can take $z = 2(\dimsing - \underline\delta(\ff))$ and apply the above argument. Taking the supremum over $\ff$ and applying Theorem \ref{theoremvariational4} shows that
\[
\HD\left(\Sing_\dims(\tau)\right) \leq \dimsing - \Theta(\sqrt\tau) \text{ if } (m,n) \neq (2,2).
\]
When $(m,n) = (2,2)$, the upper bound for Hausdorff dimension follows from the upper bound for packing dimension which we proved in \6\ref{subsectionmainupperbound}.

\section{Proof of Theorem \ref{theoremhsmall}, second formula, lower bound for Hausdorff dimension}
\label{subsectionhsmallHDgeq}

In this proof, we will employ a variant of the notion of a standard template defined by two parameters, as in Definition \ref{definitionstandardtemplate2parameter}, by introducing a third parameter.

Let $m=n=2$, and fix $0 < \tau < \frac1n = \frac12$. Fix $\lambda > 1$ and let $t_k = \lambda^k$ and $\epsilon_k = \tau t_k$. However, rather than letting $\ff = \ff[\tau,\lambda]$ (as in Definition \ref{definitionstandardtemplate2parameter}), we will introduce a new parameter $\gamma \in [1 + 6\tau + 2\lambda\tau,\lambda]$. We define $\ff$ as follows:
\begin{itemize}
\item On $[1,\gamma]$, we have $\ff = \mbf s[(1,-\tau),(\gamma,-\lambda\tau)]$. Note that \eqref{standardtemplate3} is satisfied due to the lower bound on $\gamma$.
\item Extend $\ff$ to $[\gamma,\lambda]$ via the requirement that $\ff$ is constant on $[\gamma,\lambda]$: $f_1 = f_2 = -\lambda\tau$ and $f_3 = f_4 = \lambda\tau$ on $[\gamma,\lambda]$.
\item Extend $\ff$ to $[0,\infty)$ via exponential equivariance\Footnote{Note that we form infinitely many periods when extending $\ff$ backwards from $1$ to $0$, and so $\ff$ now has infinitely many intervals of linearity in $[0,1]$. However, this does not cause any problems in what follows.}, i.e. so that $\ff(\lambda t) = \lambda \ff(t)$ for all $t \geq 0$.
\end{itemize}
For simplicity of calculation, we set $\gamma = 1 - 2\tau + 10\lambda\tau$ (this is possible as long as $1 - 2\tau + 10\lambda\tau \leq \lambda$), since this means that $\ff$ has only three intervals of linearity on $[1,\gamma]$ (otherwise $\ff$ has four intervals of linearity on $[1,\gamma]$):
\[
\ff'(t) = \begin{cases}
(-\tfrac12,\tfrac12,0,0) & 1 < t < 1 + 4\tau\\
(-\tfrac12,\tfrac16,\tfrac16,\tfrac16) & 1 + 4\tau < t < 1 - 2\tau + 6\lambda\tau\\
(\tfrac12,-\tfrac12,0,0) & 1 - 2\tau + 6\lambda\tau < t < 1 - 2\tau + 10\lambda\tau = \gamma
\end{cases}
\]
(cf. Figure \ref{figure2x2}). It follows that
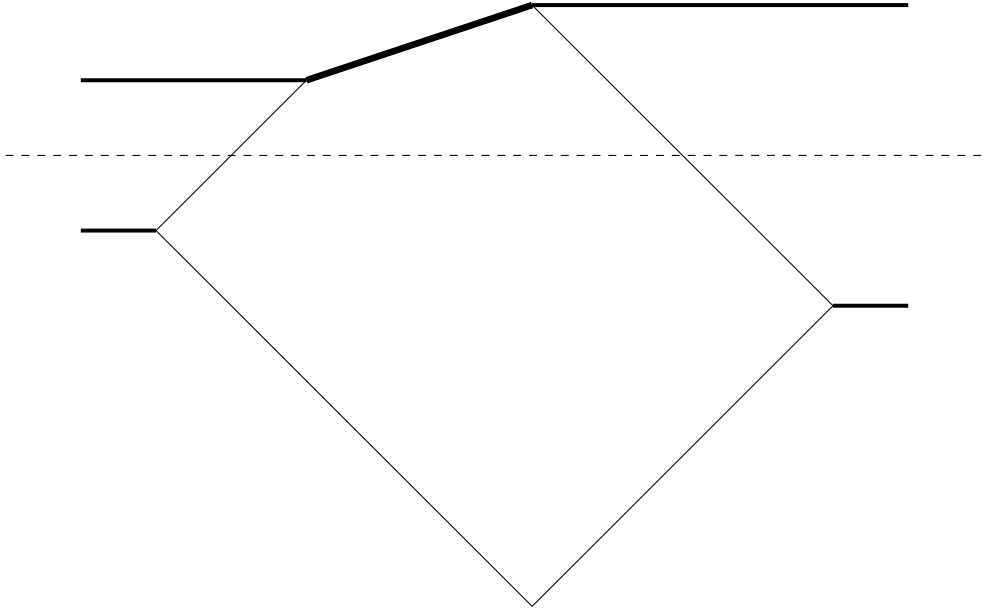
\begin{figure}
\begin{tikzpicture}
\clip(-1,-7) rectangle (12,3);
\draw[dashed] (-1,0) -- (12,0);
\draw[line width=1.5] (0,1)--(3,1);
\draw[line width=2.5] (3,1)--(6,2);
\draw[line width=1.5] (6,2) -- (11,2);
\draw[line width=1.5] (0,-1)--(1,-1);
\draw (1,-1) -- (3,1);
\draw (1,-1) -- (6,-6);
\draw (6,-6) -- (10,-2);
\draw (6,2) -- (10,-2);
\draw[line width=1.5] (10,-2) -- (11,-2);
\end{tikzpicture}
\caption{A period of an exponentially equivariant $2\times 2$ template, with $\gamma = 1 - 2\tau + 10\lambda\tau$. Here a template is called exponentially equivariant if it is equal to a scaled copy of itself; the ``period'' is an interval which is long enough to recover the template from this self-similarity property.}
\label{figure2x2}
\end{figure}
\[
\delta(\ff,t) = \begin{cases}
2 & 1 < t < 1 - 2\tau + 6\lambda\tau\\
3 & 1 - 2\tau + 6\lambda\tau < t < \lambda
\end{cases}
\]
and thus if we let $r = 1-2\tau+6\lambda\tau$, then the minima of the exponentially $\lambda$-periodic\Footnote{Meaning that $\Delta(\ff,\lambda T) = \Delta(\ff,T)$ for all $T > 0$.} function $\Delta(\ff,\cdot)$ occur at $\lambda^k r$ for $k\in\Z$. It follows that
\begin{align*}
\underline\delta(\ff) &= \Delta(\ff,r)
= \Delta(\ff,[\lambda^{-1} r,r])\\
&= \frac{3(1 - \lambda^{-1} r) + 2 (r - 1)}{r - \lambda^{-1} r}
= 3 - \frac{r - 1}{r - \lambda^{-1} r}\\
&= 3 - \frac{6\lambda \tau - 2\tau}{(1 - \lambda^{-1})(1 - 2\tau + 6\lambda\tau)}\\
&= 3 - \Theta(\tau),
\end{align*}
where the implied constant of $\Theta$ can depend on $\lambda$. This completes the proof. Note that as in \6\ref{subsectionmainlowerbound}, one can optimize over the parameter $\lambda$ to get the best possible bound using templates of this form, but we omit the required calculations. 

\section{Proof of Theorem \ref{theorempacking}, lower bound}
\label{subsectionPDgeq}

We consider a two-parameter standard template $\ff[\tau,\lambda]$ (as in Definition \ref{definitionstandardtemplate2parameter}). Fix $0 < \tau < 1/n$, such that $\tau < \frac{m-1}{2m}$ if $n=1$. Now if $\lambda$ is sufficiently large, then \eqref{standardtemplateB}-\eqref{standardtemplate3B} are satisfied (the left half of \eqref{standardtemplate3B} if $n\geq 2$, and the right half if $n=1$), and thus there is a standard template $\ff = \ff_\lambda = \ff[\tau,\lambda]$ defined by the sequence of points $(t_k,-\epsilon_k)_0^\infty$, where $t_k = \lambda^k$ and $\epsilon_k = \tau t_k$.

\begin{claim}
\label{claimuppercontractionlimit}
Let $\gg \df \mbf s[(0,0),(1,-\tau)]$ (as in Definition \ref{definitionstandardtemplate}). As $\lambda\to\infty$, the upper average contraction rate of $\ff_\lambda$ tends to
\begin{equation}
\label{deltataupackingdef}
\begin{split} 
\sup_{0 < T \leq 1} \Delta(\gg,T) &= \overline\delta_\dims(\tau)\\ 
&\df \max\left(mn - m, \; \dimsing - \frac{mn}{m+n}(d+m)\tau, \; mn - \frac{mn}{m+n}\frac{1+m\tau}{1-\frac{mn}{m-1}\tau}\right).
\end{split}
\end{equation}
\end{claim}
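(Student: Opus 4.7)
The claim splits into (a) the convergence $\overline\delta(\ff_\lambda) \to \sup_{0 < T \leq 1} \Delta(\gg, T)$ as $\lambda \to \infty$, and (b) the evaluation of this supremum as $\overline\delta_\dims(\tau)$. For (a), the crucial input is the exponential $\lambda$-equivariance $\ff_\lambda(\lambda t) = \lambda \ff_\lambda(t)$ from Definition \ref{definitionstandardtemplate2parameter}. Since $\delta(\cdot,t)$ depends only on the slopes and the equality pattern of the components of $\ff$ at $t$, and both are invariant under the rescaling $\ff \mapsto \lambda^{-1}\ff(\lambda \cdot)$, we have $\delta(\ff_\lambda, \lambda t) = \delta(\ff_\lambda, t)$ a.e. Changing variables yields the multiplicative periodicity $\Delta(\ff_\lambda, \lambda T) = \Delta(\ff_\lambda, T)$, hence $\overline\delta(\ff_\lambda) = \sup_{S \in [1/\lambda, 1]} \Delta(\ff_\lambda, S)$. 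The restriction of $\ff_\lambda$ to $[1/\lambda, 1]$ is precisely the standard template $\mbf s[(1/\lambda, -\tau/\lambda), (1, -\tau)]$, whose breakpoints and slopes (being algebraic functions of its endpoints) converge uniformly on compacta of $(0, 1]$ to those of $\gg = \mbf s[(0, 0), (1, -\tau)]$. Because $\int_0^{1/\lambda} \delta \, du = O(1/\lambda)$ and the drift of breakpoints contributes $O(1/\lambda)$ to any integral of $\delta$, this yields $\Delta(\ff_\lambda, S) = \Delta(\gg, S) + O(1/(S\lambda))$ uniformly on $S \in [1/\lambda, 1]$. A short case split --- pointwise convergence when $S$ is bounded below, and a reduction via equivariance to a convex combination of $\Delta(\gg, 1)$ and the Phase I value $mn - m$ when $S$ is of order $1/\lambda$ --- yields (a).

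For (b), we analyze the piecewise structure of $\gg$ from Definition \ref{definitionstandardtemplate}: the auxiliary $g_1$ is V-shaped with vertex at $(t_*, -(1+m\tau)/(m+n))$ where $t_* = (n + mn\tau)/(m+n)$; $g_2$ is inverse-V-shaped with vertex at $(t^{**}, (1-n\tau)/(m+n))$ where $t^{**} = (m - mn\tau)/(m+n)$; and $g_3 = \cdots = g_d = -(g_1 + g_2)/(d-2)$. The definitional switch from the regime $f_1 = g_1$, $f_2 = \cdots = f_d = -g_1/(d-1)$ to the regime $f_1 = g_1$, $f_2 = g_2$, $f_3 = \cdots = f_d = g_3$ occurs at the unique time $t_{\mathrm{trans}} \in (0, 1)$ where $g_1 + (d-1)g_2 = 0$. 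Using the \emph{physics} interpretation (Figure \ref{figuredimtemplate}) together with \eqref{Splusdef1}--\eqref{dimfI}, one reads off $\delta(\gg, \cdot)$ on each interval of linearity: it equals $mn - m$ in the initial Phase I (isolated downward $f_1$ against the stuck bulk $f_2 = \cdots = f_d$), $mn$ in the intermediate Phase II (isolated upward $f_1$ against the bulk), and strictly smaller explicit values in the Phase III sub-intervals where $g_2$ has split from the bulk. Because $\delta(\gg, \cdot)$ is piecewise constant, $T \mapsto \Delta(\gg, T)$ is piecewise of the form $(a + bT)/T$, hence monotonic between breakpoints, and its supremum is attained at one of finitely many candidate points. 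Direct evaluation produces exactly three distinct candidates: $mn - m$ (attained throughout Phase I), $\dimsing - \tfrac{mn}{m+n}(d+m)\tau$ (the value $\Delta(\gg, 1)$), and $mn - \tfrac{mn}{m+n} \cdot \frac{1+m\tau}{1 - \frac{mn}{m-1}\tau}$ (from the breakpoint in Phase III). Their maximum is $\overline\delta_\dims(\tau)$, and elementary algebra recovers the crossover thresholds $\tau_1, \tau_2$ of the remark following Theorem \ref{theorempacking}.

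The main technical obstacle lies in the bookkeeping of (b): the relative ordering of $t_*$, $t^{**}$, $t_{\mathrm{trans}}$ depends on $\tau$, so Phase III splits into several sub-configurations, each with its own slope pattern and thus requiring a separate $S_\pm$ and $\delta$ computation via \eqref{Splusdef1}--\eqref{dimfI}. The borderline case $n = 1$ also needs separate treatment, since there $g_1 + (d-1)g_2 \equiv 0$ on an initial segment (so Phase I degenerates into $\ff = (g_1, g_2, g_2, \ldots, g_2)$ from the start); admissibility of $\ff_\lambda$ in that case further relies on the extra hypothesis $\tau \leq (m-1)/(2m)$ inherited from \eqref{standardtemplate2B}.
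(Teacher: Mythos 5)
Your proposal follows essentially the same route as the paper: reduce to one multiplicative period by exponential equivariance, pass to the limit template $\gg=\mbf s[(0,0),(1,-\tau)]$, then exploit the piecewise constancy of $\delta(\gg,\cdot)$ and the resulting monotonicity of $T\mapsto\Delta(\gg,T)$ between breakpoints to evaluate the supremum at finitely many points. Your part (a), with the period $[1/\lambda,1]$ and the $O(1/(S\lambda))$ error plus the convex-combination argument for $\lambda S=O(1)$, is a harmless variant of the paper's comparison on $[\gamma,\lambda\gamma]$ with $\gg$ extended past $1$ (and your convex-combination step implicitly uses the same fact the paper states, namely that $\delta=mn-m$ persists just after each period endpoint).

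The one point where your sketch, as written, does not quite close is the large-$\tau$ regime in part (b). Your ``Phase I--II--III'' picture with an intermediate interval of $\delta=mn$ is only the configuration $\tau\leq\frac{m-1}{n(d+m-1)}$ (the paper's case 2). For $\tau\geq\frac{m-1}{n(d+m-1)}$ the transition time $t_{\mathrm{trans}}$ precedes $t_*$, Phase II is empty, and the interval $(t_{\mathrm{trans}},t_*)$ carries $\delta=mn-2m$; consequently ``direct evaluation at the breakpoints'' does \emph{not} produce the candidate $mn-\frac{mn}{m+n}\cdot\frac{1+m\tau}{1-\frac{mn}{m-1}\tau}$ there (the middle breakpoint value is a different, strictly smaller quantity, and the displayed expression may even have nonpositive denominator). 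To recover the stated formula \eqref{deltataupackingdef} in that regime you must add the separate verification that when $\tau\geq\frac{m-1}{n(d+m-1)}$ the third term of the maximum is $\leq mn-m$, so that including it in the max is harmless; this is exactly the check the paper makes at the end of its proof (together with the convention that this term is ignored when $m=1$, a degenerate case your sketch does not mention, whereas you do flag $n=1$). With that case-1 comparison added, your argument matches the paper's.
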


\begin{proof}
Indeed, first let $\gamma > 0$ be small enough so that $\gg' = (-\tfrac1n,\tfrac1{n(d-1)},\ldots,\tfrac1{n(d-1)})$ on $(0,2\gamma)$; the definition of $\gg$ guarantees that such $\gamma$ exists. Since $\ff_\lambda$ is exponentially $\lambda$-equivariant  and since $[\gamma,\lambda\gamma]$ is a period of $\ff_\lambda$ we have
\[
\overline\delta(\ff_\lambda) = \sup_{T\in [\gamma,\lambda\gamma]} \Delta(\ff_\lambda,T).
\]
Next, we extend $\gg$ to $\Rplus$ by stipulating that $g_1' = -\tfrac1n$ on $\CO 1\infty$ and then defining $g_2,\ldots,g_d$ on $\CO 1\infty$ in the same way as for standard templates (as in Definition \ref{definitionstandardtemplate}). Now since $\ff_\lambda' \to \gg'$ almost everywhere as $\lambda \to \infty$, it follows that $\Delta(\ff_\lambda,\cdot)\to \Delta(\gg,\cdot)$ uniformly on $[\gamma,\lambda\gamma]$, i.e. for every $\epsilon > 0 $ there exists $\lambda_0$ such that for all $\lambda \geq \lambda_0$ we have $|\Delta(\ff_\lambda,\cdot) - \Delta(\gg,\cdot)| < \epsilon$ on $[\gamma,\lambda\gamma]$. Thus, we have that 
\[
\lim_{\lambda\to\infty} \overline\delta(\ff_\lambda) = \sup_{T\in \CO\gamma\infty} \Delta(\gg,T).
\]
But since $\delta(\gg,t) = mn-m$ for all $t\in [0,\gamma] \cup \CO1\infty$, it follows that $\Delta(\gg,T) \leq \max(mn-m,\Delta(\gg,1)) = \max(\Delta(\gg,\gamma),\Delta(\gg,1))$ for all $T\in [0,\gamma] \cup \CO1\infty$, and thus
\[
\sup_{T\in \CO\gamma\infty} \Delta(\gg,T) = \sup_{0 < T \leq 1} \Delta(\gg,T).
\]
To complete the proof, we need to show that \eqref{deltataupackingdef} holds, i.e. that $\sup_{0 < T \leq 1} \Delta(\gg,T) = \overline\delta_\dims(\tau)$. Indeed, from the definition of $\gg$, it follows that there exist intervals $I_i = (t_i,t_{i+1})$, $i=0,1,2$, with $t_0 = 0$, $t_3 = 1$, as follows:

\begin{table}[h!]
\begin{tabular}{|l|l|l|l|}
\hline
& $(g_1',g_2')$ & $S_+(\gg,\cdot)$ & $mn-\delta(\gg,\cdot)$\\
\hline
$I_0$ & $(-\tfrac1n,\tfrac1{n(d-1)})$ & $\{2,\ldots,m+1\}$ & $m$\\
$I_1$ (case 1) & $(-\tfrac1n,-\tfrac1n)$ & $\{3,\ldots,m+2\}$ & $2m$\\
$I_1$ (case 2) & $(\tfrac1m,-\tfrac1{m(d-1)})$ & $\{1,\ldots,m\}$ & $0$\\
$I_2$ & $(\tfrac1m,-\tfrac1n)$ & $\{1,3,\ldots,m+1\}$ & $m-1$\\
\hline
\end{tabular}
\hspace{0.2 in}
\caption{Two cases for the intervals of linearity of $\gg$. See Figure \ref{2cases}.}
\label{tablestandardzerotemplate}
\end{table}

\begin{figure}[h!]
\begin{tikzpicture}
\clip(-1,-5) rectangle (7,2);
\draw[dashed] (-1,0) -- (7,0);
\draw (0,0) -- (4,-4);
\draw[line width=2.5] (0,0) -- (3,1);
\draw (3,1) -- (6,-2);
\draw (4,-4) -- (6,-2);
\draw[line width=2] (3,1) -- (6,1);
\end{tikzpicture}
\begin{tikzpicture}
\clip(-1,-5) rectangle (7,2);
\draw[dashed] (-1,0) -- (7,0);
\draw (0,0) -- (3,-3);
\draw[line width=2.5] (0,0) -- (3,1);
\draw (3,-3) -- (5.5,-0.5);
\draw[line width=2.5] (3,1) -- (4.5,0.5);
\draw (4.5,0.5) -- (5.5,-0.5);
\draw[line width=2] (4.5,0.5) -- (5.5,0.5);
\end{tikzpicture}
\caption{The joint graph of $\gg$ in Case 1 and Case 2, respectively. Note that the slope of the last top segment may be either negative or positive according to whether $m<n$ or $m>n$, respectively (in the picture we have $m=n$ which corresponds to a horizontal slope).}
\label{2cases}
\end{figure}
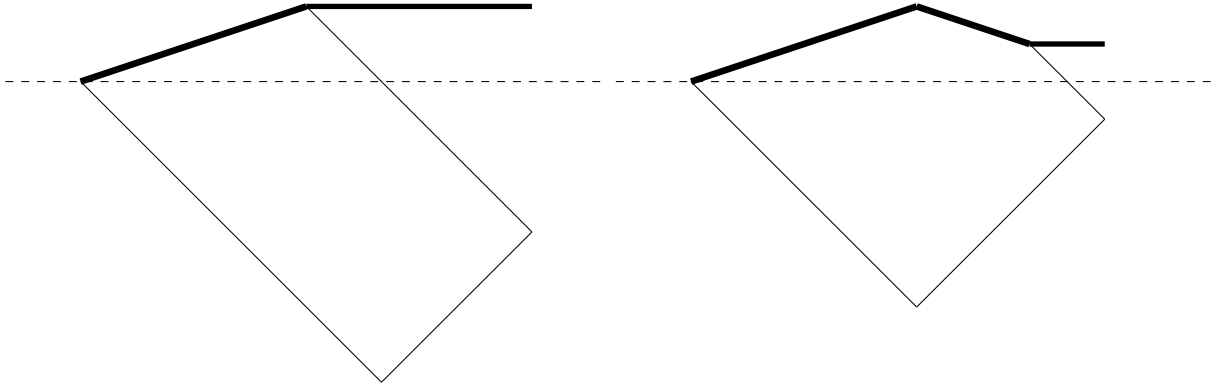

\noindent Here case 1 holds when $\tau \geq \frac{m-1}{n(d+m-1)}$, while case 2 holds when $\tau \leq \frac{m-1}{n(d+m-1)}$. (When equality holds, $I_1$ is empty and so the cases are compatible.) Now let $0 < T\leq 1$ be maximal such that $\Delta(\gg,\cdot)$ attains its maximum at $T$. Then $\delta(\gg,t) \geq \Delta(\gg,T)$ for $t$ slightly less than $T$, while $\delta(\gg,t) < \Delta(\gg,T)$ for $t$ slightly greater than $T$. Thus $T = t_i$ for some $i=1,2,3$. But if case 1 holds, then $\Delta(\gg,t_2) < mn-m = \Delta(\gg,t_1)$, so if $T=t_2$ then case 2 holds. Now it can be checked by direct calculation\Footnote{The calculation of $\Delta(\gg,t_3)$ is somewhat tedious and it is easier to use the equality case of Lemma \ref{lemmapsiprimebound} below instead of performing a direct computation, since $\psi_\gg(1) = \frac{mn}{m+n}(d+m)\tau$. Some other formulas useful for the calculations: when case 2 of Table \ref{tablestandardzerotemplate} holds we have $t_1 = \tfrac n{m+n}(1+m\tau)$ and $t_2 = 1 - \frac{mn}{m-1}\tau$.} that
\begin{align*}
\Delta(\gg,t_1) &= mn - m,\\
\Delta(\gg,t_2) &= mn - \frac{mn}{m+n}\frac{1+m\tau}{1-\frac{mn}{m-1}\tau} \text{ if case 2 holds},\\
\Delta(\gg,t_3) &= \dimsing - \frac{mn}{m+n} (d+m)\tau,
\end{align*}
which implies \eqref{deltataupackingdef}, since if $\tau \geq \frac{m-1}{n(d+m-1)}$ then 
\[
mn - \frac{mn}{m+n} \cdot \frac{1+m\tau}{1-\frac{mn}{m-1}\tau} \leq mn-m,
\] 
and thus when case 1 holds, the last term on the right-hand side of \eqref{deltataupackingdef} does not contribute to the maximum\Footnote{Note that when $m=1$, case 1 holds for all $\tau\geq 0$ and thus again the last term on the right-hand side of \eqref{deltataupackingdef} can be ignored.}. This concludes the proof of the claim.
\end{proof}

Applying the variational principle (Theorem \ref{theoremvariational2}) to $\ff_\lambda$ gives us that 
\[
\PD(\Sing_\dims(\tau)) \geq \lim_{\lambda\to\infty} \overline\delta(\ff_\lambda) = \sup_{0 < T \leq 1} \Delta(\gg,T) = \overline\delta_\dims(\tau).
\]

This completes the proof of the lower bound in Theorem \ref{theorempacking}.

\section{Proof of Theorem \ref{theorempacking}, upper bound when $n \geq 2$}
\label{subsectionpsi}

To prove the upper bound when $n \geq 2$ in Theorem \ref{theorempacking}, i.e. equality holds in \eqref{packinginequality}, we need a different definition of ``potential energy'' (cf. Section \ref{subsectionmainupperbound}). Let $\ff:\Rplus\to\R^d$ be a balanced $m\times n$ template. For each $t\geq 0$ let
\[
\psi(t) = \psi_\ff(t) \df \max\left(\frac{\pdim \qdim}{\dimsum}\big|(m+1)f_1(t) + (d-1)f_2(t)\big|,\frac{\pdim \qdim^2}{\dimsum} |f_d(t)|\right).
\]
Note that since $\ff$ is balanced,
\[
(m+1)f_1(t) + (d-1)f_2(t) \leq (m+1)f_1(t) + f_2(t) + \ldots + f_d(t) = m f_1(t) \leq 0
\]
and thus $\psi(t) \geq \phi(t) \geq 0$ for all $t \geq 0$. The analogous result to Lemma \ref{lemmaphiprimebound} is stated as follows:

\begin{lemma}
\label{lemmapsiprimebound}
Suppose that $n\geq 2$. Let $I$ be an interval of linearity for $\ff$ such that $\psi'(t)$ is well-defined for all $t\in I$, and such that $\ff(t) \neq \0$ for all $t\in I$. Then
\begin{equation}
\label{psiprimebound}
\psi'(t) \leq \dimsing - \delta(\ff,t)
\end{equation}
for $t\in I$. Equality holds in the following (non-exhaustive) cases:
\begin{itemize}
\item[1.] when $f_1 < f_2 = f_d$ on $I$,
\item[2.] when $f_1 < f_2< f_3 = f_d$, and $f_2' = -1/n$ on $I$.
\end{itemize}
\end{lemma}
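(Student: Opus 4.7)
I plan to mirror the proof of Lemma \ref{lemmaphiprimebound}. Since $\psi$ is the pointwise maximum of the two non-negative quantities
\[
\psi_A \df \tfrac{mn}{m+n}\big|(m+1)f_1 + (d-1)f_2\big| \quad\text{and}\quad \psi_B \df \tfrac{mn^2}{m+n}|f_d|,
\]
on any subinterval of $I$ on which $\psi'$ is defined, $\psi$ agrees with one of these, and I will handle the two cases separately. The balanced hypothesis combined with $f_1 \le \cdots \le f_d$ forces $f_1 + (d-1)f_2 \le 0$, hence $(m+1)f_1 + (d-1)f_2 \le mf_1 \le 0$, from which one deduces $\psi_A \ge \phi_1$; since also $\psi_B = \phi_2$, we have $\psi \ge \phi$ pointwise, where $\phi$ is the potential from Lemma \ref{lemmaphiprimebound}.

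When $\psi = \psi_B$ on a subinterval, the chain $\phi_1 \le \psi_A \le \psi_B = \phi_2$ forces $\phi = \phi_2$ there, so $\phi' = \psi_B'$, and Lemma \ref{lemmaphiprimebound} directly gives $\psi_B' \le \dimsing - \delta(\ff, I)$, with equality corresponding to case 3b of that lemma.

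When $\psi = \psi_A$, I drop the absolute value and rewrite
\[
\psi_A = -\tfrac{mn}{m+n}\bigl[(m+1)f_1 + (d-1)f_2\bigr] = -\tfrac{mn}{m+n}\bigl[(2-n)F_1 + (d-1)F_2\bigr].
\]
I split according to whether $f_1 = f_2$ on $I$. If $f_1 = f_2$, let $q \ge 2$ be the largest index with $f_1 = f_q$; since $\ff(t) \ne 0$ we have $q \le d-1$. Then $f_1 = F_q/q$ and an analogous computation to that in Lemma \ref{lemmaphiprimebound} yields $\psi_A' = \tfrac{(2m+n)\dir_-}{q} - \tfrac{n(2m+n)}{m+n}$ with $\dir_\pm = \dir_\pm(\ff, I, q)$, so combined with $mn - \delta(\ff, I) \ge \dir_-(m - \dir_+)$ the target reduces to the elementary inequality $\dir_-\bigl[\tfrac{2m+n}{q} - (m - \dir_+)\bigr] \le n$, which I verify for all admissible $(\dir_+, \dir_-)$ with $\dir_+ + \dir_- = q \le d-1$ under the hypothesis $n \ge 2$. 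If instead $f_1 < f_2$, I further split according to the largest $q' \ge 2$ with $f_2 = f_{q'}$; the admissible slopes $(F_1', F_2')$ lie in a small finite set (most combinations being excluded by $-1/n \le f_2' \le 1/m$), and for each I compute $\psi_A'$ and $\delta(\ff, I)$ explicitly to verify $\psi_A' \le \dimsing - \delta(\ff, I)$. Equality arises precisely when $q' = d$ (case 1 of the lemma) or when $q' = 2$ and $f_2' = -1/n$ (case 2), in each of which direct substitution for both values of $\dir_+(1) \in \{0,1\}$ confirms $\psi_A' = \dimsing - \delta(\ff, I)$.

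The main obstacle is the finite case analysis for $\psi = \psi_A$ with $f_1 < f_2$: for each admissible slope pair the set $S_\pm(\ff, I)$ and hence $\delta(\ff, I)$ must be read off from Definition \ref{definitiondimtemplate}, and both sides of the target inequality explicitly computed and compared. The hypothesis $n \ge 2$ enters only through the elementary inequality in the $f_1 = f_2$ subcase; a simple counterexample (for instance $m \ge 2$, $q = 2$, $\dir_+ = \dir_- = 1$, for which the left-hand side equals $\tfrac{n}{2} + 1$) shows it fails when $n = 1$, consistent with the stated hypothesis of the lemma.
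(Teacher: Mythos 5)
Your overall architecture mirrors the paper's: the same splitting into the regimes $\psi=\psi_B$ (where $\psi=\phi$ and Lemma \ref{lemmaphiprimebound} applies) and $\psi=\psi_A$, the same further split according to whether $f_1=f_2$, and in the $f_1=f_2$ subcase the same reduction, via $mn-\delta(\ff,I)\geq L_-(m-L_+)$, to the inequality $L_-\big[\tfrac{2m+n}{q}-(m-L_+)\big]\leq n$. But there is a genuine gap exactly at that point: this inequality is \emph{false} in general for admissible pairs with $L_++L_-=q\leq d-1$ and $n\geq 2$. Take $L_+=m$ and $L_->n/2$: the inequality becomes $L_-(2m+n)\leq n(m+L_-)$, i.e. $2L_-\leq n$, which fails; concretely $m=1$, $n=3$, $q=3=d-1$, $(L_+,L_-)=(1,2)$ gives $\tfrac{10}{3}>3$, and this slope configuration ($F_3'=1/3\in Z(3)$, $f_1=f_2=f_3$ of slope $1/9$) is perfectly consistent with the template axioms. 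So the constraints $q\leq d-1$ and $n\geq 2$ that you invoke do not suffice, and your planned ``verification for all admissible $(L_+,L_-)$'' would break down. The missing idea is that these offending configurations are incompatible with being in the regime $\psi=\psi_A$: the paper's Case 2 shows that if $M_+:=m-L_+=0$ and $L_->M_-:=n-L_-$, then $q>\tfrac{2m+n}{2}$, and balancedness then forces $nf_d+(m+1)f_1+(d-1)f_2>0$, contradicting the standing hypothesis \eqref{psiassumption} (i.e. $|(m+1)f_1+(d-1)f_2|\geq n|f_d|$). In other words, the case hypothesis defining $\psi=\psi_A$ must be fed back into the combinatorics (as in the paper's treatment of \eqref{ETS1}); it is not a free WLOG that can be discarded after computing $\psi_A'$. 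The role you assign to $n\geq 2$ (your $q=2$, $L_+=L_-=1$ check) is correct but is only one of the two problematic corners; the other is the one above.

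A secondary, smaller issue: in the $f_1<f_2$ subcase you propose to ``compute $\delta(\ff,I)$ explicitly'' from the slope pair $(F_1',F_2')$, but $\delta(\ff,I)$ is not determined by the behavior of $f_1,f_2$ alone — it depends on the intervals of equality among $f_3,\ldots,f_d$ as well. What the paper does (and what you need) is a lower bound on $mn-\delta(\ff,I)$ using only partial information, namely $mn-\delta\geq mA_-+B_-(m-L_+)$ as in \eqref{S-0jS+jdv2}, after which a single algebraic inequality (not a finite enumeration) settles all cases, with your two equality cases ($q'=d$, and $q'=2$ with $f_2'=-1/n$) falling out as the equality configurations.
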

Note that there is no symmetry here, unlike in the proof of Lemma \ref{lemmaphiprimebound}, since $\psi$ is not symmetric with respect to $\ff\mapsto -\ff$.
\begin{proof}
The proof is similar to that of Lemma \ref{lemmaphiprimebound}. We can suppose that
\begin{equation}
\label{psiassumption}
\big|(m+1)f_1(t) + (d-1)f_2(t)\big| \geq n |f_d(t)|
\end{equation}
for $t\in I$, since otherwise $\psi = \phi$ on $I$ and Lemma \ref{lemmaphiprimebound} implies the conclusion. Let $j \geq 2$ be the largest number such that
\[
f_2 = f_j \text{ on } I.
\]
Since $I$ is an interval of linearity for $\ff$, we have $f_j < f_{j+1}$ on $I$. Let $L_\pm = L_\pm(\ff,I,j)$ and $S_\pm = S_\pm(\ff,I)$. The proof now splits into two cases, first if $f_1 < f_2$ on $I$, and second if $f_1 = f_2$ on $I$.

{\bf Case 1}: Suppose first that $f_1 < f_2$ on $I$. Let $A_\pm = L_\pm(\ff,I,1)$ and $B_\pm = L_\pm - A_\pm$. By \eqref{Lqdef}, on $I$ we have
\begin{align*}
F_j' &= \frac j\pdim - \frac{\dimsum}{\dimprod} L_-\\
f_1' &= \frac 1\pdim - \frac{\dimsum}{\dimprod} A_-\\
\psi' &= -\frac{\dimprod}{\dimsum}\left((m+1) f_1' + \frac{d-1}{j-1} (F_j' - f_1')\right)\\
&= -\frac{(m+d)n}{d} + (m+1) A_- + \frac{d-1}{j-1} B_-
\end{align*}
and on the other hand, by \eqref{codimfI} we have
\begin{equation}
\label{S-0jS+jdv2}
\begin{split}
\dimprod - \delta(\ff,t) &\geq \#\big(S_-\cap\{1\}\big)\cdot \#\big(S_+\cap \OC 1d\big) + \#\big(S_-\cap \OC 1j\big) \cdot\#\big(S_+\cap \OC jd\big)\\
&= m A_- + B_- (\pdim - L_+)
\end{split}
\end{equation}
and thus
\[
\dimsing - \delta(\ff,t) \geq m A_- + B_- (\pdim - L_+) - \frac{mn}{d}\cdot
\]
So to demonstrate \eqref{psiprimebound}, it suffices to show that
\[
-n + (m+1) A_- + \frac{d-1}{j-1} B_- \leq m A_- + B_- (\pdim - L_+).
\]
Rearranging gives the equivalent formulation
\[
\frac{d-1}{j-1} B_- \leq (n - A_-) + B_- (m - L_+).
\]
If $B_- = 0$ this is obviously true (and since $n\geq 2$ by assumption, the inequality is strict in this case), and therefore if we backtrack we get that \eqref{psiprimebound} is true as well in this case. 
Otherwise, assume that $B_- > 0$. Then we can rearrange again to get
\[
\frac{d-1}{B_+ + B_-} \leq \frac{n - A_-}{B_-} + m - L_+,
\]
and subtracting 1 from both sides gives
\begin{equation}
\label{ETS2}
\frac{(n-L_-) + (m-L_+)}{B_+ + B_-} \leq \frac{n - L_-}{B_-} + m - L_+.
\end{equation}
This formula is true since $\frac1{B_+  + B_-} \leq \min(\frac1{B_-},1)$, and so backtracking shows that \eqref{psiprimebound} is true as well. If $f_2 = f_d$ on $I$, then $j=d$ and thus $L_+ = m$, $L_- = n$ and so equality holds (in \eqref{ETS2} and equivalently) in \eqref{psiprimebound}. Similarly, if $f_1 < f_2 < f_3 = f_d$ and $f_2' = -1/n$ on $I$, then $j=2$ and $B_+ = 0$, so $B_- = B_+ + B_- = j - 1 = 1$ and thus equality holds. This completes the proof of Case 1.

{\bf Case 2}: Next suppose that $f_1 = f_2$ on $I$. Then on $I$ we have
\[
\psi' = -\frac{\dimprod}{\dimsum} \frac{m+d}{j} F_j' = -\frac{(m+d)n}{d} + \frac{m+d}{j} L_-
\]
and on the other hand, as in \eqref{S-0jS+jd} we have
\begin{equation}
\label{S-0jS+jdv3}
\dimsing - \delta(\ff,I) \geq L_- (m - L_+) - \frac{mn}{d}
\end{equation}
so to demonstrate \eqref{psiprimebound}, it suffices to show that
\[
-n + \frac{m+d}{j} L_- \leq L_- (m - L_+).
\]
If $L_- = 0$ this is obvious (and the inequality is strict), so assume that $L_- > 0$. Then rearranging gives the equivalent formulation
\[
\frac{2m + n}{L_+ + L_-} \leq \frac{n}{L_-} + m - L_+.
\]
Write $M_+ = m - L_+$ and $M_- = n - L_-$. Then subtracting $1$ from both sides gives
\[
\frac{L_+ + 2M_+ + M_-}{L_+ + L_-} \leq \frac{M_-}{L_-} + M_+
\]
and multiplying by $L_+ + L_-$ and rearranging gives
\begin{equation}
\label{ETS1}
L_+ \leq \frac{M_- L_+}{L_-} + M_+ (L_+ + L_- - 2).
\end{equation}
We now demonstrate \eqref{ETS1}. First, note that since $L_+ + L_- = j \geq 2$, both terms on the right-hand side are nonnegative. So if either term is individually at least $L_+$, then \eqref{ETS1} holds. In particular, if $L_- \leq M_-$, then the first term is $\geq L_+$, and if $L_- \geq 2$ and $M_+ \geq 1$, then the second term is $\geq L_+$. Also, if $L_+ = 0$ then \eqref{ETS1} obviously holds. So assume that $L_+ > 0$, that $L_- > M_-$, and that either $L_- \leq 1$ or $M_+ = 0$.

If $L_- \leq 1$, then since $L_- > M_-$, we have $M_- = 0$. But since $n = L_- + M_-$, this contradicts our assumption that $n\geq 2$.

If $M_+ = 0$, then
\[
j = L_+ + L_- > L_+ + \frac{L_- + M_-}{2} = \frac{2L_+ + 2M_+ + L_- + M_-}{2} = \frac{2m + n}{2}
\]
and thus $\frac{n}{d-j} > 2 > \frac{m+d}{j}$. Since $\ff$ is balanced, this implies
\begin{align*}
&n f_d + (m+1)f_1 + (d-1) f_2 = n f_d + (m+d) f_j\\
\geq &\frac{n}{d-j} (f_{j+1} + \ldots + f_d) + \frac{m+d}{j} (f_1 + \ldots + f_j) > 0,
\end{align*}
contradicting \eqref{psiassumption}. Thus neither $L_- \leq 1$ nor $M_+ = 0$ can hold, and so \eqref{ETS1} holds, and backtracking yields \eqref{psiprimebound}. This completes our proof of Case 2, and thus completes the proof of Lemma \ref{lemmapsiprimebound}.
\end{proof}

We are now ready to prove the upper bound in Theorem \ref{theorempacking}. Let $\ff\in \Sing_\dims(\tau)$, i.e. $|f_1(t)| \geq \tau t$ for all sufficiently large $t$, be a balanced template, and let $T$ be a time such that $\delta(T) > mn-m$. Note that this implies that $1\in S_+(\ff,T)$. Let $T'$ be the largest time such that $f_1' = 1/m$ on $(T,T')$. If $T' > T$, then the convexity condition implies that $f_1(T') = f_2(T')$. On the other hand, if $T = T'$, then $f_1'(T) < 1/m$, and since $1\in S_+(\ff,T)$, this implies that $f_1(T) = f_2(T)$. So either way $f_1(T') = f_2(T')$.

Let $\gg:[0,T']\to\R^d$ be the standard template defined by the points $(0,0)$ and $(T',f_1(T'))$ (cf. Definition \ref{definitionstandardtemplate}). Then $f_1(T) = g_1(T)$ while $f_2(T) \leq g_2(T)$. Since $\ff$ is balanced, using the definition of $\gg$ this implies that $f_d(T) \geq g_d(T)$. Consequently $\psi_\ff(T) \geq \psi_\gg(T)$ and hence
\[
\Delta(\ff,T) \leq \dimsing - \psi_\ff(T) \leq \dimsing - \psi_\gg(T) = \Delta(\gg,T) = \overline\delta_\dims \left(\frac{-f_1(T')}{T'}\right).
\]
The first equality holds because for $\gg$ defined as above, on each interval of linearity one of the conditions 1,2 is satisfied (cf. Table \ref{tablestandardzerotemplate}), and the second equality is a restatement of \eqref{deltataupackingdef}.

Thus for all $T$ such that $\delta(T) > mn-m$, we have
\[
\Delta(\ff,T) \leq \max\left(mn-m, \; \max_{T' \geq T} \overline\delta_\dims\left(\frac{-f_1(T')}{T'}\right)\right),
\]
and it follows that the same is true for all $T$. Taking the limsup gives
\[
\overline\delta(\ff)  \leq \max\big(mn-m, \overline\delta_\dims(\tau)\big),
\]
where $\ff\in \Sing_\dims(\tau)$. Taking the supremum over all $\ff$ and applying Theorem \ref{theoremvariational4} completes the proof.

\section{Proof of Theorem \ref{theorempacking2}}
\label{subsectionN1PDgeq}

The proof is similar to that in Section \ref{subsectionhsmallHDgeq}.
Assume $n=1$. There are two cases to consider, when $0 < \tau < \frac{m-1}{2m}$ and when $\tau < \frac1{m^2}$.

{\bf Case 1}. Fix $0 < \tau < \frac{m-1}{2m}$. Fix $\lambda > 1$ and let $t_k = \lambda^k$ and $\epsilon_k = \tau t_k$. However, rather than letting $\ff = \ff[\tau,\lambda]$, we will introduce a new parameter $\gamma > 0$ (which we think of as being independent of $\lambda$), small enough so that $\mbf s[(\gamma,-\epsilon),(1,-\tau)]$ is well-defined for all $0\leq \epsilon \leq \tfrac{m-1}{2m} \gamma$ (it suffices to take $\gamma \leq \frac{4m}{m^2-1}(\frac{m-1}{2m}-\tau)$). Let
\[
\epsilon = \left(\frac{\tau+(\lambda-1)\tfrac{m-1}{2m}}{\lambda}\right)\gamma.
\]
We define $\ff$ as follows:
\begin{itemize}
\item On $[\gamma,1]$, we have $\ff = \mbf s[(\gamma,-\epsilon),(1,-\tau)]$ (cf. Figure \ref{figurem=2} for an example with $m=2$).
\item Extend $\ff$ to $[1,\gamma\lambda]$ via the requirements that $f_1' = f_2' = -\frac{m-1}{2m}$ and $f_3 = \ldots = f_d$ on $[1,\gamma\lambda]$.
\item Extend $\ff$ to $\Rplus$ via exponential equivariance. This is possible by the definition of $\epsilon$.
\end{itemize}

\begin{figure}
\begin{tikzpicture}[line cap=round,line join=round,>=triangle 45,x=4.9cm,y=3cm]
\clip(-0.9952263135477621,-1.8147247879868178) rectangle (2.5728329530894376,2.1852752120131784);
\draw [line width=1pt] (0,1) -- (0.5,1.25);
\draw [line width=1pt] (0.5,1.25) -- (2,-0.25);
\draw [line width=1pt] (0,-0.5) -- (0.5,-1);
\draw [line width=1pt]  (0.5,-1) -- (2,-0.25);
\draw [line width=1pt]  (0,-0.5) -- (2,0.5);
\draw [dotted, line width=1pt]  (-0.3,0) -- (2.1,0);
\begin{scriptsize}
\draw [fill=black] (0,-0.5) circle (2.5pt);
\draw[color=black] (-0.1,-0.3604764219737473) node {$(\gamma,-\epsilon)$};
\draw [fill=black] (2,-0.25) circle (2.5pt);
\draw[color=black] (2,-0.41211040890185206) node {$(1,-\tau)$};
\end{scriptsize}
\end{tikzpicture}
\caption{The joint graph of $\ff = \mbf s[(\gamma,-\epsilon),(1,-\tau)]$ with $m=2$ and $n=1$.}
\label{figurem=2}
\end{figure}
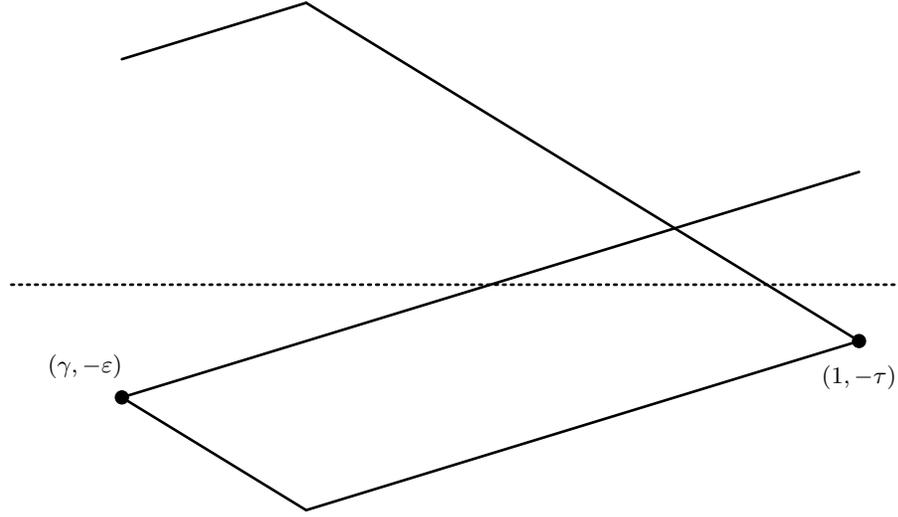

Now since $\delta(\ff,\cdot) = 1$ on $[1,\gamma\lambda]$, we have
\[
\Delta(\ff,\gamma\lambda) \geq \tfrac{\gamma\lambda-1}{\gamma\lambda}\cdot
\]
Taking the supremum over $\ff$ and applying Theorem \ref{theoremvariational4} yields
\[
\PD(\Sing_\dims(\tau)) \geq \tfrac{\gamma\lambda-1}{\gamma\lambda}\cdot
\]
Taking $\lambda\to\infty$ completes the proof.

{\bf Case 2}. Now suppose that $\tau < \frac1{m^2}$, and let $\tau' = \frac{(m-1)\tau}{1-m\tau} < \frac1m$. For each $\lambda > 1$ let $\ff_\lambda = \ff[\tau',\lambda]$ be the standard $1\times m$ template defined by $\tau'$ and $\lambda$ (as in Definition \ref{definitionstandardtemplate2parameter}). Claim \ref{claimuppercontractionlimit} shows that
\[
\lim_{\lambda\to\infty} \overline\delta(\ff_\lambda) = \delta_{1,m}(\tau') \geq mn - n = m - 1.
\]
Now the $m\times 1$ template $-\ff_\lambda$ has the same upper average contractivity as $\ff_\lambda$. Thus to complete the proof, it suffices to show that
\[
\tau(-\ff_\lambda) = \tau
\]
for all sufficiently large $\lambda$. Indeed,
\[
\tau(-\ff_\lambda) = \liminf_{t\to\infty} \tfrac1t f_d(t) = \tfrac1{t_0} f_d(t_0),
\]
where $t_0 > 1$ is the smallest time such that $f_2(t_0) = f_3(t_0)$. Since  $\ff(1) = (-\tau,-\tau,\tfrac2{m-1}\tau,\ldots,\tfrac2{m-1}\tau)$ and $\ff' = (-\tfrac1m,1,-\frac1m,\ldots,-\frac1m)$ on $(1,t_0)$ (cf. Figure \ref{figurediamond2}), we have  that
\[
f_d(t_0) = -\tau' + (t_0 - 1) = \tfrac2{m-1}\tau' - \tfrac1m(t_0 - 1).
\]
Thus
\begin{align*}
t_0 &= 1 + \tfrac m{m-1}\tau'\\
f_d(t_0) &= \tfrac1{m-1}\tau'\\
\tau(-\ff(\lambda)) &= \frac{f_d(t_0)}{t_0} = \frac{\tfrac1{m-1}\tau'}{1 + \tfrac m{m-1}\tau'} = \tau.
\end{align*}
This completes the proof in the case $\tau < \frac1{m^2}$. 

\begin{figure}[h!]
\begin{tikzpicture}
\clip(-1,-3) rectangle (8,2);
\draw[dashed] (-1,0) -- (8,0);
\draw (0,-0.5) -- (5,-3) -- (7,-1);
\draw (0,-0.5) --  (0.8,0.3);
\draw (3.5,0.75) -- (7,-1);
\draw[line width = 1.5] (0,0.5) --  (0.8,0.3);
\draw[line width = 1.5] (3.5,0.75) -- (5,1.5) -- (7,1);
\draw[line width = 2.5] (0.8,0.3) -- (3.5,0.75);
\fill (0,-0.5) circle (0.09);
\node at (0,-1.2) {$(1,-\tau')$};
\fill (7,-1) circle (0.09);
\node at (7,-0.33) {$(\lambda,-\tau' \lambda)$};
\fill (0.8,0.3) circle (0.09);
\node at (1,1) {$(t_0,f_d(t_0))$};
\end{tikzpicture}
\caption{The joint graph of $\ff_\lambda = \ff[\tau',\lambda]$ on the interval $[1,\lambda]$, as in Definition \ref{definitionstandardtemplate2parameter}.}
\label{figurediamond2}
\end{figure}
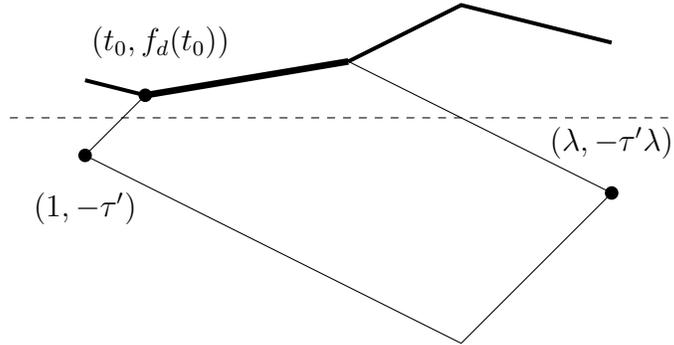

Finally, we leave the equality cases $\tau = \frac{m-1}{2m}$ and $\tau = \frac1{m^2}$ as exercises for the reader. Specifically, one glues together partial templates corresponding to a sequence of values $\tau_k \to \tau$ to get a template which is $\tau$-singular but has the desired packing dimension property.

\section{Proof of Theorem \ref{theoremN2}, lower bound for Hausdorff dimension}
\label{subsectionN2HDgeq}

Assume $n\geq 2$, and fix $0 < \tau < \frac1n$. As in Section \6\ref{subsectionPDgeq} we consider a two-parameter standard template $\ff[\tau,\lambda]$ (as in Definition \ref{definitionstandardtemplate2parameter}). Now if $\lambda$ is sufficiently large, then \eqref{standardtemplateB}-\eqref{standardtemplate3B} are satisfied, and thus there is a standard template $\ff = \ff_\lambda = \ff[\tau,\lambda]$ defined by the sequence of points $(t_k,-\epsilon_k)_0^\infty$, where $t_k = \lambda^k$ and $\epsilon_k = \tau t_k$.

Modifying the proof of Claim \ref{claimuppercontractionlimit} yields
\[
\lim_{\lambda\to\infty} \underline\delta(\ff_\lambda) = \inf_{0<T\leq1} \Delta(\gg,T),
\]
where $\gg \df \mbf s[(0,0),(1,-\tau)]$ (as in Definition \ref{definitionstandardtemplate}). Applying the variational principle (Theorem \ref{theoremvariational2}) to $\ff_\lambda$ gives us that 
\[
\HD(\Sing_\dims(\tau)) \geq \inf_{0 < T \leq 1} \Delta(\gg,T).
\]
Now $\delta(\gg,t) \geq mn-2m$ for all $t$, and $\delta(\gg,t) \geq mn-m$ for all $t\leq \frac{n(d-1)}{d}[\frac1n-\tau]$. It follows that
\[
\Delta(\gg,T) \geq mn-2m + \frac{mn(d-1)}{d}\left[\frac1n-\tau\right]
\]
for all $0 < T \leq 1$.

\section{Proof of Theorem \ref{theoremN1}, lower bound for Hausdorff dimension}
\label{subsectionN1HDgeq}

Assume $n=1$, and fix $0 < \tau < \frac{m-1}{2m}$, and let $\lambda$ be minimal such that \eqref{standardtemplate3B} holds, i.e.
\begin{equation}
\label{lambdadef}
\lambda \df 1 + \tfrac{d\tau}{2}\left(\tfrac{m-1}{2m}-\tau\right)^{-1}.
\end{equation}
As usual we let $t_k = \lambda^k$, $\epsilon_k = \tau t_k$, and $\ff = \ff[\tau,\lambda]$. 

Fix $t \geq 0$. If $\delta(\ff,t) = 0$, then $S_-(\ff,t) = \{1\}$ and thus $f'_1(t) = -1$, while if $\delta(\ff,t) \geq 1$ then we have the trivial bound $f'_1(t) \leq \frac1m$. Combining these two results into one formula yields
\[
f_1'(t) \leq -1 + \tfrac{m+1}{m}\delta(\ff,t) \text{ for all $t$}.
\]
Thus
\[
-\tfrac{m-1}{2m} < -\tau = f_1(1) \leq -1 + \tfrac{m+1}{m}\Delta(\ff,1),
\]
and rearranging gives
\[
\Delta(\ff,1) > \tfrac12.
\]
It follows that $\Delta(\ff,T) \geq \tfrac1{2T} \geq \tfrac1{2\lambda}$ for all $T\in[1,\lambda]$. The exponential equivariance of $\ff$ then implies that $\Delta(\ff,T) \geq \tfrac1{2\lambda}$ for all $T > 0$. So
\[
\underline\delta(\ff) \geq \tfrac1{2\lambda} \underset{\eqref{lambdadef}}{=} \tfrac12 - \Theta\left(\tfrac{m-1}{2m}-\tau\right)
\]
and applying Theorem \ref{theoremvariational4} completes the proof.

\section{Proof of Theorem \ref{theoremN2}, upper bound for Hausdorff dimension}
\label{subsectionN2HDleq}
Let $\ff$ be a $\tau$-singular template which is not trivially singular, i.e. $|f_1(t)| \geq \tau t$ for all sufficiently large $t$, and $f_{j+1}(t) - f_j(t) \nrightarrow \infty$ as $t\to\infty$ for all $j = 1,\ldots,d-1$. Then there exists a constant $C$ such that $f_2(T) \leq f_1(T) + C$ infinitely often. Fix $T$ such that $f_2(T) \leq f_1(T) + C$. Since $\ff$ is $\tau$-singular, we have $f_2(T) \leq f_1(T) + C \leq -\dynexp T + C$.

Since $1, 2 \in S_-(\ff,t)$ 

For all $t$ such that $f_1'(t) = f_2'(t) = -\frac1\qdim$, we have
\[
\dimprod - \delta(\ff,t) \geq 2\pdim
\]
and for all $t$ such that $f_i'(t) > -\frac1\qdim$ for some $i = 1,2$, we have
\[
f_i'(t) \geq \frac{1}{\qdim + 1}\left[\frac{1}{\pdim} - \frac{\qdim}{\qdim}\right] = -\frac{1}{\qdim} + \frac{\dimsum}{\dimprod(\qdim+1)}
\]
and thus
\[
f_1'(t) + f_2'(t) \geq -\frac2{\qdim} + \frac{\dimsum}{\dimprod(\qdim+1)}
\]
and at the same time $\dimprod - \delta(\ff,t) \geq 0$. Combining these two cases we have
\[
\dimprod - \delta(\ff,t) \geq 2\pdim - \frac{2\pdim^2\qdim(\qdim+1)}{\dimsum}\left[\frac2\qdim + f_1'(t) + f_2'(t)\right]
\]
and averaging over the interval $[0,T]$ gives
\begin{align*}
\dimprod - \Delta(\ff,T)
&\geq 2\pdim - \frac{2\pdim^2\qdim(\qdim+1)}{\dimsum}\left[\frac2\qdim + \frac{f_1(T)}{T} + \frac{f_2(T)}{T} \right]\\
&\geq 2\pdim - \frac{4\pdim^2\qdim(\qdim+1)}{\dimsum}\left[\frac1\qdim - \dynexp +\frac{C}{2T}\right].
\end{align*}
Taking the liminf as $T\to\infty$ and applying Theorem \ref{theoremvariational4} completes the proof.

\section{Proof of Theorem \ref{theoremN1}, upper bound for Hausdorff dimension}
\label{subsectionN1HDleq}
Let $\ff$ be a $\tau$-singular $m \times 1$ template, i.e. $|f_1(t)| \geq \tau t$ for all sufficiently large $t$. The proof spilts in two cases.

{\bf Case 1}. First suppose that both $f_1 = f_2$ and $f_2 = f_3$ infinitely often. 

Fix $T_1 > 0$ such that $f_2(T_1) = f_3(T_1)$, and let $T \geq T_1$ be minimal such that $f_1(T) = f_2(T)$. Let $x = \frac{\pdim - 1}{2\pdim} - \dynexp > 0$. For each $t$, let $j(t)$ denote the unique element of $S_-(\ff,t)$. Then
\[
f_1'(t) + f_2'(t)
\begin{cases}
= \frac1\pdim - 1 & j(t) = 1,2\\
\geq \frac1\pdim - 1 + \alpha & j(t) > 2
\end{cases}
\]
where $\alpha > 0$ is a constant. On the other hand,
\[
\tfrac1T\big(f_1(T) + f_2(T)\big) \leq -2\tau = \frac1\pdim - 1 + 2x.
\]
It follows that
\[
\lambda(\{t\leq T : j(t) > 2\}) = O(xT)
\]
where $\lambda$ is Lebesgue measure. Consequently $f_i(t) = \tfrac{t}{\pdim} + O(xT)$ for all $i > 2$ and $t \in [0,T]$. On the other hand, since $f_2' \geq -1$ it follows that for $t \in [0,T]$ we have
\[
f_2(t) \leq f_2(T) + T-t \leq -\left(\frac{\pdim - 1}{2\pdim} - x\right)T + T-t = \frac{\pdim+1}{2\pdim}T - t + O(xT),
\]
and thus we have $f_2 < f_3$ for all $t\in I \df (T/2 + cxT,T)$, where $c > 0$ is a constant. In particular we have $T_1 \leq T/2 + cxT$. By the minimality of $T$, it follows that $f_1 < f_2$ on $I$. Using the convexity condition it is possible to prove that $j(t)=2$ for all $t\in I$. Thus $f_1' = \frac1\pdim$ on $I$ and thus
\[
f_1(T/2) = f_1(T) - \tfrac1\pdim(T/2) + O(xT) \leq -\dynexp T - \tfrac1\pdim(T/2) + O(xT) = -(T/2) + O(xT).
\]
Consequently,
\begin{equation}
\label{statisticalT2}
\lambda(\{t\leq T/2 : j(t) > 1\}) = O(xT)
\end{equation}
and thus $\Delta(\ff,T/2) = O(xT)$.

{\bf Case 2a}. Now if $f_1 < f_2$ for all sufficiently large times, then it follows from the convexity condition that $j(t) = 1$ for all sufficiently large times, and thus $\underline\delta(\ff) = 0$.

{\bf Case 2b}. If $f_2 < f_3$ for all sufficiently large times, then it follows from the convexity condition that $j(t) \leq 2$ for all sufficiently large times, and thus
\[
2f_1(t) \leq f_1(t) + f_2(t) = -\frac{\pdim-1}{\pdim} t + C
\]
for some constant $C$. This demonstrates that $\dynexp \geq \frac{\pdim-1}{2\pdim}$. Since equality holds infinitely often, we have $\dynexp = \frac{\pdim-1}{2\pdim}$. Thus for $\dynexp < \frac{\pdim-1}{2\pdim}$, we have $f_2 = f_3$ infinitely often.

\section{Proof of Theorem \ref{theoremN1}, upper bound for packing dimension}
\label{subsectionN1PDleq}

Let $T_1 > 0$ be a local maximum of $\Delta(\ff,\cdot)$, and by contradiction suppose that $\Delta(\ff,T_1) > 1$. Then $\delta(\ff,I) > 1$, where $I$ is the interval of linearity for $\ff$ whose right endpoint is $T_1$. Equivalently, $j > 2$ on $I$, where $j$ is as in \6\ref{subsectionN1HDleq}. Let $T$ be as in \6\ref{subsectionN1HDleq}. Since $f_1 < f_2$ on $(T_1,T)$, by the convexity condition we have $j > 1$ on $(T_1,T)$ and thus by \eqref{statisticalT2} we have $T_1 = T/2 + O(xT)$. But then by the argument of \6\ref{subsectionN1HDleq}, we have
\[
\Delta(\ff,T_1) = \Delta(\ff,T/2) + O(x) = O(x)
\]
and thus if $x$ is sufficiently small, then $\Delta(\ff,T_1) < 1$, a contradiction.

\draftnewpage

\section{Proof of Theorem \ref{theoremspecialcase}}
\label{subsectionspecialcase}
Note that the packing dimension formula in Theorem \ref{theoremspecialcase} follows immediately from Theorem \ref{theorempacking}. Thus, we prove only the Hausdorff dimension formula. However, note that the first part of the proof could apply to the computation of packing dimension as well.

Fix $\tau > 0$, and let $\ff$ be a $1\times 2$ template which satisfies $\dynexp(\ff) = \tau$ but is not trivially singular. We claim that
\begin{align}
\label{specialcasebounds}
\underline\delta(\ff) &\leq \underline\delta(\tau),
\end{align}
where $\underline\delta(\tau)$ is the right-hand side of the first formula of Theorem \ref{theoremspecialcase}. This will prove the upper bound of that formula. Indeed, since $\ff$ is not trivially singular, the sets $F_- = \{t\geq 0 : f_1(t) = f_2(t)\}$ and $F_+ = \{t\geq 0 : f_2(t) = f_3(t)\}$ are both unbounded. Since $\ff$ is piecewise linear, we can write $F_-\cup F_+$ as the union of a sequence of intervals $[s_1,t_1] < [s_2,t_2] < \ldots$
\begin{claim}
We can assume without loss of generality that 
\[
F_+ = [s_1,t_1]\cup [s_3,t_3]\cup \ldots ~\text{and}~ F_- = [s_2,t_2]\cup [s_4,t_4]\cup \ldots
\]
\end{claim}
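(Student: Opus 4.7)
The plan is to show that any $1\times 2$ template $\ff$ that is not trivially singular and satisfies $\what\dynexp(\ff)=\tau$ can be replaced by another $\tau$-singular template $\tilde\ff$ whose maximal touching intervals strictly alternate between $F_+$ and $F_-$, starting with $F_+$, and which satisfies $\underline\delta(\tilde\ff) \geq \underline\delta(\ff)$. Since the upper bound \eqref{specialcasebounds} is the supremum of $\underline\delta$ over all such templates, this reduction is harmless. The modification is purely local: whenever two consecutive intervals $[s_k,t_k]$ and $[s_{k+1},t_{k+1}]$ belong to the same set (say both to $F_+$), we rebuild $\ff$ on the sandwich $[s_k,t_{k+1}]$ to break this repetition; the case of two consecutive $F_-$ intervals is handled symmetrically.

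On the gap $(t_k,s_{k+1})$ we have $f_1 < f_2 < f_3$, so condition (III) of Definition \ref{definitiontemplate} applies. With $m=1$ and $n=2$ one computes $Z(1)=\{1,-\tfrac12\}$, $Z(2)=\{-1,\tfrac12\}$ and $Z(3)=\{0\}$. A short case check shows that the allowed slope triples $(f_1',f_2',f_3')$ are exactly $(-\tfrac12,-\tfrac12,1)$, $(-\tfrac12,1,-\tfrac12)$ and $(1,-\tfrac12,-\tfrac12)$. The boundary conditions $f_2=f_3$ at $t_k$ and at $s_{k+1}$, combined with the fact that the interior interval $(t_k,s_{k+1})$ starts by leaving $F_+$ and ends by re-entering $F_+$, force the gap to begin with slopes $(-\tfrac12,-\tfrac12,1)$ and end with slopes $(1,-\tfrac12,-\tfrac12)$.

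I would then replace the restriction $\ff|_{[s_k,t_{k+1}]}$ by a standard-template construction of the form $\mathbf{s}[(s_k,-\epsilon_k),(t_{k+1},-\epsilon_{k+1})]$ (cf.\ Definition \ref{definitionstandardtemplate}), where $\epsilon_k = -f_1(s_k)$ and $\epsilon_{k+1}=-f_1(t_{k+1})$, chosen so that $\tilde\ff$ matches $\ff$ at both endpoints. By the construction of standard templates, $\tilde f_1$ dips down to touch $\tilde f_2$ at some intermediate time, creating a genuine $F_-$ interval sandwiched between the two $F_+$ intervals. One then checks that $\tilde\ff$ is a valid template (the quantized slope and convexity conditions at the junctions are automatic because the incoming and outgoing slope profiles are precisely the $(1,-\tfrac12,-\tfrac12)$ and $(-\tfrac12,-\tfrac12,1)$ configurations dictated above), that $\tilde f_1 \leq f_1$ pointwise on $[s_k,t_{k+1}]$ so $\what\dynexp(\tilde\ff)\geq \what\dynexp(\ff)=\tau$, and, finally, via a direct computation on each interval of linearity using \eqref{dimfI}, that $\delta(\tilde\ff,t) \geq \delta(\ff,t)$ throughout the modified region. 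Consequently $\underline\delta(\tilde\ff) \geq \underline\delta(\ff)$.

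The main obstacle is making the local rebuild genuinely preserve $\underline\delta$ and $\what\dynexp$; this is where the slope analysis above is crucial, since it both rules out pathological gap behaviours and guarantees that the sandwich's endpoint slopes match those of the untouched portions of $\ff$. Iterating the procedure (at most countably many times, followed by a limiting argument if necessary) produces a template whose touching intervals strictly alternate. To conclude, note that if the resulting alternation begins with an $F_-$ interval, we may simply discard that bounded initial piece and reindex; this affects only a finite portion of $[0,T]$ and hence leaves $\underline\delta$ unchanged, so one may assume the first interval lies in $F_+$.
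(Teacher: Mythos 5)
You take a genuinely different route from the paper, and as written it does not go through. The paper removes a repetition of two consecutive same-type intervals by \emph{merging} them: on each gap $(t_k,s_{k+1})$ between two intervals of $F_+$ it redefines the template as $(f_1,-\tfrac12 f_1,-\tfrac12 f_1)$ (and as $(-\tfrac12 f_3,-\tfrac12 f_3,f_3)$ between two intervals of $F_-$), so that the gap itself joins the touching set; continuity at the gap endpoints is automatic because $\ff$ is balanced and $f_2=f_3$ (resp.\ $f_1=f_2$) there, and one then checks $\delta(\gg,t)\geq\delta(\ff,t)$ pointwise and $\what\dynexp(\gg)=\what\dynexp(\ff)$. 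Your plan is instead to \emph{insert} an interval of the opposite type by splicing a standard template onto the sandwich $[s_k,t_{k+1}]$, and this fails at the splicing step: by Definition \ref{definitionstandardtemplate}, $\mathbf{s}[(s_k,-\epsilon_k),(t_{k+1},-\epsilon_{k+1})]$ satisfies $f_1=f_2=-\epsilon_k$ at its left endpoint, whereas at $s_k$ (an endpoint of an $F_+$ interval) the original template has $f_1(s_k)<f_2(s_k)=f_3(s_k)$; with $\epsilon_k=-f_1(s_k)$ the glued function is discontinuous in the second and third coordinates, so $\w\ff$ is not a template. The geometry of the standard template is also the opposite of what you need: $f_1$ and $f_2$ touch at the two defining endpoints and separate in between, while it is $f_2$ and $f_3$ that meet in the middle. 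Finally, for $(m,n)=(1,2)$ the existence condition \eqref{standardtemplate3} degenerates: its second alternative forces $\epsilon_{k+1}\leq 0$, and its first reads $\tfrac12\Delta t-\Delta\epsilon\geq 3\epsilon_k$, which fails whenever the sandwich is short compared with $\epsilon_k$ — and $\epsilon_k$ grows linearly in $s_k$ since $\what\dynexp(\ff)=\tau>0$ — so the piece you want to splice in need not even exist.

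Second, the slope bookkeeping you use to claim the junctions match ``automatically'' is wrong at the right end of the gap. Your list of admissible triples on $\{f_1<f_2<f_3\}$ is correct, and leaving $F_+$ at $t_k$ does force $(-\tfrac12,-\tfrac12,1)$; but re-entering $F_+$ at $s_{k+1}$ requires $f_2'>f_3'$ just before $s_{k+1}$, i.e.\ the triple $(-\tfrac12,1,-\tfrac12)$. The triple $(1,-\tfrac12,-\tfrac12)$ you assert has $f_2'=f_3'$, so $f_2$ and $f_3$ would remain a positive constant apart and could never merge at $s_{k+1}$; it is the configuration for entering $F_-$, not $F_+$. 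Beyond this, the two statements that actually carry the reduction — $\delta(\w\ff,t)\geq\delta(\ff,t)$ on the modified region and control of the exponent — are asserted rather than proved: you only obtain $\what\dynexp(\w\ff)\geq\tau$, whereas the claim as used later (the ``allowable change'' framework requires $\what\tau$ unchanged) needs the exponent preserved exactly, or at least a monotonicity-in-$\tau$ argument you do not supply. Also, when three or more consecutive intervals lie in the same set, your sandwiches $[s_k,t_{k+1}]$ and $[s_{k+1},t_{k+2}]$ overlap, so the proposed iteration is not well defined as stated; the paper's merging construction, being supported on the pairwise disjoint gaps, sidesteps all of these difficulties at once.
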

\begin{proof}
First, since $F_-$ and $F_+$ are disjoint, for each $k$ we have either $[s_k,t_k] \subset F_-$ or $[s_k,t_k] \subset F_+$. Now let $\gg:\Rplus\to\R^3$ be defined by the formulas
\[
\gg(t) = \begin{cases}
\left(-\tfrac{1}{2}f_3(t),-\tfrac{1}{2}f_3(t),f_3(t),\ldots,f_3(t)\right) &\text{ if } t\in (t_k,s_{k+1}),\; [s_k,t_k],[s_{k+1},t_{k+1}] \subset F_-\\
\left(f_1(t),-\tfrac{1}{2}f_1(t),\ldots,-\tfrac{1}{2}f_1(t)\right) &\text{ if } t\in (t_k,s_{k+1}),\; [s_k,t_k],[s_{k+1},t_{k+1}] \subset F_+\\
\ff(t) & \text{ otherwise}.
\end{cases}
\]
Then $\delta(\gg,t) \geq \delta(\ff,t)$ for all $t\geq 0$, so $\underline\delta(\gg) \geq \underline\delta(\ff)$ and $\overline\delta(\gg) \geq \overline\delta(\ff)$. Moreover, since the minima of the functions 
\[
t\mapsto \frac{-f_1(t)}{t} ~\text{and}~ t\mapsto \frac{-g_1(t)}{t}
\]
on an interval of the form $[t_k,s_{k+1}]$ are always attained at one of the endpoints of the interval, we have $\dynexp(\gg) = \dynexp(\ff)$. So it suffices to prove \eqref{specialcasebounds} with $\ff$ replaced by $\gg$. Now the corresponding sets $F_-$ and $F_+$ defined in terms of $\gg$ are clearly of the desired form, with the exception that the roles of $F_-$ and $F_+$ may be switched; this exception can be dealt with by truncating the template from the left so as to cut out the interval $[s_1,t_1]$.
\end{proof}

We observe that $f_1$ and $f_2$ ``split'' at times $t_{2k}$ and ``merge'' at times $s_{2k}$, while $f_2$ and $f_3$ ``split'' at times $t_{2k+1}$ and ``merge'' at times $s_{2k+1}$. Consequently
\begin{align*}
f_1'(t_{2k}^+) &< f_2'(t_{2k}^+), &
f_2'(t_{2k+1}^+) &< f_3'(t_{2k+1}^+),\\
f_1'(s_{2k}^-) &> f_2'(s_{2k}^-), &
f_2'(s_{2k+1}^-) &> f_3'(s_{2k+1}^-).
\end{align*}
It follows that if $j(t)$ denotes the unique element of $S_+(\ff,t)$, then
\begin{align*}
j(s_{2k}^+) &= j(t_{2k}^-) = 1,&
j(t_{2k}^+) &= j(s_{2k+1}^-) = 2,\\
j(s_{2k+1}^+) &= j(t_{2k+1}^-) = 2,&
j(t_{2k+1}^+) &= 3 > j(s_{2k+2}^-) = 1.
\end{align*}
Thus by the convexity condition, there exists sequences of numbers $t_{2k+1} < a_k \leq r_k < s_{2k+2}$ such that
\[
\ff'(t) =
\begin{cases}
\big(-\tfrac12,1,-\tfrac12\big) & t_{2k} < t < s_{2k+1}\\
\big(-\tfrac12,\tfrac14,\tfrac14\big) & s_{2k+1} < t < t_{2k+1}\\
\big(-\tfrac12,-\tfrac12,1\big) & t_{2k+1} < t < a_k\\
\big(-\tfrac12,1,-\tfrac12\big) & a_k < t < r_k\\
\big(1,-\tfrac12,-\tfrac12\big) & r_k < t < s_{2k+2}\\
\big(\tfrac14,\tfrac14,-\tfrac12\big) & s_{2k+2} < t < t_{2k+2}
\end{cases}
\]
(cf. Figure \ref{figuremainexample}). Evidently, we have
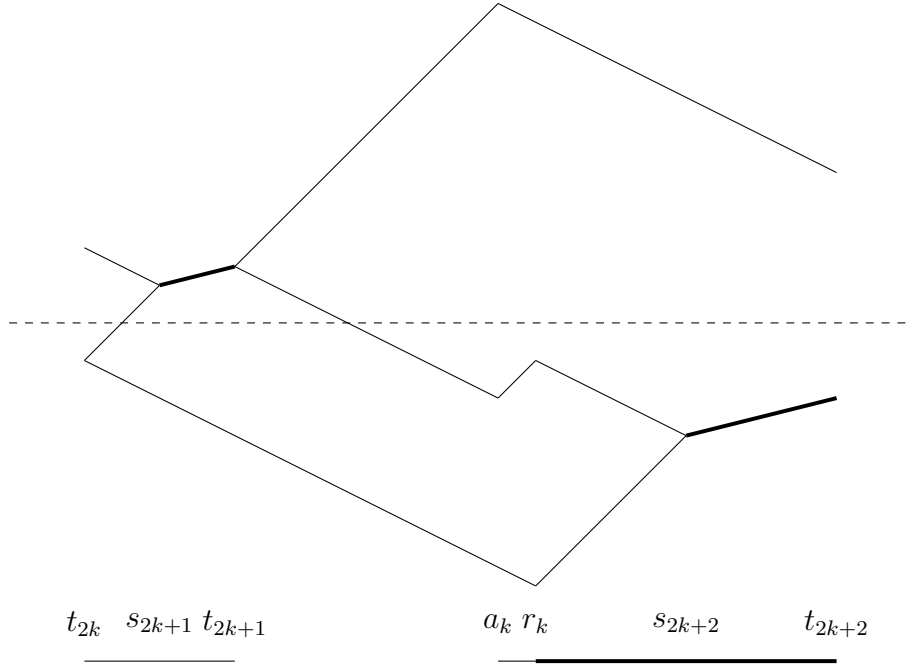
\begin{figure}
\begin{tikzpicture}
\clip(-1,-5) rectangle (11,5);
\draw[dashed] (-1,0) -- (11,0);
\node at (0,-4) {$t_{2k}$};
\node at (1,-4) {$s_{2k+1}$};
\node at (2,-4) {$t_{2k+1}$};
\node at (5.5,-4) {$a_k$};
\node at (6,-4) {$r_k$};
\node at (8,-4) {$s_{2k+2}$};
\node at (10,-4) {$t_{2k+2}$};
\draw (0,-4.5) -- (2,-4.5);
\draw (5.5,-4.5) -- (6,-4.5);
\draw[line width=1.5] (6,-4.5) -- (10,-4.5);
\draw (0,1)--(1,0.5);
\draw (0,-0.5)--(1,0.5);
\draw (0,-0.5)--(6,-3.5);
\draw[line width=1.5] (1,0.5) -- (2,0.75);
\draw (2,0.75) -- (5.5,4.25);
\draw (5.5,4.25) -- (10,2);
\draw (2,0.75) -- (5.5,-1);
\draw (5.5,-1) -- (6,-0.5);
\draw (6,-0.5) -- (8,-1.5);
\draw (6,-3.5) -- (8,-1.5);
\draw[line width=1.5] (8,-1.5) -- (10,-1);
\end{tikzpicture}
\caption{A piece of an arbitrary $1\times 2$ template.}
\label{figuremainexample}
\end{figure}
\[
\delta(\ff,t) = 3 - j(\ff,t) =
\begin{cases}
1 & t_{2k} < t < t_{2k+1}\\
0 & t_{2k+1} < t < a_k\\
1 & a_k < t < r_k\\
2 & r_k < t < t_{2k+2}.
\end{cases}
\]
Now let $A_k,B_k,C_k,D_k\in\R$ be chosen so that
\begin{align*}
f_1(t) &= A_k - \tfrac12 t \text{ for all } t\in [t_{2k},r_k],\\
f_1(t) &= B_k + t \text{ for all } t\in [r_k,s_{2k+2}],\\
f_3(t) &= C_k + t \text{ for all } t \in [t_{2k+1},a_k],\\
f_3(t) &= D_k - \tfrac12t \text{ for all } t\in [a_k,s_{2k+3}].
\end{align*}
Then the set of parameters
\[
\Big(A_k,B_k,C_k,D_k\Big)_{k\in\N}
\]
is a necessary and sufficient set of parameters for $\ff$ in the following sense: the map sending $\ff$ to this set of parameters is injective, and its image is the set of all sequences of parameters that satisfy the following inequalities:
\begin{align} \label{parameters1}
s_k &\leq t_k < s_{k+1},\\ \label{parameters2}
t_{2k+1} &< a_k \leq r_k < s_{2k+2}
\end{align}
where $s_k,t_k,a_k,r_k$ are defined by the equations
\begin{align} \label{rkdef}
0 &= (A_k - \tfrac12 r_k) - (B_k + r_k)\\ \label{Ckdef}
0 &= (C_k + a_k) - (D_k - \tfrac12 a_k)\\ \label{t0def}
0 &= 2\big(A_k - \tfrac12 t_{2k}\big) + \big(D_{k-1} - \tfrac12 t_{2k}\big)\\ \label{s0def}
0 &= 2\big(B_k + s_{2k+2}\big) + \big(D_k - \tfrac12 s_{2k+2}\big)\\ \label{Ckin}
0 &= \big(A_k - \tfrac12 t_{2k+1}\big) + 2 \big(C_k + t_{2k+1}\big)\\ \label{s1def}
0 &= \big(A_k - \tfrac12 s_{2k+1}\big) + 2 \big(D_{k-1} - \tfrac12 s_{2k+1}\big)
\end{align}
The idea now is to take a function $\ff$ defined by a sequence of parameters satisfying \eqref{parameters1}-\eqref{parameters2}, and to replace it by a function $\w\ff$ defined by a sequence of parameters
\[
\Big(\w A_k,\w B_k,\w C_k,\w D_k\Big)_{k\in\N}.
\]
If we can show that $\Delta(\w\ff,T) \geq \Delta(\ff,T)$ for all $T$, while $\what\tau(\w\ff) = \what\tau(\ff)$, then it suffices to prove \eqref{specialcasebounds} for $\w\ff$. A change that satisfies this inequality will be called an \emph{allowable change}. Note that if a change only affects the value of $\delta(\ff,\cdot)$ on two intervals $I_1,I_2$ such that $\max(I_1) < \min(I_2)$, increasing it on $I_1$ and decreasing it on $I_2$, with greater total area for the effect on $I_1$, then the change is allowable. We now show that we can make some allowable changes to simplify the structure of the template $\ff$.

\begin{claim}
We can without loss of generality assume that $a_k = r_k$ for all $k$.
\end{claim}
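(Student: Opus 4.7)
The plan is to construct $\w\ff$ as a local modification of $\ff$ near each index $k$ for which $a_k<r_k$; throughout, I will keep the parameters $A_k$ and $D_k$ unchanged and perturb only $B_k$ and $C_k$, so that the segment of $\w\ff$ on $[t_{2k+2},\infty)$ is identical to that of $\ff$. Concretely, for each $k$ I would set $\w a_k=\w r_k$ equal to some point $p_k\in[a_k,r_k]$ (chosen below), insert the single slope pattern $(1,-\tfrac12,-\tfrac12)$ on $(p_k,\w s_{2k+2})$, and then extend the pattern $(\tfrac14,\tfrac14,-\tfrac12)$ leftward from $t_{2k+2}$ to fill the remaining time. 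The new endpoint $\w s_{2k+2}$ is then forced by the merger condition $\w f_1(\w s_{2k+2})=\w f_2(\w s_{2k+2})$.

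The first key step is to choose $p_k$ so that $\w\ff(t_{2k+2})=\ff(t_{2k+2})$; a direct linear computation along the components shows that setting $p_k=a_k$ yields $\w s_{2k+2}=s_{2k+2}-2(r_k-a_k)$ and that $\w f_i(t_{2k+2})=f_i(t_{2k+2})$ for $i=1,2,3$. This makes each local modification completely decoupled from the others and from the tail of the template, so $\w\ff$ is globally well-defined by performing the substitution at every $k$. I would then verify the template axioms at the new corners $a_k$ and $\w s_{2k+2}$: convexity of $F_2$ at $a_k$ follows from $f_2<f_3$ there, and the slopes $1$ and $-\tfrac12$ remain in $Z(1)$.

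The next step is to establish that the modification is allowable. For the cumulative $\delta$-mass, on $(a_k,t_{2k+2})$ the original $\delta$-profile $(0,1,2,1)$ on the subintervals $(t_{2k+1},a_k),(a_k,r_k),(r_k,s_{2k+2}),(s_{2k+2},t_{2k+2})$ is replaced by $(0,2,1)$ on $(t_{2k+1},a_k),(a_k,\w s_{2k+2}),(\w s_{2k+2},t_{2k+2})$; since the total mass is preserved (each side integrates to the same value, as one checks from $\w s_{2k+2}-a_k=(s_{2k+2}-r_k)+(r_k-a_k)/2$ balancing cancel) but mass is strictly shifted to the left (the $\delta=2$ region now begins at $a_k$ instead of $r_k$), the inequality $\Delta(\w\ff,T)\ge\Delta(\ff,T)$ follows for every $T$, with equality for $T\ge t_{2k+2}$.

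The main obstacle, and the step I would expect to require the most care, is verifying that $\what\tau(\w\ff)=\what\tau(\ff)$. Because $\w f_1\ge f_1$ strictly on $(a_k,s_{2k+2})$ (the modification replaces a descending piece of $f_1$ with an ascending one), a priori the liminf $\liminf_t -\w f_1(t)/t$ could drop below $\tau$. The plan is to show that the excess $\w f_1(t)-f_1(t)$ is bounded by $\tfrac{3}{2}(r_k-a_k)$, while at the same time the sequence $t_{2k+2}$ is dense enough and $\w f_1(t_{2k+2})=f_1(t_{2k+2})$, so that the liminf along this subsequence equals the original liminf. For $t$ interior to $(a_k,t_{2k+2})$, I would use the uniform estimate $r_k-a_k\le s_{2k+2}-t_{2k+1}$ together with the constraint $\tau>0$ to conclude that $(r_k-a_k)/t\to 0$ along any subsequence realizing $\what\tau(\w\ff)$; otherwise, one extracts a subsequence where the template has bounded $r_k-a_k$ relative to $t$ and a compactness/limiting argument returns us to a template already satisfying $a_k=r_k$.
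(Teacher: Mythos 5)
Your construction — raising the ascending branch of $f_1$ so that its minimum occurs at $a_k$ rather than $r_k$, leaving $f_3$ and everything beyond $t_{2k+2}$ untouched — is a genuinely different move from the paper's, which instead decreases $C_k$ (the ascending branch of $f_3$), thereby pushing $t_{2k+1}$ and $a_k$ to the right until $a_k=r_k$ while never touching $f_1$, so that $\what\tau$ is preserved for free. Your route is viable, but two of your verifications fail as written. First, the $\delta$-bookkeeping is wrong: in the original template $\delta(\ff,\cdot)=2$ on all of $(r_k,t_{2k+2})$, including $(s_{2k+2},t_{2k+2})$ (the slope pattern $(\tfrac14,\tfrac14,-\tfrac12)$ with $f_1=f_2<f_3$ gives $S_+=\{1\}$, hence $\delta=2$), so the old profile is $(0,1,2,2)$, not $(0,1,2,1)$, and the new one is $(0,2,2)$, not $(0,2,1)$. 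Consequently your claim that the total $\delta$-mass is preserved is false; the correct computation gives $\delta_{\w\ff}\ge\delta_{\ff}$ pointwise with a net gain of $r_k-a_k$, which is more than enough, but with the numbers you wrote the change would actually \emph{lose} mass (an earlier gain of area $r_k-a_k$ against a later loss of $2(r_k-a_k)$), so your stated justification of allowability does not go through and is only rescued after the profiles are corrected.

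Second, the step you yourself flag as the main obstacle, $\what\tau(\w\ff)=\what\tau(\ff)$, is not actually proved: the bound $\w f_1-f_1\le\tfrac32(r_k-a_k)$ controls nothing unless $(r_k-a_k)/t\to0$, which need not hold, and the fallback ``compactness/limiting argument returning to a template with $a_k=r_k$'' is not an argument. Equality here is essential, since your modification only raises $f_1$, so $\what\tau$ can only drop, and a drop would weaken the reduction (the target bound deteriorates as $\what\tau$ decreases, and the paper's notion of allowable change demands $\what\tau(\w\ff)=\what\tau(\ff)$). There is a short repair consistent with your construction: the modification is supported on $[a_k,s_{2k+2}]$, where $\w f_1$ is nondecreasing (slopes $1$, then $\tfrac14$) and $\w f_1\le 0$ (any balanced template has $f_1\le0$), so $-\w f_1(t)/t$ is nonincreasing there and attains its minimum at the right endpoint, where $\w f_1=f_1$; hence $\liminf_t -\w f_1(t)/t\ge\liminf_t -f_1(t)/t$, and the reverse inequality is immediate from $\w f_1\ge f_1$. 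With these two corrections your argument works, but note how much shorter the paper's choice of perturbation is: by moving only $C_k$ it shifts $\delta$-mass leftward by a one-line computation and avoids the $\what\tau$ issue entirely.
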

\begin{proof}
We claim that decreasing $C_k$ by $\epsilon$ while leaving all other parameters fixed is an allowable change. Indeed, this change will have the effect of increasing $t_{2k+1}$ by $\frac43\epsilon$ while increasing $a_k$ by $\frac23\epsilon$. This means that $\delta(\ff,\cdot)$ is increased by $1$ on an interval of length $\frac43\epsilon$ around $t_{2k+1}$, but decreased by $1$ on an interval of length $\frac23\epsilon$ around $a_k$. Thus, the change is allowable, and applying the maximum value of $\epsilon = \frac32(r_k - a_k)$ completes the proof.
\end{proof}

From now on we will not treat $C_k$ as an independent parameter, but rather assume that it is given by \eqref{Ckdef} together with the formula $a_k = r_k$. Note that in this case, \eqref{rkdef}, \eqref{Ckdef}, and \eqref{Ckin} combine to form the equation
\begin{equation}
\label{parameters4.5}
0 = \big(A_k - \tfrac12 t_{2k+1}\big) + 2 \big(D_k - A_k + B_k + t_{2k+1}\big).
\end{equation}

\begin{claim}
The following set of parameter changes is allowable:
\begin{align*}
\w A_k &= A_k + \epsilon\\
\w B_{k-1} &= B_{k-1} + \epsilon\\
\w D_{k-1} &= D_{k-1} - \epsilon
\end{align*}
\end{claim}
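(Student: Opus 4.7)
The plan is to verify directly the allowability criterion stated just before the claim: compute how each breakpoint of $\ff$ shifts under the proposed perturbation, identify the local changes to $\delta(\ff,\cdot)$, and check that their running integral remains nonnegative. Since the perturbation alters only three parameters and they enter the defining equations \eqref{rkdef}, \eqref{t0def}, \eqref{s0def}, \eqref{s1def}, and \eqref{parameters4.5} linearly, the calculation is essentially algebraic.

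First I would substitute $\w A_k = A_k+\epsilon$, $\w B_{k-1} = B_{k-1}+\epsilon$, $\w D_{k-1} = D_{k-1}-\epsilon$ into those equations to compute the shifted breakpoints. A direct calculation yields
\[
\w r_{k-1} = r_{k-1}-\tfrac{2}{3}\epsilon,\quad \w s_{2k} = s_{2k}-\tfrac{2}{3}\epsilon,\quad \w t_{2k} = t_{2k}+\tfrac{2}{3}\epsilon,
\]
\[
\w s_{2k+1} = s_{2k+1}-\tfrac{2}{3}\epsilon,\quad \w t_{2k+1} = t_{2k+1}+\tfrac{2}{3}\epsilon,\quad \w r_k = r_k+\tfrac{2}{3}\epsilon,
\]
with every other transition point unaffected. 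The critical cancellation occurs in \eqref{parameters4.5} for index $k-1$: the coefficients $-2B_{k-1}$ and $-2D_{k-1}$ transform as $-2(B_{k-1}+\epsilon) - 2(D_{k-1}-\epsilon) = -2B_{k-1} - 2D_{k-1}$, so $t_{2k-1}$ is pinned in place; likewise, $t_{2k-2}, s_{2k-1}, s_{2k+2}, t_{2k+2}$ are determined by parameters that were left alone.

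Next I would read off $\delta(\w\ff,\cdot) - \delta(\ff,\cdot)$. Since the value of $\delta$ on each piece depends only on the local pattern of $\ff'$ listed earlier in Section \ref{subsectionspecialcase}, and since the ordered list of patterns is unchanged, $\delta_{\w\ff}-\delta_\ff$ is supported on four intervals of length $\tfrac{2}{3}\epsilon$ situated near the shifted breakpoints, taking the values $+2,+1,+1,-2$ on
\[
(r_{k-1}-\tfrac{2}{3}\epsilon, r_{k-1}),\ (t_{2k}, t_{2k}+\tfrac{2}{3}\epsilon),\ (t_{2k+1}, t_{2k+1}+\tfrac{2}{3}\epsilon),\ (r_k, r_k+\tfrac{2}{3}\epsilon),
\]
respectively. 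The cumulative integral $\int_0^T [\delta_{\w\ff}-\delta_\ff](s)\,ds$ therefore rises monotonically through the values $0\to \tfrac{4}{3}\epsilon \to 2\epsilon \to \tfrac{8}{3}\epsilon$ and then drops to $\tfrac{4}{3}\epsilon$; in particular it is nonnegative for every $T\geq 0$, which says exactly that $\Delta(\w\ff,T)\geq \Delta(\ff,T)$ for all $T>0$.

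Finally, the perturbation affects $\ff$ only on the bounded interval $[t_{2k-2},t_{2k+2}]$ and the alteration is bounded there by $O(\epsilon)$, so $\w f_1 - f_1$ is bounded and hence $\what\tau(\w\ff) = \what\tau(\ff)$. The principal obstacle is purely bookkeeping: tracking which breakpoints move and which are frozen (particularly the cancellation in \eqref{parameters4.5} that pins $t_{2k-1}$), and correctly reading off $\delta$ on each of the six slope patterns of $\ff'$; once these are in hand, the verification of the running integral inequality is elementary.
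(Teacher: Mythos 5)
Your proposal is correct and follows essentially the same route as the paper's proof: you compute the same breakpoint shifts (with $t_{2k-1}$ pinned by the cancellation in \eqref{parameters4.5}, $r_{k-1}$ down and $t_{2k},t_{2k+1},r_k$ up by $\tfrac23\epsilon$), read off the same local changes $+2,+1,+1,-2$ to $\delta(\ff,\cdot)$, and conclude allowability because the only decrease occurs after, and is dominated by, the increases, with $\what\dynexp$ unchanged since the modification is bounded. The only difference is cosmetic: you make the running-integral check explicit where the paper invokes its earlier two-interval domination criterion.
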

\begin{proof}
These changes lead to the following changes to $t_k,r_k$:
\begin{itemize}
\item no change to $t_{2k-1}$
\item decrease $r_{k-1}$ by $\tfrac23\epsilon$ (thus increasing $\delta(\ff,\cdot)$ by $2$ on an interval of this length)
\item increase $t_{2k}$ by $\tfrac23\epsilon$ (thus increasing $\delta(\ff,\cdot)$ by $1$ on an interval of this length)
\item increase $t_{2k+1}$ by $\tfrac23\epsilon$ (thus increasing $\delta(\ff,\cdot)$ by $1$ on an interval of this length)
\item increase $r_k$ by $\tfrac23\epsilon$ (thus decreasing $\delta(\ff,\cdot)$ by $2$ on an interval of this length)
\end{itemize}
The changes to $s_k$ can be ignored as they do not affect $\delta(\ff,\cdot)$, except to note that $\Delta s_k = \w s_k - s_k$ is always negative and so $\w t_k - \w s_k \geq t_k - s_k \geq 0$. The only decreasing effect, due to the change on $r_k$, is dominated by the increasing effect due to the change on $r_{k-1}$. Thus the changes are allowable.
\end{proof}

Now for each $k$, choose the maximum value of $\epsilon$ such that the changes lead to parameters satisfying \eqref{parameters1}-\eqref{parameters2} as well as the inequality
\[
f_1(t) \leq -\tau t \text{ for all } t,
\]
where $\tau < \what\tau(\ff)$ is arbitrary. Note that by piecewise linearity, this inequality is equivalent to saying that for all $k$ we have
\begin{align}
\label{parameters5}
f_1(t_{2k}) &\leq -\tau t_{2k}.
\end{align}
Then after the changes, \eqref{parameters5} will be satisfied with equality for every $k$. Equivalently,
\begin{equation}
\label{parameters5.5}
A_k - \tfrac12 t_{2k} = -\tau t_{2k}.
\end{equation}
Let $u_k = t_{2k}$, and note that
\[
\ff(u_k) = \left(-\tau u_k,-\tau u_k,2\tau u_k\right).
\]
This equality implies that for each $k$, we can define a template $\gg^{(k)}$ by letting $\gg^{(k)} = \ff$ on $[u_k,u_{k+1}]$ and then extending by exponential equivariance:
\[
\gg^{(k)}(\lambda t) = \lambda \gg^{(k)}(t) \text{ where } \lambda = u_{k+1}/u_k.
\]
Note that clearly, $\tau(\gg^{(k)}) = \tau$ for all $k$. From now on we will specialize to the Hausdorff dimension case of Theorem \ref{theoremspecialcase}.
\begin{claim}
We have
\begin{equation}
\label{fgkbound}
\underline\delta(\ff) \leq \sup_k \underline\delta(\gg^{(k)}).
\end{equation}
\end{claim}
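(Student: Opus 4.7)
The plan is to argue by contradiction, exploiting that $\gg^{(k)}$ is the exponentially $\lambda_k$-equivariant template that agrees with $\ff$ on the window $[u_k,u_{k+1}]$, where $\lambda_k \df u_{k+1}/u_k$. Let $\sigma_k \df \int_{u_k}^{u_{k+1}}\delta(\ff,s)\,\dee s$ and $c_k \df \sigma_k/(u_{k+1}-u_k)$ denote the time-averaged contraction rate over the $k$-th window. The first step is the routine observation that, by summing the geometric series of backward-translated periods of $\gg^{(k)}$, the primitive $G_k(T) \df \int_0^T\delta(\gg^{(k)},s)\,\dee s$ satisfies $G_k(u_k) = \sigma_k/(\lambda_k-1) = c_k u_k$; equivalently $\Delta(\gg^{(k)},u_k) = c_k$. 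Since $\ff$ and $\gg^{(k)}$ induce the same $\delta$ on $[u_k,u_{k+1}]$, this yields the comparison identity
\[
\Delta(\ff,T)-\Delta(\gg^{(k)},T) = \frac{u_k\bigl(\Delta(\ff,u_k)-c_k\bigr)}{T}, \qquad T\in[u_k,u_{k+1}],
\]
and, specialising to $T=u_{k+1}$, the one-step recursion
\[
\Delta(\ff,u_{k+1}) = (1-\alpha_k)\Delta(\ff,u_k) + \alpha_k c_k, \qquad \alpha_k \df 1-\tfrac{u_k}{u_{k+1}}.
\]

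Next, combine these with the contradiction hypothesis. Put $S \df \sup_k\underline\delta(\gg^{(k)})$ and suppose instead that $\underline\delta(\ff) > S+\epsilon$ for some $\epsilon>0$. For each sufficiently large $k$, pick $T_k^*\in[u_k,u_{k+1}]$ realising $\Delta(\gg^{(k)},T_k^*) = \underline\delta(\gg^{(k)}) \leq S$, so that $\Delta(\ff,T_k^*) > S+\epsilon$; plugging into the comparison identity (and using $T_k^*\geq u_k$) yields the key quantitative bound $\Delta(\ff,u_k)-c_k > \epsilon$ for every sufficiently large $k$. The recursion then forces
\[
\Delta(\ff,u_{k+1}) < \Delta(\ff,u_k) - \alpha_k\epsilon,
\]
and iterating from some $K_0$ gives $\Delta(\ff,u_N) < \Delta(\ff,u_{K_0}) - \epsilon\sum_{j=K_0}^{N-1}\alpha_j$.

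The main obstacle, and the step whose verification will be the most delicate, is then establishing that $\sum_j\alpha_j = +\infty$, which drives $\Delta(\ff,u_N)$ below zero and contradicts $\delta(\ff,\cdot)\geq 0$. This is not automatic because the window ratios $\lambda_k$ may cluster at $1$. However, the product telescopes cleanly:
\[
\prod_{j=K_0}^{N-1}(1-\alpha_j) = \prod_{j=K_0}^{N-1}\frac{u_j}{u_{j+1}} = \frac{u_{K_0}}{u_N},
\]
and since $\ff$ is not trivially singular the sets $F_-=\{t\geq 0: f_1(t)=f_2(t)\}$ and $F_+=\{t\geq 0: f_2(t)=f_3(t)\}$ are unbounded, so $u_N=t_{2N}\to\infty$ and the product tends to $0$. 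The standard equivalence that $\prod_j(1-\alpha_j)\to 0$ if and only if $\sum_j\alpha_j=\infty$ (valid for $\alpha_j\in[0,1)$) then closes the argument.
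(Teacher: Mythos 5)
Your proof is correct, but it runs along a genuinely different route than the paper's. The paper argues directly: since $\Delta(\ff,\cdot)\geq 0$, there are infinitely many $k$ with $\Delta(\ff,u_{k+1})\geq\Delta(\ff,u_k)-\epsilon$, and for such $k$ it converts this into $\Delta(\ff,u_k)-c_k=O(\epsilon)$ — crucially using that $u_{k+1}/u_k$ is bounded away from $1$ (which holds here because $\tau>0$ forces each window to have length $\gtrsim\tau u_k$) — and then feeds this into the same comparison identity you derived, obtaining $\inf_{T\in[u_k,u_{k+1}]}\Delta(\ff,T)\leq\underline\delta(\gg^{(k)})+O(\epsilon)$ and concluding by taking $\liminf_k$ and letting $\epsilon\to 0$. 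You instead argue by contradiction: from $\underline\delta(\ff)>S+\epsilon$ you extract the uniform gap $\Delta(\ff,u_k)-c_k>\epsilon$, feed it into the exact one-step recursion $\Delta(\ff,u_{k+1})=(1-\alpha_k)\Delta(\ff,u_k)+\alpha_k c_k$, and kill the hypothesis via the telescoping product $\prod_j(1-\alpha_j)=u_{K_0}/u_N\to 0$, which forces $\sum_j\alpha_j=\infty$ and drives $\Delta(\ff,u_N)$ below $0$. Both arguments rest on the same two computations (the geometric-series identity $\Delta(\gg^{(k)},u_k)=c_k$ and the window comparison identity), but your version buys something the paper's does not: it never uses that the ratios $u_{k+1}/u_k$ are bounded away from $1$ (nor bounded above), so it would survive in settings where the windows degenerate; the paper's version is shorter and avoids the contradiction scaffolding, at the price of invoking that geometric fact about the specific construction. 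Your verifications of the supporting steps — that $\Delta(\gg^{(k)},\cdot)$ is exponentially $\lambda_k$-periodic so its infimum is attained on one period, that $\Delta(\ff,T_k^*)>S+\epsilon$ for large $k$ because $T_k^*\geq u_k\to\infty$, and that $u_N\to\infty$ so the product tends to $0$ — are all sound.
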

\begin{proof}
Fix $\epsilon > 0$. Then there exist infinitely many $k$ such that $\Delta(\ff,u_{k+1}) \geq \Delta(\ff,u_k) - \epsilon$. For such a $k$, we have
\[
\Delta(\gg^{(k)},u_k) = \Delta(\ff,[u_k,u_{k+1}]) \geq \Delta(\ff,u_k) - O(\epsilon)
\]
since $u_{k+1}/u_k$ is bounded away from $1$. Thus
\[
\inf_{T\in [u_k,u_{k+1}]} \Delta(\ff,T) \leq \inf_{T\in [u_k,u_{k+1}]} \Delta(\gg^{(k)},T) + O(\epsilon) = \underline\delta(\gg^{(k)}) + O(\epsilon).
\]
Taking the liminf over $k$ and then letting $\epsilon\to 0$ gives \eqref{fgkbound}.
\end{proof}

Thus, we can without loss of generality assume that $\ff$ is exponentially equivariant, i.e. that
\begin{align}
\label{exponential}
A_k &= \lambda^k A,&
B_k &= \lambda^k B,&
D_k &= \lambda^k D
\end{align}
for some $A,B,D > 0$ and $\lambda > 1$. Now by rescaling, we can without loss of generality assume that $u_0 = 1$. Plugging $k=0$ into the formulas \eqref{rkdef}-\eqref{s1def}, \eqref{parameters4.5}, and \eqref{parameters5.5}, and solving for the appropriate variables yields
\begin{align*}
A &= \tfrac12 - \tau\\
D &= \lambda(\tfrac32 - 2A) = \lambda(\tfrac12 + 2\tau)\\
t_0 &= 1\\
s_1 &= \tfrac23(A+2\lambda^{-1} D) = 2 - 2A = 1 + 2\tau\\
t_1 &= \tfrac23(A - 2D - 2B)\\
r_0 &= \tfrac23(A-B)\\
s_2 &= -\tfrac23(2B+D)\\
t_2 &= \lambda.
\end{align*}
On the interval $[u_0,u_1] = [1,\lambda]$, the behavior of $\delta(\ff,\cdot)$ is as follows:
\begin{equation}
\label{deltaftspecialcase}
\delta(\ff,t) = \begin{cases}
1 & 1 < t < t_1\\
0 & t_1 < t < r_0\\
2 & r_0 < t < \lambda.
\end{cases}
\end{equation}
Now consider the change $\w B = B - \epsilon$. This change increases $t_1$ by $\frac43\epsilon$ and increases $r_0$ by $\frac23\epsilon$, this increasing $\delta(\ff,\cdot)$ by $1$ on an interval of length $\frac43\epsilon$ around $t_1$ and decreasing $\delta(\ff,\cdot)$ by $2$ on an interval of length $\frac23\epsilon$ around $r_0$. Thus the change is allowable, and by taking the maximum possible value of $\epsilon = \tfrac34(t_2-s_2)$, we can without loss of generality assume that $s_2 = t_2$, or equivalently that
\[
B = -\tfrac34\lambda - \tfrac12 D = \lambda(A - \tfrac32) = -\lambda(1+\tau)
\]
(cf. Figure \ref{figurespecialcase4}). Note that this implies
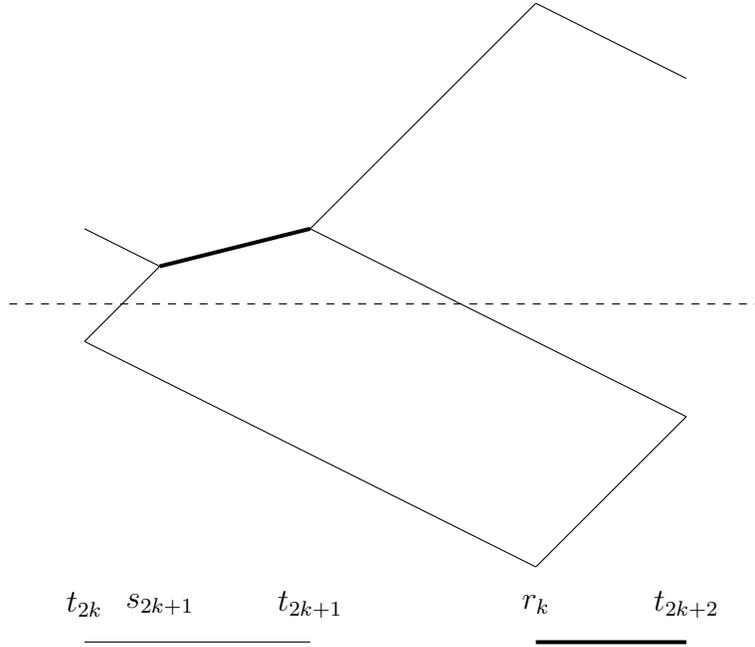
\begin{figure}
\begin{tikzpicture}
\clip(-1,-5) rectangle (9,5);
\draw[dashed] (-1,0) -- (9,0);
\node at (0,-4) {$t_{2k}$};
\node at (1,-4) {$s_{2k+1}$};
\node at (3,-4) {$t_{2k+1}$};
\node at (6,-4) {$r_k$};
\node at (8,-4) {$t_{2k+2}$};
\draw (0,-4.5) -- (3,-4.5);
\draw[line width=1.5] (6,-4.5) -- (8,-4.5);
\draw (0,1)--(1,0.5);
\draw (0,-0.5)--(1,0.5);
\draw (0,-0.5)--(6,-3.5);
\draw[line width=1.5] (1,0.5) -- (3,1);
\draw (3,1) -- (6,4);
\draw (6,4) -- (8,3);
\draw (3,1) -- (8,-1.5);
\draw (6,-3.5) -- (8,-1.5);
\end{tikzpicture}
\caption{A period of an exponentially periodic $1\times 2$ template, simplified using the arguments of this section.}
\label{figurespecialcase4}
\end{figure}
\[
t_1 = \tfrac23 A + \tfrac43 \lambda A.
\]
Now it is a problem of one-variable calculus: $\lambda$ is the only free parameter, and we must optimize $\underline\delta(\ff)$. Note that $\lambda$ is subject to the restriction
\[
\lambda \geq \frac{3/2 - 2A}{A} = \frac{1/2 + 2\tau}{1/2 - \tau}
\]
which comes from the inequality $s_1 \leq t_1$. Now from \eqref{deltaftspecialcase}, we have
\begin{align*}
\underline\delta(\ff) &= \Delta(\ff,r_0) = \Delta(\ff,[\lambda^{-1} r_0,r_0])
= \frac{1(t_1-1) + 2(1-\lambda^{-1} r_0)}{r_0 - \lambda^{-1} r_0}\cdot
\end{align*}
On the other hand,
\begin{align*}
t_1 &= \tfrac23 A + \tfrac43 \lambda A,&
r_0 &= \tfrac23 A - \tfrac23 \lambda A + \lambda.
\end{align*}
Let $x = \tau$ and $y = \tfrac13(\lambda - 1)$. Then
\begin{align*}
t_1 &= (\tfrac13-\tfrac23x)(3+6y) = (1-2x)(1+2y),\\
r_0 &= 1+3y-(\tfrac13-\tfrac23x)(3y) = 1+(2+2x)y,\\
\underline\delta(\ff) &= \frac{\lambda(t_1-1) + 2(\lambda - r_0)}{r_0(\lambda - 1)}\\
&= \frac{(1+3y)(-2x+(2-4x)y) + (2-4x)y}{(1+(2+2x)y)(3y)}\\
&= f_x(y) \df \frac23 \cdot \frac{-x + (2-7x)y + (3-6x)y^2}{y + (2+2x)y^2}\cdot
\end{align*}
We now need to find the maximum of the function $f_x$ on the interval $\CO{\frac{x}{1/2-x}}{\infty}$, assuming that $0 < x < 1/2$. The function $f_x$ has two critical points, given by the formulas\Footnote{Note that we found it easier to do these calculations first for the general case
\[
f(y) = \frac{-A+By+Cy^2}{Dy+Ey^2},
\]
then plug in the values $A=x$, $B= 2-7x$, $C=3-6x$, $D=1$, and $E=2+2x$, and finally multiply by $\frac23$. In the general case the formulas are
\begin{align*}
0 &= AD+2AEy-(BE-CD)y^2\\
y &= \frac{\epsilon\sqrt{Q} + AE}{BE-CD} = \frac{AD}{\epsilon\sqrt{Q} - AE} &\text{where $Q = (AE)^2+(AD)(BE-CD)$}\\
f(y) &= \frac1{D^2}\big(2AE + BD - 2 \epsilon\sqrt{A^2 E^2 + ABDE - ACD^2}\big).\noreason
\end{align*}}
\begin{align*}
0 &= x + (4x + 4x^2)y + (-1+4x+14x^2)y^2\\
y &= \frac{\epsilon\sqrt{x-6x^3+4x^4} + 2x + 2x^2}{1-4x-14x^2}
= \frac{x}{\epsilon\sqrt{x-6x^3+4x^4} - 2x - 2x^2 }\\
f_x(y) &= \frac43 - \frac43\epsilon\sqrt{x-6x^3+4x^4} - 2x + \frac83 x^2
\end{align*}
where $\epsilon = \pm 1$. Note that since the critical point corresponding to $\epsilon = -1$ is negative, it is not in the domain and so can be ignored. The critical point corresponding to $\epsilon = +1$ is positive if and only if $1 - 4x - 14x^2 > 0$, which in turn is true if and only if $x < \frac{3\sqrt 2 - 2}{14}$. In this case, it is easy to check that this critical point is in the domain of $f_x$, and that the critical point is a maximum. Thus in this case
\[
\sup_y f_x(y) = f_x(y_{\text{crit}}) = \frac43 - \frac43\sqrt{x-6x^3+4x^4} - 2x + \frac83 x^2.
\]
On the other hand, if $x \geq \frac{3\sqrt 2 - 2}{14}$, then this critical point is negative or undefined, and thus $f_x$ has no critical points on its domain. It can be verified that $f_x$ is increasing in this case, so its supremum is equal to its limiting value:
\[
\sup_y f_x(y) = \lim_{y\to\infty} f_x(y) = \frac{2}{3}\cdot \frac{3-6x}{2+2x} = \frac{1-2x}{1+x}\cdot
\]
Since $\underline\delta(\ff) \leq \sup_y f_x(y)$, this completes the proof of the upper bound. To prove the lower bound, note that if $y\in \CO{\frac{x}{1/2-x}}{\infty}$, then there is a unique exponentially periodic template $\ff$ satisfying the formulas appearing in the above proof, and this template satisfies $\underline\delta(\ff) = f_x(y)$. Thus $\HD(\Sing_{1,2}(\omega)) \geq f_x(y)$, and taking the supremum over $y$ proves the lower bound. Note that the exponentially periodic template $\ff$ is the same as the standard template defined by the sequence of points $(t_k,-\epsilon_k) = (\lambda^k,-\tau \lambda^k)$, where $\tau = x$ and $\lambda = 1+3y$.

\section{Proof of Theorem \ref{theoremsingonaverage}}
\label{subsectionsingonaverage}
Let $\ff$ be a template, and let $\phi$ be as in \6\ref{subsectionmainupperbound}. We claim that
\[
\phi'(t) \leq \dimsing - \delta(\ff,t) + \frac{mn}{m+n} g(t),
\]
where $g(t) = 1$ if $\ff(t) = \0$ and $g(t) = 0$ otherwise. Indeed, when $\ff(t) \neq \0$, this follows from Lemma \ref{lemmaphiprimebound}, and when $\ff(t) = \0$ it follows from direct calculation using the fact that $\phi'(t) = 0$ and $\delta(\ff,t) = mn$. Now fix $T > 0$. Integrating over $[0,T]$ gives
\[
0 \lesssim_\plus \phi(T) - \phi(0) \leq \int_0^T \left[\dimsing - \delta(\ff,t) + \frac{mn}{m+n} g(t)\right] \;\dee t = T\left[\dimsing - \Delta(\ff,T) + \frac{mn}{m+n} G(T)\right],
\]
where $G(T)$ is the average of $g$ on $[0,T]$. It follows that
\[
\overline\delta(\ff) \leq \limsup_{T\to\infty} \left[\dimsing + \frac{mn}{m+n} G(T)\right] = \dimsing + \frac{mn}{m+n}\limsup_{T\to\infty} G(T) = \PP(\ff)\dimsing + (1-\PP(\ff)) mn,
\]
where
\[
\PP(\ff) \df \liminf_{T\to\infty} (1-G(T))
\]
is the proportion of time spent near infinity. Applying Theorem \ref{theoremvariational2} gives
\[
\dim_H(\{A:\PP(A)=p\}) \leq \dim_P(\{A:\PP(A)=p\}) \leq p\dimsing + (1-p)mn.
\]
For the reverse direction, fix $p$ and $\epsilon > 0$ small. Define $\ff$ on $[1,1+\epsilon]$ as follows:
\begin{itemize}
\item Let $\ff = \gg[(0,0),(1+p\epsilon,0)]$ on $[1,1+p\epsilon]$
\item Let $\ff(t) \equiv \0$ on $[1+p\epsilon,1+\epsilon]$
\end{itemize}
and extend by exponential equivariance. It is easy to see that $\PP(\ff) = p$ and
\[
\dim_P(\DD(\ff)) \geq \dim_H(\DD(\ff)) \geq p\dimsing + (1-p)mn - O(\epsilon).
\]
This completes the proof.

\section{Proof of Theorem \ref{theoremstarkov}}
\label{subsectionstarkov}

Let $\phi$ be a function such that $\phi(t) \to \infty$ as $t \to \infty$, and without loss of generality suppose that $\phi$ is increasing. Let $(t_k,-\epsilon_k)$ be a sequence of points such that:
\begin{itemize}
\item[(i)] $\Delta t_k \leq \frac12 \phi(t_k)$ for all $k$;
\item[(ii)] $\epsilon_k \leq \frac12 \phi(t_k)$ for all $k$;
\item[(iii)] $\epsilon_k \to \infty$ as $k\to\infty$;
\item[(iv)] $\epsilon_k/\Delta t_k \to 0$ and $\epsilon_{k+1}/\Delta t_k \to 0$ as $k\to\infty$.
\end{itemize}
Then let $\ff$ be the standard template defined by the sequence of points $(t_k,-\epsilon_k)$. Conditions (i) and (ii) imply that $f_1(t) \geq -\phi(t_k) \geq -\phi(t)$ for all $k\in\N$ and $t\in [t_k,t_{k+1}]$. Condition (iii) implies that $\ff$ is singular. Finally, condition (iv) implies that $\underline\delta(\ff) = \dimsing$, since
\begin{align*}
\Delta(\ff,[t_k,t_{k+1}])
&= \Delta\left(\mbf s\left[\left(0,-\frac{\epsilon_k}{\Delta t_k}\right),\left(0,-\frac{\epsilon_{k+1}}{\Delta t_k}\right)\right],1\right)\\
&\to \Delta(\mbf s[(0,0),(1,0)],1) = \dimsing \text{ as } k\to\infty.
\end{align*}

\section{Proof of Theorem \ref{theoremkmessenger}}
\label{subsectionkmessenger}

Fix $2 \leq k \leq d - 1$ and $j\in \{k-1,k\}$, and let $\ff$ be a template with the following properties:
\begin{align} \label{ksing1}
f_{k-1}(t) &\to -\infty \text{ as } t\to \infty,\\ \label{ksing2}
f_{k+1}(t) &\to +\infty \text{ as } t \to\infty,\\ \label{ft0}
\frac1t\ff(t)&\to 0 \text{ as } t\to\infty,\\ \label{Sj+-to1}
\frac1T \lambda\big([0,T]\cap (S_j^+ \cup S_j^-)\big) &\to 1 \text{ as } T\to\infty,
\end{align}
where $S_j^+$ (resp. $S_j^-$) is the set of all times $t\geq 0$ such that the following hold:
\begin{itemize}
\item $f_1(t) = \ldots = f_j(t) < f_{j+1}(t) = \ldots = f_d(t)$,
\item $(L_+,L_-) = (\lceil \frac{j m}{d}\rceil,\lfloor \frac{j n}{d}\rfloor)$ (resp. $(L_+,L_-) = (\lfloor \frac{j m}{d}\rfloor,\lceil \frac{j n}{d}\rceil)$), where $L_\pm = L_\pm(\ff,t,j)$.
\end{itemize}
Such a template can be constructed by alternating long $S_j^\pm$ intervals with short intervals along which $f_k$ crosses 0 and returns, in a manner consistent with the rule on changes of slopes (cf. Figure \ref{hourglass}). The key point is that if $t\in S_j^+$ then $f_1'(t) \geq 0$, but if $t\in S_j^-$ then $f_1'(t) \leq 0$ (with equality if and only if $\frac{j m}{d}$ is an integer). Note that the template $\ff$ is not trivially singular.

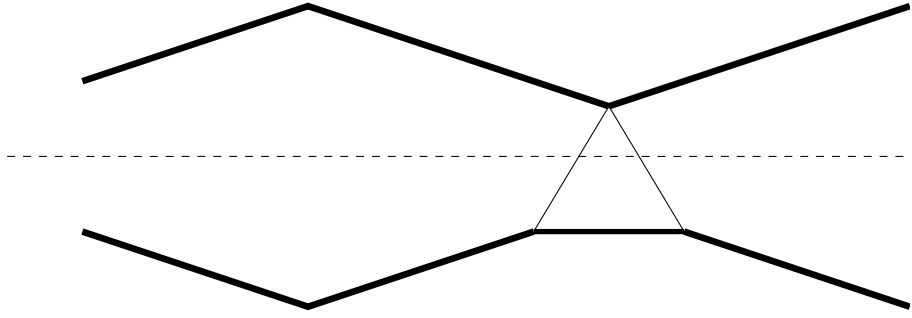
\begin{figure}[h!]
\begin{tikzpicture}
\clip(-1,-3) rectangle (11,3);
\draw[dashed] (-1,0) -- (11,0);
\draw[line width=2.5] (0,-1) -- (3,-2) -- (6,-1);
\draw[line width=2.5] (0,1) -- (3,2) -- (7,2/3);
\draw (7,2/3) -- (8,-1);
\draw (6,-1) -- (7,2/3);
\draw[line width=2] (6,-1) -- (8,-1);
\draw[line width=2.5] (8,-1) -- (11,-2);
\draw[line width=2.5] (7,2/3) -- (11,2);
\end{tikzpicture}
\caption{A piece of a template $\ff$ with the desired properties, as described in \6\ref{subsectionkmessenger} (Proof of Theorem \ref{theoremkmessenger}). The triangular portion of the figure can be made arbitrarily small in proportion to the rest.}
\label{hourglass}
\end{figure}

To compute the lower contractivity of $\ff$, we observe that for $t\in S_j^+$, we have
\begin{align*}
f_1'(S_j^+) \df f_1'(t) &= \frac1j\left[\frac{\lceil\frac{jm}{d}\rceil}{m} - \frac{\lfloor\frac{jn}{d}\rfloor}{n}\right]\\
&= \frac1j\left[\frac{\frac{jm}{d} + \{-\frac{jm}{d}\}}{m} - \frac{\frac{jn}{d} - \{\frac{jn}{d}\}}{n}\right]\\
&= \frac1j \frac{m+n}{mn}\left\{\frac{jn}{d}\right\}\\
mn - \delta(\ff,S_j^+) \df mn - \delta(\ff,t) &= L_- (m-L_+) = \left\lfloor \frac{j n}{d}\right\rfloor \left(m - \left\lceil \frac{j m}{d}\right\rceil\right)\\
&=  \left(\frac{jn}{d} - \left\{\frac{jn}{d}\right\}\right)\left(m - \frac{jm}{d} - \left\{-\frac{jm}{d}\right\}\right)\\
&= \frac{j(d-j)mn}{d^2} - \frac{(d-j)m + jn}{d} \left\{\frac{jn}{d}\right\} + \left\{\frac{jn}{d}\right\}^2.
\end{align*}
Similarly, for $t\in S_j^-$ we have
\begin{align*}
f_1'(S_j^-) \df f_1'(t) &= -\frac1j \frac{m+n}{mn} \left\{\frac{jm}{d}\right\}\\
mn - \delta(\ff,S_j^-) \df mn - \delta(\ff,t) &= \frac{j(d-j)mn}{d^2} + \frac{(d-j)m + jn}{d} \left\{\frac{jm}{d}\right\} + \left\{\frac{jm}{d}\right\}^2.
\end{align*}
On the other hand, for $t\notin S_j^+ \cup S_j^-$ we have $-\frac1n \leq f_1'(t) \leq \frac1m$ and $0\leq \delta(\ff,t) \leq mn$. If $\frac{jm}{d}$ is an integer, then by \eqref{Sj+-to1} we have
\[
\delta(\ff,S_j^+) = \delta(\ff,S_j^-) = f_\dims(j)
\]
and we are done. Otherwise, by \eqref{ft0} and \eqref{Sj+-to1} we have
\[
\frac1T \lambda\big([0,T]\cap S_j^\pm\big) \to \alpha^\pm \text{ as } T\to \infty,
\]
where $\alpha^+ + \alpha^- = 1$ and
\[
\alpha^+ f_1'(S_j^+) + \alpha^- f_1'(S_j^-) = 0.
\]
It follows that
\begin{align*}
\alpha^+ &= \left\{\frac{jm}{d}\right\},&
\alpha^- &= \left\{\frac{jn}{d}\right\},
\end{align*}
and thus
\begin{align*}
\underline\delta(\ff) &= \alpha^+ \delta(\ff,S_j^+) + \alpha^- \delta(\ff,S_j^-) = f_\dims(j).
\end{align*}
This completes the proof.

\section{Proof of Theorem \ref{theoremSSR}}
\label{subsectionSSR}

Part (i) follows directly from Lemma \ref{lemmaapproximatetemplate}, since we can take $\Lambda = u_A \Z^d$ where $A$ is the matrix in question. To prove part (ii), consider the template $\ff$ that we need to approxiomate by a successive minima function $\hh_A$. If $\overline\delta(\ff) > 0$, then by Theorem \ref{theoremvariational2}, the packing dimension of $\DD(\ff)$ is positive and thus $\DD(\ff)$ is nonempty. If we take $A\in\DD(\ff)$, then $\hh_A \asymp_\plus \ff$.
On the other hand, suppose that $\overline\delta(\ff) = 0$, and consider the set
\[
Z =  \{t\geq 0 : \Delta(\ff,t) > 0\}
\]
Then the density of $Z$ is zero, i.e. $\lim_{T\to\infty} \frac1T |Z\cap [0,T]| = 0$, where $|\cdot  |$ denotes $1$-dimensional Lebesgue measure. On the other hand, for all $t\notin Z$ we must have $\ff'(t) = (-\frac1n,\ldots,-\frac1n,\frac1m,\ldots,\frac1m)$. It follows that $f_n(t) < f_{n+1}(t)$ for all sufficiently large $t$. Then the convexity and quantized slope conditions (see Definition \ref{definitiontemplate}) imply that $F_n$ must be piecewise linear with only finitely many intervals of linearity. 
Now it follows, using the fact that $Z$ has zero density, that $F_n'(t) = -1$ for all sufficiently large $t$, which in turn implies that $\ff(t) \asymp_\plus (-\frac1n,\ldots,-\frac1n,\frac1m,\ldots,\frac1m) t$. Now there exist matrices $A$ such that $\hh_A(t) \asymp_\plus (-\frac1n,\ldots,-\frac1n,\frac1m,\ldots,\frac1m) t$ (for example, matrices with rational entries) and so this completes the proof.

\section{Proof of Theorem \ref{theoremJS}}
\label{sectionJS}
A matrix $A$ is badly approximable if and ony if its successive minima function $\hh_A$ is bounded. Thus, by Theorem \ref{theoremvariational2}, the Hausdorff dimension of the set of badly approximable matrices is equal to the supremum of $\underline\delta$ over bounded templates. Since $\underline\delta(\0) = mn$ and $\underline\delta(\ff) \leq mn$ for all templates $\ff$, this supremum is equal to $mn$.

\section{Proof of Theorem \ref{theoremBD}}
\label{sectionBD}

Analogously to the uniform dynamical exponent, we define the regular (non-uniform) dynamical exponent of a map $\ff:\Rplus\to\R^d$ to be the number
\begin{equation*}\label{RDE}
\dynexp(\ff) \df \limsup_{t\to\infty} \frac{-1}t f_1(t).
\end{equation*}
Now let $\ff$ be a template with $\tau(\ff)=\tau \in [0,\frac1n]$ and consider the potential function
\[
\phi(t) = \phi_\ff(t) = mn|f_1(t)|.
\]
\begin{lemma}
Let $I$ be an interval of linearity for $\ff$. Then
\[
\phi'(I) \leq mn - \delta(I),
\]
with equality in the following cases:
\begin{itemize}
\item $\ff = \0$ on $I$
\item $f_1' = -\tfrac1n$ and $f_2 = f_d$ on $I$.
\end{itemize}
\end{lemma}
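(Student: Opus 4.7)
The plan is to follow the structure of the proof of Lemma \ref{lemmaphiprimebound}, but with considerably less casework since the potential $\phi = mn|f_1|$ involves only $f_1$ rather than a max with a second term, so no symmetry-breaking assumption is required. First I will dispose of the degenerate situation $\ff \equiv 0$ on $I$ (where $\phi' = 0$ and $\delta(\ff, I) = mn$, yielding equality). In all other cases, I reduce to the situation $f_1 < 0$ on $I$, which is the case of interest and is automatic for balanced templates since $f_i \geq f_1$ and $\sum_i f_i = 0$ force $f_1 \leq 0$; then $\phi' = -mn f_1'$ on the interval of linearity.

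Next I invoke the standard parameterization: let $j$ be the largest index with $f_j = f_1$ on $I$. Then $F_j = j f_1$, and by the defining slope identity $F_j' = L_+/m - L_-/n$ with $L_\pm = L_\pm(\ff, I, j)$ satisfying $L_+ + L_- = j$, I obtain
\[
\phi'(I) \;=\; \frac{mL_- - nL_+}{j}.
\]
On the other hand, \eqref{codimfI} yields the crude bound $mn - \delta(\ff, I) \geq L_-(m - L_+)$, so the target inequality reduces to the purely algebraic claim $mL_- - nL_+ \leq jL_-(m-L_+)$. Rearranging via $L_+ + L_- = j$ produces the identity
\[
jL_-(m-L_+) - (mL_- - nL_+) \;=\; (j-1)L_-(m-L_+) + L_+(n-L_-),
\]
whose right-hand side is manifestly nonnegative term-by-term.

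For the equality analysis, observe that equality in the algebraic step forces both summands above to vanish, while equality in the \eqref{codimfI} bound forces $S_+$ to be an initial segment within each of $(0,j]$ and $(j,d]$. The case $\ff \equiv 0$ is handled directly. The case $f_1' = -1/n$ with $f_2 = f_d$ on $I$ forces $j=1$, $L_+ = 0$, $L_- = 1$, making both algebraic summands vanish; meanwhile the collapse $f_2 = \cdots = f_d$ makes $(1,d]$ a single interval of equality with $M_+ = m$, so $S_+ = \{2,\ldots,m+1\}$ and the \eqref{codimfI} bound is tight. I do not anticipate any substantive obstacle here; the only delicate bookkeeping is the equality analysis, since (for example) relaxing $f_2 = f_d$ to $f_2 < f_d$ while keeping $j = 1, L_+ = 0, L_- = 1$ can break the \eqref{codimfI} equality while leaving the algebraic step tight.
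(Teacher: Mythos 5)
Your proof is correct and follows essentially the same route as the paper's: the same choice of $j$ (largest index with $f_j=f_1$ on $I$), the same slope computation $\phi'(I)=\tfrac1j(mL_--nL_+)$, and the same bound $mn-\delta(I)\ge L_-(m-L_+)$ coming from \eqref{codimfI}. The only difference is cosmetic — where the paper checks $mL_--nL_+\le jL_-(m-L_+)$ via $L_-\le n$ and $j\ge 1$, you rewrite the difference as $(j-1)L_-(m-L_+)+L_+(n-L_-)\ge 0$ — and your verification of the two stated equality cases (including tightness of the \eqref{codimfI} bound) is, if anything, slightly more explicit than the paper's.
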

\begin{subproof}
Let $j$ be the largest value such that $f_1 = f_j$ on $I$, and let $L_\pm = L_\pm(\ff,I,j)$. Then
\[
\phi'(I) = mn\frac1j\left[\frac{L_-}{n} - \frac{L_+}{m}\right]
\]
while
\[
mn - \delta(I) \geq L_- (m - L_+).
\]
So we need to show that
\[
\frac1j[m L_- - n L_+ ] \leq L_-  (m - L_+).
\]
Indeed, since $L_- \leq n$ and $j \geq 1$, we have
\[
\frac1j[m L_- - n L_+ ] \leq \frac1j[m L_- - L_- L_+ ] = \frac1j L_- (m - L_+) \leq L_- (m - L_+).
\]
Equality holds when $L_+ = m$ and $L_- = n$, and when $L_+ = 0$ and $L_- = 1$.
\end{subproof}

Integrating gives
\[
\phi(T) = mn|f_1(T)| \leq T(mn - \Delta(T)).
\]
Dividing by $T$ and then taking the limsup gives
\[
mn\tau \leq mn - \underline\delta(\ff).
\]
Rearranging gives
\[
\underline\delta(\ff) \leq mn(1 - \tau).
\]
Thus, by Theorem \ref{theoremvariational2}, we have
\[
\HD(\{\omega\text{-approximable matrices}\})
= \sup_{\ff:\tau(\ff)=\tau} \underline\delta(\ff) \leq mn(1 - \tau).
\]

Let us now show the reverse inequality. For each $\lambda > (1 + \tau/m) / (1 - \tau/n)$, let $\ff_\lambda$ be as in Figure \ref{JSBD}, i.e. $\ff_\lambda$ is exponentially $\lambda$-periodic and $f_{\lambda,1}$ is maximal with respect to the restriction $f_{\lambda,1}(1) = -\tau$. Then $\tau(\ff_\lambda) = \tau$, while $\underline\delta(\ff_\lambda) = mn(1 - \tau) / (1 - \lambda^{-1})$. So as $\lambda\to\infty$, we have $\underline\delta(\ff_\lambda) \to mn(1 - \tau)$ and the proof is complete.

\begin{figure}[h!]
\begin{tikzpicture}
\clip (0,-4) rectangle (39,2.5);
\scalebox{0.5}{
\draw[dashed] (1,0) -- (32,0);
\draw (3,0) -- (8,-3) -- (12,0);
\draw[line width=3] (12,0) -- (15,0);
\draw[line width=3] (3,0) -- (8,2) -- (12,0);
\draw (15,0) -- (25,-6) -- (31,0);
\draw[line width=3] (15,0) -- (25,4) -- (31,0);
\node at (20,-2) {\scalebox{1.8}{$-\frac1n$}};
\node at (28,-2) {\scalebox{1.8}{$\frac1m$}};
\fill (25,-6) circle (0.1);
\node at (25,-6.7) {\scalebox{1.3}{$(1,-\tau)$}};
}
\end{tikzpicture}
\caption{The joint graph of $\ff_\lambda$ as described above.}
\label{JSBD}
\end{figure}


\section{Proof of Theorem \ref{theoremconjectureBGMRV}}

To prove this theorem, we use Theorem \ref{theoremvariationaluniform} directly rather than its corollary Theorem \ref{theoremvariational2}. The proof is similar to that in Section \ref{subsectionmainlowerbound}. Fix $\epsilon > 0$, let $C_\epsilon > 0$ be as in the statement of Theorem \ref{theoremvariationaluniform}, and fix $t > 0$ large and $\gamma > 1$ close to $1$. Consider the template $\ff$ which is given on each interval $[\gamma^k,\gamma^{k+1}]$ by the standard template $\ss[(\gamma^k,-2C_\epsilon),(\gamma^{k+1},-2C_\epsilon)]$. Then as in Section \ref{subsectionmainlowerbound} we get $\underline\delta(\ff) \geq \dimsing - o(\gamma - 1)$, so by Theorem \ref{theoremvariationaluniform} we have $\HD(\DD(\ff,C_\epsilon)) \geq \dimsing - o(\gamma - 1)$. To complete the proof, we need to show that for all $\bfA \in \DD(\ff,C_\epsilon)$, we have $\bfA \in \FS(\dims)$. Indeed, we have $\|\Mink_\bfA(t)\| \geq \|\ff(t)\| - \|\Mink_\bfA(t) - \ff(t)\| \geq 2C_\epsilon - C_\epsilon = C_\epsilon$ for all $t$, which implies $\bfA \in \DI(\dims)$. Moreover, we have $\|\Mink_\bfA(\gamma^k)\| \lesssim C_\epsilon$ for all $k$ which implies $\bfA \notin \Sing(\dims)$. Finally, since $\|\Mink_\bfA(t\gamma^k)\| \to \infty$ for some constant $t$, we have $\bfA \notin \BA(\dims)$.

\draftnewpage
\part{Dimension games}
\label{partgames}
\section{Preliminaries on measures and dimensions}
\label{sectiondimensionprelim}
We first recall the basics of Hausdorff and packing measures and dimensions, \cite{BishopPeres, Falconer_book2013}. Hausdorff measure and dimension were introduced in 1918 by Hausdorff \cite{Hausdorff}, while packing measure and dimension were introduced by Tricot in 1982 \cite{Tricot}. Sullivan independently (re)discovered packing measures and dimensions when studying the limit sets of geometrically finite Kleinian groups in 1984 \cite{Sullivan_entropy}.

The \emph{$s$-dimensional Hausdorff measure} of a set $A\subset\R^D$ is
\label{HM}
\[
\scrH^s(A) \df \sup_{\epsilon > 0} \inf\left\{\sum_{i = 1}^\infty (\diam(U_i))^s: 
\begin{split}
&\text{$(U_i)_1^\infty$ is a countable cover of $A$}\\
&\text{~~~with $\diam(U_i)\leq\epsilon \all i$}
\end{split}
\right\} \cdot
\]
Dual to the Hausdorff measure, which is defined via economical coverings by small balls, it is natural to define a measure in terms of dense packings by small disjoint balls. This leads to the notion of the \emph{$s$-dimensional packing measure} of a set $A\subset\R^D$, which is defined as 
\label{PM}
\[
\scrP^s(A) \df \inf\left\{ \sum_{i = 1}^\infty \w\scrP^s(A_i) : A \subset \bigcup_{i = 1}^\infty A_i\right\},
\]   
where
\[
\w\scrP^s(A) \df \inf_{\epsilon > 0} \sup\left\{\sum_{j = 1}^\infty (\diam(B_j))^s:
\begin{split}
&\text{$(B_j)_1^\infty$ is a countable disjoint collection of balls}\\
&\text{~~~with centers in $A$ and with $\diam(B_j)\leq\epsilon \all j$}
\end{split}\right\}\cdot
\]
Given the measures defined above, we define the \emph{Hausdorff dimension} and \emph{packing dimension} of a set $A\subset\R^D$ as follows:
\label{HDPD}
\begin{align*}
\HD(A) &\df \inf \{ s : \scrH^s(A) = 0 \} =  \sup \{ s : \scrH^s(A) = \infty \}\\
\PD(A) &\df \inf \{ s : \scrP^s(A) = 0 \} = \sup \{ s : \scrP^s(A) = \infty \}.
\end{align*} 
We recall two basic facts (see \cite[\6 3.2 and \6 3.5]{Falconer_book2013}) about these dimensions. First, they are both {\it monotonic}, i.e. if $E \subset F \subset \R^D$, then $\HD(E) \leq \HD(F)$ and $\PD(E) \leq \PD(F)$. Second, the packing dimension is bounded below by the Hausdorff dimension, i.e. for $F \subset \R^D$, we have $\HD(F) \leq \PD(F)$.

In the sequel we will apply the following consequence of the Rogers--Taylor--Tricot density theorem for Hausdorff and packing measures \cite[Theorem 2.1]{Simmons7}, which provides a method of computing the Hausdorff and packing dimensions of a Borel set in terms of local geometric-measure-theoretic information.
For each point $\xx\in\R^D$ define the lower and upper pointwise dimensions of a measure $\mu$ at $\xx$ by 
\label{localdim}
\[
\underline\dim_\xx(\mu) \df \liminf_{\rho\to 0} \frac{\log\mu(B(\xx,\rho))}{\log\rho} \;\;\;\text{and}\;\;\;
\overline\dim_\xx(\mu) \df \limsup_{\rho\to 0} \frac{\log\mu(B(\xx,\rho))}{\log\rho} \cdot
\]
Note that the limits may be replaced by limits over any sequence $\rho_n\to 0$ such that $\rho_n / \rho_{n + 1}$ is bounded, without affecting the values. Also note that it is possible for the dimensions to take any value in $[0,\infty]$.

\begin{theorem}
\label{theoremRTT}
Fix $D\in\N$ and let $\mu$ be a locally finite Borel measure on $\R^D$. Then for every Borel set $A\subset\R^D$,
\begin{itemize}
\item If $\underline\dim_\xx(\mu) \geq s$ for all $x \in A$ and $\mu(A)>0$, then $\HD(A)\geq s$.
\item If $\underline\dim_\xx(\mu) \leq s$ for all $x \in A$, then $\HD(A)\leq s$.
\item If $\overline\dim_\xx(\mu) \geq s$ for all $x \in A$ and $\mu(A)>0$, then $\PD(A)\geq s$.
\item If $\overline\dim_\xx(\mu) \leq s$ for all $x \in A$, then $\PD(A)\leq s$.
\end{itemize}
\end{theorem}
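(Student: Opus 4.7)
The plan is to derive all four implications from a common scheme: first reduce to subsets on which the pointwise dimension inequality is quantified uniformly, then estimate Hausdorff/packing (pre)measures by comparing them against $\mu$ via covering/packing lemmas. For a parameter $\epsilon>0$ and $n\in\N$, set
\[
A_n^- \df \{\xx\in A : \mu(B(\xx,\rho))\leq \rho^{s-\epsilon} \text{ for all } \rho\leq 1/n\},
\]
which increases to all of $A$ under the hypothesis $\underline\dim_\xx(\mu)\geq s$; similarly
\[
A_n^+ \df \{\xx\in A : \mu(B(\xx,\rho))\geq \rho^{s+\epsilon} \text{ for all } \rho\leq 1/n\}
\]
increases to all of $A$ under $\overline\dim_\xx(\mu)\leq s$. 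For the other two bullets the analogous sets are defined using \emph{existence} of arbitrarily small radii with the requisite inequality. Since $\HD$ and $\PD$ are countably stable, I need only establish the bounds on each $A_n^{\pm}$, and by local finiteness of $\mu$ I may further intersect with a fixed ball to assume $\mu(A_n^{\pm})<\infty$.

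For the Hausdorff lower bound (bullet 1), I apply the mass distribution principle: any countable cover $(U_i)$ of $A_n^-$ by sets of diameter $\leq 1/n$ satisfies $\mu(U_i)\leq \diam(U_i)^{s-\epsilon}$ (each $U_i$ meeting $A_n^-$ lies in a ball of that diameter centred in $A_n^-$), so $\mu(A_n^-)\leq \sum\diam(U_i)^{s-\epsilon}$. Taking infima gives $\scrH^{s-\epsilon}(A_n^-)\geq \mu(A_n^-)$; since $\mu(A)>0$ some $A_n^-$ has positive measure, yielding $\HD(A)\geq s-\epsilon$, then $s$ as $\epsilon\to 0$. For the Hausdorff upper bound (bullet 2), the set where $\mu(B(\xx,\rho))\geq\rho^{s+\epsilon}$ occurs at arbitrarily small scales $\rho\leq 1/n$ contains $A$. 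The balls realising this form a Vitali cover of $A_n$ at every scale $\delta$, so the standard $5r$-covering lemma in $\R^D$ extracts a disjoint subfamily $(B(\xx_i,\rho_i))$ with $A_n\subset \bigcup B(\xx_i,5\rho_i)$. Then
\[
\textstyle\sum (10\rho_i)^{s+\epsilon}\leq 10^{s+\epsilon}\sum \mu(B(\xx_i,\rho_i))\leq 10^{s+\epsilon}\mu(\NN(A_n,1/n))<\infty,
\]
showing $\scrH^{s+\epsilon}(A_n)<\infty$ and hence $\HD(A)\leq s$.

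The packing upper bound (bullet 4) is the direct analogue: every disjoint family $(B_j)$ of closed balls of radius $\leq 1/n$ centred in $A_n^+$ satisfies $\sum\diam(B_j)^{s+\epsilon}\leq 2^{s+\epsilon}\sum\mu(B_j)\leq 2^{s+\epsilon}\mu(\NN(A_n^+,1/n))<\infty$, so $\w\scrP^{s+\epsilon}(A_n^+)<\infty$, giving $\PD(A)\leq s$. The packing lower bound (bullet 3) is the step I expect to be the main obstacle, because the natural argument would cover $A$ by balls of small mass and then invoke an enlargement by a factor like $5$, but the hypothesis $\mu(B(\xx,\rho))\leq \rho^{s-\epsilon}$ at a single radius $\rho$ says nothing about $\mu(B(\xx,5\rho))$ in the absence of a doubling condition. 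I will sidestep this by invoking the Vitali covering theorem for Radon measures on $\R^D$ directly: the family of balls $B(\xx,\rho)$ with $\xx\in A_n$, $\rho\leq \delta$, and $\mu(B(\xx,\rho))\leq \rho^{s-\epsilon}$ is a Vitali cover of $A_n$, so one extracts a \emph{disjoint} subfamily $(B(\xx_i,\rho_i))$ with $\mu\bigl(A_n\setminus \bigsqcup_i B(\xx_i,\rho_i)\bigr)=0$. This disjoint family is an admissible $2\delta$-packing of $A_n$, with
\[
\textstyle\sum (2\rho_i)^{s-\epsilon}\geq \sum \mu(B(\xx_i,\rho_i))\geq \mu(A_n),
\]
so $\w\scrP^{s-\epsilon}(A_n)\geq \mu(A_n)$. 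To pass from the pre-measure to $\scrP^{s-\epsilon}$, observe that any countable decomposition $A_n=\bigcup_k E_k$ has some $E_k$ with $\mu(E_k)>0$, and the same Vitali argument applied to $E_k\subset A_n$ gives $\w\scrP^{s-\epsilon}(E_k)\geq \mu(E_k)>0$; hence $\scrP^{s-\epsilon}(A_n)>0$, so $\PD(A)\geq s-\epsilon$ and letting $\epsilon\to 0$ completes the proof.
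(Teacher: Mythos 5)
The paper does not actually prove Theorem \ref{theoremRTT}; it cites Taylor--Tricot, Bishop--Peres and Falconer for the four statements, so the comparison is with the standard density-theorem proofs in those references — and your argument is exactly that standard proof. The reductions to the uniformized sets $A_n^{\pm}$, the mass distribution principle for the Hausdorff lower bound, the $5r$-covering argument for the Hausdorff upper bound, the direct packing estimate for the packing upper bound, and — crucially — your recognition that the packing lower bound cannot be run through a $5r$-type enlargement (no doubling for $\mu$) and must instead use the Vitali covering theorem for Radon measures (Besicovitch) are all correct and are precisely how the cited results are proved; the measurability and finiteness caveats you gloss over (restricting to a bounded ball, working with outer measure) are routine.

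There is one small logical slip at the very end of the packing lower bound, in the passage from the premeasure $\w\scrP^{s-\epsilon}$ to the measure $\scrP^{s-\epsilon}$. From ``every countable decomposition $A=\bigcup_k E_k$ has some piece with $\mu^*(E_k)>0$, hence $\w\scrP^{s-\epsilon}(E_k)>0$'' you may only conclude that each competitor sum $\sum_k \w\scrP^{s-\epsilon}(E_k)$ is positive, not that the infimum over decompositions is positive: the positive lower bound you exhibit depends on the decomposition. The fix is immediate with what you already proved: your Vitali--Besicovitch step gives $\w\scrP^{s-\epsilon}(E)\geq \mu^*(E)$ for \emph{every} subset $E\subset A$ (after replacing $E_k$ by $E_k\cap A$, which is harmless by monotonicity), so for any decomposition
\[
\sum_k \w\scrP^{s-\epsilon}(E_k) \;\geq\; \sum_k \mu^*(E_k\cap A) \;\geq\; \mu(A) \;>\;0
\]
by countable subadditivity of the outer measure, whence $\scrP^{s-\epsilon}(A)\geq \mu(A)>0$ and $\PD(A)\geq s-\epsilon$ as desired. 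With that one-line repair the proof is complete.
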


The statement above is closest to \cite[Proposition 2.3]{Falconer_book3}. Readers interested in studying further refinements are referred to Cutler's weak and strong duality principles in \cite[Theorems 1.4 and 1.5]{Cutler}. See \cite[\68]{MSU} for a self-contained proof of the density theorem for measures in the setting of metric spaces.

\section{A characterization of Hausdorff and packing dimensions using games}
\label{sectiongame}

Schmidt's game is a two-player topological game introduced in a seminal paper of Wolfgang M. Schmidt in 1966 \cite{Schmidt1} as a technique to analyze Diophantine sets that are exceptional with respect to both measure and category. 
Schmidt's paper led to a plethora of applications at the interface of dynamical systems, Diophantine approximation and fractal geometry, which often involve various modifications of his eponymous game. For a small sample of such research, see \cite{Dani2, McMullen_absolute_winning, KleinbockWeiss2, BFKRW, CCM, An1, BeresnevichVelani5, FSU4, BHNS}.

The proof of our variational principle is based on a new variant of Schmidt's game which is in principle capable of computing the Hausdorff and packing dimensions of any Borel set. In Schmidt's original game, players take turns choosing a descending sequence of balls and compete to determine whether or not the intersection point of these balls is in a certain target set. The key feature of our new variant is that instead of requiring the rate at which the players' moves contribute information to the game to be constant, the new variant allows the rate of information transfer to be variable, with the first player, Alice, getting to choose the rate of information transfer. However, Alice is penalized if she exerts too much control over the game over long periods of time without giving her opponent Bob a chance to exert control over the game. 

\begin{definition}
\label{definitiongames1}
Given $0 < \beta < 1$ and $\delta >0$, Alice and Bob play the \emph{$\delta$-dimensional Hausdorff (resp. packing) $\beta$-game} as follows:
\begin{itemize}
\item The turn order is alternating, with the first turn being the $0$th turn and Alice playing first. Thus, Bob's $k$th turn occurs after Alice's $k$th turn and before Alice's $(k + 1)$st turn.
\item Alice begins by choosing a starting radius $\rho_0 > 0$.
\item On the $k$th turn, Alice chooses a nonempty finite $3\rho_k$-separated set\Footnote{\label{rhosep} A set $A$ is called \emph{$\rho$-separated} if $\dist(x,y) > \rho$ for all distinct $x,y \in A$.} $A_k \subset \R^D$, and Bob responds by choosing a ball $B_k \df B(\xx_k,\rho_k)$, where $\xx_k \in A_k$ and $\rho_k \df \beta^k \rho_0$. (We can think of Alice's choice $A_k$ as representing the collection of balls $\{B(\xx,\rho_k) : \xx\in A_k\}$ from which Bob chooses his ball.)
\item On the $0$th turn, there is no further restriction on Alice's choice $A_0$, but on each subsequent turn $(k+1)$, she must choose $A_{k+1}$ so as to satisfy
\begin{equation}
\label{Alicerules}
A_{k + 1} \subset B(\xx_k,(1 - \beta)\rho_k).
\end{equation}
Note that this condition guarantees (see Figure \ref{figurepotentialgame}) that
\[
B_0 \supset B_1 \supset B_2 \supset \cdots
\]
\end{itemize}
\begin{figure}
\centering
\begin{tikzpicture}[scale=0.45]
\clip (-1.1,-1.1) rectangle (5.1,5.1);
\redball{ (2,2) circle (3)}
\end{tikzpicture}
\begin{tikzpicture}[scale=0.45]
\clip (-1.1,-1.1) rectangle (5.1,5.1);
\draw (2,2) circle (3);
\draw (0.1,2.2) circle (1);
\draw (2.8,1) circle (1);
\redball{ (2.5,3.5) circle (1)}
\end{tikzpicture}
\begin{tikzpicture}[scale=0.45]
\clip (-1.1,-1.1) rectangle (5.1,5.1);
\draw (2,2) circle (3);
\draw (0.1,2.2) circle (1);
\draw (2.8,1) circle (1);
\draw (2.5,3.5) circle (1);
\draw (2.5,2.9) circle (0.333);
\draw (2.5,4.1) circle (0.333);
\draw (1.9,3.5) circle (0.333);
\redball{ (3.1,3.5) circle (0.333)}
\end{tikzpicture}
\caption{Three consecutive rounds of the Hausdorff/packing game. On each round Alice presents Bob with a set of balls to choose between (represented by the set of centers of those balls), and Bob chooses one of the balls, which are colored/shaded above.}
\label{figurepotentialgame}
\end{figure}
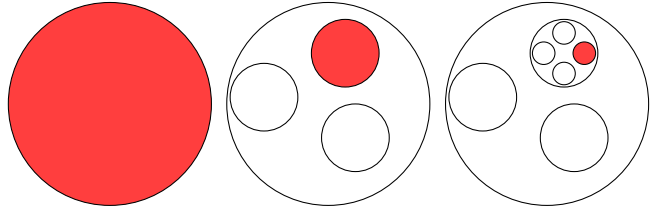
After infinitely many turns have passed, the point
\begin{equation}
\label{outcome}
\xx_\infty = \lim_{k\to\infty} \xx_k \in \bigcap_{k = 0}^\infty B_k
\end{equation}
is computed (note that the right-hand side is always a singleton). It is called the \emph{outcome} of the game. Also, we let $\AA = (A_k)_{k\in\N}$, and we compute the number
\begin{equation}
\label{Hausdorff}
\underline\delta(\AA) \df \liminf_{k\to\infty} \frac{1}{k} \sum_{i = 0}^k \frac{\log\#(A_i)}{-\log(\beta)}
\end{equation}
resp.
\begin{equation}
\label{packing}
\overline\delta(\AA) \df \limsup_{k\to\infty} \frac{1}{k} \sum_{i = 0}^k \frac{\log\#(A_i)}{-\log(\beta)},
\end{equation}
which represents Alice's \emph{score}. Alice's goal will be to ensure that the outcome is in a certain set $S$, called the \emph{target set}, and simultaneously to guarantee that her score is at least $\delta$. To be precise, a set $S \subset \R^D$ is said to be \emph{$\delta$-dimensionally Hausdorff (resp. packing) $\beta$-winning} if Alice has a strategy to simultaneously ensure that the outcome $\xx_\infty$ is in $S$, and that her score $\underline\delta(\AA)$ (resp. $\overline\delta(\AA)$) is at least $\delta$. The set $S$ is said to be \emph{$\delta$-dimensionally Hausdorff (resp. packing) winning} if it is $\delta$-dimensionally Hausdorff (resp. packing) $\beta$-winning for all sufficiently small $\beta > 0$. (Equivalently, we could say that Alice's score is automatically set equal to zero whenever $\xx_\infty \notin S$, in which case we would say that $S$ is $\delta$-dimensionally Hausdorff $\beta$-winning if Alice has a strategy to ensure that her score is at least $\delta$.)
\end{definition}

The following result is one of the key ingredients in the proof of the variational principle:

\begin{theorem}
\label{theoremHPgame}
The Hausdorff (resp. packing) dimension of a Borel set $S \subset \R^D$ is the supremum of $\delta$ such that $S$ is $\delta$-dimensionally Hausdorff (resp. packing) winning.
\end{theorem}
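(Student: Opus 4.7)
The plan is to prove Theorem~\ref{theoremHPgame} by establishing two matching inequalities: (I) every $\delta$-dimensionally Hausdorff (resp.\ packing) winning Borel set $S$ has $\HD(S) \ge \delta$ (resp.\ $\PD(S) \ge \delta$), and (II) every Borel set $S$ with $\HD(S) > \delta$ (resp.\ $\PD(S) > \delta$) is $\delta$-dimensionally Hausdorff (resp.\ packing) winning. These together give the equality stated.

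For (I) I would manufacture a Borel probability measure $\mu$ supported on $S$ from a fixed winning strategy $\sigma$ for Alice. Consider the tree of all plays in which Alice follows $\sigma$; assign uniform conditional probabilities $1/\#(A_k)$ to Bob's choices at depth $k$, and let $\mu$ be the push-forward under the outcome map \eqref{outcome} of the resulting measure on infinite branches. The crucial geometric fact is that when two plays first disagree at depth $j+1$, their outcomes lie in disjoint $\rho_{j+1}$-balls with centres at least $3\rho_{j+1}$ apart, so the outcomes themselves are separated by at least $\rho_{j+1}$. A short additional argument using the $3\rho_j$-separation of $A_j$ shows that for \emph{any} $y$ and any $r < \rho_j/2$ at most one subtree of depth $j$ can contribute outcomes to $B(y,r)$, yielding
\[
\mu\big(B(y,r)\big) \le \prod_{i=0}^{j}\frac{1}{\#(A_i)}.
\]
Taking logarithms, noting $\log r < 0$ and $-\log r \asymp j(-\log\beta)$, and flipping the inequality upon division by a negative quantity gives
\[
\frac{\log \mu(B(y,r))}{\log r} \ge (1-o(1))\cdot\frac{1}{j}\sum_{i=0}^{j}\frac{\log \#(A_i)}{-\log\beta}.
\]
Passing to $\liminf_{r\to0}$ (resp.\ $\limsup_{r\to0}$) yields $\underline\dim_y(\mu) \ge \underline\delta(\AA) \ge \delta$ (resp.\ $\overline\dim_y(\mu) \ge \overline\delta(\AA) \ge \delta$) at every outcome $y$, where $\AA$ is the history producing $y$. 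Applying Theorem~\ref{theoremRTT} to the set of outcomes, which has full $\mu$-measure and lies in $S$, gives the dimension lower bound for $S$.

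For (II) in the Hausdorff case, fix $\delta < s < \HD(S)$, pass by inner regularity to a compact subset with $\HD$ still exceeding $s$, and use Frostman's lemma to produce a Borel probability measure $\mu$ on that subset with $\mu(B(x,r)) \le Cr^s$ for all $x,r$. Alice's strategy is the natural greedy one: take $A_{k+1}$ to be a maximal $3\rho_{k+1}$-separated subset of $\mathrm{supp}(\mu) \cap B(\xx_k,(1-\beta)\rho_k)$, so that maximality and the Frostman upper bound deliver the per-round estimate $\#(A_{k+1}) \ge \mu(B(\xx_k,(1-\beta)\rho_k))/(C\,3^s\,\rho_{k+1}^s)$. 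The crux is that Bob may adversarially minimise $\mu(B(\xx_{k+1},(1-\beta)\rho_{k+1}))$, potentially collapsing the score; to prevent this, I would augment the greedy strategy by further restricting $A_{k+1}$ to ``good'' points satisfying a conditional Frostman-type lower bound $\mu(B(x,(1-\beta)\rho_{k+1})) \gtrsim \rho_{k+1}^s$. A Vitali-type covering argument using the doubling of $\R^D$ and the Frostman upper bound shows that such good points can be selected in sufficient abundance to preserve the invariant $\mu(B(\xx_k,(1-\beta)\rho_k)) \gtrsim \rho_k^s$, which in turn forces a per-round score of $s - O(1/\log\beta)$, exceeding $\delta$ for $\beta$ small. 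Since each $\xx_k \in \mathrm{supp}(\mu) \subset S$ and the subset is compact, $\xx_\infty \in S$.

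The principal obstacle is (II) for packing dimension, where Frostman's lemma is unavailable in its classical form. Its substitute, a direct corollary of Theorem~\ref{theoremRTT}, is the characterization that $\PD(S) > s$ holds if and only if there exists a probability measure $\mu$ on $S$ with $\overline\dim_x(\mu) \ge s$ for $\mu$-a.e.\ $x$; this controls $\mu(B(x,r)) \le r^s$ only along a sparse and $x$-dependent sequence of scales, so the greedy strategy cannot be applied uniformly in $k$. Fortunately, a packing-winning strategy only requires the running average to exceed $\delta$ in $\limsup$ rather than $\liminf$, and I would exploit this by alternating ``good blocks,'' on which Alice plays greedily at Frostman-valid scales isolated via an Egorov-type argument producing a positive-$\mu$-mass subset enjoying the bound simultaneously along a common scale subsequence, with ``coast blocks,'' on which Alice offers singletons so as to shepherd Bob into the next good block while preserving positive $\mu$-mass in the current ball. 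Balancing the block lengths so that the good-block contributions dominate in $\limsup$ is the principal combinatorial step.
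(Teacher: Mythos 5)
Your part (I) is exactly the paper's lower-bound argument (Alice follows her winning strategy, Bob chooses uniformly at random, the $3\rho_k$-separation gives $\mu(B(\xx,\rho_k))\le\prod_{i\le k}\#(A_i)^{-1}$, then Theorem \ref{theoremRTT}), so no issue there. The gap is in part (II), where you try to build Alice's winning strategy directly from a Frostman measure. The step that fails is the claim that Alice can restrict to ``good'' points satisfying $\mu(B(x,(1-\beta)\rho_{k+1}))\gtrsim\rho_{k+1}^s$ in enough abundance to preserve the invariant $\mu(B(\xx_k,(1-\beta)\rho_k))\gtrsim\rho_k^s$. A Frostman measure gives only \emph{upper} bounds on ball masses: inside a ball of radius $\rho_k$ carrying mass $\asymp\rho_k^s$ the measure may be spread out essentially $D$-dimensionally at the next scale, so that every ball of radius $\rho_{k+1}=\beta\rho_k$ has mass $\asymp\beta^{D-s}\rho_{k+1}^s$; since $s\le D$ and $\beta$ is small, there need be \emph{no} good points with a constant uniform in $\beta$ and $k$. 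Your Vitali-type count cannot rescue this: the bad set is covered by $\asymp\beta^{-D}$ balls of radius $\rho_{k+1}$, each of mass $<c\rho_{k+1}^s$, giving a bad-mass bound $\asymp c\beta^{s-D}\rho_k^s\ge c\rho_k^s$, which is not smaller than the invariant. Since Bob can always pick the lightest admissible ball, this defect can recur at every scale and drive the liminf score below $\delta$. The packing half inherits the same defect inside your ``good blocks,'' and in addition the asserted equivalence ``$\PD(S)>s$ iff there is a measure on $S$ with $\overline\dim_x(\mu)\ge s$ a.e.'' is not a direct corollary of Theorem \ref{theoremRTT}: only the ``if'' direction is; the ``only if'' direction is a nontrivial packing analogue of Frostman's lemma.

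The idea your proposal is missing — and the reason the hypothesis that $S$ is Borel appears — is determinacy. The paper never constructs a winning strategy for Alice from a measure on $S$. Instead it proves the contrapositive: if $S$ is not $\delta$-dimensionally winning, then for arbitrarily small $\beta$ Alice has no winning strategy in the $\beta$-game, hence by Borel determinacy (the game being a Borel game on a complete metric space) \emph{Bob} has one. The measure construction is then run in the opposite direction from your part (I): Bob plays his winning strategy while Alice randomizes over a nested family of moves $A_k^{(i,j)}$ with a power-law distribution $\prob(i_k=i,\;j_k=j)\ge c\,j^{-(1+\epsilon)}$, arranged so that for every $x\in S$ Alice has a deterministic play producing outcome $x$ whose score is necessarily $<\delta$; this yields $\underline\dim_x(\mu)\le\frac{\log c}{\log\beta}+(1+\epsilon)\delta$ for all $x\in S$, and Theorem \ref{theoremRTT} plus $\beta,\epsilon\to0$ gives $\HD(S)\le\delta$ (identically for packing with $\limsup$ in place of $\liminf$). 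This route avoids Frostman-type lemmas entirely and handles Hausdorff and packing uniformly; if you want to salvage a direct construction of Alice's strategy, you would need a genuinely new mechanism to cope with Bob always steering into low-mass balls, which is precisely what determinacy lets the paper sidestep.
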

\begin{remark}
The theorem remains true (with the same proof) if $\R^D$ is replaced by any doubling\Footnote{A metric space is \label{doubling}\emph{doubling} if there exists constants $C,r_0$ such that every ball of radius $0<r\leq r_0$ can be covered by at most $C$ balls of radius $r/2$.} metric space.
\end{remark}
A key fact used in the proof is that since $S$ is Borel, the Borel determinacy theorem \cite{Martin_determinacy} implies that for all $\delta,\beta$, the $\delta$-dimensional Hausdorff and packing $\beta$-games are determined, meaning that either Alice or Bob has a winning strategy. This follows from \cite[Theorem 3.1]{FLS}, since the games can be viewed as ``games played on complete metric spaces'' in the language of \cite{FLS}, specifically with $X = \R^D \times \N^\N$ (the latter factor representing the number of balls that Alice chooses in each step).
\begin{proof}
We prove the theorem for the case of Hausdorff dimension; the argument in the case of packing dimension is nearly identical.\\

\noindent We begin by proving the lower bound. Suppose that $S$ is $\delta$-dimensionally Hausdorff winning, and we must show that $\HD(S) \geq \delta$. Fix $\beta > 0$ such that $S$ is $\delta$-dimensionally Hausdorff $\beta$-winning, and consider a strategy for Alice to win the  $\delta$-dimensional Hausdorff $\beta$-game with target set $S$. Now for each $k\geq 0$, let $E_k$ denote the union of all sets $A_k$ that Alice might choose according to her strategy in response to some possible sequence of moves that Bob could play, and let $\rho_k = \beta^k \rho_0$. Then the set
\[
C \df \bigcap_{k = 0}^\infty \bigcup_{\xx_k\in E_k} B(\xx_k,\rho_k)
\]
is the set of all possible outcomes of the game when Alice plays her winning strategy. It is a closed and totally disconnected set, contained entirely in $S$. Note that by induction and the restrictions on Alice's possible moves, for all $k$, $E_k$ is $3\rho_k$-separated.

To bound the Hausdorff dimension of $C$, we introduce a probability measure on $C$ by considering the scenario where Alice plays according to her winning strategy and Bob plays randomly: on the $k$th turn, Bob chooses the point $\xx_k \in A_k$ uniformly at random, independently of all previous choices. This yields a random game whose outcome is distributed according to some probability measure $\mu$ on $C$. Now fix $\xx\in C$, and for each $k\geq 0$ let $\xx_k\in E_k$ be chosen so that $\xx\in B(\xx_k,\rho_k)$. Then since $E_k$ is $3\rho_k$-separated, if Bob plays in a way such that the final outcome is in $B(\xx,\rho_k)$, then on the $k$th turn he must choose the ball $B(\xx_k,\rho_k)$. It follows that
\[
B(\xx,\rho_k)\cap C \subset B(\xx_k,\rho_k)
\]
and thus
\[
\mu(B(\xx,\rho_k)) \leq \mu(B(\xx_k,\rho_k)) = \left(\prod_{i = 0}^k \#(A_i)\right)^{-1}.
\]
So the lower pointwise dimension of $\mu$ at $\xx$ is
\begin{align*}
\underline\dim_\xx(\mu)
&\df \liminf_{\rho\to 0} \frac{\log\mu(B(\xx,\rho))}{\log\rho}\\
&= \liminf_{k\to\infty} \frac{\log\mu(B(\xx,\rho_k))}{\log\rho_k} \since{$\rho_k = \beta^k \rho_0$}\\
&\geq \liminf_{k\to\infty} \frac{\log\mu(B(\xx_k,\rho_k))}{\log\rho_k}\\
&= \liminf_{k\to\infty} \frac{-\sum_{i = 0}^k \log\#(A_i)}{k\log\beta + \log\rho_0}\\
&= \underline\delta(\AA) \geq \delta
\end{align*}
since Alice is using a winning strategy. Since $\xx\in C$ was arbitrary and $\mu(C) = 1$, applying the Rogers--Taylor--Tricot Theorem \ref{theoremRTT} proves the lower bound $\HD(S) \geq \delta$.\\

\noindent To prove the upper bound, suppose that $S$ is not $\delta$-dimensionally Hausdorff winning, and we will show that $\HD(S) \leq \delta$. Fix $0 < \beta \leq 1/2$ small enough so that $S$ is not $\delta$-dimensionally Hausdorff $\beta$-winning. Then Alice does not have a winning strategy for the $\delta$-dimensional Hausdorff $\beta$-game with target set $S$. Since this game is determined as we mentioned earlier, we know that Bob must have a winning strategy for it, which we now fix.

Fix a radius $\rho_0 > 0$, and for each $k\in\N$ 
\begin{itemize}
\item let $E_k$ be a maximal $\frac 13\beta^k\rho_0$-separated subset of $\R^D$, and
\item  let $E_k^{(1)},\ldots,E_k^{(p)}$ be disjoint $3\beta^k\rho_0$-separated subsets of $E_k$ such that $E_k = \bigcup_{i = 1}^p E_k^{(i)}$.
\end{itemize}
Since $\R^D$ is a doubling metric space (see Footnote \ref{doubling}), it is possible to choose $p$ to be independent of $k$ and $\beta$. We define a family of counter-strategies for Alice as follows. Consider the $k$th turn for some $k\in\N$, and if $k \geq 1$ then let $B_{k-1} = B(\xx_{k-1},\rho_{k-1})$ be the move that Bob just played. Let
\[
\w B_{k-1} = \begin{cases}
B(\xx_{k-1},(1-\beta)\rho_{k-1}) & k\geq 1\\
B(\0,\kappa + \rho_0) & k = 0
\end{cases},
\]
where $\kappa > 0$ is a large constant. Next let
\[
X_k^{(i)} = E_k^{(i)}\cap \w B_{k-1},\;\;\;\;
N_k^{(i)} = \#(X_k^{(i)}),\;\;\;\;
A_k^{(i,N_k^{(i)})} = X_k^{(i)} ~.
\]
From then on, we define the moves $A_k^{(i,j)}$ and $B_k^{(i,j)}$ by backwards recursion as follows:
\begin{itemize}
\item if $A_k^{(i,j)}$ is defined for some $j\geq 1$, then $$B_k^{(i,j)} = B(\xx_k^{(i,j)},\rho_k)$$ is Bob's response if Alice plays $A_k^{(i,j)}$.
\item if $A_k^{(i,j)}$ and $B_k^{(i,j)} = B(\xx_k^{(i,j)},\rho_k)$ are both defined for some $j \geq 1$, then $$A_k^{(i,j-1)} \df A_k^{(i,j)} \butnot \{\xx_k^{(i,j)}\}.$$
\end{itemize}
Note that $\#(A_k^{(i,j)}) = j$ for all $j = 0,\ldots,N_k^{(i)}$.

In what follows we will consider two counter-strategies for Alice: a random one and a deterministic one. The purpose of the random strategy is to construct a measure on $S$, whereas the purpose of the deterministic strategy is to prove a bound on this measure in order to apply the Rogers--Taylor theorem. Note that since Bob's strategy wins against deterministic counter-strategies, it wins against random ones as well.

Now consider the scenario where Bob plays according to his winning strategy and Alice plays randomly: on the $k$th turn, Alice chooses a move $A_k^{(i_k,j_k)}$ where the integers $i_k$ and $j_k$ are chosen independently of previous choices $i_1,i_2,\dots,i_{k-1}$ and $j_1,j_2,\dots,j_{k-1}$ of with respect to a probability distribution satisfying
\begin{equation}
\label{ikjkdist}
\prob(i_k = i, \; j_k = j) \geq c j^{-(1+\epsilon)},
\end{equation}
where $\epsilon > 0$ is fixed and $c > 0$ is a constant depending on $\epsilon$ and $p$. By the Kolmogorov extension theorem, this yields a random sequence of plays whose outcome is distributed according to some probability measure $\mu$ on $\R^D$.

Now fix $\xx\in S\cap B(\0,\kappa)$. For each $k\in\N$, there exists $\xx_k \in E_k$ such that $\dist(\xx,\xx_k) \leq \frac{1 - \beta}{1 + \beta} \rho_k$. Note that $\xx_0 \in B(\0,\kappa + \rho_0)$, and $\xx_{k+1} \in B(\xx_k,(1-\beta)\rho_k)$ for all $k$. It follows that Alice can guarantee that the outcome is equal to $\xx$ by playing the move $A_k = A_k^{(I_k,J_k)}$ on the $k$th turn for some sequences of integers $(I_k)_{k\in\N}$, $(J_k)_{k\in\N}$. Since Bob's strategy is winning and $\xx\in S$, it follows Alice's score is less than $\delta$, i.e.
\[
\underline\delta(\AA) < \delta.
\]
Let $G_1$ denote the sequence of plays described above, and let $G_2$ be a sequence of plays where on the $k$th turn, Alice chooses a set $A_k^{(i_k,j_k)}$, and Bob responds according to his winning strategy, such that $i_k = I_k$ and $j_k = J_k$ for all $k \in \{ 0,\ldots,\ell \}$. Then the $\ell$th ball of $G_2$ is equal to the $\ell$th ball of $G_1$, and thus the outcome of $G_2$ is within $2\rho_\ell$ of the outcome of $G_1$, i.e. $\xx$. Thus if we think of $G_2$ as being chosen randomly, then
\begin{align*}
\mu\big(B(\xx,2\rho_\ell)\big) 
&\geq \prob\big(i_k = I_k,\; j_k = J_k \all k\leq \ell\big)\\ 
&\geq \prod_{k = 0}^\ell c J_k^{-(1 + \epsilon)}\\
&= c^\ell \exp\left(-(1+\epsilon)\sum_{k = 0}^\ell \log\#(A_k)\right)
\end{align*}
and so
\begin{align*}
\underline\dim_\xx(\mu)
&= \liminf_{\ell\to\infty} \frac{\log\mu(B(\xx,2\rho_\ell))}{\log(2\rho_\ell)}\\
&\leq \liminf_{\ell\to\infty} \frac{\ell\log(c) + (1+\epsilon)\sum_{k = 0}^\ell -\log\#(A_k)}{\ell\log\beta + \log(2\rho_0)}\\
&= \frac{\log(c)}{\log(\beta)} + (1 + \epsilon)\underline\delta(\AA)\\
&< \frac{\log(c)}{\log(\beta)} + (1 + \epsilon)\delta.
\end{align*}
Since $\xx\in S$ was arbitrary, applying the Rogers--Taylor Theorem \ref{theoremRTT} again yields
\[
\HD(S) \leq \frac{\log(c)}{\log(\beta)} + (1 + \epsilon)\delta.
\]
Letting $\beta,\epsilon \to 0$ completes the proof.
\end{proof}

\section{Playing games with Diophantine targets}
\label{sectiongamesgeometry}

In practice, when we play the Hausdorff or packing game with a target set defined in terms of the parametric geometry of numbers, it is helpful to use a different formalism to encode Alice and Bob's moves. First of all, note that for each $k$, the ball $B_k = B(\xx_k,\rho_k)$ is homeomorphic to the unit ball $B(\0,1)$ via the similarity transformation
\[
T_k(\zz) = \xx_k + \rho_k\zz.
\]
By replacing $A_{k+1}$ and $\xx_{k+1}$ by their preimages under $T_k$, and leaving $A_0$ and $\xx_0$ the same, we can see that we can make the following changes to the rules of the $\delta$-dimensional Hausdorff (resp. packing) $\beta$-game without affecting the existence of winning strategies for either player:\footnote{The moves in the new modified game $\w A_k$ and $\w \xx_k$ are related to the analogous moves $A_k$ and $\xx_k$ in the original game via the formulas $A_k = \xx_{k-1} + \rho_{k-1} \w A_{k}$ and $\xx_k = \xx_{0} + \sum_{i = 1}^k \beta^i \rho_{-1} \w \xx_i$.}

\begin{itemize}
\item For $k\geq 1$, instead of requiring that Alice's choice $A_k$ is $3\rho_k$-separated, we require that it is $3\beta$-separated.
\item Instead of \eqref{Alicerules}, Alice must choose $A_{k+1}$ to satisfy
\begin{equation}
\label{Alicerules2}
A_{k+1} \subset B(\0,1-\beta)
\end{equation}
whereby Bob then chooses a point $\xx_{k+1} \in A_{k+1}$.
\item The outcome of the game, instead of being computed by \eqref{outcome}, is computed by the formula
\begin{equation}
\label{outcome2}
\xx_\infty \df \xx_0 + \sum_{k = 1}^\infty \beta^k \rho_{-1} \xx_k,
\end{equation}
where $\rho_{-1} \df \beta^{-1}\rho_0$ (using the definition of $\rho_k$ in Definition \ref{definitiongames1}).
\end{itemize}
We will call the version of the Hausdorff (resp. packing) game resulting from these rule changes the \label{definitionmodifiedgame}\emph{modified Hausdorff (resp. packing) game}. It will be the version we use in the proof of Theorem \ref{theoremvariational2} (Variational principle, version 2) in Part \ref{partproofvariational}.

\comtushar{We need to make the change $d \to D$ in the section before.\ddr}
Let $D=mn$ in Section \ref{sectiongame}, and let us identify $\R^D$ with $\MM$, the space of $m\times n$ matrices with real entries. Thus we replace $\xx_k$ by $X_k$ etc. Further, we will assume that the target set is of the form (recalling notation from below Theorem \ref{theoremSSR})
\[
S = \DD(\SS) = \bigcup_{\ff\in\SS} \DD(\ff) = \bigcup_{\ff\in\SS} \{\bfA \in \MM : \Mink_\bfA \asymp_\plus \ff\}
\]
for some collection $\SS$ of functions from $\Rplus$ to $\R^d$ closed under finite perturbations (i.e. if whenever $\ff\in\SS$ and $\gg \asymp_\plus \ff$, we have $\gg\in \SS$). In this case, we can track the ``progression'' of the game by associating a unimodular lattice to each turn of the game. Specifically, for each $k\geq 0$ let
\begin{equation}
\label{Lambdakdef}
\Lambda_{k+1} \df g_{-\alpha\log(\beta^{k+1} \rho_{-1})} u_{Y_k} \Z^{\dimsum}, ~\text{where}~
\alpha \df \frac{\dimprod}{\dimsum} ~\text{and}~ Y_k \df X_0 + \sum_{i=1}^k \beta^i \rho_{-1} X_i .
\end{equation}
Here, we use uppercase letters $(X,Y)$ instead of bold letters $(\xx,\yy)$ because we are working with matrices rather than with vectors. Then for $k\geq 1$, $\Lambda_k$ and $\Lambda_{k+1}$ are related by the formula
\begin{equation}
\label{Lambdarecursive}
\Lambda_{k+1} = g_{-\alpha\log(\beta)} u_{X_k} \Lambda_k.
\end{equation}
This is because
\begin{equation}
\label{danisemi}
u_X g_{-\alpha\log(\lambda)} = g_{-\alpha\log(\lambda)} u_{\lambda X} \text{ for all } \lambda, X.
\end{equation}
\begin{notation}
\label{gamma}
To simplify the notation in \eqref{Lambdarecursive}, we let
\begin{align*}
\tbeta &\df -\alpha\log(\beta) > 0,& 
g &\df g_\tbeta,
\end{align*}
so that
\begin{equation*}
\Lambda_{k+1} = g u_{X_k} \Lambda_k.
\end{equation*}
Intuitively, this means that $\Lambda_k$ is well-defined at the start of turn $k$, and that Alice and Bob's choices on turn $k$ can be thought of as a process of choosing $\Lambda_{k+1}$ indirectly by choosing $X_k$.
\end{notation}

The significance of the sequence of lattices $(\Lambda_k)_1^\infty$ is given by the following lemma, where we use the notation
\begin{equation*}
\label{hLambda}
\hh(\Lambda) \df (\log\lambda_1(\Lambda),\ldots,\log\lambda_d(\Lambda)).
\end{equation*}
\begin{lemma}
\label{lemmaMS}
Let $\jj:\Rplus\to\R^d$ be the function defined on $\gamma\Z$ by the formula
\[
\jj(k\gamma) \df \hh(\Lambda_k)
\]
and extended to $\Rplus$ via linear interpolation. Then 
\[
\jj\asymp_\plus \hh_{X_\infty}
\] 
where $X_\infty$ is as in \eqref{outcome2}. In particular, $X_\infty\in\DD(\SS)$ if and only if $\jj\in\SS$.
\end{lemma}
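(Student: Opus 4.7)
The plan is to show the comparison $\jj(k\tbeta)\asymp_\plus \hh_{X_\infty}(k\tbeta)$ at the discrete times $k\tbeta$ and then extend to all $t\in\Rplus$ using Lipschitz control of both sides. Unwinding \eqref{Lambdakdef} and using the identity $-\alpha\log(\beta^k\rho_{-1})=k\tbeta-\alpha\log\rho_{-1}$, I would write
\[
\Lambda_k \;=\; g_{k\tbeta}\,\big(g_{-\alpha\log \rho_{-1}} u_{Y_{k-1}}\Z^d\big).
\]
Since $g_{-\alpha\log\rho_{-1}}$ is a fixed diagonal matrix (depending only on Alice's initial choice $\rho_{-1}$), its action perturbs each successive minimum of any lattice by an additive constant; hence $\hh(\Lambda_k)\asymp_\plus \hh(g_{k\tbeta}u_{Y_{k-1}}\Z^d)=\hh_{Y_{k-1}}(k\tbeta)$.

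The main step is to compare $\hh_{Y_{k-1}}(k\tbeta)$ with $\hh_{X_\infty}(k\tbeta)$. By \eqref{Alicerules2}, $\|X_i\|\leq 1$ for $i\geq 1$, so the geometric tail bound
\[
\|X_\infty - Y_{k-1}\| \;=\;\Big\|\sum_{i\geq k}\beta^i\rho_{-1}X_i\Big\| \;\lesssim\; \beta^k\rho_{-1}
\]
holds. For $v=(\pp,\qq)\in\Z^d$, a direct computation gives
\[
g_{k\tbeta}\big(u_{X_\infty}-u_{Y_{k-1}}\big)v \;=\; \big(e^{k\tbeta/m}(X_\infty-Y_{k-1})\qq,\; \0\big),
\]
and since $\|\qq\|\leq e^{k\tbeta/n}\|g_{k\tbeta}u_{Y_{k-1}}v\|$, the error is bounded by $e^{k\tbeta(1/m+1/n)}\|X_\infty-Y_{k-1}\|\cdot\|g_{k\tbeta}u_{Y_{k-1}}v\|$. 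The crucial arithmetic point is that $1/m+1/n=1/\alpha$ and $\tbeta=-\alpha\log\beta$, so $e^{k\tbeta/\alpha}=\beta^{-k}$, causing the prefactor $e^{k\tbeta/\alpha}\|X_\infty-Y_{k-1}\|$ to be $\lesssim \rho_{-1}$, a \emph{constant} independent of $k$. Therefore the lattices $g_{k\tbeta}u_{X_\infty}\Z^d$ and $g_{k\tbeta}u_{Y_{k-1}}\Z^d$ have all successive minima comparable up to a bounded factor, giving $\hh_{X_\infty}(k\tbeta)\asymp_\plus \hh_{Y_{k-1}}(k\tbeta)$. Chaining with the first step yields $\jj(k\tbeta)=\hh(\Lambda_k)\asymp_\plus \hh_{X_\infty}(k\tbeta)$ for every $k\in\N$.

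To upgrade this to $\jj\asymp_\plus \hh_{X_\infty}$ on all of $\Rplus$, I would use that each coordinate of $\hh_{X_\infty}$ has slope in $[-1/n,1/m]$ (wherever differentiable), so $\hh_{X_\infty}$ is Lipschitz with constant $\max(1/m,1/n)$. Consequently, on any interval $[k\tbeta,(k+1)\tbeta]$ the oscillation of $\hh_{X_\infty}$ is $O(1)$, and since $\jj$ is linearly interpolated between values which are each $\asymp_\plus$-close to $\hh_{X_\infty}$ at the endpoints, $\jj(t)\asymp_\plus \hh_{X_\infty}(t)$ for all $t\in\Rplus$. The final assertion $X_\infty\in\DD(\SS)\iff \jj\in\SS$ is then immediate from the hypothesis that $\SS$ is closed under finite perturbations.

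The one delicate point — and really the entire content of the lemma — is the exact cancellation $e^{k\tbeta/\alpha}\cdot\beta^k=1$ in the second paragraph; this is not an obstacle so much as a check that the rate $\tbeta=-\alpha\log\beta$ in Notation \ref{gamma} has been calibrated precisely so that Alice and Bob's choices at stage $k$ on scale $\beta^k\rho_{-1}$ perturb the lattice $\Lambda_k$ by only a bounded amount, which is what makes the subsequent game-theoretic arguments tractable.
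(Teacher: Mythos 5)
Your proof is correct and follows essentially the same route as the paper: the cancellation $e^{k\tbeta/\alpha}\beta^k=1$ that you isolate is exactly the commutation identity \eqref{danisemi}, which the paper packages by writing $u_{Z_k}\Lambda_k=g_{-\alpha\log\rho_{-1}+k\tbeta}\,u_{X_\infty}\Z^d$ with $Z_k=\sum_{i\ge0}\beta^iX_{k+i}\in B(\0,1)$ and invoking $\hh(\Lambda)\asymp_\plus\hh(u_{Z_k}\Lambda)$ for a bounded unipotent, rather than comparing $\|g_{k\tbeta}u_{X_\infty}v\|$ and $\|g_{k\tbeta}u_{Y_{k-1}}v\|$ vector by vector as you do. The remaining steps (absorbing the fixed shift $g_{-\alpha\log\rho_{-1}}$ and extending from $\tbeta\Z$ to $\Rplus$ via Lipschitz continuity) are the same, merely spelled out more explicitly in your write-up.
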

\begin{proof}
Fix $k$, and write
\[
Z_k \df \sum_{i=0}^\infty \beta^i X_{k+i}.
\]
Then
\[
u_{Z_k} \Lambda_k = g_{-\alpha \log(\rho_{-1}) + k\gamma} u_{X_\infty} \Z^{\dimsum}.
\]
Since $Z_k \in B(\0,1)$, this implies that
\[
\ff(k\gamma) = \hh(\Lambda_k) \asymp_\plus \hh(u_{Z_k} \Lambda_k) = \hh(g_{-\alpha\log(\rho_{-1}) + k\gamma} u_{X_\infty} \Z^{\dimsum}) = \hh_{X_\infty}(-\alpha\log(\rho_{-1}) + k\gamma)
\]
and thus $\ff \asymp_\plus \hh_{X_\infty}$. Since $\SS$ is closed under finite perturbations, it follows that $\ff$ is in $\SS$ if and only if $\hh_{X_\infty}$ is, i.e. if and only if $X_\infty \in \DD(\SS)$.
\end{proof}

\draftnewpage
\part{Proof of the variational principle}
\label{partproofvariational}

Throughout Part \ref{partproofvariational}, $\| \cdot \|$ generally denotes the Euclidean norm, i.e. $\| \xx \|^2 \df \sum_i x_i^2$.  This is allowed since the variational principle (Theorem \ref{theoremvariationaluniform}) is independent of the choice of norm. In certain places we will use the max norm, i.e. $\| \xx \|_{\infty} \df \max_i |x_i|$. Also, in certain places $\|\cdot\|$ denotes other kinds of norms such as the covolume of a lattice.

\section{Preliminaries}
This section collects various notation and lemmata employed in our proof of the variational principle, viz. Theorem \ref{theoremvariationaluniform}. Though some of these results may be considered elementary by experts familiar with the geometry of numbers, we include such for the benefits of self-containment. Thus, for instance, we begin by recalling Minkowski's second theorem on successive minima for the reader's convenience.

\begin{theorem}[Minkowski, {\cite[Theorem V in \6VIII.4.3]{Cassels3}}]
\label{mink2}
Let $\Lambda$ be a lattice in a vector space $V\subset \R^d$. Then
\[
\prod_{j=1}^{\dim(V)} \lambda_j(\Lambda) \asymp \|\Lambda\|,
\]
where $\|\Lambda\|$ denotes the covolume of $\Lambda$, and \hyperref[subsectionDani]{$\lambda_j(\Lambda)$} is the $j$th minimum of $\Lambda$ with respect to the unit ball of $V$.
\end{theorem}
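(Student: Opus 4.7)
By restricting to $V$ we may assume $V = \R^n$ with $n = \dim(V)$ and that $\Lambda$ is a full-rank lattice; the implied constants in $\asymp$ will depend only on $n$ and the chosen norm on $V$, which by convention in this part is the Euclidean norm. Since $\Lambda$ is discrete and $\{\rr \in \Lambda : \|\rr\| \leq \lambda\}$ is finite for each $\lambda$, the infima defining the successive minima are attained: there exist linearly independent vectors $v_1,\ldots,v_n \in \Lambda$ with $\|v_j\| = \lambda_j(\Lambda)$.

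For the lower bound $\prod_j \lambda_j(\Lambda) \gtrsim \|\Lambda\|$, consider the sublattice $\Lambda' \df \Z v_1 + \cdots + \Z v_n \subset \Lambda$. Its covolume equals $|\det(v_1,\ldots,v_n)|$, and on the other hand $\|\Lambda'\| = [\Lambda:\Lambda'] \cdot \|\Lambda\| \geq \|\Lambda\|$, since the index is a positive integer. Hadamard's inequality gives $|\det(v_1,\ldots,v_n)| \leq \prod_j \|v_j\| = \prod_j \lambda_j(\Lambda)$, so $\|\Lambda\| \leq \prod_j \lambda_j(\Lambda)$ as required.

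The reverse inequality $\prod_j \lambda_j(\Lambda) \lesssim \|\Lambda\|$ is the nontrivial half of Minkowski's second theorem. The plan is to reproduce Minkowski's classical argument. Let $B$ be the closed Euclidean unit ball, so that $v_j \in \lambda_j B \cap \Lambda$. Choose a basis in which $v_1,\ldots,v_n$ are upper-triangularized (via Gram--Schmidt), reducing to the case where $v_j$ is a combination of $e_1,\ldots,e_j$. Consider the diagonal transformation $T = \diag(1/\lambda_1,\ldots,1/\lambda_n)$ and the body $K \df T(B)$; then $Tv_j$ still lies in the span of $e_1,\ldots,e_j$ with norm $\leq 1$. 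A volume-counting argument along the flag $\mathrm{span}(v_1) \subset \mathrm{span}(v_1,v_2) \subset \cdots$, combined with Minkowski's first theorem applied to each quotient lattice, shows $\mathrm{vol}(B) \cdot \prod_j \lambda_j(\Lambda) \leq 2^n \|\Lambda\|$; since $\mathrm{vol}(B)$ depends only on $n$, this gives the desired bound.

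The main obstacle is the last step: Hadamard's inequality cannot yield the upper bound because $v_1,\ldots,v_n$ need not be even approximately orthogonal, and the essential input is therefore a volume-packing estimate rather than a determinant identity. For a complete and carefully written proof we refer to the cited reference in Cassels \cite[Theorem V in \S VIII.4.3]{Cassels3}.
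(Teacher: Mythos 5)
The paper itself offers no proof of Theorem \ref{mink2}: it is imported verbatim from Cassels \cite{Cassels3} and used as a black box, so there is nothing internal to compare your argument against. Your first half is fine and is already more than the paper supplies: taking vectors $v_1,\ldots,v_n$ realizing the minima (attained by discreteness), the sublattice they generate has covolume $|\det(v_1,\ldots,v_n)|=[\Lambda:\Lambda']\,\|\Lambda\|\geq\|\Lambda\|$, and Hadamard gives $|\det(v_1,\ldots,v_n)|\leq\prod_j\lambda_j(\Lambda)$, so $\|\Lambda\|\leq\prod_j\lambda_j(\Lambda)$.

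The other direction, however, is not established by what you wrote, and this is the entire content of the theorem. Two concrete problems: first, the claim that $Tv_j$ has norm $\leq 1$ is false in general, since vectors realizing the minima need not be size-reduced against the earlier ones; even in the plane the best one can say is $\|Tv_2\|\leq\sqrt{5}/2$. Second, the decisive inequality $\mathrm{vol}(B)\prod_j\lambda_j\leq 2^n\|\Lambda\|$ is asserted via an unspecified ``volume-counting argument along the flag, combined with Minkowski's first theorem on quotient lattices''; that is precisely the nontrivial step, and the naive quotient argument genuinely breaks down when consecutive minima coincide (lifting a short quotient vector and reducing modulo the smaller flag subspace costs on the order of its covering radius, which can swamp the quotient's first minimum). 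Since you then defer to Cassels, your proof in effect reduces to the same citation the paper makes. If you want a self-contained argument, note that in this part of the paper the norm is Euclidean, and for ellipsoidal norms your diagonal-squeeze idea can be completed without quotients: with $T=\mathrm{diag}(1/\lambda_1,\ldots,1/\lambda_n)$ in the Gram--Schmidt frame of $v_1,\ldots,v_n$, one shows directly that $T\Lambda$ has no nonzero point in the open unit ball. Indeed, if $w\in\Lambda\setminus\{0\}$ has top Gram--Schmidt index $j$, let $j'$ be the start of the block of equal minima containing $j$ (so $j'=0$ or $\lambda_{j'}<\lambda_{j'+1}=\lambda_j$); then $v_1,\ldots,v_{j'},w$ are linearly independent, which forces $\|w\|\geq\lambda_j$, and since $\lambda_i\leq\lambda_j$ for $i\leq j$ one gets $\|Tw\|\geq\|w\|/\lambda_j\geq 1$. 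A single application of Minkowski's convex body theorem to $T\Lambda$ then yields $\mathrm{vol}(B)\leq 2^n\|T\Lambda\|=2^n\|\Lambda\|/\prod_j\lambda_j$, i.e. the upper bound with a constant depending only on the dimension, which is all the $\asymp$ in the statement requires; for arbitrary norms one really does need the non-linear deformation carried out in Cassels.
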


\begin{definition}
\label{definitionLambdarational}
Let $\Lambda \subset \R^d$ be a lattice. A subspace $V\subset \R^d$ is called \emph{$\Lambda$-rational} if $V\cap\Lambda$ is a lattice in $V$. Denote the set of all $q$-dimensional $\Lambda$-rational subspaces of $\R^d$ by $\VV_q(\Lambda)$.
\end{definition}

\begin{notation}
\label{Lambdarational}
If $V$ is a $\Lambda$-rational subspace of $\R^d$, we denote the covolume of $V\cap\Lambda$ in $V$, with respect to the Euclidean metric on $V$ inherited from $\R^d$, by $\|V\|$. Although this notation is misleading since $\|V\|$ depends on $\Lambda$ and not just on $V$, in practice this should not be a problem as it should generally be clear what $\Lambda$ is (for instance, if $V$ is $\Lambda$-rational, then $g V$ is $g \Lambda$-rational, so we can take $\|g V\|$ to be the covolume of $g(V\cap \Lambda)$).
\end{notation}

The following result is well-known.
\begin{proposition}[Exterior product formula]
\label{propositionexteriorproduct}
If $\vv_1,\ldots,\vv_k$ is a basis of $V\cap \Lambda$, then
\[
\|V\| = \|\vv_1\wedge\cdots\wedge\vv_k\|.
\]
\end{proposition}

\begin{notation}
\label{contracting}
We denote the subspace of $\R^d$ contracted by the \hyperref[subsectionDani]{$(g_t)$ flow} (defined in \6 \ref{subsectionDani}) by $\vert$, i.e.
\[
\vert \df \{\0\}\times \R^\qdim.
\]
The conical $\epsilon$-neighborhood of a subspace $V \subset \R^d$ will be denoted
\[
\CC(V,\epsilon) = \{\rr : \dist(\rr,V) \leq \epsilon\|\rr\|\}
\]
where $\dist$ denotes infimal distance. Given $0 < \beta < 1$ in the definition of the $\delta$-dimensional Hausdorff (resp. packing) $\beta$-game (see Definition \ref{definitiongames1}), and following Notation \ref{gamma} and \eqref{Lambdakdef} from Section \ref{sectiongamesgeometry}, we write
\begin{align*}
\tbeta &= -\frac{mn}{m+n}\log(\beta),&
g &= g_\tbeta.
\end{align*}
Following Definition \ref{definitiontemplate}, we write
\[
F_q = \sum_{i=1}^q f_i.
\]
\end{notation}

The following lemmas will be used in the proof of Theorem \ref{theoremvariationaluniform}.

\begin{lemma}
\label{lemmaminkowskibasis}
Let $\Lambda \leq \R^d$ be a lattice. Then there exists a basis $\{\rr_1,\ldots,\rr_d\}$ of $\Lambda$ such that if $V_q = \sum_{i=1}^q \R\rr_i$, then
\[
\log\|V_q\| \asymp_\plus \sum_{i=1}^q \log\lambda_i(\Lambda).
\]
Moreover,
\begin{equation}
\label{rilambdai}
\log\|\rr_i\| \asymp_\plus \log\lambda_i(\Lambda) \text{ for all } i.
\end{equation}
\end{lemma}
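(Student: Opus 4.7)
The plan is to construct a basis of $\Lambda$ that respects the filtration by the spans of the successive minima vectors, and then to invoke Minkowski's second theorem (Theorem \ref{mink2}) on each sublattice $\Lambda \cap V_q$.

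First I would fix linearly independent vectors $\vv_1,\ldots,\vv_d \in \Lambda$ with $\|\vv_i\| = \lambda_i(\Lambda)$, and set $V_q \df \mathrm{span}_\R(\vv_1,\ldots,\vv_q)$. Each $V_q$ is $\Lambda$-rational, and since $\vv_1,\ldots,\vv_q \in \Lambda \cap V_q$ while any $q$ linearly independent vectors in $\Lambda \cap V_q$ are $q$ linearly independent vectors in $\Lambda$, one has
\[
\lambda_i(\Lambda \cap V_q) = \lambda_i(\Lambda) \text{ for all } i \leq q.
\]
Applying Theorem \ref{mink2} to the rank-$q$ lattice $\Lambda \cap V_q$ in $V_q$ then immediately yields
\[
\log\|V_q\| \asymp_\plus \sum_{i=1}^q \log\lambda_i(\Lambda),
\]
which is the first assertion, provided we can exhibit a basis $\rr_1,\ldots,\rr_d$ of $\Lambda$ with $\mathrm{span}(\rr_1,\ldots,\rr_q) = V_q$ for each $q$.

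Next I would construct this basis inductively, starting with $\rr_1 = \vv_1$. Assume $\rr_1,\ldots,\rr_{q-1}$ form a basis of $\Lambda \cap V_{q-1}$. Since $(\Lambda \cap V_q)/(\Lambda \cap V_{q-1})$ is a torsion-free rank-one $\Z$-module, there exists $\rr_q \in \Lambda \cap V_q$ such that $\Lambda \cap V_q = (\Lambda \cap V_{q-1}) \oplus \Z \rr_q$. The key adjustment step is to replace $\rr_q$ by $\rr_q + \ww$ for a suitable $\ww \in \Lambda \cap V_{q-1}$ so as to minimize $\|\rr_q\|$: write $\rr_q = \rr_q^\perp + \rr_q^\|$ with $\rr_q^\perp \perp V_{q-1}$ and $\rr_q^\| \in V_{q-1}$, and choose $\ww$ to be a nearest vector in $\Lambda \cap V_{q-1}$ to $-\rr_q^\|$. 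This makes $\|\rr_q^\|\|$ at most the covering radius of $\Lambda \cap V_{q-1}$, which by a standard Gram-Schmidt argument using the already-chosen $\rr_1,\ldots,\rr_{q-1}$ is $\lesssim \max_{i<q}\|\rr_i\|$. For the perpendicular component, writing $\vv_q = \zz + k\rr_q$ with $\zz \in \Lambda \cap V_{q-1}$ and $k \in \Z \setminus \{0\}$, one obtains $\|\rr_q^\perp\| \leq \|\vv_q^\perp\|/|k| \leq \|\vv_q\| = \lambda_q$. Inducting on $q$ and noting $\lambda_i \leq \lambda_q$ for $i \leq q$, this yields $\|\rr_q\| \lesssim \lambda_q(\Lambda)$.

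Finally I would derive the lower bound $\|\rr_q\| \gtrsim \lambda_q$ by combining the two previous steps. Hadamard's inequality gives $\|V_q\| \leq \prod_{i=1}^q \|\rr_i\|$, while the already-proved first assertion gives $\|V_q\| \asymp \prod_{i=1}^q \lambda_i$. Thus
\[
\prod_{i=1}^q \lambda_i \;\lesssim\; \prod_{i=1}^q \|\rr_i\|.
\]
Fixing $j \leq q$ and using $\|\rr_i\| \lesssim \lambda_i$ for $i \neq j$ in the right-hand product yields $\|\rr_j\| \gtrsim \lambda_j$, which upgrades the one-sided bound into $\|\rr_i\| \asymp \lambda_i$, i.e. the second assertion after taking logarithms.

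The main obstacle is the inductive adjustment step, where one has to check that the covering radius of $\Lambda \cap V_{q-1}$ in the Euclidean metric is controlled by $\lambda_{q-1}$; this is routine but requires care since the Gram-Schmidt orthogonalization of the basis $\rr_1,\ldots,\rr_{q-1}$ introduces dimension-dependent constants. Everything else is a straightforward application of Theorem \ref{mink2} and elementary lattice theory.
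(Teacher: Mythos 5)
Your proof is correct, but it takes a genuinely different route from the paper's. The paper starts from a Minkowski reduced basis $\{\rr_1,\ldots,\rr_d\}$ (citing Helfrich), writes the change-of-basis matrix in Iwasawa form $h=kan$, and uses the fact that reducedness forces the unipotent factor $n$ to be bounded; the lattice is then a bounded distortion of the diagonal lattice $a\Z^d$, for which both claims are immediate ($\|\rr_i\|\asymp a_i\asymp\lambda_i(\Lambda)$ and $\|V_q\|\asymp a_1\cdots a_q$). You instead build the flag directly from vectors $\vv_1,\ldots,\vv_d$ realizing the successive minima, observe that $\lambda_i(\Lambda\cap V_q)=\lambda_i(\Lambda)$ for $i\leq q$, and get the covolume estimate by applying Theorem \ref{mink2} to each sublattice $\Lambda\cap V_q$; the adapted basis is then produced by lifting generators of the rank-one quotients $(\Lambda\cap V_q)/(\Lambda\cap V_{q-1})$ and size-reducing modulo the covering radius of $\Lambda\cap V_{q-1}$, with the lower bound $\|\rr_j\|\gtrsim\lambda_j$ recovered from Hadamard's inequality against the already-established covolume asymptotics. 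Each step checks out: the perpendicular-component bound $\|\rr_q^\perp\|\leq\|\vv_q^\perp\|/|k|\leq\lambda_q$ is valid because $\vv_q\notin V_{q-1}$ forces $k\neq 0$, the covering-radius bound only needs rounding coordinates in the basis $\rr_1,\ldots,\rr_{q-1}$ (no Gram–Schmidt is really required), and the compounding of constants across the induction is harmless since only $d$-dependent multiplicative constants (i.e. additive constants on logarithms) are claimed. What each approach buys: the paper's argument is shorter but leans on the cited near-orthogonality property of Minkowski reduced bases, whereas yours is self-contained modulo Minkowski's second theorem and makes explicit that the subspaces $V_q$ may be taken to be spanned by minima vectors — a fact the paper also uses implicitly elsewhere (e.g. in Lemma \ref{lemmaminkowskibasis2}).
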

\begin{proof}
 
Let $\{\rr_1,\ldots,\rr_d\}$ be a Minkowski reduced basis of $\Lambda$ (see \cite[Proposition 5.3]{Helfrich}). 
Now let $h$ be the change of basis matrix changing $\{\ee_1,\ldots,\ee_d\}$ into $\{\rr_1,\ldots,\rr_d\}$ and write $h = kan$ where $k\in \SO(d)$, $a$ is a diagonal matrix, and $n$ is an upper triangular matrix. Since $\{\rr_1,\ldots,\rr_d\}$ is a Minkowski reduced basis, $a_i \gtrsim a_{i+1}$ for all $i$ and $n$ is bounded, and thus $g = k(ana^{-1})$ is also bounded. Note that $\rr_i = ga\ee_i$ for all $i$.
Then for all $i$, 
\[
\|\rr_i\| \asymp a_i \asymp \lambda_i(a \Z^d) \asymp \lambda_i(g a \Z^d) = \lambda_i(\Lambda).
\]
On the other hand, for each $q = 1,\ldots,d$, we have
\[
\log\|V_q\| \asymp_\plus \log\|a E_q\| = \log(a_1\ldots a_q) =
\sum_{i=1}^q \log(a_i) \asymp_\plus \sum_{i=1}^q
\log\lambda_i(\Lambda),
\]
where $E_q = \sum_{i=1}^q \R\ee_i$.
\end{proof}

\begin{lemma}
\label{lemmaminkowskibasis2}
Let $\Lambda \leq \R^d$ be a lattice, and let $V_q$ be as in Lemma \ref{lemmaminkowskibasis}. Then
\begin{equation}
\label{minkowskibasis2}
\log\|V_q'\| \gtrsim_\plus \sum_{i=1}^{q-1} \log\lambda_i(\Lambda) + \log\lambda_{q+1}(\Lambda) \text{ for all } V_q' \in \VV_q(\Lambda) \butnot \{V_q\}.
\end{equation}
\end{lemma}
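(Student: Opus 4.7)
The plan is to compare the covolume of $V_q'$ to the successive minima of $V_q' \cap \Lambda$ via Minkowski's second theorem, then bound those successive minima from below using the hypothesis $V_q' \neq V_q$. Concretely, I would apply Lemma \ref{lemmaminkowskibasis} to the rank-$q$ lattice $V_q'\cap\Lambda$ inside the $q$-dimensional Euclidean space $V_q'$, producing a basis $\ss_1,\ldots,\ss_q$ of $V_q'\cap\Lambda$ with $\log\|\ss_i\|\asymp_\plus \log\lambda_i(V_q'\cap\Lambda)$ for each $i$. Then by Theorem \ref{mink2} applied to $V_q'\cap\Lambda$,
\[
\log\|V_q'\| \asymp_\plus \sum_{i=1}^q \log\lambda_i(V_q'\cap\Lambda).
\]
Since $V_q'\cap\Lambda\subseteq\Lambda$, for every $i$ we have $\lambda_i(V_q'\cap\Lambda)\geq \lambda_i(\Lambda)$, which already takes care of the first $q-1$ terms on the right-hand side of \eqref{minkowskibasis2}.

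The heart of the argument, which I expect to be the only non-routine step, is to upgrade the trivial bound $\lambda_q(V_q'\cap\Lambda)\geq \lambda_q(\Lambda)$ to
\begin{equation}\label{keystep}
\lambda_q(V_q'\cap\Lambda) \gtrsim \lambda_{q+1}(\Lambda),
\end{equation}
using crucially that $V_q'\neq V_q$. For this, note that since $V_q'$ and $V_q$ are distinct $q$-dimensional subspaces, $V_q'$ cannot contain the spanning set $\{\rr_1,\ldots,\rr_q\}$ of $V_q$; equivalently, $V_q'\cap\Lambda \not\subset V_q$, so there exists a least index $i_0\in\{1,\ldots,q\}$ with $\ss_{i_0}\notin V_q$. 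The vectors $\rr_1,\ldots,\rr_q,\ss_{i_0}$ are then $q+1$ linearly independent elements of $\Lambda$ (linear independence because $\rr_1,\ldots,\rr_q$ span $V_q$ and $\ss_{i_0}\notin V_q$), so by the definition of successive minima,
\[
\lambda_{q+1}(\Lambda) \;\lesssim\; \max\bigl(\|\rr_q\|,\,\|\ss_{i_0}\|\bigr)
\;\asymp_\plus\; \max\bigl(\lambda_q(\Lambda),\,\lambda_{i_0}(V_q'\cap\Lambda)\bigr)
\;\leq\; \max\bigl(\lambda_q(\Lambda),\,\lambda_q(V_q'\cap\Lambda)\bigr),
\]
using \eqref{rilambdai} for $\rr_q$ and the analogous estimate for $\ss_{i_0}$. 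Combined with $\lambda_q(V_q'\cap\Lambda)\geq \lambda_q(\Lambda)$, this yields \eqref{keystep}.

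Putting the pieces together,
\[
\log\|V_q'\| \;\asymp_\plus\; \sum_{i=1}^q \log\lambda_i(V_q'\cap\Lambda)
\;\geq\; \sum_{i=1}^{q-1}\log\lambda_i(\Lambda) + \log\lambda_q(V_q'\cap\Lambda)
\;\gtrsim_\plus\; \sum_{i=1}^{q-1}\log\lambda_i(\Lambda) + \log\lambda_{q+1}(\Lambda),
\]
which is exactly \eqref{minkowskibasis2}. The only subtlety requiring care is the case analysis around $i_0$, i.e.\ making sure that the $q+1$ selected vectors are genuinely linearly independent; this is immediate from the minimality of $i_0$ because $\ss_1,\ldots,\ss_{i_0-1}\in V_q\cap V_q'$ plays no role in the linear independence check, which only needs $\{\rr_1,\ldots,\rr_q\}\cup\{\ss_{i_0}\}$.
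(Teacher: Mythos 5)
Your proposal is correct and follows essentially the same route as the paper: Minkowski's second theorem applied to $\Lambda\cap V_q'$, the trivial bound $\lambda_i(\Lambda\cap V_q')\geq\lambda_i(\Lambda)$ for $i\leq q-1$, and then the key observation that combining lattice vectors of $V_q$ (whose norms are controlled by \eqref{rilambdai}) with a vector of $\Lambda\cap V_q'$ outside $V_q$ produces $q+1$ linearly independent vectors, forcing $\lambda_q(\Lambda\cap V_q')\gtrsim\lambda_{q+1}(\Lambda)$. The paper phrases this last step via spanning sets of $\Lambda\cap V_q$ and $\Lambda\cap V_q'$ and an infimum, but your explicit choice of $\rr_1,\ldots,\rr_q,\ss_{i_0}$ is just a concrete instantiation of the same idea.
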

\begin{proof}
Fix $V_q'\in \VV_q(\Lambda) \butnot \{V_q\}$. By Minkowski's second theorem (Theorem \ref{mink2}), we have
\begin{equation}
\label{secondtheoremprime}
\log\|V_q'\| \asymp_\plus \sum_{i=1}^q \log\lambda_i(\Lambda\cap V_q').
\end{equation}
For all $i = 1,\ldots,q-1$, we have
\begin{equation}
\label{lambdaiVqprime}
\lambda_i(\Lambda\cap V_q') \geq \lambda_i(\Lambda).
\end{equation}
For the $i=q$ term, we use a different argument to get a better bound. Let $E$ (resp. $E'$) be a spanning set for $\Lambda\cap V_q$ (resp. $\Lambda\cap V_q'$). Then $E\cup E'$ is a spanning set for $\Lambda\cap (V_q+V_q')$. Since $\dim(V_q+V_q') \geq q+1$, it follows that
\[
\max_{\rr\in E\cup E'} \|\rr\| \geq \lambda_{q+1}(\Lambda).
\]
Taking the infimum over all $E,E'$ gives
\[
\max\big(\lambda_q(\Lambda\cap V_q),\lambda_q(\Lambda\cap V_q')\big) \geq \lambda_{q+1}(\Lambda).
\]
On the other hand, it follows from \eqref{rilambdai} that $\lambda_q(\Lambda\cap V_q) \asymp \lambda_q(\Lambda) \leq \lambda_q(\Lambda\cap V_q')$. Thus,
\[
\lambda_q(\Lambda\cap V_q') \gtrsim \lambda_{q+1}(\Lambda).
\]
Combining with \eqref{secondtheoremprime} and \eqref{lambdaiVqprime} yields \eqref{minkowskibasis2}.
\end{proof}

\begin{lemma}
\label{lemmapushoffvert}
Recall from \eqref{slopeset} that $d_+ = \pdim$ and $d_- = \qdim$. Let $q=\dir_+ + \dir_-$ be a decomposition of $q$ with $\dir_\pm \in [0,d_\pm] \cap \Z$. Let $\Lambda$ be a lattice and let $V$ be a $q$-dimensional $\Lambda$-rational subspace such that
\[
\dir_- \geq \sup_{\|\bfC\|\leq\beta} \dim(u_\bfC V\cap \vert).
\]
Then for all $t\geq 0$,
\begin{equation}
\label{gtVV}
\log\|g_t V\| - \log\|V\| \gtrsim_{\plus,\beta} \left(\frac{\dir_+}{m} - \frac{\dir_-}{n}\right)t.
\end{equation}
The reverse inequality holds if $\dim(V\cap \vert) = \dir_-$.
\end{lemma}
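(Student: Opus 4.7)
The plan is to convert the desired inequality into a statement about a Plücker representative of $V \cap \Lambda$ and then to identify the hypothesis with a quantitative nondegeneracy condition on that representative.

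First I would set up the wedge product framework. The flow $g_t$ diagonalizes on $\wedge^q \R^d$ with respect to the grading $\wedge^q \R^d = \bigoplus_{i+j=q} W_{i,j}$, where $W_{i,j} \df \wedge^i(\R^m\times\{\0\}) \otimes \wedge^j \vert$; on $W_{i,j}$ the flow acts by the scalar $e^{t(i/m - j/n)}$. Letting $\rr_1,\ldots,\rr_q$ be a basis of $V\cap\Lambda$ and $\omega \df \rr_1\wedge\cdots\wedge\rr_q$, we have $\|V\| = \|\omega\|$ and $\|g_t V\| = \|g_t \omega\|$. Write $\omega = \sum_{i+j=q} \omega_{i,j}$ with $\omega_{i,j} \in W_{i,j}$. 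A quick direct computation (expanding each $\rr_s$ as $\rr_s^+ + \rr_s^-$ and using that $\pi_+|_V$ has rank $q - k$ where $k \df \dim(V\cap\vert)$) shows $\omega_{i,j} = 0$ whenever $i > q-k$. The hypothesis gives $k \leq L_-$, hence $L_+ \leq q-k$, so $\omega_{L_+, L_-}$ is allowed to be nonzero.

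Next I would derive the lower bound by isolating this single term. Since $e^{2t(i/m - (q-i)/n)} \geq e^{2t(L_+/m - L_-/n)}$ for every $i \geq L_+$ and every $t \geq 0$, the Pythagorean identity
\[
\|g_t \omega\|^2 = \sum_{i+j=q} e^{2t(i/m - j/n)} \|\omega_{i,j}\|^2 \;\geq\; e^{2t(L_+/m - L_-/n)} \|\omega_{L_+, L_-}\|^2
\]
yields $\log\|g_t V\| - \log\|V\| \geq t(L_+/m - L_-/n) + \log(\|\omega_{L_+,L_-}\|/\|\omega\|)$. So the whole proof reduces to the claim that $\|\omega_{L_+,L_-}\|/\|\omega\| \gtrsim_\beta 1$ for $V$ satisfying the hypothesis — i.e., that this ``nondegeneracy ratio'' is bounded below by a constant depending only on $\beta$.

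The main obstacle is this claim, and I would attack it by contrapositive. Fix an orthonormal basis of $V$ adapted to the splitting $(V \cap \vert) \oplus E$ (where $E$ is a complement), so that $\|\omega_{L_+,L_-}\|/\|\omega\|$ admits a concrete description in terms of Gram determinants of projections $\pi_+(e_i)$. Then the condition $\dim(u_C V \cap \vert) = \dim(V \cap u_{-C}\vert) \geq L_- + 1$ translates, via $v = (x,y) \in V \cap \mathrm{Graph}(-C) \iff \pi_+(v) + C\pi_-(v) = 0$, into the rank condition $\mathrm{rank}(M + CN) \leq L_+ - 1$ where $M = \pi_+|_V$ and $N = \pi_-|_V$. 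Using the adapted basis from above, I would write $M + CN$ explicitly and show that the smallness of $\|\omega_{L_+,L_-}\|/\|\omega\|$ permits one to solve the resulting $L_+$-minor vanishing condition by some $C$ with $\|C\|$ comparable to a fixed power of that ratio. This is essentially a quantitative Schubert-variety estimate: the distance (via the $u_C$-orbit) from $V$ to $\{W : \dim(W \cap \vert) \geq L_- + 1\}$ is controlled by $\|\omega_{L_+,L_-}\|/\|\omega\|$, which one verifies explicitly in a 2-by-2 model case and then extends by the Cauchy–Binet / Plücker expansion. The contrapositive contradicts $V \in X_\beta$, yielding the claim.

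Finally, for the reverse inequality when $\dim(V\cap\vert) = L_-$, the same graded decomposition gives $\omega_{i,q-i} = 0$ for $i > L_+$, so
\[
\|g_t \omega\|^2 = \sum_{i \leq L_+} e^{2t(i/m - (q-i)/n)} \|\omega_{i,q-i}\|^2 \;\leq\; e^{2t(L_+/m - L_-/n)} \|\omega\|^2,
\]
because on the range $i \leq L_+$ the exponent is maximized at $i = L_+$. Taking logs gives $\log\|g_t V\| - \log\|V\| \leq t(L_+/m - L_-/n)$ with no additive correction, completing the proof.
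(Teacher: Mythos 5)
Your reverse-inequality argument is fine, and the weight-space decomposition of $\wedge^q\R^d$ is a legitimate alternative to the paper's direct construction. But the forward inequality has a genuine gap: you reduce everything to the claim that $\|\omega_{L_+,L_-}\|/\|\omega\| \gtrsim_\beta 1$ under the hypothesis, and that claim is false. The hypothesis only bounds $\dim(u_\bfC V\cap(\{0\}\times\R^n))$ from above, so it forbids the \emph{high-$j$} Plücker blocks from dominating, but it in no way forces the single block $(L_+,L_-)$ to be large: the mass may sit in a block with even larger weight. Concretely, take $m=n=2$, $q=2$, $L_+=L_-=1$, and $V_\epsilon=\mathrm{span}\{(1,0,\epsilon,0),\,(0,1,0,\epsilon)\}$. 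For $\epsilon$ small and any $\|C\|\leq\beta$ the map $I+C\,\mathrm{diag}(\epsilon,\epsilon)$ is invertible, so $\sup_{\|C\|\leq\beta}\dim(u_C V_\epsilon\cap(\{0\}\times\R^2))=0\leq L_-$ and the hypothesis holds; yet $\|\omega_{1,1}\|/\|\omega\|=O(\epsilon)\to 0$. (The lemma's conclusion still holds for $V_\epsilon$ only because the \emph{higher}-weight block $\omega_{2,0}$ is large.) So your ``nondegeneracy ratio'' is the wrong quantity, and the contrapositive you propose cannot produce a small $C$ from its smallness.

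The statement that would make your strategy work is a lower bound on the projection of $\omega$ onto the whole block $\bigoplus_{i\geq L_+}W_{i,q-i}$ (these are precisely the Plücker coordinates that vanish when $\dim(W\cap(\{0\}\times\R^n))\geq L_-+1$), since every weight in that block is at least $L_+/m-L_-/n$ for $t\geq 0$. Proving that bound is the real content of the lemma, and it is exactly what the paper's argument supplies: it recursively chooses $\rr_{j+1}=(\pp_{j+1},\qq_{j+1})\in V\cap\bigcap_{i\leq j}(\pp_i,0)^\perp$ with $\|\pp_{j+1}\|\geq\beta\|\qq_{j+1}\|$, and when the process stops after $k$ steps the leftover subspace $W_k$ is a graph over its projection to $\{0\}\times\R^n$ of slope at most $\beta$, which \emph{directly} yields a matrix $\|\bfC\|\leq\beta$ with $u_\bfC W_k\subset\{0\}\times\R^n$ and hence $k\geq L_+$; the orthogonality of the $\pp_i$'s then gives the expansion estimate. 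Note also that even with the corrected block, your ``quantitative Schubert-variety estimate'' step is only sketched: smallness of Plücker coordinates measures distance to the Schubert variety in the Grassmannian, but you still must show the variety is reached by a unipotent move $u_C$ with $\|C\|$ controlled, which is the part the paper's graph/orthogonalization construction handles and which a ``2-by-2 model case plus Cauchy--Binet'' does not establish.
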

We recall that $0 < \beta < 1$ is the parameter used in the definition of the $\delta$-dimensional Hausdorff/packing $\beta$-game.
\begin{proof}
Let $(\rr_i)_1^k = (\pp_i,\qq_i)_1^k \in V$ be a maximal set with respect to the following properties: $\|\pp_i\| > \beta \|\qq_i\|$ for all $i$, and $\pp_i$ is perpendicular to $\pp_j$ whenever $i\neq j$. For each $j = 1,\ldots,k$ let
\[
W_j\df V\cap \bigcap_{i=1}^j (\pp_i,\0)^\perp
\] 
and note that for all $\rr = (\pp,\qq)\in W_k$, we have $\|\pp\| \leq \beta\|\qq\|$. Now let $X = \{\qq\in\R^\qdim : \exists \pp\in\R^\pdim \; (\pp,\qq)\in W_k\}$, and let $\bfC:X\to\R^\pdim$ be defined so that $(-\bfC\qq,\qq)\in W_k$. Now, $\bfC$ can be extended to a map from $\R^\qdim$ to $\R^\pdim$ without increasing its operator norm. It follows that $\|\bfC\|\leq \beta$ and $u_\bfC W_k \subset \vert$, which implies that
\[
q - k = \dim(W_k) = \dim(u_\bfC W_k) \leq \dim(u_\bfC V\cap \vert) \leq \dir_-.
\]
Rearranging gives $k \geq \dir_+$. Finally, let $(\rr_i)_{k+1}^q$ be an arbitrary basis of $W_k$. Then there exists a constant $\alpha > 0$ such that
\[
\|V\| = \alpha\|\rr_1\wedge\cdots\wedge\rr_q\|
\]
and
\[
\|g_t V\| = \alpha\|g_t \rr_1 \wedge\cdots\wedge g_t\rr_q\|.
\]
In particular
\[
\|V\| \leq \alpha\|\rr_1\|\cdots\|\rr_k\|\cdot \|\rr_{k+1}\wedge\cdots\wedge\rr_q\| \lesssim_\beta \alpha \|\pp_1\|\cdots\|\pp_k\|\cdot\|\rr_{k+1}\wedge\cdots\wedge\rr_q\|
\]
while
\[
\|g_t V\| = \alpha\|e^{t/m}[(\pp_1,\0) + o_\beta(\| \pp_1 \|)]\wedge\cdots\wedge e^{t/m}[(\pp_k,\0) + o_\beta(\| \pp_k \|)]\wedge g_t \rr_{k+1} \wedge\cdots\wedge g_t\rr_q\|.
\]
Since $((\pp_i,\0))_1^k$ are orthogonal to each other and also to $\rr_{k+1},\ldots,\rr_q$, it follows that if $t$ is sufficiently large in comparison to $\beta$ we have
\begin{align*}
\|g_t V\| &\gtrsim_{\phantom\beta} \alpha e^{kt/m} \|\pp_1\|\cdots\|\pp_k\|\cdot\|g_t \rr_{k+1} \wedge\cdots\wedge g_t\rr_q\|\\
&\gtrsim_{\phantom\beta} \alpha e^{kt/m} \|\pp_1\|\cdots\|\pp_k\|\cdot e^{-(q-k)t/n}\|\rr_{k+1} \wedge\cdots\wedge\rr_q\|\\
&\gtrsim_\beta e^{kt/m-(q-k)t/n}\|V\|.
\end{align*}
Since $k\geq \dir_+$, this completes the proof of \eqref{gtVV}.

Now suppose $\dim(V\cap\vert) = L_-$, and we will show that the reverse inequality of \eqref{gtVV} holds. Let $(\rr_i)_1^{L_-}$ be an orthonormal basis of $V\cap\vert$, and extend to an orthonormal basis $(\rr_i)_1^q$ of $V$. Then by Proposition \ref{propositionexteriorproduct},
\begin{align*}
\frac{\|g_t V\|}{\|V\|} 
&\asymp \|g_t \rr_1 \wedge \cdots \wedge g_t \rr_q\|\\
&\lesssim \|g_t \rr_1\| \cdots \|g_t \rr_q\|\\
&\leq (e^{-t/n} \|\rr_1\|) \cdots (e^{-t/n} \|\rr_{L_-}\|) (e^{t/m} \|\rr_{L_- + 1}\|) \cdots (e^{t/m} \|\rr_q\|)\\
&= \exp\left(\left(\frac{L_+}{m} - \frac{L_-}{n}\right) t \right).
\end{align*}
This completes the proof.
\end{proof}

We finish with an elementary observation about the slopes of line segments appearing in templates (see Definition \ref{definitiontemplate}) that is used in proving Lemma \ref{lemmahperturbation}, which in turn is needed in the proof of the lower bound of the variational principle.

\begin{observation}
\label{observationrationalslopes}
If $\ff$ is a template then for all $t\geq 0$ we have
\[
f_j'(t) - f_i'(t) \in \tfrac1q\Z \;\text{ for some\; $q\leq \dimprod d^2$}.
\]
\end{observation}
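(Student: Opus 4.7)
The plan is to read the claim off from Definition \ref{definitiontemplate}(III) together with Definition \ref{definitiondimtemplate}. Fix $t\geq 0$ and an interval of linearity $I$ for $\ff$ containing $t$ (if $t$ is a breakpoint, take $I$ to be one of the adjacent intervals and interpret $f_i'(t)$ as the corresponding one-sided derivative; the argument is identical). Partition $[1,d]_\Z$ into its intervals of equality $\OC pq_\Z$ for $\ff$ on $I$, as in Definition \ref{definitiondimtemplate}.

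First I would show that for every index $i$, the derivative $f_i'(t)$ lies in $\tfrac{1}{k\, mn}\Z$ for some positive integer $k \leq d$. Indeed, let $\OC pq_\Z$ be the interval of equality containing $i$. By definition all components $f_{p+1},\dots,f_q$ share a common slope on $I$, so
\[
f_i'(t) \;=\; \frac{F_q'(t) - F_p'(t)}{q-p}.
\]
The quantized slope condition (III) of Definition \ref{definitiontemplate} provides $L_\pm(p),L_\pm(q)\in\Z$ with $F_q' - F_p' = \tfrac{M_+}{m} - \tfrac{M_-}{n}$, where $M_\pm \df L_\pm(q) - L_\pm(p)\in\Z$. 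Thus
\[
f_i'(t) \;=\; \frac{M_+ n - M_- m}{(q-p)\,mn} \;\in\; \tfrac{1}{k\, mn}\Z \qquad\text{with } k \df q-p \in [1,d]_\Z.
\]

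Now, given two indices $i,j$, apply the previous step to write $f_i'(t)\in\tfrac1{k_1 mn}\Z$ and $f_j'(t)\in\tfrac1{k_2 mn}\Z$ with $k_1,k_2\in[1,d]_\Z$. Then
\[
f_j'(t) - f_i'(t) \;\in\; \tfrac{1}{k_1 k_2\, mn}\Z \;\subset\; \tfrac{1}{q}\Z
\]
for $q \df k_1 k_2\, mn \leq d^2 \cdot mn = \dimprod d^2$, which is exactly the desired conclusion. The only nonroutine input is the quantized slope condition, which we have used to guarantee that each $F_q'(t)$ lies in $\tfrac{1}{mn}\Z$; everything else is an arithmetic bookkeeping exercise, so I do not anticipate any genuine obstacle.
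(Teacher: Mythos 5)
Your proposal is correct and follows essentially the same route as the paper: the paper's proof simply asserts that each $f_i'(t)$ has the form $\tfrac{p}{\dimprod q}$ with $q\leq d$ and then takes the difference with common denominator $\dimprod q_1 q_2\leq \dimprod d^2$, which is exactly what you establish. The only difference is that you spell out the justification (via intervals of equality and the quantized slope condition) that the paper leaves implicit.
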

\begin{proof}
For all $i,t$ we have
\[
f_i'(t) = \frac{p}{\dimprod q}
\]
for some $p\in\Z$ and $q = 1,\ldots,d$.  So we have
\[
f_j'(t) - f_i'(t) = \frac{p_2}{\dimprod q_2} - \frac{p_1}{\dimprod q_1} = \frac{p}{\dimprod q_1 q_2}
\]
and we have $\dimprod q_1 q_2 \leq \dimprod d^2$.
\end{proof}

\draftnewpage
\section{Proof of Theorem \ref{theoremvariationaluniform}, lower bound}
\label{sectionlower}
Let $\ff$ be a template. We must show that for all $\epsilon > 0$, there exists $C_\epsilon > 0$ such that
\begin{align}
\label{lowerbound}
\HD(\DD(\ff,C_\epsilon)) &\geq \underline\delta(\ff) - \epsilon,&
\PD(\DD(\ff,C_\epsilon)) &\geq \overline\delta(\ff) - \epsilon
\end{align}
where $\DD(\ff,C_\epsilon) \df \DD(\NN(\{\ff\},C_\epsilon))$. To this end, we will play the modified Hausdorff and packing games with target set $S = \DD(\ff,C_\epsilon)$. It turns out that the same strategy will work for Alice in both games.

The proof can be divided into four basic stages:
\begin{enumerate}[1.]
\item {\it Reduction}: We can without loss of generality assume that the template $\ff$ appearing in the statement of the theorem is in a special form which is convenient to the later argument.
\item {\it Mini-strategy}: For any template $\gg$ (not necessarily the same as the $\ff$ appearing in the theorem), Alice can guarantee that if $\bfA$ is the outcome of the game, then the successive minima function $\hh_\bfA$ remains close to $\gg$ for a certain interval of time before diverging from it. This interval can be an interval of linearity of $\gg$, or the union of any fixed number of intervals of linearity. However, the upper bound on $|\hh_\bfA - \gg|$ rapidly grows as the allowed number of intervals of linearity increases.
\item {\it Error correction}: If the value of the successive minima function $\hh_\bfA$ at a certain time $t$ is slightly off from the value of $\ff$ at $t$, then we can perturb $\ff$ into a partial template $\gg$ such that $\gg(t) = \hh_\bfA(t)$. Alice can then follow the perturbed template $\gg$ rather than the original template $\ff$.
\item {\it Uniform error bounds}: The error correction techniques from stage (3) are sufficient to guarantee that the final successive minima function $\hh_\bfA$ remains at a bounded distance from the desired template $\ff$, and that the inequalities $\underline\delta(\AA) \geq \underline\delta(\ff) - \epsilon$ and $\overline\delta(\AA) \geq \overline\delta(\ff) - \epsilon$ are satisfied.
\end{enumerate}

Stage 2 is in some sense the most important one because it makes the connection between the parametric geometry of numbers and the theory of templates. In the other stages, for the most part we do not deal with parametric geometry of numbers directly.

\subsection{Reduction}

There are two key features we would like to assume of our template $\ff$: its corner points\Footnote{I.e. points where the derivative of $\ff$ is undefined.} should be appropriately spaced, and each corner point should have only one ``purpose''.

\begin{definition}
\label{definitionintegral}
Given an $\tspace > 0$, a template $\ff$ is \emph{$\tspace$-integral} if
\begin{itemize}
\item[(I)] its corner points are multiples of $\tspace$, and
\item[(II)] for all $t\in \tspace\N$ we have $f_i(t) \in \frac{\tspace}{mnd!}\Z$ for all $1\leq i \leq d$.
\end{itemize}
By the quantized slope condition (see Definition \ref{definitiontemplate}) it suffices to check (II) for any $t\in \tspace\N$ (e.g. $t=0$) to obtain it for all $t\in \tspace\N$.
\end{definition}

\begin{definition}[Cf. Figure \ref{figuresplitsmergerstransfers}]
\label{definitionsimple}
Let $\ff$ be a template, let $t > 0$ be a corner point of $\ff$, and let $I_-,I_+$ be the two maximal intervals of linearity for $\ff$ such that $I_- = (t_-,t)$ and $I_+ = (t,t_+)$ for some $t_- < t < t_+$.
\begin{itemize}
\item We call $t$ a \emph{split} (resp. \emph{merge}) if there exists $q = 1,\ldots,d-1$ such that $f_q(t) = f_{q+1}(t)$, but $f_q < f_{q+1}$ on $I_+$ (resp. on $I_-$).
\item We call $t$ a \emph{transfer} if there exists $q = 1,\ldots,d-1$ such that $f_q(t) < f_{q+1}(t)$ and $\dir_+(\ff,I_+,q) > \dir_+(\ff,I_-,q)$ (equiv. $F_q'(I_+) > F_q'(I_-)$).
\end{itemize}
Finally, we call the template $\ff$ \emph{simple} if the sets of splits, merges, and transfers are pairwise disjoint.

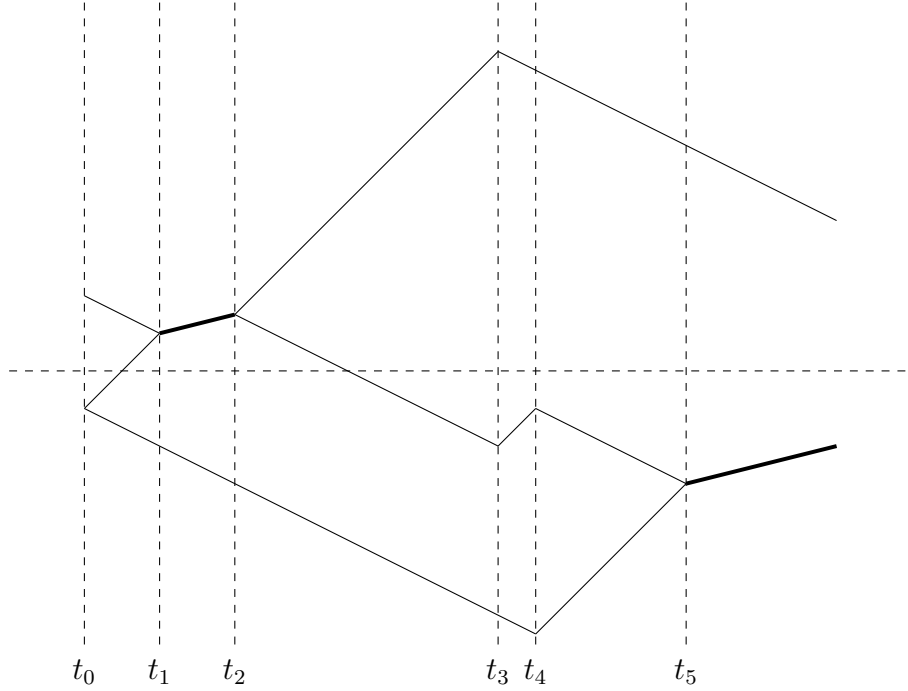
\begin{figure}
\begin{tikzpicture}
\clip(-1,-5) rectangle (11,5);
\draw[dashed] (-1,0) -- (11,0);
\node at (0,-4) {$t_{0}$};
\node at (1,-4) {$t_{1}$};
\node at (2,-4) {$t_{2}$};
\node at (5.5,-4) {$t_{3}$};
\node at (6,-4) {$t_{4}$};
\node at (8,-4) {$t_{5}$};
\draw (0,1)--(1,0.5);
\draw (0,-0.5)--(1,0.5);
\draw (0,-0.5)--(6,-3.5);
\draw[line width=1.5] (1,0.5) -- (2,0.75);
\draw (2,0.75) -- (5.5,4.25);
\draw (5.5,4.25) -- (10,2);
\draw (2,0.75) -- (5.5,-1);
\draw (5.5,-1) -- (6,-0.5);
\draw (6,-0.5) -- (8,-1.5);
\draw (6,-3.5) -- (8,-1.5);
\draw[line width=1.5] (8,-1.5) -- (10,-1);
\draw[dashed] (0,-3.64) -- (0,5);
\draw[dashed] (1,-3.64) -- (1,5);
\draw[dashed] (2,-3.64) -- (2,5);
\draw[dashed] (5.5,-3.64) -- (5.5,5);
\draw[dashed] (6,-3.64) -- (6,5);
\draw[dashed] (8,-3.64) -- (8,5);
\end{tikzpicture}
\caption{In this figure of a portion of an arbitrary $1\times 2$ template, the corner points $t_0$ and $t_2$ are splits, $t_1$ and $t_5$ are merges, and $t_3$ and $t_4$ are transfers.}
\label{figuresplitsmergerstransfers}
\end{figure}
\end{definition}

\begin{remark}
In any template, every corner point is either a split, a merge, or a transfer.
\end{remark}

We now show that we can assume without loss of generality that the template $\ff$ appearing in the statement of Theorem \ref{theoremvariationaluniform} is both simple and integral. Since a similar argument will be needed for the proof of the upper bound of Theorem \ref{theoremvariationaluniform} (specifically, showing that a successive minima function can always be approximated by a template (cf. Lemma \ref{lemmaapproximatetemplate} below)), we prove this lemma in slightly greater generality than may appear to be necessary.

\begin{lemma}
\label{lemmafindtemplate}
Fix $\tspace > 0$, and let $\ff:\Rplus\to \R^d$ be a map (not necessarily a template) satisfying the following conditions:
\begin{itemize}
\item[\text{(I)}] $f_1 \leq \cdots \leq f_d$.
\item[\text{(II)}] For all $t_1 < t_2$ and $i = 1,\ldots,d$ we have
\[
-\frac{1}{\qdim} \leq \frac{f_i(t_2) - f_i(t_1)}{t_2 - t_1} \leq \frac{1}{\pdim}\cdot
\]
\item[\text{(III)}] For all $q = 1,\ldots,d$ and for every interval $I$ such that
\begin{equation}
\label{qsplit}
f_{q+1}> f_q \text{ on } I,
\end{equation}
there exists a convex, piecewise linear function $F_{q,I}:I\to\R$ with slopes in $Z(q)$ (cf. \eqref{slopeset}) which satisfies
\begin{equation}
\label{FqI}
F_q \df \sum_{i=1}^q f_i \asymp_\plus F_{q,I} \text{ on } I
\end{equation}
and if $\ff$ is a template, then
\begin{equation}
\label{FqIgeq}
F_{q,I}' \geq F_q' \text{ on } I.
\end{equation}
\end{itemize}
Then there exists a simple $\tspace$-integral template $\gg$ which approximates $\ff$ to within an additive constant, i.e. satisfies $\gg \asymp_\plus \ff$. The implied constant depends on $\tspace$ and on the implied constant of \eqref{FqI} but not directly on $\ff$. Moreover, if $\ff$ is a template, then $\gg$ can be chosen so that for all $q,t,t'$ such that $g_q(t) < g_{q+1}(t)$ and $|t'-t|\leq \tspace$, we have $f_{q+1}(t') - f_q(t') \geq \tspace$ and $G_q'(t) \geq F_q'(t')$, and consequently
\begin{align}
\label{reductionimprovement}
\underline\delta(\gg) &\geq \underline\delta(\ff),&
\overline\delta(\gg) &\geq \overline\delta(\ff).
\end{align}
\end{lemma}
\begin{remark*}
Any template $\ff$ satisfies conditions (I)-(III) (and in fact, one can take $F_{q,I} = F_q$ in (III)).
\end{remark*}
\begin{remark*}
In the proof below, all implied constants are assumed to depend on $\tspace$. 
\end{remark*}
\begin{proof}
Let \label{definitiontstar}$\tstar \df d\cdot(d^2)!\tspace \in \tspace\N$, and let \label{definitiontbigspace}$\tbigspace \df \dimprod d^{4d}\tstar$. Fix $q = 1,\ldots,d$, and let $\II_q$ be the collection of all intervals satisfying \eqref{qsplit} whose endpoints are in $\tbigspace\N\cup\{\infty\}$, and which are maximal with respect to these two properties. Note that this implies that  $\II_q$ is a disjoint collection if intervals. For each $I\in \II_q$, let $F_{q,I}:I\to\R$ be a convex, piecewise linear function as in (III).

By first moving the corner points of $F_{q,I}$ to the left and then increasing $F_{q,I}$ by an additive constant, we may without loss of generality suppose that the following hold:
\begin{itemize}
\item[(IV)] the corner points of $F_{q,I}$ are all integer multiples of $\tbigspace$; and
\item[(V)] the values of $F_{q,I}$ at integer multiples of $\tbigspace$ are all in the set $(d^{4(d-q)} + d^{4d}\Z)\tstar$. (The displacement term $d^{4(d-q)}$ will help us guarantee that the resulting template $\gg$ is simple.)
\end{itemize}
Here, we have used the fact that $Z(q) \subset \frac1{mn} \Z$ for all $q$, to ensure that the conditions are not inconsistent. Moving the corner points to the left rather than to the right guarantees that \eqref{FqIgeq} is still satisfied, via the convexity condition. We can also assume that $F_{d,I_*} \equiv \tstar$, where $I_* = \Rplus$ is the unique element of $\II_d$.

\begin{claim}
\label{claimexplicitlipschitz}
There exist collections of disjoint intervals $\w\II_q$ ($q=1,\ldots,d$) satisfying 
\begin{equation}\label{fqasymp}
f_q \asymp_\plus f_{q+1} ~\text{on}~ \Rplus \butnot \bigcup(\w\II_q),
\end{equation}
and functions $\w F_{q,I} : I \to \R$ ($q=1,\ldots,d$, $I \in \w \II_q$) satisfying (the analogues of) \text{(III)-(V)} as well as the following:
\begin{itemize}
\item[(VI)] for all $1 \leq q_1 < q_2 \leq d$, $I_1\in \w\II_{q_1}$, and $I_2\in \w\II_{q_2}$, we have
\begin{equation}
\label{explicitlipschitz}
-\frac1\qdim \leq \frac{\w F_{q_2,I_2}' - \w F_{q_1,I_1}'}{q_2 - q_1} \leq \frac1\pdim \text{ on } I_1 \cap I_2.
\end{equation}
\end{itemize}
\end{claim}
\begin{subproof}
Fix a constant $C_2 > 0$ to be determined, and let $C_1 = 2d^2 C_2$. Let $\JJ_q = \{I \in \II_q : |I| > C_1\}$ and $S = \bigcup_q \bigcup_{I\in \JJ_q} S_{q,I}$, where $S_{q,I}$ is the union of the set of corner points of $F_{q,I}$ and the set of endpoints of $I$. Consider the equivalence relation $\sim$ on $S$ where $a\sim b$ means that one can reach $b$ starting from $a$ via a series of ``jumps'' of size $\leq C_2$, while remaining in $S$.

We claim that each equivalence class for $\sim$ has cardinality at most $2d^2$. Indeed, otherwise there exist points $t_0 < \ldots < t_{2d^2}$ in $S$ such that $t_{i+1} - t_i \leq C_2$ for all $i$. By the pigeonhole principle there exists $q = 1,\ldots,d$ such that $\#(\bigcup_{I\in \JJ_q} S_{q,I}\cap [t_0,t_{2d^2}]) \geq 2d+1$. Since $\#(S_{q,I}) \leq \#(Z(q)) + 1 = \min(q,d-q) + 1 \leq d$ for all $I\in \II_q$, applying the pigeonhole principle again shows that there exist at least 3 intervals $I\in \JJ_q$ such that $I \cap [t_0,t_{2d^2}] \neq \emptyset$, and since $\JJ_q \subset \II_q$ these intervals must be disjoint. But then the middle interval is a subset of $[t_0,t_{2d^2}]$, which contradicts $I\in \JJ_q$, since $t_{2d^2} - t_0 \leq 2d^2 C_2 = C_1$.

Now let $\sigma:S\to S$ be the map which sends each equivalence class under $\sim$ to its smallest element, and note that $|\sigma(a) - a| \leq C_1$ for all $a\in S$. Write $\sigma((a,b)) = (\sigma(a),\sigma(b))$ for all $a,b\in S$, and
for each $q = 1,\ldots,d$ let $\w\II_q = \{\sigma(I) : I \in \JJ_q\}$.
For each $q = 1,\ldots,d$ and $I\in \JJ_q$, let $\w F_{q,I}$ be a piecewise linear function which is equal to $F_{q,I}$ at the left endpoint of $I$, such that if $(a,b) \subset I$ is a maximal interval of linearity for $F_{q,I}$ of slope $z$, then $(\sigma(a),\sigma(b))$ is (if nonempty) a maximal interval of linearity for $\w F_{q,I}$ of slope $z$.

Now if $t\in \Rplus \butnot \bigcup(\w\II_q)$, then either $t\in \sigma(I)$ for some interval $I = (a,b)$ (with $a,b\in S$) disjoint from $\bigcup(\JJ_q)$, or $t = \sigma(a)$ for some $a\in S\butnot\bigcup(\JJ_q)$. Thus $|t - a| \leq C_1$ for some $a\notin \bigcup(\JJ_q)$, and thus $|t - b| \leq 2C_1$ for some $b\notin \bigcup(\II_q)$, and thus by the definition of $\II_q$ we have $f_q(t) \asymp_{\plus,C_1} f_q(a) \asymp_\plus f_{q+1}(a) \asymp_{\plus,C_1} f_{q+1}(t)$, i.e. \eqref{fqasymp} holds. Moreover, by letting $(a_i,a_{i+1})$ be the maximal intervals of linearity of $F_{q,I}$ and inducting on $i$, we get that $\w F_{q,\sigma(I)} \asymp_{\plus,C_1} F_{q,I}$ for all $q,I$. Thus, \eqref{FqI} holds with $F$ replaced by $\w F$. Moreover, since $\sigma(S) \subset S \subset \tbigspace \Z$, condition (IV) holds for $\w F$. Also, by translating each $\w F_{q,I}$ by a constant if necessary, we can without loss of generality assume that condition (V) holds for $\w F$.

If $\ff$ is a template, then to demonstrate \eqref{FqIgeq}, suppose $\sigma((a,b))\in \w\II_q$; by the convexity condition, $F_q'$ is increasing on $\bigcup(\II_q)$, and so since $\sigma(b) \leq b$, on $\sigma((a,b))$ we have $F_q' \leq F_q'\given (a,b) = \w F_{q,\sigma((a,b))}'$.

Finally, we need to show that \eqref{explicitlipschitz} holds for $\w F$.  Indeed, let $(\sigma(a),\sigma(b)) \subset I_1\cap I_2$ be a maximal interval of linearity for $\w F_{q,I_2} - \w F_{q,I_1}$. Since $\sigma(a) < \sigma(b)$, we have $b - a \geq C_2$. Let $k = q_2 - q_1$. Then by condition (II) we have
\[
-\frac k\qdim \leq \sum_{i=q_1+1}^{q_2} \frac{f_i(b) - f_i(a)}{b - a} \leq \frac k\pdim
\]
and thus if $z$ is the constant value of $F_{q_2,I_2}' - F_{q_1,I_1}'$ on $(a,b)$, then
\[
-\frac k\qdim - \frac{4C_3}{C_2} \leq z = \frac{F_{q_2,I_2}(b) - F_{q_2,I_2}(a) - F_{q_1,I_1}(b) + F_{q_1,I_1}(a)}{b - a} \leq \frac k\pdim + \frac{4C_3}{C_2}
\]
where $C_3$ is the implied constant of \eqref{FqI}. On the other hand, we have $z \in \frac1\dimprod\Z$ by condition (III). So if we choose $C_2 > 4\dimprod C_3$, then we get $-k/\qdim \leq z \leq k/\pdim$, completing the proof of the claim.
\end{subproof}

Next, for each $q = 1,\ldots,d$ let $F_{q,\ast}:\Rplus\to \R\cup \{\ast\}$ be defined by the formula
\[
F_{q,\ast}(t) = \begin{cases}
\w F_{q,I}(t) & \text{ if } t\in I \text{ for some } I\in \w\II_q\\
\ast & \text{otherwise},
\end{cases}
\]
and let 
\begin{equation}
\label{Gqstar}
G_{q,\ast}(t) = F_{q,\ast}(t) + q(d-q)C_4, 
\end{equation}
where $C_4\in d^{4d}\tstar\N$ is large to be determined.
Let $G_{0,\ast}(t) = 0  \in (d^{4(d-0)} + d^{4d}\Z)\tstar$ for all $t$.
Then since we assumed that $F_{d,I_*} \equiv \tstar$, where $I_* = \Rplus$, it follows that $G_{d,\ast}(t) = \tstar$ for all $t \in I_*$. Here and henceforth we let $\ast + x = \ast + \ast = \ast$ for any $x \in \R$.

At this point, the intuitive idea is to try to define the template $\gg$ by solving the equations
\begin{equation}
\label{intuitivegdef}
G_{q,\ast}(t) = \begin{cases}
\sum_{i = 1}^q g_i(t) & \text{ if } g_q(t) < g_{q+1}(t)\\
\ast & \text{ if } g_q(t) = g_{q+1}(t).
\end{cases}
\end{equation}
However, the formula \eqref{intuitivegdef} is not necessarily solvable with respect to $\gg$, due to the fact that the natural candidate for a solution does not necessarily satisfy $g_1(t) \leq \cdots \leq g_d(t)$. To address this issue, we introduce the concept of the convex hull function of a set:

\begin{definition}
\label{definitionconvexhull}
The \emph{convex hull function} of a set $\Gamma\subset \R^2$ is the largest convex function $h:I\to\R$ such that $h(x) \leq y$ for all $(x,y)\in \Gamma$, where $I$ is the smallest interval containing the projection of $\Gamma$ onto the first coordinate.
\end{definition}

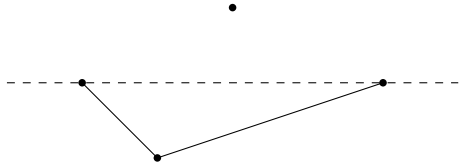
\begin{figure}[h!]
\begin{tikzpicture}
\clip(-1,-1.3) rectangle (5,1.3);
\draw[dashed] (-1,0) -- (5,0);
\draw (0,0) -- (1,-1);
\draw (1,-1) -- (4,0);
\fill (0,0) circle (0.05);
\fill (1,-1) circle (0.05);
\fill (2,1) circle (0.05);
\fill (4,0) circle (0.05);
\end{tikzpicture}
\caption{The convex hull function $h$ of the set $\{(0,0),(1,-1),(2,1),(4,0)\}$. Since $1 > h(2) = -2/3$, the convex hull function does not change when the point $(2,1)$ is removed.}
\end{figure}

We can now define $\gg$ via the formula
\[
g_q(t) = h_t(q) - h_t(q-1),
\]
where $h_t:[0,d]\to\R$ is the convex hull function of the set
\[
\Gamma(t) = \{(q,G_{q,\ast}(t)) : q = 0,\ldots,d,\; G_{q,\ast}(t) \neq \ast\}.
\]
To complete the proof of Lemma \ref{lemmafindtemplate}, we must show
\begin{enumerate}[\bf (A)]
\item that $\gg = (g_1,\ldots,g_d)$ is a simple $\tspace$-integral template,
\item that $\gg \asymp_\plus \ff$,
\item that if $\ff$ is a template, then for all $q,t,t'$ such that $g_q(t) < g_{q+1}(t)$ and $|t'-t|\leq \tspace$, we have $f_{q+1}(t') - f_q(t') \geq \tspace$ and $G_q'(t) \geq F_q'(t')$, and consequently \eqref{reductionimprovement} holds.
\end{enumerate}


{\bf Proof of (A)}: We start with showing that $\gg$ is continuous. From this it is easy to see that it is piecewise linear, the first step to proving that it is a template. Fix $t > 0$, and write $h(t^\pm) = \lim_{s\to t^\pm} h(s)$. (The limit exists since $h$ is linear on intervals of the form $(a,s)$ and $(s,b)$.sa) Let
\[
\Gamma(t^\pm) = \lim_{s\to t^\pm} \Gamma(s) = \{(q,G_{q,\ast}(t^\pm)) : q = 0,\ldots,d, \; G_{q,\ast}(t^\pm) \neq \ast\}.
\]
We need to show that $\Gamma(t^-)$ and $\Gamma(t^+)$ have the same convex hull function. For this purpose, it suffices to show that any point in one of these sets but not the other is not an element of the graph of the corresponding convex hull function (which implies that the convex hull function does not change when the point is removed).

Indeed, fix $q = 1,\ldots,d-1$ and suppose that $G_{q,\ast}(t^+) \neq \ast$ but $G_{q,\ast}(t^-) = \ast$. Let $0\leq p < q$ and $d\geq r > q$ be maximal and minimal, respectively, such that $G_{p,\ast}(t^+),G_{r,\ast}(t^+) \neq \ast$. Then by \eqref{fqasymp} we have
\[
f_{p+1}(t) \asymp_\plus \ldots \asymp_\plus f_q(t) \asymp_\plus f_{q+1}(t) \asymp_\plus \ldots \asymp_\plus f_r(t)
\]
and thus by \eqref{FqI},
\[
\frac{F_{q,\ast}(t^+) - F_{p,\ast}(t^+)}{q - p} \asymp_\plus f_q(t) \asymp_\plus f_{q+1}(t) \asymp_\plus \frac{F_{r,\ast}(t^+) - F_{q,\ast}(t^+)}{r - q}\cdot
\]
Then by \eqref{Gqstar} it follows that
\begin{align*}
\frac{G_{r,\ast}(t^+) - G_{q,\ast}(t^+)}{r - q} - \frac{G_{q,\ast}(t^+) - G_{p,\ast}(t^+)}{q - p}
&\asymp_\plus \left[\frac{r(d-r) - q(d-q)}{r-q} - \frac{q(d-q) - p(d-p)}{q-p}\right] C_4\\
&=_\pt -(r-p)C_4 \leq -2C_4.
\end{align*}
So if $C_4$ is sufficiently large, then
\[
\frac{G_{r,\ast}(t^+) - G_{q,\ast}(t^+)}{r - q} < \frac{G_{q,\ast}(t^+) - G_{p,\ast}(t^+)}{q - p}
\]
i.e. the slope of the line from $(q,G_{q,\ast}(t^+))$ to $(r,G_{r,\ast}(t^+))$ is less than the slope of the line from $(p,G_{p,\ast}(t^+))$ to $(q,G_{q,\ast}(t^+))$. It follows that $(q,G_{q,\ast}(t^+))$ lies above the graph of the convex hull function of $\Gamma(t^+)$. Since $q$ was arbitrary, this shows that $\Gamma(t^-)$ and $\Gamma(t^+)$ have the same convex hull function. Thus $\gg(t^-) = \gg(t^+)$, and $\gg$ is continuous at $t$.

We next demonstrate that $\gg$ satisfies conditions (I)-(III) of Definition \ref{definitiontemplate}. (I) follows from the fact that convex hull functions are convex, while (II) follows from \eqref{explicitlipschitz} in Claim \ref{claimexplicitlipschitz}. To demonstrate (III), fix $q = 1,\ldots,d$ and let $I$ be an interval of linearity for $\gg$ such that $g_q < g_{q+1}$ on $I$. Fix $t\in I$. Since $h_t(q) - h_t(q-1) < h_t(q+1) - h_t(q)$ (with the convention $h_t(d+1) = +\infty$), the point $(q,h_t(q))$ is an extreme point of the convex hull of $\Gamma(t)$ and thus $(q,h_t(q)) \in \Gamma(t)$, i.e. $G_{q,\ast}(t) = h_t(q)$. It follows that
\begin{equation}
\label{GqastGq}
\sum_{i = 1}^q g_i = G_{q,\ast} \text{ on } I.
\end{equation}
Since $G_{q,\ast}\given I$ is convex and piecewise linear with slopes in $Z(q)$, it follows that the same is true for $\sum_1^q g_i \given I$. Thus, $\gg$ is a template.

To show that $\gg$ is simple and $\tspace$-integral, we first observe that by condition (IV) all transfers occur at integer multiples of $\tbigspace$. Let $t$ be a split or a merge with corresponding index $q$. Then $(q,G_{q,\ast}(s))$ is an extreme point of the convex hull of $\Gamma(s)$ when $s$ approaches $t$ from one side, but not from the other side. So there exist $0\leq p < q < r\leq d$ such that the point $(q,G_{q,\ast}(t))$ lies on the line segment connecting $(p,G_{p,\ast}(t))$ and $(r,G_{r,\ast}(t))$. Thus, we have $\Phi(t) = 0$ where
\[
\Phi(s) = (r-q)G_{p,\ast}(s) + (q-p)G_{r,\ast}(s) - (r-p)G_{q,\ast}(s).
\]
Write $t = t' + t''$ where $t'$ is a multiple of $\tbigspace$ and $0 \leq t'' < \tbigspace$. Then by assumption $G_j(t') \in (d^{4(d-j)} + d^{4d}\Z)\tstar$ for all $j$. Thus $\frac1\tstar \Phi(t') \in \Z$, and furthermore
\begin{align*}
\tfrac{1}{\tstar} \Phi(t')
&\equiv (r-q)d^{4(d-p)} + (q-p)d^{4(d-r)} - (r-p)d^{4(d-q)}\\ 
&\equiv (q-p) d^{4(d-r)}\\ 
& \not\equiv 0 \note{modulo $d^{4(d-q)}$}.
\end{align*}
In particular $\Phi(t') \neq 0 = \Phi(t)$, so $t'' > 0$ and thus $t$ is not a transfer. Thus, the set of splits and the set of merges are both disjoint from the set of transfers.

Since $G_{p,\ast},G_{q,\ast},G_{r,\ast}$ are linear on $[t',t'+\tbigspace]$, so is $\Phi$. Let $z$ denote the constant value of $\Phi'$ on $[t',t'+\tbigspace]$, and note that
\begin{align*}
0\neq z &= \left(\tfrac1\pdim+\tfrac1\qdim\right) \Big[ (r-q)\dir_+(p) + (q-p)\dir_+(r) - (r-p)\dir_+(q)\Big]\\
&\in \left(\tfrac1\pdim+\tfrac1\qdim\right)\{-(r-p)q,\ldots,(r-q)p+(q-p)r\} \subset \left(\tfrac1\pdim+\tfrac1\qdim\right) \{-d^2,\ldots,d^2\}.
\end{align*}
Thus
\begin{align*}
t'' &= -\frac{\Phi(t')}{z} \in \frac{\tstar \Z}{\big(\tfrac1\pdim + \tfrac1\qdim\big)(d^2)!} \subset \frac{\tstar \Z}{d\cdot(d^2)!},
\end{align*}
so since $\tstar = d\cdot(d^2)!\tspace$ we have $t'' \in \Z\tspace$. Since transfers also occur at integer multiples of $\tspace$, this implies that condition (I) of Definition \ref{definitionintegral} is satisfied. To check condition (II), note that we have $G_{q,\ast}(t) \in \tstar\Z$ whenever $t\in \tbigspace\N$, and thus since $G_{q,\ast}$ has slopes in $Z(q) \subset \frac1{mn}\Z$, we have $G_{q,\ast}(t) \in \frac{\tspace}{mn} \Z$ whenever $t\in \tspace\N$. Now for each $q = 1,\ldots,d$ and $t\in \tspace\N$, there exist $p < q \leq r$ such that
\[
g_q(t) = \frac{G_{r,\ast}(t) - G_{p,\ast}(t)}{r - p} \in \frac{\tspace}{mnd!} \Z.
\]
Thus $\gg$ is $\tspace$-integral.

Next, since $t'' > 0$, it follows that $\Phi$ is linear in a neighborhood of $t$, and thus there exist points near $t$ for which $\Phi$ is strictly negative. At these points, we have $g_q = g_{q+1}$. It follows that $t$ is not both a split and a merge with respect to the same index $q$.

By contradiction, suppose that $t$ is both a split and a merge, with corresponding indices $q_1 \neq q_2$. We can apply the above argument twice: for each $i = 1,2$ we get indices $0 \leq p_i < q_i < r_i \leq d$, a function $\Phi_i$, and a slope $z_i$. We have
\[
-\frac{\Phi_1(t')}{z_1} = t'' = -\frac{\Phi_2(t')}{z_2}
\]
and thus
\[
\frac{\Phi_1(t')}{\tstar} \cdot \frac{z_2}{\tfrac1\pdim + \tfrac1\qdim} = \frac{\Phi_2(t')}{\tstar} \cdot \frac{z_1}{\tfrac1\pdim + \tfrac1\qdim}\cdot
\]
So there exist $a_1,a_2\in \{-d^2,\ldots,d^2\}\butnot\{0\}$ such that
\begin{align*}
&a_1[(r_1-q_1)d^{4(d-p_1)} + (q_1-p_1)d^{4(d-r_1)} - (r_1-p_1)d^{4(d-q_1)}]\\\equiv\; &a_2[(r_2-q_2)d^{4(d-p_2)} + (q_2-p_2)d^{4(d-r_2)} - (r_2-p_2)d^{4(d-q_2)}] \note{modulo $d^{4d}$}.
\end{align*}
Comparing the base $d^4$ expansions of both sides shows that $(p_1,q_1,r_1) = (p_2,q_2,r_2)$, contradicting that $q_1 \neq q_2$. Thus, the set of splits and the set of merges are disjoint. This completes the proof of (A), viz. that $\gg = (g_1,\ldots,g_d)$ is a simple $\tspace$-integral template.

{\bf Proof of (B)}: We next show that $\gg \asymp_\plus \ff$. Indeed, fix $t\geq 0$. Let $h_1$ and $h_2$ be the convex hull functions of $\Gamma(t)$ and $\{(q,F_q(t)) : G_{q,\ast}(t) \neq \ast\}$, respectively. Since $G_{q,\ast}(t) \asymp_\plus F_q(t)$ for all $q$ such that $G_{q,\ast}(t) \neq \ast$, we have $h_1 \asymp_\plus h_2$. Since $f_1(t) \leq \cdots \leq f_d(t)$, the map $q \mapsto F_q(t)$ is convex and thus $h_2(q) \geq F_q(t)$. On the other hand, by \eqref{fqasymp}, the map $q\mapsto F_q(t)$ is approximately linear on segments $[p,r]$ where $q\in (p,r)$ implies $G_{q,\ast}(t) = \ast$, and thus $h_2(q) \lesssim_\plus F_q(t)$. Combining, we get $h_1(q) \asymp_\plus F_q(t)$. But then $g_q(t) = h_1(q) - h_1(q-1) \asymp_\plus F_q(t) - F_{q-1}(t) = f_q(t)$ for all $q$, i.e. $\gg(t) \asymp_\plus \ff(t)$, and we are done with the proof of (B).

{\bf Proof of (C)}: Next, suppose that $\ff$ is a template, and fix $q,t,t'$ such that $g_q(t) < g_{q+1}(t)$ and $|t' - t| \leq \tspace$. We will show that $f_{q+1}(t') - f_q(t') \geq \tspace$ and $G_q'(t) \geq F_q'(t')$. Indeed, let $p < q < r$ be maximal and minimal, respectively, such that $G_{p,\ast}(t),G_{r,\ast}(t) \neq \ast$. Then by \eqref{GqastGq} and the definitions of $G_{q,\ast},F_{q,\ast}$ we have
\[
G_q(t) = G_{q,\ast}(t) = F_{q,\ast}(t) + q(d-q) C_4 = F_q(t) + q(d-q) C_4,
\]
and similarly for $G_p(t)$, $G_r(t)$. Then
\begin{align*}
f_{q+1}(t) - f_q(t) &\asymp_\plus \frac1{r-q}(F_r(t) - F_q(t)) - \frac1{q-p}(F_q(t) - F_p(t)) \by{\eqref{fqasymp}}\\
&=_\pt \frac1{r-q}(G_r(t) - r(d-r) C_4 - G_q(t) + q(d-q) C_4) \noreason\\
&\;\;\;\; - \frac1{q-p}(G_q(t) - q(d-q) C_4 - G_p(t) + p(d-p) C_4)\noreason\\
&\geq_\pt (r+q-d)C_4 - (q+p-d)C_4 \by{convexity of $q\mapsto G_q(t)$}\\
&=_\pt (r-p)C_4 \geq 2C_4.
\end{align*}
It follows that $f_{q+1}(t') - f_q(t') \geq 2 C_4 - 2\tspace$. Choosing $C_4 \geq 2\tspace$, we get $f_{q+1}(t') - f_q(t') \geq  \tspace$. On the other hand, since $F_{q,\ast}' = G_{q,\ast}' = G_q'$ near $t$,  by \eqref{FqIgeq} we have $G_q'(t) \geq F_q'(t)$.

Finally, to demonstrate \eqref{reductionimprovement}, let $I$ be an interval on which both $\ff$ and $\gg$ are linear. For all $q$ such that $g_q < g_{q+1}$ on $I$, the previous argument gives $G_q' \geq F_q'$ on $I$, and thus $\dir_+(\gg,I,q) \geq \dir_+(\ff,I,q)$ (the right-hand side being well-defined since $f_q < f_{q+1}$ on $I$). It follows that
\begin{equation}
\label{S+gf}
\#\big(S_+(\gg,I)\cap \OC 0q_\Z\big) \geq \#\big(S_+(\ff,I)\cap \OC 0q_\Z\big)
\end{equation}
for all $q$ such that $g_q < g_{q+1}$ on $I$ (cf. Definition \ref{definitiondimtemplate}). Combining with \eqref{Splusdef1} shows that \eqref{S+gf} holds for all $q = 1,\ldots,d$, and thus since
\[
\delta(\ff,I) = \sum_{q = 1}^{d-1} \#\big(S_+(\ff,I)\cap \OC 0q_\Z\big) - \binom m2,
\]
we have $\delta(\gg,I) \geq \delta(\ff,I)$. Since $I$ was arbitrary, we get \eqref{reductionimprovement}. This concludes the proof of (C).

Having proved (A), (B), and (C), we have completed the proof of Lemma \ref{lemmafindtemplate}.
\end{proof}

\begin{lemma}
\label{lemmaapproximatetemplate}
If $\Lambda$ is a unimodular lattice in $\R^d$, then the successive minima function $\hh = (h_1,\ldots,h_d)$, where
\[
h_i(t) = \log\lambda_i(g_t \Lambda),
\]
satisfies conditions $\text{(I)-(III)}_{\ff=\hh}$ of Lemma \ref{lemmafindtemplate}, meaning that it can be approximated by a template.
\end{lemma}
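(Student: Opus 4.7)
The plan is to verify the three conditions $(\text{I})$-$(\text{III})_{\ff = \hh}$ in turn, with the first two being essentially immediate and the last being the main content.

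Condition $(\text{I})$ is just the definition of successive minima: $\lambda_1(g_t\Lambda)\leq\cdots\leq\lambda_d(g_t\Lambda)$ for every $t$. For condition $(\text{II})$, I would use that for any $\rr\in\R^d$ and any $s\in\R$ we have $\|g_s\rr\| \leq \max(e^{s/m},e^{-s/n})\|\rr\|$. Applying this to a set of vectors realizing the first $i$ minima of $g_{t_1}\Lambda$ yields $\lambda_i(g_{t_2}\Lambda)\leq e^{(t_2-t_1)/m}\lambda_i(g_{t_1}\Lambda)$ for $t_2>t_1$, and a symmetric argument yields $\lambda_i(g_{t_2}\Lambda)\geq e^{-(t_2-t_1)/n}\lambda_i(g_{t_1}\Lambda)$; taking logarithms gives $(\text{II})$.

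For $(\text{III})$, fix $q\in\{1,\ldots,d\}$ and an interval $I$ on which $h_q<h_{q+1}$. The first key step is to show that the subspace
\[
V_q(t) \df \lb \rr\in\Lambda : \|g_t\rr\|\leq\lambda_q(g_t\Lambda)\rb
\]
is constant on $I$. Indeed, for $t_0 \in I$, the set of $\rr\in\Lambda$ with $\|g_{t_0}\rr\|<2\lambda_q(g_{t_0}\Lambda)$ is finite by discreteness of $\Lambda$, and by continuity of the minima together with the strict inequality $h_q(t_0)<h_{q+1}(t_0)$, the set of vectors with $\|g_t\rr\|\leq\lambda_q(g_t\Lambda)$ among this finite list is unchanged for $t$ in a neighbourhood of $t_0$. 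Hence $V_q:=V_q(t)$ is locally constant on $I$, and thus constant. Minkowski's second theorem (Theorem \ref{mink2}) applied to the lattice $\Lambda\cap V_q$ inside $V_q$, combined with the fact that $\lambda_i(g_t(\Lambda\cap V_q)) = \lambda_i(g_t\Lambda)$ for $i=1,\ldots,q$, yields
\[
H_q(t) = \sum_{i=1}^q h_i(t) \asymp_\plus \log\|g_t V_q\| \text{ on } I.
\]

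The final step is to realize $\log\|g_t V_q\|$ as, up to an absolute $O(1)$ error, a convex piecewise linear function with slopes in $Z(q)$. Fix a $\Z$-basis $\rr_1,\ldots,\rr_q$ of $\Lambda\cap V_q$ and expand
\[
g_t\rr_1\wedge\cdots\wedge g_t\rr_q = \sum_{J} c_J\,e^{\sigma_J t}\,\ee_J
\]
in the standard Pl\"ucker basis of $\bigwedge^q\R^d$ indexed by $q$-subsets $J\subset\{1,\ldots,d\}$, where the $c_J$ are the (time-independent) Pl\"ucker coordinates of $\rr_1\wedge\cdots\wedge\rr_q$. Because $g_t$ acts by $e^{t/m}$ on the first $m$ coordinates and by $e^{-t/n}$ on the last $n$, we have $\sigma_J = \frac{k_+}{m} - \frac{k_-}{n}$ with $k_\pm \df \#(J\cap\{1,\dots,d_\pm\})$ satisfying $k_++k_-=q$; thus each $\sigma_J$ lies in $Z(q)$. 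The norm of a sum of orthogonal terms satisfies $\max_J|c_J|e^{\sigma_J t} \leq \|g_tV_q\| \leq \binom{d}{q}^{1/2}\max_J|c_J|e^{\sigma_J t}$, so upon setting
\[
H_{q,I}(t) \df \max_{J:c_J\neq 0}\bigl(\sigma_J t + \log|c_J|\bigr),
\]
we obtain $\log\|g_t V_q\|\asymp_\plus H_{q,I}(t)$, and $H_{q,I}$ is convex, piecewise linear, with slopes in $Z(q)$, as required. The only point requiring genuine care is the constancy of $V_q(t)$ on $I$; the rest is bookkeeping with wedge products and Minkowski's second theorem.
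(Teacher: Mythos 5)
Your proposal is correct and follows essentially the same route as the paper: constancy of $V_q(t)$ on intervals where $h_q<h_{q+1}$, Minkowski's second theorem to replace $H_q$ by $\log\|g_tV_q\|$, and the Pl\"ucker-coordinate expansion of the wedge product showing this is within $O(1)$ of a maximum of affine functions with slopes in $Z(q)$, hence convex and piecewise linear. You merely fill in the details for condition (II) and for the local constancy of $V_q(t)$ that the paper leaves to the reader, so there is nothing substantively different to flag.
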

\begin{proof}
Condition (I) is immediate from the definition, while condition (II) follows from some simple calculations which we leave to the reader. To demonstrate property (III), fix $j = 1,\ldots,d-1$ and an interval $[T_1,T_2]$ such that $h_{j+1}(t) > h_j(t)$ for all $t\in [T_1,T_2]$. For each $t\in [T_1,T_2]$ let\footnote{I.e. $V_j(t)$ is the smallest subspace containing $\{ \rr\in \Lambda : \|g_t \rr\| \leq e^{h_j(t)}\}$. See Convention \ref{spanconvention}.}
\[
V_j(t) = \lb \rr\in \Lambda : \|g_t \rr\| \leq e^{h_j(t)} \rb = \lb \rr\in \Lambda : \|g_t \rr\| < e^{h_{j + 1}(t)} \rb.
\]
The assumption on $[T_1,T_2]$ guarantees that the map $t\mapsto V_j(t)$ is continuous on this interval, and since this map takes only rational values, it is therefore constant. So $V_j(t)$ is independent of $t$. By Minkowski's second theorem (Theorem \ref{mink2}), for all $t\in [T_1,T_2]$ we have
\[
\prod_{i = 1}^j \lambda_i(g_t \Lambda) \asymp \|g_t V_j(t)\| = \| g_t V_j \|,
\]
where we use $\| \cdot \|$ to denote covolume, see Notation \ref{Lambdarational}.
To continue further, we use the exterior product formula for covolume (Proposition \ref{propositionexteriorproduct}):
\[
\|g_t V_j\| = \|g_t v_1\wedge\cdots\wedge g_t v_j\|
\]
where $v_1,\ldots,v_j$ is a basis of $V_j\cap \Lambda$. The expression on the right-hand side is a member of the space $\bigwedge^j \R^d$, which has a basis of the form $\{e_S : S\subset\{1\ldots,d\},\#(S) = j\}$. Thus,
\[
\|g_t V_j\| \asymp \max_{\#(S) = j} \lb g_t v_1\wedge\cdots\wedge g_t v_j,e_S\rb = \max_{\#(S) = j} \| \pi_S g_t V_j \|,
\]
where $\pi_S$ denotes the coordinate projection from $\R^d$ to $\R^S$. The logarithm of the right-hand side is the maximum of linear maps whose slopes are in the set $Z(j)$. Thus, the function
\[
F_{j,[T_1,T_2]}(t) \df \max_{\#(S) = j} \log \| \pi_S g_t V_j\|
\]
satisfies the appropriate conditions, cf. \eqref{FqI}.
\end{proof}

\subsection{Mini-strategy}
\label{subsectionministrategy}

Suppose that Alice and Bob have played the first $k$ turns of the modified Hausdorff/packing game (from Section \ref{sectiongamesgeometry}), and that Alice wants to play so as to guarantee that the successive minima function of the outcome will be close to a given template $\gg$ for some short period of time starting at $k\tbeta$. Recall from Notation \ref{gamma} that $\tbeta = -\alpha\log(\beta)$, where $\alpha = \frac{\dimprod}{\dimsum}$ and $0<\beta<1$ is the parameter in the modified Hausdorff/packing game. 
Whether or not she can do this depends both on the template $\gg$ and on the lattice $\Lambda_k$ given by \eqref{Lambdakdef}. Intuitively, we expect that she can do it if $\hh(\Lambda_k)$ is close to $\gg(k\tbeta)$, and $\Lambda_k$ is ``positioned in a way so as to allow Alice to continue this correspondence for larger values of $k$''. If the lattice $\Lambda_k$ is positioned appropriately, we will call it a \emph{$C$-match} for $\gg$ at time $k\tbeta$. We give the formal definition as follows:

\begin{definition}
\label{definitionCmatch}
Let $\gg$ be a $\tbeta$-integral partial template, and fix $C > 0$. A lattice $\Lambda\subset\R^d$ is a \emph{$C$-match} for $\gg$ at time $t\in \tbeta \N$ if
\begin{itemize}
\item[(I)] We have
\begin{equation}
\label{Cmatch}
\|\Mink(\Lambda) - \gg(t)\| < C.
\end{equation}
\item[(II)] There is a family of nested $\Lambda$-rational subspaces $(V_q)_{q\in Q(t)}$, where 
\[
Q(t) \df \{q:g_q(t) < g_{q+1}(t)\},
\] 
such that for all $q\in Q(t)$, we have $\dim(V_q) = q$,
\begin{equation}
\label{lambdaVq}
\Big| \log\lambda_i(\Lambda \cap V_q) - h_i(\Lambda) \Big| \leq C \text{ for all } 1 \leq i \leq q,
\end{equation}
and
\begin{equation}
\label{VqL}
\dim(V_q\cap \vert) \geq \dir_-(\gg,I,q),
\end{equation}
where $I$ is an interval of linearity for $\gg$ whose left endpoint is $t$.
\end{itemize}
\end{definition}

Fix $C_1>0$.\Footnote{Note that the constants $C_1, C_2$ appearing in this section and the next two are independent of those with the same names in the previous subsection (in the proof of Lemma \ref{lemmafindtemplate}).} We now show that if $\Lambda_{k_1}$ is a $C_1$-match for $\gg$ at time $t_1 = k_1 \tbeta$, then it is possible for Alice to follow $\gg$ for any fixed number of intervals of linearity to within an additive constant depending on $C_1$:
\begin{lemma}
\label{lemmaministrategy}
Fix $k_1,k_2\in \N$ with $k_2 > k_1$ and let $t_i = k_i \tbeta$. Let $\gg:\CO{t_1}\infty\to\R^d$ be a $\tbeta$-integral partial template, and let $\niol$ be the number of maximal intervals of linearity of the function $\gg\given (t_1,t_2)$. Suppose that on the $k_1$th turn of the modified Hausdorff/packing game, $\Lambda_{k_1}$ is a $C_1$-match for $\gg$ at time $t_1$. Then Alice has a strategy for turns $k_1,\ldots,k_2-1$ of the modified Hausdorff/packing game guaranteeing the following:
\begin{itemize}
\item[(i)] For all $k \in [k_1,k_2]_\Z$,
\begin{equation}
\label{C2def}
\Mink(\Lambda_k) \asymp_{\plus,C_1,N,\beta} \gg(k\tbeta).
\end{equation}
\item[(ii)] The final lattice $\Lambda_{k_2}$ is a $C_2$-match for $\gg$ at time $t_2$, where $C_2$ is a constant depending only on $C_1$, $\niol$ and $\beta$.
\item[(iii)] We have
\[
\Delta(\AA,[k_1,k_2]) \df \frac{1}{k_2 - k_1} \sum_{k = k_1}^{k_2 -1} \frac{\log\#(A_k)}{-\log(\beta)} = \delta(\gg,[t_1,t_2]) + O\left(\tfrac1{\tbeta} + \tfrac1{k_2 - k_1}\right),
\]
where the implied constant may depend on $C_1$ and $\niol$ but does not depend on $\beta$.
\end{itemize}
\end{lemma}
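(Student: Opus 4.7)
The plan is to build Alice's strategy inductively on $k\in[k_1,k_2]$ while maintaining the match property with a constant that grows in a controlled way. The fundamental geometric input is Lemma~\ref{lemmapushoffvert}: whenever a $\Lambda$-rational subspace $V$ has $\dim(V\cap\LL)\geq \dir_-$, the logarithmic covolume of $g_t V$ increases per turn by precisely $(\tfrac{\dir_+}{\pdim}-\tfrac{\dir_-}{\qdim})t$ up to an additive constant. Since one turn of the game applies $g=g_\tbeta$ composed with $u_{X_k}$ (where $\|X_k\|_\infty\leq 1$), the logarithmic covolume of each flag subspace $V_q$ in a match grows by approximately $(\tfrac{\dir_+(\gg,I,q)}{\pdim}-\tfrac{\dir_-(\gg,I,q)}{\qdim})\tbeta = G_q(t+\tbeta)-G_q(t)$, which is exactly the evolution prescribed by~$\gg$. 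Combined with Minkowski's second theorem (Theorem~\ref{mink2}) and Lemmas~\ref{lemmaminkowskibasis}--\ref{lemmaminkowskibasis2} for recovering successive minima from covolumes, this propagates the match condition~\eqref{lambdaVq} across a single turn with additive penalty $O_\beta(1)$.

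Inside a single maximal interval of linearity $I\subset[t_1,t_2]$ of $\gg$, Alice keeps the same flag $(V_q)_{q\in Q(t)}$ throughout (up to the $gu_{X_k}$ action) and plays, on each turn $k$ with $t=k\tbeta\in I$, a $3\beta$-separated set $A_k\subset B(\0,1-\beta)$ of cardinality
\[
\#(A_k)=\beta^{-\delta(\gg,t)}+O(1),
\]
constructed as follows. The pair $(S_+(\gg,I),S_-(\gg,I))$ singles out $\delta(\gg,I)$ coordinate pairs $(i_+,i_-)\in S_+\times S_-$ with $i_+<i_-$; these correspond to the directions in matrix space $\MM\simeq\R^{\dimprod}$ along which Alice has freedom without breaking the flag's position in $\LL$. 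Alice places the points of $A_k$ as a $\Theta(\beta)$-dense, $3\beta$-separated grid inside the $\delta(\gg,t)$-dimensional affine subspace picked out by these coordinates, leaving the remaining $\dimprod-\delta(\gg,t)$ coordinates fixed at values dictated by the flag. Regardless of Bob's choice $X_k\in A_k$, each subspace $V_q$ remains $\Lambda_{k+1}$-rational (since $u_{X_k}$ acts trivially on the quotient $\R^d/V_q$ in the chosen coordinates), and~\eqref{VqL} is preserved since $\dir_\pm$ are constant on $I$. Telescoping the per-turn $O_\beta(1)$ errors over at most $|I|/\tbeta$ turns produces a match constant depending only on the initial constant and on $|I|$.

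The main obstacle is handling the $\niol-1$ interior corner points of $\gg$, where the flag structure must be updated. Because $\gg$ is simple and $\tbeta$-integral (by the reduction Lemma~\ref{lemmafindtemplate}, applied with spacing $\tbeta$), each corner point lies in $\tbeta\N$ and is exclusively a split, a merger, or a transfer. At a split at some index $q$, Alice must introduce a new subspace $V_q$ between existing flag levels; she chooses it as the span of the first $q$ vectors of a Minkowski reduced basis of $\Lambda_k\cap V_r$, where $r>q$ is the next present flag level, and Lemma~\ref{lemmaminkowskibasis2} ensures the covolume bound~\eqref{lambdaVq} holds with an $O(C)$ cost. At a merger the superseded subspace is simply dropped. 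At a transfer, the new value of $\dir_-(\gg,I,q)$ differs from the old; Alice uses her choice of $A_k$ to guarantee that the updated flag $V_q$ has $\dim(V_q\cap\LL)\geq \dir_-(q)$, by selecting on the previous turn an $A_{k-1}$ whose points push $V_q$ into the contracting subspace $\LL$ by the required amount (this is always possible because the quantized slope condition~(III) of Definition~\ref{definitiontemplate} combined with Lemma~\ref{lemmapushoffvert} certifies such a $V_q$ exists in $\Lambda_k$). Each corner event inflates the match constant by a factor of $O_\beta(1)$ depending on the incoming constant, so iterating over the $\niol$ intervals of linearity yields a final constant $C_2=C_2(C_1,\niol,\beta)$, which proves~(i) and~(ii).

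Finally, for the contraction-rate statement~(iii), observe that by $\tbeta$-integrality of $\gg$ the function $\delta(\gg,\cdot)$ is constant on each half-open interval $[k\tbeta,(k+1)\tbeta)$, so
\[
\int_{t_1}^{t_2}\delta(\gg,t)\,\dee t=\tbeta\sum_{k=k_1}^{k_2-1}\delta(\gg,k\tbeta)+O(\niol),
\]
the $O(\niol)$ term accounting for corner points that are potentially shared with turn boundaries. Since $\log\#(A_k)/(-\log\beta)=\delta(\gg,k\tbeta)+O(1/(-\log\beta))$ by the construction of $A_k$, and since at the $O(\niol)$ corner turns Alice may use a different (but still $O(1)$-size variant of the) generic set, dividing by $k_2-k_1$ gives
\[
\Delta(\AA,[k_1,k_2])=\delta(\gg,[t_1,t_2])+O\!\left(\tfrac1\tbeta+\tfrac1{k_2-k_1}\right),
\]
with an implied constant independent of $\beta$ since the per-turn error $O(1/(-\log\beta))$ is absorbed into $O(1/\tbeta)=O(1/(-\log\beta))$. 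This completes the proof plan.
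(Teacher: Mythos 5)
There is a genuine gap, and it is exactly at the point where your proposal says ``Telescoping the per-turn $O_\beta(1)$ errors over at most $|I|/\tbeta$ turns produces a match constant depending only on the initial constant and on $|I|$.'' A constant depending on $|I|$, i.e.\ on $k_2-k_1$, is not acceptable: the lemma requires the additive constants in (i) and (ii) to depend only on $C_1$, $\niol$ and $\beta$, and this uniformity is what the whole bootstrap in \6\ref{subsectionsummary} rests on (there $\kspace$, hence the interval length, is chosen \emph{after} $C_2$ is known, so a $C_2$ growing with the number of turns makes the argument circular). The paper avoids accumulation by applying Lemma \ref{lemmapushoffvert} to the \emph{initial} flag over the whole time interval $[t_1,k\tbeta]$ at once, so the covolume estimate \eqref{Vqk2} holds with an error that is $O_\beta(1)$ uniformly in $k$. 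A second, independent gap: controlling $\log\|V_q^{(k)}\|$ for $q\in Q$ together with Minkowski's second theorem only pins down the partial sums $\sum_{i\le q}\log\lambda_i(\Lambda_k)$ at the separated indices; inside an interval of equality $\OC pq_\Z$ nothing in your argument prevents the individual minima of the quotient lattice $\Lambda_k\cap V_q^{(k)}/V_p^{(k)}$ from drifting apart linearly in $k$ while their product stays correct, which would destroy \eqref{Cmatch} and \eqref{lambdaVq}. The paper needs the separate inductive argument of Claim \ref{claimlocalmatch} (the quantities $\eta_j(k)$, the large constant $K$, and the gap-forces-growth contradiction via Lemma \ref{lemmapushoffvert}) to keep each $\log\lambda_{p+j}$ in a window around $g_{p+j}(k\tbeta)$ uniformly in $k$; your proposal has no counterpart, and it cannot be extracted from determinant bookkeeping plus Lemmas \ref{lemmaminkowskibasis}--\ref{lemmaminkowskibasis2}.

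Two further points would need repair even after the above. First, your admissible moves carry no quantitative stability requirement analogous to \eqref{good2}: when Lemma \ref{lemmapushoffvert} is invoked at time $k$ the relevant unipotent perturbation is the accumulated tail $Z_k=\sum_{i}\beta^{i}X_{k+i}$ of \emph{future} moves, so the intersection condition with $\vert$ must be robust under perturbations of definite size (the paper uses $2\beta^{1/2}$); the same robustness is what makes the adapted basis of Claim \ref{claimorthobasis} almost orthonormal, and without it your ``$3\beta$-separated grid in the $\delta$-dimensional coordinate slice'' is not known to be separated in $\MM$ nor to consist of allowable moves, so the count $\#(A_k)\asymp\beta^{-\delta(\gg,t)}$ needed for (iii) is unjustified (note also that on the very first turn only $\#(A_{k_1})\ge 1$ can be guaranteed). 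Second, at corner points your constructions are not proofs: at a split, the span of an initial segment of a Minkowski reduced basis controls covolume but not $\dim(V_q\cap\vert)$, while \eqref{VqL} demands the latter; and ``selecting $A_{k-1}$ so as to push $V_q$ into $\vert$ by the required amount'' is precisely the content of Claim \ref{claimextendflag}(iv), whose proof requires the recursive choice of lattice vectors near $\vert$ satisfying the angle and norm conditions \eqref{rjdef1}--\eqref{rjdef2}; it does not follow formally from the quantized slope condition.
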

\begin{proof}
By induction, it suffices to prove the lemma in the case where $\niol = 1$, i.e. where $\gg$ is linear on $I = (t_1,t_2)$.

Let $\Lambda \df \Lambda_{k_1}$, and let
\begin{align*}
Q' &\df \{q : g_q(t_1) < g_{q+1}(t_1)\},&
Q &\df \{q : g_q < g_{q+1} \text{ on } I\}.
\end{align*}
Note that using the notation from Definition \ref{definitionCmatch}, we have
\[
Q' = Q(t_1) \subset Q(t_1) \cup Q(t_2) = Q.
\]
In the sequel, for each $q \in Q'$, let $V_q$ be as in Definition \ref{definitionCmatch}, as guaranteed by the fact that $\Lambda$ is a $C_1$-match for $\gg$ at time $t_1$.

\begin{claim}
\label{claimextendflag}
If $\beta$ is sufficiently small, then there exists a family of $\Lambda$-rational subspaces $(V_q)_{q\in Q}$ extending $(V_q)_{q\in Q'}$ with the following properties:
\begin{itemize}
\item[(i)] $\dim(V_q) = q$ for all $q\in Q$.
\item[(ii)] $V_p \subset V_q$ for all $p,q\in Q$ such that $p < q$.
\item[(iii)] $\log\|V_q\| \asymp_\plus \sum_1^q g_i(t_1)$ for all $q\in Q$, where the implied constant may depend on $C_1$.
\item[(iv)] There exists $\bfB\in B_\MM(\0,1 - \beta)$ such that for all $q\in Q$,
\[
\dim(u_\bfB V_q\cap \vert) = \dir_-(q) \df \dir_-(\gg,I,q)
\]
and
\[
\dim(u_{\bfB + \bfC} V_q\cap \vert) \leq \dir_-(q) \text{ for all } \|\bfC\| \leq 2\beta^{1/2}.
\]
\end{itemize}
\end{claim}
\begin{subproof}
Fix $\epsilon > 0$ small and independent of $\beta$, and let $S_\pm = S_\pm(\gg,I)$ as defined in \eqref{Splusdef1} and \eqref{Sminusdef1}. We will define the family $(V_q)_{q\in Q}$ and a sequence of linearly independent lattice vectors $(\rr_i)_{i\in S_-}$ by simultaneous recursion: Fix $j\in S_-$ and suppose that $\rr_i$ has been defined for all $i\in S_-(j) \df \{i\in S_- : i < j\}$. Let $q\in Q$ and $r\in Q'$ be maximal and minimal, respectively, such that $q < j \leq r$. If $V_q$ has not been defined yet, then let $V_q \subset V_r$ be a $\Lambda$-rational subspace of dimension $q$ such that
\begin{align}
\label{Vpdef}
V_p &\subset V_q \;\;\all Q\ni p < q,&
\rr_i &\in V_q \;\;\all S_-\ni i \leq q,
\end{align}
chosen so as to minimize $\|V_q\|$ subject to these restrictions. Then
\begin{align*}
\dim(V_r\cap \vert) &\underset{\eqref{VqL}}{\geq} \dir_-(r) = \#(S_-(r+1)) > \#(S_-(j))
\end{align*}
since $j\in S_-$. Further, we observe that 
\[
\dim(V_r) > \dim\left(V_q + \textstyle\sum_{i\in S_-(j)} \R\rr_i\right)
\]
since 
\begin{align*}
\dim\left(V_q + \textstyle\sum_{i\in S_-(j)} \R\rr_i\right) &\leq q + \#(S_-(j)) - \#(S_-(q+1))\\
&\leq q + j - (q+1)\\
&< j\leq r.
\end{align*}
We claim that it is possible to choose $\rr_j \in \Lambda\cap V_r$ such that \Footnote{In the equations below, $\ang$ denotes the angle between two vectors, or between a vector and a vector subspace.}
\begin{align}
\label{rjdef1}
\ang(\rr_j,\vert) &\leq \epsilon,&
\ang(\rr_j,\R\rr_i) &\geq_\pt \pi/2 - \epsilon \;\;\all i \in S_-(j),\\ 
\label{rjdef2}
\ang\left(\rr_j,V_q+\textstyle\sum_{i\in S_-(j)}\R\rr_i\right) &\geq \epsilon^2,&
\log\|\rr_j\| &\lesssim_{\plus,C_1} g_j(t_1).
\end{align}
Indeed, one produces $\rr_j$ by first choosing a unit vector 
\[
\uu_1 \in V_r \cap \vert \cap \bigcap_{i \in S_-(j)} \rr_i^\perp,
\] 
choosing a second unit vector $\uu_2\in V_r$ so that\Footnote{We use \label{nbhd}$\NN(A,\epsilon)$ to denote the $\epsilon$-neighborhood of a set $A\subset\R^d$.} 
\[
B(\uu_2,\epsilon/3) \subset \R B_\ang(\uu_1,\epsilon)\butnot \NN \left(V_q + \sum_{i\in S_-(j)}\R\rr_i,2\epsilon^2 \right),
\] 
and finally choosing 
\[
\rr_j \in \Lambda\cap V_r \cap B(\tau \uu_2, \tau \epsilon/3),
\] 
where $\tau = C \lambda_r(\Lambda \cap V_r)$ for a constant $C$ (depending on $\epsilon$) large enough to guarantee that $\Lambda\cap V_r$ is a $(\tau \epsilon/3)$-net in $V_r$. 
Now both sides of \eqref{rjdef1} follow since $\rr_j \in  \R B_\ang(\uu_1,\epsilon)$. The left-hand side of \eqref{rjdef2} follows since $\tau^{-1}\rr_j \notin \NN(V_q + \sum_{i\in S_-(j)}\R\rr_i,2\epsilon^2)$ and $\|\tau^{-1}\rr_j\| \geq 1 - \epsilon/3$. The right-hand side of \eqref{rjdef2} follows from the fact that $\log\|\rr_j\| \asymp_\plus \log\tau \asymp_\plus \log\lambda_r(\Lambda \cap V_r) \asymp_{\plus,C_1} g_j(t_1)$ (by \eqref{Cmatch} and \eqref{lambdaVq} and since $g_j(t_1) = g_r(t_1)$ since $\CO jr$ is disjoint from $Q'$). This completes the proof of \eqref{rjdef1}-\eqref{rjdef2}, and thus the construction of $(\rr_i)_1^d$ and $(V_q)_{q\in Q}$.

Note that by construction, the family $(V_q)_{q\in Q}$ satisfies (i) and (ii). To demonstrate (iii), first we observe that it holds for $q\in Q'$ by Minkowski's second theorem (Theorem \ref{mink2}). By induction, suppose that (iii) holds for all $p<q$, where $q\in Q\butnot Q'$, and let $p\in Q$ and $r\in Q'$ be maximal and minimal, respectively, such that $p < q\leq r$. Then by \eqref{rjdef2}, \eqref{Cmatch}, and \eqref{lambdaVq}, we have
\begin{align*}
\log\left\|V_p + \sum_{i\in S_-(p,q)} \R\rr_i\right\| &\leq \log\|V_p\| + \sum_{i\in S_-(p,q)} \log\|\rr_i\|\\
&\lesssim_{\plus,C_1} \sum_{i\leq p} g_i(t_1) + \sum_{i\in S_-(p,q)} g_i(t_1)\\
&=_\pt \sum_{i\leq p + \#(S_-(p,q))} g_i(t_1), \since{$(p,r)\cap Q = \emptyset$}
\end{align*}
where $S_-(p,q) = \{i\in S_- : p < i \leq q\}$. Thus by \eqref{Cmatch}, it is possible to choose a $\Lambda$-rational subspace $V_q\subset V_r$ satisfying \eqref{Vpdef} such that $\log\|V_q\| \lesssim_{\plus,C_1} \sum_{i\leq q} g_i(t_1)$. The reverse inequality follows directly from Minkowski's second theorem (Theorem \ref{mink2}).

To demonstrate (iv), let $\vert' = \sum_{j\in S_-} \R\rr_j$. Then \eqref{rjdef1} implies that $\dist_\GG(\vert,\vert') = O(\epsilon)$, where $\dist_\GG$ denotes distance in the Grassmannian variety of $n$-dimensional subspaces of $\R^d$, which we denote by $\GG = \GG(d,n)$. It follows that if $\epsilon$ is sufficiently small, then there exists $\bfB\in B_\MM(\0,1-\beta)$ such that $u_{-\bfB} \vert = \vert'$. Then for all $q\in Q$, by \eqref{Vpdef} we have
\[
\dim(u_{\bfB} V_q \cap \vert) = \dim(V_q\cap \vert') \geq \#\{i\in S_- : i \leq q\} = \dir_-(q).
\]
Conversely, fix $\|\bfC\| \leq 2\beta^{1/2}$ and $q\in Q$. Let us define
\[
W \df u_\bfC V_q \cap \sum_{S_-\ni i > q} \R\rr_i .
\]
Then  
\[
\dim(u_{\bfB+\bfC}V_q\cap\vert) = \dim(u_\bfC V_q\cap \vert') \leq \dir_-(q) + \dim(W).
\]
Thus if $\dim(W) = 0$, then we are done with proving (iv). So by contradiction, suppose that $\dim(W) > 0$, i.e. that there exists
\[
\0\neq \rr\in W.
\]
Write $\rr = \sum_{S_-\ni i > q} c_i \rr_i$ for some constants $c_i\in\R$. Let $S_-\ni j > q$ be chosen so as to maximize $\theta^{-j} |c_j|\cdot\|\rr_j\|$, where $\theta > 0$ is small. 
Since $\rr \neq \0$, we have $c_i \neq 0$ for some $i$, and thus $\theta^{-j} |c_j| \cdot \|\rr_j\| \geq \theta^{-i} |c_i| \cdot \|\rr_i\| > 0$.
Then
\begin{align*}
\rr_j &= \frac1{c_j}\left(\rr - \sum_{i\neq j} c_i \rr_i\right)
\end{align*}
and thus
\begin{align*}
\frac1{\|\rr_j\|}\dist\left(\rr_j,V_q+\textstyle\sum_{i\in S_-(j)} \R\rr_i\right)
&\leq \frac1{|c_j|\cdot\|\rr_j\|} \left[\|\rr - u_{-\bfC}\rr\| + \sum_{i>j} |c_i|\cdot\|\rr_i\|\right]\\
&\lesssim \frac{\|\bfC\|\cdot\|\rr\|}{|c_j|\cdot\|\rr_j\|} + \sum_{i>j} \theta^{i-j}\\
&\lesssim 2\beta^{1/2} \max_i \frac{|c_i|\cdot\|\rr_i\|}{|c_j|\cdot\|\rr_j\|} + \theta\\
&\lesssim \theta^{1-d} \beta^{1/2} + \theta.
\end{align*}
Letting $\theta \df \beta^{1/(2d)} \leq \epsilon^3$ gives
\[
\ang\left(\rr_j,V_q+\textstyle\sum_{i\in S_-(j)} \R\rr_i\right) \lesssim \epsilon^3,
\]
which contradicts the first half of \eqref{rjdef2} if $\epsilon$ (or equivalently $\beta$) is sufficiently small. This completes the proof of (iv), and thus of Claim \ref{claimextendflag}.
\end{subproof}

Now for the purposes of defining Alice's strategy, fix $k = k_1,\ldots,k_2-1$, and suppose that the game has progressed to turn $k$, so that Bob's matrices $\bfB_{k_1},\ldots,\bfB_{k-1} \in B_\MM(\0,1-\beta)$ have all been defined. For each $q\in Q$ let
\[
V_q^{(k)} \df (g u_{\bfB_{k-1}})\cdots (g u_{\bfB_{k_1}}) V_q,
\]
where $g = g_\tbeta$ and $\tbeta$ are as in Notation \ref{gamma}. 
Recall we defined $\Lambda \df \Lambda_{k_1}$ and so Claim \ref{claimextendflag} implies $V_q$ is $\Lambda_{k_1}$-rational, and thus $V_q^{(k)}$ is $\Lambda_k$-rational.
 


Now let $\OC pq_\Z$ be an interval of equality for $\gg$ on $I$, and consider the quotient lattice 
\[
\Gamma_k \df \Lambda_k\cap V_q^{(k)}/V_p^{(k)}
\] 
(more precisely, $\Gamma_k$ is the image of $\Lambda_k$ under the quotient map $V_q^{(k)} \to V_q^{(k)} / V_p^{(k)}$). Let $(\rr_i^{(k)})_1^{q-p}$ be a basis for $\Gamma_k$ such that
\[
\|\rr_i^{(k)}\| \asymp \lambda_i(\Gamma_k) \text{ for all } 1\leq i \leq q-p.
\]
For each $j = 1,\ldots,q-p$ let
\[
V_{p+j}^{(k)} = V_p^{(k)} + \sum_{i=1}^j \R \rr_i^{(k)}.
\]
Next, a matrix $\bfB$ will be called \emph{good on turn $k$} if for all $j = 1,\ldots,d$ we have
\begin{equation}
\label{good1}
\dim\big(u_\bfB V_j^{(k)}\cap \vert\big) = \dir_-(j) \df \#(S_-\cap [1,j])
\end{equation}
and
\begin{align}
\label{good2}
\dim\big(u_{\bfB+\bfC} V_j^{(k)}\cap \vert\big) \leq \dir_-(j) \text{ for all $\|\bfC\| \leq 2\beta^{1/2}$.}
\end{align}
Note that the notion of being good on turn $k$ is dependent on the choice of subspaces $V_j^{(k)}$ and is therefore not canonical.

Alice's strategy on turn $k$ can now be given as follows:\\
\begin{center}
\fbox{
\begin{minipage}{27em}
Let $A_k$ be a $3\beta$-separated subset (to be specified later) of the set of matrices in $B_\MM(\0,1-\beta)$ that are good on turn $k$.
\end{minipage}
}
\end{center}
Note that by Claim \ref{claimextendflag}(iv), we can always take $A_k \neq \emptyset$.

Now, to prove that Alice's strategy guarantees (i)-(iii) in Lemma \ref{lemmaministrategy}, consider a possible sequence of responses from Bob, i.e. a sequence $(\bfB_k)_{k=k_1}^{k_2-1}$ such that for each $k$, we have $\bfB_k\in A_k$. For each $k = k_1,\ldots,k_2$ let
\[
Z_k \df \sum_{\ell = k_1}^{k - 1} \beta^{\ell - k_1} \bfB_\ell \in B(\0,1),
\]
so that for all $q\in Q$,
\[
V_q^{(k)} = g^{k-k_1} u_{Z_k} V_q
\]
(cf. \eqref{danisemi}). Now fix $q\in Q$, and let $\dir_\pm \df \dir_\pm(\gg,I,q)$. Fix $k = k_1+1,\ldots,k_2$. Since $\bfB_{k-1}$ is good on turn $k-1$, we have
\[
\dim(u_{Z_k} V_q \cap \vert) = \dim(V_q^{(k)}\cap \vert) = \dir_-
\]
and since $\bfB_{k_1}$ is good on turn $k_1$ and $\|Z_k - \bfB_{k_1}\| \leq \frac{\beta}{1-\beta}$ and $\frac{\beta}{1-\beta} + \beta < 2\beta^{1/2}$ once $\beta$ is sufficiently small, we have
\[
\dim\big(u_{Z_k + \bfC} V_q\cap \vert\big) \leq \dir_- \text{ for all }\|\bfC\| \leq \beta.
\]
Now by Lemma \ref{lemmapushoffvert}, these two formulas imply that
\begin{align*}
\log\|V_q^{(k)}\| - \log\|V_q\|
&\asymp_{\plus\phantom{,\beta}} \log\|g^{k-k_1} u_{Z_k} V_q\| - \log\|u_{Z_k} V_q\|\\
&\asymp_{\plus,\beta} \left(\frac{\dir_+}{\pdim} - \frac{\dir_-}{\qdim}\right)(k-k_1)\tbeta\\
&=_{\phantom{\plus,\beta}} \sum_{i=1}^q g_i(k\tbeta) - \sum_{i=1}^q g_i(t_1). \by{\eqref{Lqdef}}
\end{align*}
Combining with condition (iii) of Claim \ref{claimextendflag} shows that
\begin{equation}
\label{Vqk2}
\log\|V_q^{(k)}\| \asymp_{\plus,\beta} \sum_{i=1}^q g_i(k\tbeta).
\end{equation}
Now let $\OC pq_\Z$ be an interval of equality for $\gg$ on $I$, and let 
\[
\Gamma_k = \Gamma_k(p,q) \df \Lambda_k \cap V_q^{(k)}/V_p^{(k)}
\] 
as above.
\begin{claim}
\label{claimlocalmatch}
We have
\[
\log\lambda_j(\Gamma_k) \asymp_{\plus,\beta} g_{p+j}(k\gamma)
\]
for all $j=1,\ldots,q-p$ and $k=k_1,\ldots,k_2$.
\end{claim}
\begin{subproof}
Write
\[
\eta_j(k) \df \log\lambda_j(\Gamma_k) - g_{p+j}(k\tbeta).
\]
By \eqref{Vqk2} and Minkowski's second theorem (Theorem \ref{mink2}), we have
\begin{equation}
\label{Vqk3}
\sum_{i=1}^{q-p} \eta_i(k) \asymp_{\plus,\beta} \log\|\Gamma_k\| - \sum_{i=p+1}^q g_i(k\tbeta) \asymp_\plus 0.
\end{equation}
First suppose that $\diff_- = 0$, where $\diff_\pm = \diff_\pm(\gg,I,p,q)$. Then for all $j,k$ we have
\[
g_{p+j}(k\gamma) - g_{p+j}(k_1\gamma) = \frac{(k-k_1)\gamma}{m} \geq \log\lambda_j(\Gamma_k) - \log\lambda_j(\Gamma_{k_1}),
\]
and \eqref{Vqk3} implies that approximate equality holds. Similar logic works if $\diff_+ = 0$.

So suppose that $\diff_+,\diff_- > 0$. Let $K$ be a large constant (depending on $\beta$). To complete the proof of Claim \ref{claimlocalmatch} we will show that
\begin{equation}
\label{IHschmidt}
-\frac{K}{\diff_+} \leq \eta_j(k) \leq \frac{K}{\diff_-}
\end{equation}
for all $j=1,\ldots,q-p$ and $k=k_1,\ldots,k_2$, by induction on $k$. Indeed, suppose that \eqref{IHschmidt} holds for $k$, and we will prove that it holds for $k' = k + \ell_0$, where $\ell_0$ is a large integer. (We then take $k' = k,k+1,\ldots,k+\ell_0-1$ as the base cases of the induction.) By \eqref{Vqk3}, we have
\[
j \eta_j(k) \geq \sum_{i = 1}^j \eta_i(k) \asymp_{\plus,\beta} -\sum_{i = j+1}^{q-p} \eta_i(k) \geq -\frac{(q-p-j)K}{\diff_-}\cdot
\]
Letting $j = \diff_+ + 1$ shows that
\[
\eta_{\diff_+ + 1}(k) \gtrsim_{\plus,\beta} -\frac{\diff_- - 1}{\diff_-}\frac{K}{\diff_+ + 1} = -\frac{K}{\diff_+} + \alpha K,
\]
where $\alpha > 0$ is a positive constant.

Note that since $g_{p+j}(t) = g_q(t)$ for all $t\in I$ and $j = 1,\ldots,q-p$, we have $\eta_1 \leq \cdots \leq \eta_{q-p}$, so if \eqref{IHschmidt} fails for $k' = k + \ell_0$, then either $\eta_1(k') < -K/\diff_+$ or $\eta_{q-p}(k') > K/\diff_-$. By contradiction suppose that $\eta_1(k') < -K/\diff_+$ (the other case is similar). Then $\eta_1(k) \asymp_{\plus,\ell_0,\beta} -K/\diff_+$ and thus
\[
\eta_{\diff_+ + 1}(k) - \eta_1(k) \gtrsim_{\plus,\ell_0,\beta} \alpha K.
\]
If $K$ is sufficiently large in comparison to $\ell_0$, then it follows that there exists $j' = 1,\ldots,\diff_+$ such that
\begin{equation}
\label{jgap}
\eta_{j'+1}(k) - \eta_{j'}(k) \geq \frac{\alpha K}{\diff_+ + 1}\cdot
\end{equation}
It follows from \eqref{jgap} that if $K$ is sufficiently large (in comparison to $\ell_0$), then
\[
V_{p+j'}^{(\ell)} = b_{k,\ell} V_{p+j'}^{(k)} \text{ for all }\ell = k,\ldots,k'
\]
where $b_{k,\ell} \df (g u_{\bfB_{\ell-1}})\cdots (g u_{\bfB_k})$. Thus since $\bfB_k,\ldots,\bfB_{\ell-1}$ are good on their respective turns, Lemma \ref{lemmapushoffvert} shows that
\[
\log\|V_{p+j'}^{(k')}\| - \log\|V_{p+j'}^{(k)}\| \asymp_{\plus,\beta} (k'-k)\tbeta \left(\frac{\dir_+(p+j')}{\pdim} - \frac{\dir_-(p+j')}{\qdim}\right).
\]
Subtracting \eqref{Vqk2} (with $q=p$ and separately $k=k$, $k=k'$) and using the asymptotic
\[
\log\|V_{p+j'}^{(\ell)}\| - \log\|V_p^{(\ell)}\| \asymp_{\plus,\beta} \sum_{i=1}^{j'} \log \lambda_i(\Gamma_\ell)
\]
and the relations
\begin{align*}
\dir_+(p+j') &= \dir_+(p) + j',&
\dir_-(p+j') &= \dir_-(p)
\end{align*}
(valid since $j' \leq \diff_+$) show that
\begin{equation}
\label{sumtojprime}
\sum_{i=1}^{j'} \log\lambda_i(\Gamma_{k'}) - \sum_{i=1}^{j'} \log\lambda_i(\Gamma_k)
\asymp_{\plus,\beta} (k'-k)\tbeta \frac{j'}{\pdim}\cdot
\end{equation}
On the other hand, since $\log\|b_{k,k'}\| \lesssim_\plus (k'-k)\gamma/m$, we have
\[
\log\lambda_i(\Gamma_{k'}) - \log\lambda_i(\Gamma_k) \lesssim_\plus (k'-k)\tbeta \frac{1}{\pdim} \text{ for all $i = 1,\ldots,j'$}
\]
and by \eqref{sumtojprime}, approximate equality holds. In particular
\[
\eta_1(k') - \eta_1(k) \asymp_{\plus,\beta} (k'-k)\tbeta \left[\frac{1}{\pdim} - \frac{1}{\diff_+ + \diff_-}\left(\frac{\diff_+}{\pdim} - \frac{\diff_-}{\qdim}\right)\right].
\]
The right-hand side is strictly positive, so if $\ell_0$ is sufficiently large, then the left-hand side is also positive. But this contradicts our assumption that $\eta_1(k') < -K/\diff_+ \leq \eta_1(k)$, thus demonstrating \eqref{IHschmidt}. This concludes the proof of Claim \ref{claimlocalmatch}.
\end{subproof}

Next, note that for any $1 \leq i \leq q-p$ we have that 
\[
\lambda_i(\Gamma_k(p,q))
\leq
\lambda_{p+i}(\Lambda_k \cap V_q^{(k)})
\lesssim
\max_{\substack{\OC{p'}{q'}\\q'\leq q}}\lambda_{q'-p'}(\Gamma_k(p',q')),
\]
where the maximum is taken over all intervals of equality $\OC{p'}{q'}$ for $\gg$ that satisfy $q'\leq q$. Indeed, the first inequality can be demonstrated by observing that the projection of a set of $p+i$ linearly independent vectors in $\Lambda_k\cap V_q$ contains a linearly independent set of $i$ vectors in $\Gamma_k(p,q)$. For the second inequality, denote the right-hand side by $\lambda$ and note that by pulling back vectors appropriately, we can recursively construct bases of $\Lambda_k\cap V_{q'}$ for all $q' \leq q$, such that the largest vector in each basis has norm $\lesssim \lambda$.

Now using Claim \ref{claimlocalmatch}, we have that 
\[
g_{p+i}(k\gamma) 
\lesssim_{\plus,\beta}
\log \left( \lambda_{p+i}(\Lambda_k \cap V_q^{(k)}) \right)
\lesssim_{\plus,\beta}
\max_{\substack{\OC{p'}{q'}\\q'\leq q}} g_{q'}(k \gamma)
=
g_q(k\gamma)
=
g_{p+i}(k\gamma), 
\]
where the maximum is taken as before. Thus we have that for $p < j \leq q$
\begin{equation}
\label{LambdakVq}
\log\lambda_j(\Lambda_k\cap V_q^{(k)})  \asymp_{\plus,\beta} g_j(k\gamma).
\end{equation}

To demonstrate \eqref{Cmatch} and \eqref{lambdaVq}, we pick $\rr \in \Lambda_k \butnot V_p^{(k)}$. Now consider the projection map
\[
\pi : V_{q} \to V_{q} / V_{p}
\]
and note that 
\[
\|\rr\| \geq \|\pi(\rr)\| 
\geq \lambda_1(\Gamma_k(p,q))
\asymp_{\times,\beta} \exp(g_{p+1}(k\gamma)).
\]
Therefore we have that $\log\lambda_{p+1}(\Lambda_k) \gtrsim_{\plus,\beta} g_{p+1}(k\gamma)$. Thus for $p < j \leq q$, we also have 
\[
\log\lambda_j(\Lambda_k) \gtrsim_{\plus,\beta} g_j(k\gamma).
\]
On the other hand, by the monotonicity of the successive mimina functional we have 
\[
\log\lambda_j(\Lambda_k\cap V_q) \geq \log\lambda_j(\Lambda_k).
\] 
Therefore, using \eqref{LambdakVq} and the previous two display equations, we get 
\[
g_j(k\gamma) \asymp_{\plus,\beta} \log\lambda_j(\Lambda_k\cap V_q) \asymp_{\plus,\beta} \log\lambda_j(\Lambda_k),
\] 
and thus \eqref{C2def} holds. This completes the proof of condition (i) of Lemma \ref{lemmaministrategy}. 

We proceed to prove conditions (ii) and (iii).
By \eqref{C2def}, \eqref{Cmatch} and \eqref{lambdaVq} hold with $\Lambda = \Lambda_{k_2}$, $t = t_2$, and $C=C_2$, where $C_2$ is the implied constant of \eqref{C2def}. 
Observe that by \eqref{VqL} we have
\[
\dim(V_q(\Lambda_{k_2})\cap \vert) = \dim(V_q^{(k_2)}\cap \vert) \geq \dir_-(\gg,I,q) \geq \dir_-(\gg,I_+,q),
\]
where $I_+$ is the interval of linearity for $\gg$ whose left endpoint is $t_2$. Note that the last inequality is due to the assumption of convexity in (III) of Definition \ref{definitiontemplate}. It follows that condition (II) of Definition \ref{definitionCmatch} holds with $\Lambda = \Lambda_{k_2}$ and $t = t_2$, which completes the proof of (ii).

To demonstrate (iii), it suffices to show that
\begin{itemize}
\item[(a)] $\#(A_{k_1}) \geq 1$, and
\item[(b)] $\#(A_k) \gtrsim \beta^{-\delta}$ for all $k > k_1$, where $\delta = \delta(\gg,I)$.
\end{itemize}
Note that (a) is true by part (iv) of Claim \ref{claimextendflag}. To demonstrate (b), fix $k > k_1$, and observe that since $\bfB_{k-1}$ is good on turn $k-1$, for all $q\in Q$ we have
\begin{equation}
\label{indhypgood}
\dim(V_q^{(k)}\cap \vert) = \dir_-(q)
\end{equation}
and
\begin{equation}
\label{goodonkminusone}
\dim(u_\bfC V_q^{(k)} \cap \vert) \leq \dir_-(q) \;\;\all\; \bfC \in B_\MM(\0,\beta^{-1/2}).
\end{equation}
We now construct a basis of $\R^d$ as follows. 
\begin{claim}
\label{claimorthobasis}
There exists an almost orthonormal basis $(\rr_i)_1^d$ of $\R^d$ (meaning that $\rr_i \cdot \rr_j = \delta_{ij} + o(1)$ as $\beta\to 0$ for all $i,j$), which contains a subset that is an orthonormal basis of $\vert$ and for each $q \in Q$ contains an almost orthonormal basis of $V_q^{(k)}$. 
\end{claim}
\begin{subproof}
Let $\OC pq_\Z$ be an interval of equality for $\gg$ on $I$, and let $\diff_\pm \df \diff_\pm(p,q)$ (as defined in \eqref{Mpqdef}). Let $(\rr_i)_{p+1}^{p+\diff_+}$ be an orthonormal basis of
\[
W_+(p,q) \df V_q^{(k)} \cap (V_p^{(k)})^\perp \cap (V_q^{(k)}\cap\vert)^\perp
\]
and let $(\rr_i)_{p+\diff_++1}^q$ be an orthonormal basis of
\[
W_-(p,q) \df V_q^{(k)} \cap \vert \cap (V_p^{(k)}\cap \vert)^\perp.
\]
Such bases exist because \eqref{indhypgood} allows us to compute the dimensions of these spaces. Then we claim that $(\rr_i)_1^d$ is an almost orthonormal basis of $\R^d$ (meaning that $\rr_i \cdot \rr_j = \delta_{ij} + o(1)$ as $\beta\to 0$ for all $i,j$), and that $(\rr_i)_{i\in S_-}$ is an orthonormal basis of $\vert$ (where $S_-$ is defined in \eqref{Sminusdef1}). 

Indeed, to see why $(\rr_i)_1^d$ is almost orthonormal, we fix $i < j$ and consider four cases:
\begin{align}
\label{++}
&\rr_i \in W_+(p_1,q_1), \;\; \rr_j \in W_+(p_2,q_2)\\
\label{+-}
&\rr_i \in W_+(p_1,q_1), \;\; \rr_j \in W_-(p_2,q_2)\\
\label{-+}
&\rr_i \in W_-(p_1,q_1), \;\; \rr_j \in W_+(p_2,q_2)\\
\label{--}
&\rr_i \in W_-(p_1,q_1), \;\; \rr_j \in W_-(p_2,q_2)
\end{align}
for $p_1 \leq q_1$ and $p_2 \leq q_2$. In the three cases \eqref{++}, \eqref{-+} and \eqref{--}, we have that $\rr_i \cdot \rr_j =\delta_{ij}$ by part (ii) of Claim \ref{claimextendflag}.
Note that since $\vert = \sum_{\OC pq_\Z} W_-(p,q)$, it follows from \eqref{--} that $(\rr_i)_{i\in S_-}$ is an orthonormal basis of $\vert$. 

So we are left to consider the case \eqref{+-}. Note that in this case we may assume that $p_1 < q_1 \leq p_2 < q_2$ (since if $p_1 = q_1$ and $p_2 = q_2$, then \eqref{+-} reduces to \eqref{-+}).

Let $V \df V_{q_1}^{(k)}$. Then $\rr_i \in W_+(p_1,q_1) \subset V \cap (V \cap \vert)^\perp$ and $\rr_j \in W_-(p_2,q_2) \subset \vert \cap (V\cap \vert)^\perp$. Write $\rr_i = (\pp,\qq')$ and $\rr_j = (\0,\qq)$.
Now let 
\[
Y \vv \df -\frac{(\qq'\cdot\vv)}{(\qq'\cdot\qq')}\pp.
\] 
Then note that $u_Y (\pp,\qq') \in \vert$ and $u_Y(V\cap \vert) = V\cap\vert$ (since $\rr_i \in (V\cap \vert)^\perp$). Therefore we have that 
\[
\dim(u_Y(V)\cap\vert) > \dim(V\cap\vert).
\]
Thus by \eqref{goodonkminusone}, we have that 
\[
\|Y\| > \beta^{-1/2}.
\]
Since $\|Y\| \leq \|\pp\| / \|\qq'\| \leq 1 / \|\qq'\|$ it follows that $\|\qq'\| < \beta^{1/2}$.
Therefore 
\[
|\rr_i \cdot \rr_j| = |\qq\cdot\qq'| \leq \|\qq'\| < \beta^{1/4}.
\]
Thus $\rr_i \cdot \rr_j = \delta_{ij} + o(1)$ as $\beta\to 0$ for all $i,j$ as claimed. This concludes the proof of Claim \ref{claimorthobasis}.
\end{subproof}

Let $\ZZ$ be the space of all $d\times d$ matrices $X$ such that for all $i,j$ such that $X_{i,j} \neq 0$, we have $i < j$, $i\in S_+$, and $j\in S_-$. Evidently, $\dim(\ZZ) = \delta(\gg,I)$. Now let $\bfR$ be the matrix whose column vectors are $\rr_1,\ldots,\rr_d$. Then for all $\bfD\in \ZZ$, the matrix $\bfR\cdot ( I + \bfD )\cdot\bfR^{-1}$ preserves the subspaces $(V_q^{(k)})_{q\in Q}$. Now define a map $\Phi:\ZZ\to \MM$ as follows: for each $\bfD\in \ZZ$, $\bfB = \Phi(\bfD)$ is the unique matrix such that
\[
u_\bfB \vert = \bfR \cdot ( I + \bfD ) \cdot\bfR^{-1} \vert.
\]
It is easy to check that in a neighborhood of the origin, $\Phi$ is a bi-Lipschitz embedding with bi-Lipschitz constant depending only on $\max(\|\bfR\|,\|\bfR^{-1}\|)$. But since the basis $(\rr_i)_1^d$ is almost orthonormal as proved in Claim \ref{claimorthobasis}, there is a uniform bound on this constant as long as $\beta$ is sufficiently small.

\begin{claim}
Let $C$ be the bi-Lipschitz constant of $\Phi$. There exist $W \in B_\ZZ(\0,1/(2C))$ and a constant $0 < \epsilon \leq 1/(2C)$ such that for all $W'\in B_\ZZ(W,\epsilon)$, \eqref{good2} holds for $X = \Phi(W')$ for all $j = 1,\ldots,d$, as long as $\beta$ is sufficiently small.
\end{claim}
\begin{subproof}
By an induction argument, it suffices to consider only one value of $j$, specifically $j = p + M_+(p,q) \in \OC pq_\Z$ where $\OC pq_\Z$ is an interval of equality. Note that since $M_+ + M_- = q-p$, it follows that $M_- = q-j$.
Now the map 
\[
\Psi:W \mapsto (I + W) \cdot \bfR^{-1} ((V_q^{(k)}\cap\vert) / V_p^{(k)})
\] 
is easily seen to be an open mapping from $\ZZ$ to the Grassmannian\Footnote{We use the notation $\GG \df \GG_k(V)$ to denote the Grassmannian variety of $k$-dimensional subspaces of a Euclidean vector space $V$. Each $(\GG, d_{\GG})$ is a compact metric space, see \cite[Lemma 3.2]{Morris2} for two equivalent (symmetric) means to define the metric on the Grassmanian.} $\GG \df \GG_{M_-}(\R^q\times\{\0\} / \R^p \times \{\0\})$ in a neighborhood of $W = \0$, since it is a composition of a projection map ($W\mapsto (W_{i,j})_{i,j\in \OC pq_\Z}$) and a homeomorphism. Thus in particular there exists $W$ such that $\dist_\GG(\Psi(W),\VV) > 0$ where $\VV = \{V : \dim(\bfR^{-1} (V_j^{(k)} / V_p^{(k)}) \cap V) > 0\}$. A compactness argument (since $V_j^{(k)}$ and $\bfR$ both range over compact sets) shows we can take $\dist_\GG(\Psi(W),\VV) \asymp 1$.\Footnote{Otherwise, we could find sequences $V_{j,\ell}$ and $\bfR_\ell$ such that the corresponding $\dist_\GG(\Psi_\ell(W),\VV_\ell) \to 0$ as $\ell\to \infty$, and passing to a convergent subsequence would give $\dist_\GG(\Psi_\infty(W),V_\infty) = 0$ in the limit.} Now let $\epsilon = (1/2) \dist_\GG(\Psi(W),\VV)$, and let $W'\in B_\ZZ(W,\epsilon)$ and $X = \Phi(W')$. Then if $\|Y\| \leq 2\beta^{1/2}$, then we have $\dist_\GG(\Psi(W),u_{-Y} \Psi(W)) \lesssim \beta^{1/2}$ and so if $\beta$ is sufficiently small we have $u_{-Y} \Psi(W) \notin \VV$, which implies \eqref{good2}.
\end{subproof}

Now let $A_k'$ be a maximal $3C\beta$-separated subset of $B_\ZZ(W,\epsilon)$. Then $A_k = \Phi(A_k')$ is a $3\beta$-separated subset of $B(\0,1-\beta)$ consisting entirely of matrices good on turn $k$. It follows that
\[
\#(A_k) = \#(A_k') \asymp \beta^{-\delta}.
\]
This concludes the proof of condition (iii) of Lemma \ref{lemmaministrategy}, and therefore of the entire lemma.
\end{proof}

\subsection{Error correction}
\label{subsectionerrorcorrection}
Fix $\tspace \in \tbeta\N$, let $\ff$ be a simple $\tspace$-integral template, fix $t_0\in \tspace\N$, and let $\Pert = (\pert_1,\ldots,\pert_d)\in\R^d$ be a vector such that $\pert_i \leq \pert_{i+1}$ for all $i$ such that $f_i(t_0) = f_{i+1}(t_0)$. Such a vector will be called a \emph{perturbation vector} of $\ff$ at $t_0$. For convenience, for each $k\in\N$ let $t_k = t_0 + k \tspace$. We define the function $\aa:\N\cup\{-1\}\to\R^d$ recursively as follows:
\begin{itemize}
\item $\aa(-1) = \Pert$.
\item Fix $k\geq 0$ such that $\aa(k-1)$ has been defined, and let $I_k = (t_k,t_{k+1})$. If $\OC pq_\Z$ is an interval of equality for $\ff$ on $I_k$ (cf. Definition \ref{definitiondimtemplate}), then for all $i = p+1,\ldots,q$, we let
\begin{equation}
\label{akrecursive}
a_i(k) = 
\begin{cases}
a_i(k-1) & \text{ if } f_{p+1}' = \ldots = f_q' \in \exceptionalset \text{ on } (t_0,t_{k+1})\\
\displaystyle\frac{1}{q-p} \sum_{j=p+1}^q a_j(\index-1)
& \text{ otherwise.}
\end{cases}
\end{equation}
\end{itemize}
The idea is that we will construct a new template by displacing $\ff$ by $\aa(k)$ on each interval $I_k$, and then changing the resulting function into a template by modifying it slightly to deal with the issues that arise near multiples of $\tspace$. The motivation for the equation \eqref{akrecursive} will become apparent when we analyze when it is possible to perform such a modification. Note that by induction, for all $k$ we have
\begin{equation}
\label{akh}
\|\aa(k)\|_{\infty} \leq \|\Pert\|_{\infty}
\end{equation}
and
\begin{equation}
\label{aikai1k}
a_i(k) \leq a_{i+1}(k) \text{ whenever } f_i = f_{i+1} \text{  on } I_k.
\end{equation}

\begin{lemma}
\label{lemmahperturbation}
Let the notation be as above. If $\displaystyle \|\Pert\|_{\infty} < \cspace = \frac{\tspace}{2mnd!}$, then there exists a partial template $\gg:\CO{t_0}\infty\to\R^d$ such that
\begin{equation}
\label{gfh}
\gg(t_0) = \ff(t_0) + \Pert
\end{equation}
and such that for all $k$, we have
\begin{equation}
\label{hpert}
\gg = \ff + \aa(k) \text{ on } \w I_k \df \big(t_k + \timeerror,t_{k+1} - \timeerror\big),
\end{equation}
where $\aa$ is as above, and
\[
\timeerror \df 2 \dimprod d^2\|\Pert\|_{\infty}.
\]
Moreover, we have
\begin{equation}
\label{Spmgtft}
S_\pm(\gg,t) = S_\pm(\ff,t) \text{ for } t\geq t_0
\end{equation}
and in particular
\begin{equation}
\label{deltagdeltaf}
\delta(\gg,t) = \delta(\ff,t) \text{ for } t\geq t_0.
\end{equation}
\end{lemma}
The partial template $\gg$ constructed in the proof below will be called the \emph{$\Pert$-perturbation} of $\ff$ at $t_0$.

\begin{proof}
We will first show that for all $k\geq 0$, if $\gg$ is any function satisfying \eqref{hpert}, then $\gg \given \w I_k$ is a partial template. Indeed, since $\gg$ is linear on $\w I_k$, it suffices to check conditions (I) and (II) of Definition \ref{definitiontemplate}, along with the following weakening of condition (III):
\begin{itemize}
\item[(III$'$)] For all $j = 1,\ldots,d-1$ such that $g_j < g_{j+1}$ on $\w I_k$, we have $G_j'(\w I_k) \in Z(j)$.
\end{itemize}
Condition (II) is obvious, so we check (I) and (III$'$).
\begin{subproof}[Proof of \text{(I)}]
Fix $i = 1,\ldots,d-1$, and we will show that $g_i \leq g_{i+1}$ on $\w I_k$. There are three cases:
\begin{itemize}
\item If $f_i = f_{i+1}$ on $I_k$, then by \eqref{aikai1k} we have $a_i(k) \leq a_{i+1}(k)$ and thus $g_i \leq g_{i+1}$ on $\w I_k$.
\item If $f_i(t_k) = f_{i+1}(t_k)$ but $f_i < f_{i+1}$ on $I_k$, then we have $f_i'(I_k) < f_{i+1}'(I_k)$, and thus
\begin{align*}
f_{i + 1} - f_i &> (f_{i+1}'(I_k) - f_i'(I_k)) \timeerror \note{on $\w I_k$}\\
&\geq \tfrac{1}{mnd^2}\timeerror \by{Observation \ref{observationrationalslopes}}\\
&= 2\|\Pert\|_{\infty}\\
&\geq |a_{i+1}(k) - a_i(k)|, \by{\eqref{akh}}
\end{align*}
so $g_i < g_{i+1}$ on $\w I_k$. Similar logic applies if $f_i(t_{k+1}) = f_{i+1}(t_{k+1})$ but $f_i < f_{i+1}$ on $I_k$.
\item If $f_i(t_k) < f_{i+1}(t_k)$ and $f_i(t_{k+1}) < f_{i + 1}(t_{k+1})$, then since $\ff$ is $\tspace$-integral we have
\begin{align*}
f_{i+1} - f_i &\geq \mathrm{min}\big(f_{i+1}(t_k) - f_i(t_k),f_{i+1}(t_{k+1}) - f_i(t_{k+1})\big) \hspace{-1.3 in} \note{on $I_k$}\\
&\geq \tfrac{\tspace}{mnd!} \by{Definition \ref{definitionintegral}}\\
&= 2\cspace\\
&> 2\|\Pert\|_{\infty} \by{hypothesis}\\
&\geq |a_{i+1}(k) - a_i(k)| \by{\eqref{akh}}
\end{align*}
and thus $g_i < g_{i+1}$ on $\w I_k$.
\QEDmod\qedhere\end{itemize}
\end{subproof}
\begin{subproof}[Proof of \text{(III$'$)}]
Fix $j = 1,\ldots,d-1$ such that $g_j < g_{j+1}$ on $\w I_k$. There are two cases:
\begin{itemize}
\item If $f_j < f_{j+1}$ on $I_k$, then $G_j'(\w I_k) = F_j'(I_k) \in Z(j)$.
\item If $f_j = f_{j+1}$ on $I_k$, then $a_j(k) < a_{j+1}(k)$. Moreover, $j$ and $j+1$ are in the same interval of equality $\OC pq_\Z \ni j,j+1$ for $\ff$ on $I_k$. By \eqref{akrecursive} we have $f_{p+1}'(I_k) = \ldots = f_q'(I_k) \in \exceptionalset$. Without loss of generality suppose that $f_{p+1}'(I_k) = \ldots = f_q'(I_k) = \tfrac1\pdim$. Then we have
\[
G_j'(\w I_k) = F_j'(I_k) = F_p'(I_k) + \frac{j-p}{\pdim} = \frac{\dir_+(\ff,I_k,p) + (j-p)}{\pdim} - \frac{\dir_-(\ff,I_k,p)}{\qdim} \in Z(j).
\]
(The intuition behind this calculation is that $\frac1\pdim$ and $-\frac1\qdim$ are ``free slopes'' that can be used by an individual $f_j$ without the need for averaging; cf. the model of ``particle physics'' described in the paragraph below Definition \ref{definitiondimtemplate}.)
\QEDmod\qedhere\end{itemize}
\end{subproof}
Next, we demonstrate \eqref{Spmgtft} for $t\in \w I_k$ (note that \eqref{deltagdeltaf} follows from \eqref{Spmgtft}). Let $\OC pq_\Z$ be an interval of equality for $\ff$ on $I_k$. By the proof of (I) above, we have $g_p < g_{p+1}$ and $g_q < g_{q+1}$ on $\w I_k$. Let
\[
\diff_\pm = \diff_\pm(\ff,I_k,p,q) = \dir_\pm(\gg,\w I_k,q) - \dir_\pm(\gg,\w I_k,p).
\]
If $\diff_+ > 0$ and $\diff_- > 0$, then $a_{p+1}(k) = \ldots = a_q(k)$ and thus $\OC pq_\Z$ is an interval of equality for $\gg$ on $\w I_k$, which implies that $S_+(\ff,I_k)\cap \OC pq_\Z = S_+(\gg,\w I_k)\cap \OC pq_\Z$. On the other hand, if $\diff_+ = 0$, then $S_+(\ff,I_k)\cap \OC pq_\Z = \emptyset = S_+(\gg,\w I_k)\cap \OC pq_\Z$, and if $\diff_- = 0$, then $S_+(\ff,I_k)\cap \OC pq_\Z = \OC pq_\Z = S_+(\gg,\w I_k)\cap \OC pq_\Z$. Since $\OC pq_\Z$ was arbitrary we have $S_+(\ff,I_k) = S_+(\gg,\w I_k)$ and thus $\delta(\ff,I_k) = \delta(\gg,\w I_k)$.

Finally, we describe how to define $\gg$ on an interval of the form
\begin{equation}
\label{Jk}
J_k \df
\begin{cases}
\big[t_k - \timeerror,t_k + \timeerror\big] & \text{if } k > 0\\
\big[t_0,t_0 + \timeerror\big] & \text{if } k = 0
\end{cases}
\end{equation}
We now consider two cases:\\

{\bf Case 1.} If $\aa(k-1) = \aa(k)$, then we can continue to use the formula $\gg = \ff + \aa(k)$ on $J_k$. Minor modifications to the previous argument show that $\gg\given \w I_{k-1}\cup J_k \cup \w I_k$ is a partial template (where we use the convention that $\w I_{-1} = \emptyset$), and $S_\pm(\ff,J_k) = S_\pm(\gg,J_k)$.\\

{\bf Case 2.} Suppose that $\aa(k-1) \neq \aa(k)$. By \eqref{akrecursive}, this means that $t_k$ is either a merge, a transfer, or $t_0$. We restrict our attention to the case where $t_k$ is a merge; the other cases are similar. Define $\gg$ on $J_k$ as follows: Let $\OC pq_\Z$ be an interval of equality for $\ff$ on $I_k$ which is not an interval of equality for $\ff$ on $I_{k-1}$, and let $\diff_\pm = \diff_\pm(\ff,I_k,p,q)$, so that $\diff_+ + \diff_- = q - p$, and $S_+(\ff,J_k)\cap\OC pq_\Z = \{p+1,\ldots,p+\diff_+\}$. Note that $\diff_+,\diff_- > 0$, as otherwise we would have $f_{p+1}' = f_q' \in \exceptionalset$ on $I_{k-1}$, and thus $\OC pq_\Z$ would be an interval of equality for $\ff$ on $I_{k-1}$. We define the piecewise linear functions $g_{p+1},\ldots,g_q$ on $J_k$ by imposing the following conditions:
\begin{itemize}
\item We have
\begin{equation}
\label{atminJk}
\gg(\min(J_k)) = \ff(\min(J_k)) + \aa(k-1).
\end{equation}
\item We have
\begin{equation}
\label{onJk}
\sum_{i = p+1}^q g_i' = \sum_{i = p+1}^q f_i' = \frac{\diff_+}{\pdim} - \frac{\diff_-}{\qdim} \text{ on $J_k$}
\end{equation}
(the second equality holds because $t_k$ cannot be a transfer, since $f$ is simple).
\item For all $p < i \leq p+\diff_+$ and $t\in J_k$, we have $g_i'(t) = \frac1\pdim$ unless $g_i(t) = g_{p+\diff_+ +1}(t)$, in which case $g_i'(t) = z(t)$, where $z(t)$ is defined below.
\item For all $p+\diff_+ < i \leq q$ and $t\in J_k$, we have $g_i'(t) = -\frac1\qdim$ unless $g_i(t) = g_{p+\diff_+}(t)$, in which case $g_i'(t) = z(t)$, where $z(t)$ is defined below.
\end{itemize}
The number $z(t)$ appearing in the last two conditions can be computed by plugging the values of $g_i'$ appearing in those conditions into \eqref{onJk} and then solving for $z(t)$. If $g_{p+\diff_+}(t) < g_{p+\diff_+ + 1}(t)$, then $z(t)$ is taken to be undefined. In all of the above formulas, derivatives should be assumed to be taken from the right.

It is easy to check that these conditions uniquely determine the functions $g_{p+1},\ldots,g_q$ on the interval $J_k$, and that $S_+(\gg,t)\cap\OC pq_\Z = \{p+1,\ldots,p+\diff_+\} = S_+(\ff,t)\cap\OC pq_\Z$ for all $t\in J_k$. Since $\CO{t_0}\infty = \bigcup J_k \cup \bigcup \w I_k$, this implies that $S_\pm(\gg,t) = S_\pm(\ff,t)$ for all $t\geq t_0$.

To ensure that this does not lead to an inconsistency with \eqref{hpert}, we need to check that
\begin{equation}
\label{maxJk}
\gg(\max(J_k)) = \ff(\max(J_k)) + \aa(k).
\end{equation}
Since $\OC pq_\Z$ is an interval of equality for $\ff$ on $I_k$ and since $\diff_+,\diff_- > 0$, by \eqref{akrecursive} the map $i \mapsto f_i(\max(J_k)) + a_i(k)$ is constant on $\OC pq_\Z$. 

Suppose first that the map $i\mapsto g_i(\max(J_k))$ is also constant on $\OC pq_\Z$. Let $h(t) \df (G_q - G_p)(t) - (F_q - F_p)(t)$. Then \eqref{atminJk} implies that $h(\min(J_k)) = \sum_{p+1}^q a_i(k-1)$. Now by \eqref{akrecursive} this gives $h(\min(J_k)) = \sum_{p+1}^q a_i(k)$, and by \eqref{onJk} we have $h' = 0$ and thus $h(\max(J_k)) = \sum_{p+1}^q a_i(k)$. Rearranging gives the sum of the $i$th coordinate of \eqref{maxJk} over $i\in \OC pq_\Z$. Since both sides' $i$th coordinates are independent of $i$ for $i\in\OC pq_\Z$, this demonstrates \eqref{maxJk}.

On the other hand, suppose that $i\mapsto g_i(\max(J_k))$ is not constant on $\OC pq_\Z$. Then either there exists $p < i \leq p+\diff_+$ such that $g_i(t) < g_{p+\diff_+ +1}(t)$ for all $t\in J_k$, or there exists $p+\diff_+ < i \leq q$ such that $g_i(t) > g_{p+\diff_+}(t)$ for all $t\in J_k$. Without loss of generality suppose the first case holds. Then $g_{p+1}(t) < g_{p+\diff_+ + 1}(t)$ for all $t\in J_k$, and thus $g_{p+1}'(t) = \frac1\pdim$ for all $t\in J_k$. Now let
\begin{align*}
F(t) &\df \sum_{i = p+1}^q \big[f_i(t) - f_{p+1}(t)\big],\\
G(t) &\df \sum_{i = p+1}^q \big[g_i(t) - g_{p+1}(t)\big].
\end{align*}
Then $F(t),G(t) \geq 0$, $F(t_k) = 0$, and
\[
F'(t) \geq G'(t) = -Z \df \left[\frac{\diff_+}{\pdim} - \frac{\diff_-}{\qdim}\right] - \frac{q - p}{\pdim} = -\diff_-\left[\frac1{\pdim}+\frac1{\qdim}\right].
\]
It follows that
\begin{align*}
0 &\leq G(\max(J_k))
= G(\min(J_k)) - Z|J_k|\\
&\leq F(\min(J_k)) + 2d\|\Pert\|_{\infty} - Z|J_k|\\
&\leq F(t_k) + Z\big[k>0\big]\timeerror + 2d\|\Pert\|_{\infty} - Z|J_k|
= 2d\|\Pert\|_{\infty} - Z\timeerror < 0,
\end{align*}
where the last inequality follows from the definition of $\timeerror$ and the inequality $\diff_- \geq 1$. This is a contradiction, and therefore \eqref{maxJk} holds and so $\gg$ is continuous in a neighborhood of $\max(J_k)$. 

Thus $\gg$ is continuous on $\CO{t_0}\infty$. Indeed,  we have that $\gg$ is piecewise linear on $J_k$ by definition and on $\w I_k$ by \eqref{hpert}. Further, $\gg$ is continuous at the transition points $\min(J_k) \in \w I_{k-1}\cap J_k$ and $\max(J_k) \in J_k \cap \w I_k$. The former follows from \eqref{atminJk} and the latter from \eqref{maxJk}.

Recall that we have previously shown that $\gg \given \w I_k$ is a partial template. We leave the verification of the other conditions of Definition \ref{definitiontemplate} as an exercise to the reader. This concludes the proof of Lemma \ref{lemmahperturbation}.
\end{proof}

Now we combine the concept of perturbation vectors with the concept of $C$-matches introduced in \6\ref{subsectionministrategy}. The following lemma shows that by perturbing a template, it is possible to improve the constant $C$ appearing in Definition \ref{definitionCmatch}:

\begin{lemma}
\label{lemmaC0match}
Let $\Lambda$ be a $\cspace$-match (where $\cspace = \frac{\tspace}{2mnd!}$) for an $\tspace$-integral template $\ff$ at $t_0\in \tspace\N$, and let $\gg$ be the $\Pert$-perturbation of $\ff$ at $t_0$, where $\Pert \in (d!)^3(d^3)!\tbeta\Z^d$ is a perturbation vector of size $\|\bb\|_\infty < \cspace - C_1$ such that
\begin{equation}
\label{C0match}
\big\|\Mink(\Lambda) - [\ff(t_0) + \Pert]\big\|_{\infty} < C_1
\end{equation}
for some constant $C_1 \leq \cspace$. Suppose that $t_0$ is not a split with respect to $\ff$. Then $\gg$ is $\tbeta$-integral on $[t_0,t_2]$ and $\Lambda$ is a $C_1$-match for $\gg$ at $t_0$.
\end{lemma}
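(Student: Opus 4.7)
The proof decomposes into three tasks: verify $\gg$ is $\tbeta$-integral, verify condition (I) of the $C_1$-match, and construct and verify a subspace family witnessing condition (II). Task (I) is immediate: by \eqref{gfh} we have $\gg(t_0) = \ff(t_0) + \Pert$, so \eqref{C0match} yields $\|\Mink(\Lambda) - \gg(t_0)\|_\infty < C_1$.

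For $\tbeta$-integrality, the corner points of $\gg$ come from two sources: those inherited from $\ff$ (which lie in $\tspace\N \subseteq \tbeta\N$ by our arithmetic arrangement of scales) and those introduced by Lemma \ref{lemmahperturbation} inside the gluing intervals $J_k$ of width $\timeerror = 2\dimprod d^2\|\Pert\|_\infty$. Because $\Pert \in (d^3)!\tbeta\Z^d$, the quantity $\timeerror$ lies in $\tbeta\Z$, and inductively every component of $\aa(k)$---defined by averages in \eqref{akrecursive} over at most $d$ indices---does as well, since the factor $(d^3)!$ is chosen exactly to absorb these denominators together with the slope denominators controlled by Observation \ref{observationrationalslopes}. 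An analogous verification shows that values of $\gg$ at points of $\tbeta\N$ remain in $\frac{\tbeta}{mnd!}\Z$, establishing Definition \ref{definitionintegral}.

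For condition (II), let $(W_q)_{q\in Q_\ff(t_0)}$ be the subspaces witnessing the $\cspace$-match of $\Lambda$ for $\ff$, where $Q_\ff(t_0) = \{q : f_q(t_0) < f_{q+1}(t_0)\}$. A short calculation using $\tspace$-integrality and $\|\Pert\|_\infty < \cspace = \tspace/(2mnd!)$ yields $Q_\ff(t_0) \subseteq Q_\gg(t_0)$, with every extra index $q \in Q_\gg(t_0)\setminus Q_\ff(t_0)$ satisfying $f_q(t_0)=f_{q+1}(t_0)$ and $\pert_{q+1}-\pert_q \geq (d^3)!\tbeta$ by the integrality of $\Pert$. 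We set $V_q \df W_q$ for $q \in Q_\ff(t_0)$; for each extra index we interpolate between the adjacent $W_p$ and $W_r$ (with $p, r$ the nearest elements of $Q_\ff(t_0)$ below and above $q$) using further independent lattice vectors attaining the successive minima of $\Lambda$, made possible by the strict gap $\pert_{q+1}-\pert_q>0$. By replacing each $W_q$ (as needed) with the span of a well-chosen sequence of lattice-minimum-attaining vectors, we arrange $\log\lambda_i(\Lambda \cap V_q) = h_i(\Lambda)$ exactly, so that \eqref{lambdaVq} holds with $C=0 \leq C_1$.

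The decisive verification is \eqref{VqL}: $\dim(V_q \cap \vert) \geq \dir_-(\gg, I_+^\gg, q)$, where $I_+^\gg$ is the interval of linearity for $\gg$ immediately right of $t_0$. Here the hypothesis that $t_0$ is not a split for $\ff$ is essential: it forces every $q$ with $f_q(t_0)=f_{q+1}(t_0)$ to satisfy $f_q \equiv f_{q+1}$ on $I_0 = (t_0,t_0+\tspace)$, so the slope structure of $\gg$ on $I_+^\gg \subseteq J_0$ is obtained from that of $\ff$ on $I_0$ via the explicit recipe of \eqref{akrecursive} and the two-case construction of $\gg$ on $J_0$ in Lemma \ref{lemmahperturbation}. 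A case analysis shows $\dir_-(\gg, I_+^\gg, q) \leq \dir_-(\ff, I_0, q)$ for $q \in Q_\ff(t_0)$, whence \eqref{VqL} descends from the corresponding property of $W_q$; the extra indices are handled by the analogous slope calculation for the merging procedure of $\gg$ on $J_0$. The principal obstacle is the combinatorial bookkeeping that simultaneously enforces $\Lambda$-rationality, exact minima-attainment, nestedness, and sufficient $\vert$-intersection for each $V_q$; the integrality of $\Pert$ together with the quantized slope condition render this tractable via a finite case analysis.
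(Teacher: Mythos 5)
You correctly isolate the three tasks and dispose of condition (I) of Definition \ref{definitionCmatch} exactly as the paper does, but each of the two remaining tasks has a genuine gap. For $\tbeta$-integrality, your argument rests on the inductive claim that every component of $\aa(k)$ stays in $\tbeta\Z$ because the factor $(d^3)!$ ``absorbs'' the averaging denominators. That claim is unjustified and false in general: the recursion \eqref{akrecursive} compounds averages across successive merges, so the denominators relative to $\tbeta$ can grow without bound in $k$. Already for a $1\times 2$ template in which $f_1,f_2$ and then $f_2,f_3$ alternately merge with common slope $\tfrac14\notin\exceptionalset$ (as in Figure \ref{figuremainexample}), the recursion alternately replaces $(a_1,a_2)$ and then $(a_2,a_3)$ by their averages, roughly doubling the denominator at every merge; a fixed factor $(d^3)!$ cannot absorb an unbounded product. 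More importantly, even granting integrality of $\timeerror$ and of the $\aa(k)$, you never locate the new corner points of $\gg$ inside the windows $J_k$: these are collision times and must actually be solved for. The paper computes $\sum_{i=p+1}^{q}g_i$ on $J_k$ in two ways (using \eqref{onJk} and the slopes $\tfrac1m$, $-\tfrac1n$, $z(t)$), obtaining an equation of the form $\bigl(\tfrac{A}{m}+\tfrac{A}{n}\bigr)\Delta t=\sum_{l=i+1}^{j}\bigl[g_l(t_*)-g_i(t_*)\bigr]$ with $A\leq d$ and $t_*=\min(J_k)$, and then checks divisibility after dividing by $\tfrac{(m+n)A}{mn}$; this computation is what the factor $(d^3)!$ is actually for, and it is the substantive content of the integrality claim, which your write-up skips entirely.

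For condition (II) of the match, your construction is internally inconsistent. Keeping $V_q=W_q$ only yields \eqref{lambdaVq} with constant $\cspace$, not $C_1$; but replacing the $W_q$ by spans of minimum-attaining vectors, as you propose, destroys the only source of the lower bound \eqref{VqL}, since $\dim(V_q\cap\vert)\geq\dir_-$ is a property of the particular subspaces supplied by the $\cspace$-match and is not preserved under such a replacement. The paper instead chooses each $V_j$, for $j$ in an interval of equality $\OC pq_\Z$ of $\ff$ on $I_+$ with $g_j(t_0)<g_{j+1}(t_0)$, inside the span of $\{\rr\in\Lambda:\|\rr\|\leq\lambda_j(\Lambda)\}$, and recovers \eqref{VqL} from the dimension count
\[
\dim(V_j\cap\vert)\;\geq\;\max\bigl(\dim(V_p\cap\vert),\,\dim(V_q\cap\vert)-(q-j)\bigr)\;\geq\;\max\bigl(\dir_-(\ff,I_+,p),\,\dir_-(\ff,I_+,q)-(q-j)\bigr)\;=\;\dir_-(\ff,I_+,j)\;=\;\dir_-(\gg,I_+,j),
\]
where the final equalities are exactly where the hypothesis that $t_0$ is not a split enters, together with the ordering convention in \eqref{Splusdef1}--\eqref{Sminusdef1}. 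Your proposal instead asserts an inequality $\dir_-(\gg,I_+,q)\leq\dir_-(\ff,I_0,q)$ without proof and defers the genuinely new indices $q$ with $f_q(t_0)=f_{q+1}(t_0)$ but $g_q(t_0)<g_{q+1}(t_0)$ --- which is where all the content of condition (II) lies --- to an unspecified ``finite case analysis''; as written this is a statement of what must be proved rather than a proof.
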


\begin{proof}
To show that $\gg$ is $\tbeta$-integral, we need to check both conditions (I) and (II) of Definition \ref{definitionintegral}. To show (II), we note that since $\ff$ is $\tspace$-integral, $\tspace \in \tbeta\N$, and $\Pert\in (d^3)!\tbeta\Z^d$, it follows by \eqref{gfh} that for all $1\leq i \leq d$ we have
\[
g_i(t_0) \in \frac{\tbeta}{mnd!}\Z.
\]
This is sufficient by the remark at the end of Definition \ref{definitionintegral}.
To show (I), we need to prove that all the corner points of $\gg$ are multiples of $\tbeta$. Suppose that $t$ is a corner point for $\gg$. Then $t \in J_k$ for some $k \in \{0,1,2\} $ (cf. \eqref{Jk}), and we let $t_* \df \min(J_k)$.
Recall from Lemma \ref{lemmahperturbation} that $s \df 2mnd^2 \|\bb\|_\infty \in (d^3)!\gamma\Z$, and so $t_\ast \in (d^3)!\gamma\Z$ as well.
Since $t$ is a corner point of $g$, there exists an interval of equality $\OC pq_\Z$ such that (i) the function $z$ appearing in the proof of Lemma \ref{lemmahperturbation} is well-defined at $t$ and (ii) if $p \leq i < p + M_\plus < j \leq q$ are minimal and maximal, respectively, so that $g_{i+1}(t) = \ldots = g_j(t) = z(t)$, then either $g_{i+1}'=1/m$ on $(t_*,t)$ or $g_j'=-1/n$ on $(t_*,t)$. Without loss of generality suppose the former holds, and also note that the definition of $z(t)$ implies that $g_{p+1}'=\ldots=g_i'=1/m$ and $g_{j+1}'=\ldots=g_q'=-1/n$ on $(t_*,t)$. Then, letting $\Delta t \df t - t_*$, on the one hand we have that 
\begin{align*}
\sum_{\ell = p+1}^q g_{\ell}(t)
&= \sum_{\ell = p+1}^i g_{\ell}(t) + \sum_{j+1}^q g_{\ell}(t) + (j - i) g_{i+1}(t)\\
&= \sum_{\ell = p+1}^i g_{\ell}(t_*) + (i - p) \frac{\Delta t}{m} +
\sum_{\ell = j+1}^q g_{\ell}(t_*) - (q - j) \frac{\Delta t}{n} +
(j - i) \left(g_{i+1}(t_*) + \frac{\Delta t}{m} \right),
\end{align*}
while, on the other, using \eqref{onJk}, we have that 
\[
\sum_{\ell = p+1}^q g_{\ell}(t)  = \sum_{\ell = p+1}^q g_{\ell}(t_*) + \left( \frac{M_+}{m} - \frac{M_-}{n}\right) \Delta t. 
\]
Solving for $\Delta t$ gives
\[
(A/m + A/n) \Delta t =  \sum_{\ell = i+1}^j g_{\ell}(t_*) - (j - i) g_{i+1}(t_*)
\]
where $A \df j - (p + M_+) \in \OC 0{M_-}_\Z$.

Now, since $\bb\in (d!)^3(d^3)!\gamma\Z^d$, we have $t_\ast \in (d!)^3(d^3)!\gamma\Z$ and further it follows from the definition in \eqref{akrecursive} that $a_{\ell}(k) \in (d!)^{3-k-1}(d^3)!\gamma\Z$. Thus for all $p < \ell \leq q$,
\[
g_{\ell}(t_*) = f_{\ell}(t_*) + a_{\ell}(k) \in (d!)^{3-k-1}(d^3)!\gamma\Z
\]
and thus
\[
(A/m + A/n) \Delta t \in (d^3)! \gamma \Z .
\]
and therefore that 
\[
(m+n) A (t - t_*) \in (d^3)! \gamma \Z.
\]
It thus follows that $t \in \gamma \Z$. This completes the proof that $\gg$ is $\tbeta$-integral on $[t_0,t_2]$.

Since $\gg(t_0) = \ff(t_0) + \Pert$, \eqref{C0match} implies that condition (I) of Definition \ref{definitionCmatch} holds with $C=C_1$ and $t = t_0$. Let $I_+$ be an interval of linearity for both $\ff$ and $\gg$ whose left endpoint is $t_0$, and let $\OC pq_\Z$ be an interval of equality for $\ff$ on $I_+$. 
Then $f_q(t) < f_{q+1}(t)$ for $t\in I_+$, so since $\ff$ is $\tspace$-integral and $\|\bb\|_\infty < \cspace - C_1$, by \eqref{C0match} we have $\mink_q(\Lambda) < \mink_{q+1}(\Lambda)$. For each $j\in \OC pq_\Z\cap Q(t_0)$, let $V_j$ be a $\Lambda$-rational subspace of the linear span of $\{\rr\in\Lambda:\|\rr\|\leq\lambda_j(\Lambda)\}$ of dimension $j$. 
Then
\begin{align*}
\dim(V_j\cap \vert) &\geq \max\big(\dim(V_p\cap\vert),\dim(V_q\cap\vert)-(q-j)\big)\\
&\geq \max\big(\dir_-(\ff,I_+,p),\dir_-(\ff,I_+,q)-(q-j)\big) = \dir_-(\ff,I_+,j) = \dir_-(\gg,I_+,j),
\end{align*}
where the second-to-last equality follows from the assumption that $t_0$ is not a split for $\ff$. This demonstrates \eqref{VqL}.

 This concludes the proof of Lemma \ref{lemmaC0match}.
\end{proof}

\subsection{Uniform error bounds}
\label{subsectionsummary}

We are now ready to complete the proof of \eqref{lowerbound}. First, by Lemma \ref{lemmafindtemplate} we can without loss of generality assume that $\ff$ is simple and that its corner points are all multiples of $2\tspace$, where $\tspace = \kspace \tbeta$, $\kspace \in (d!)^3 (d^3)! \N$ is large to be determined, and $\tbeta$ is as above. After translating by $\tspace$, we can assume that the corner points are at odd multiples of $\tspace$ instead of even multiples. We can now define Alice's strategy as follows: Fix $\ell\in\N$ and let $k_\ell = 2\ell \kspace$, and suppose that the game has progressed to turn $k_\ell$. This means that the lattice $\Lambda^{(\ell)} \df \Lambda_{k_\ell}$ has already been defined.
\begin{itemize}
\item If $\Lambda^{(\ell)}$ is not a $\cspace$-match for $\ff$ at $t_\ell \df k_\ell \tbeta = 2\ell \kspace \tbeta$, then Alice resigns (plays arbitrarily) on turn $k_\ell$.
\item Suppose that $\Lambda^{(\ell)}$ is a $\cspace$-match for $\ff$ at $t_\ell$. Let $\Pert = \Pert^{(\ell)}$ be the element of $(d!)^3(d^3)! \tbeta\Z^d$ closest to $\Mink(\Lambda^{(\ell)}) - \ff(t_\ell)$ (using any tiebreaking mechanism). Then $\Pert$ is a perturbation vector satisfying \eqref{C0match} with $\Lambda = \Lambda^{(\ell)}$, $t_0 = t_\ell$, and $C_1 = (1/2)(d!)^3(d^3)! \tbeta$. Let $\gg = \gg^{(\ell)}$ be the $\Pert$-perturbation of $\ff$ at $t_\ell$. Then by Lemma \ref{lemmaC0match}, $\gg$ is $\tbeta$-integral and $\Lambda^{(\ell)}$ is a $C_1$-match for $\gg$ at $t_\ell$. This allows us to apply Lemma \ref{lemmaministrategy} (setting $k_1$ and $k_2$ Lemma \ref{lemmaministrategy} to be $k_\ell$ and $k_{\ell+1}$, respectively), and on turns $k_\ell,\ldots,k_{\ell+1}-1$ Alice plays the strategy given by this lemma.
\end{itemize}

We assume that Alice does not resign at turn $k_{\ell}$.
Let $t_\ell' \df (2\ell + 1)\tspace$. Since $\ff$ is linear on $I_0^{(\ell)} \df [t_\ell,t_\ell']$ and $I_1^{(\ell)} \df [t_\ell',t_{\ell+1}]$, 
it follows that $\gg$ is linear on $[t_\ell + s,t_\ell' - s]$ and $[t_\ell' + s,t_{\ell+1}]$. On the other hand, note that in the proof of Lemma \ref{lemmahperturbation}, \eqref{onJk} and the following bullet points imply that on each interval $J_k = [t_\ell,t_\ell + s]$ or $[t_\ell' - s,t_\ell' + s]$, $\gg$ only changes slopes at points $t$ such that $g_i < g_j$ on $(\min(J_k),t)$ and $g_i = g_j$ on $(t,\max(J_k))$ for some $i < j$. It follows that $\gg$ has at most $d-1$ maximal intervals of linearity on $J_k$, and thus at most $2d$ maximal intervals of linearity on $I_\ell$. In particular we have $\niol\leq 2d$ in Lemma \ref{lemmaministrategy}.

To compute the relation between $\Pert^{(\ell)}$ and $\Pert^{(\ell+1)}$, we let $\aa^{(\ell)}:\N\cup\{-1\}\to\R^d$ be the function defined in \6\ref{subsectionerrorcorrection}, so that $\aa^{(\ell)}(-1) = \Pert^{(\ell)}$. Then we have
\begin{align*}
\gg^{(\ell)} &= \ff + \aa^{(\ell)}(0) \text{ on } \w I_0^{(\ell)} = [t_\ell+\timeerror,t_\ell'-\timeerror],\\
\gg^{(\ell)} &= \ff + \aa^{(\ell)}(1) \text{ on } \w I_1^{(\ell)}\cup J_2^{(\ell)}\cup \w I_2^{(\ell)} = [t_\ell'+\timeerror,t_{\ell+1}'-\timeerror].
\end{align*}
The second equality follows from the fact that $t_2^{(\ell)} = t_{\ell+1}$ is not a corner point of $\ff$, so $\aa^{(\ell)}(1) = \aa^{(\ell)}(2)$ and thus Case 1 of the proof of Lemma \ref{lemmahperturbation} applies. In particular, we have
\[
\gg^{(\ell)}(t_{\ell+1}) = \ff(t_{\ell+1}) + \aa^{(\ell)}(1).
\]
On the other hand, according to part (ii) of Lemma \ref{lemmaministrategy}, $\Lambda^{(\ell+1)} \df \Lambda_{k_{\ell+1}}$ is a $C_2$-match for $\gg^{(\ell)}$ at $t_{\ell+1}$, where $C_2$ is a constant depending only on $C_1$. Thus, using (I) of Definition \ref{definitionCmatch}, we have
\[
\big\|\Mink(\Lambda^{(\ell+1)}) - [\ff(t_{\ell+1}) + \aa^{(\ell)}(1)]\big\|_{\infty} \leq C_2,
\]
and so by the definition of $\Pert^{(\ell+1)}$, we have
\begin{equation}
\label{hrecursive}
\|\Pert^{(\ell+1)} - \aa^{(\ell)}(1)\|_{\infty} \leq C_2 + C_1,
\end{equation}
assuming that Alice does not resign on turn $k_{\ell+1}$.

Assume now that there exists a constant $B > 0$ (which is independent of Bob's strategy) such that
\begin{equation}
\label{ETSsummary}
\|\Pert^{(\ell)}\|_{\infty} \leq B \text{ for all }\ell \text{ such that Alice does not resign on or before turn $k_\ell$}.
\end{equation}
Fix $\ell$ such that Alice does not resign on or before turn $k_\ell$. Then $\Lambda^{(\ell+1)}$ is a $C_2$-match for $\gg^{(\ell)}$ at $t_{\ell+1}$, and is therefore a $(C_2+B)$-match for $\ff$ at $t_{\ell+1}$, since $\|\aa^{(\ell)}(1)\|_{\infty} \leq \|\Pert^{(\ell)}\|_{\infty} \leq B$. Letting $k_\tspace$ be large enough so that $\tspace \geq 4\dimprod d!(C_2+B)$, we see that $\Lambda^{(\ell+1)}$ is a $\cspace$-match for $\ff$ at $t_{\ell+1}$, and thus Alice does not resign on turn $k_{\ell+1}$. So by induction Alice never resigns.

So for all $\ell \in \N$, $\Lambda^{(\ell)}$ is a $C$-match for $\ff$ at $t_\ell$, where $C \df C_2 + B$. It follows from Definition \ref{definitionCmatch} that for all $\ell \in \N$
\[
\| \hh(\Lambda_\ell) - \ff(t_\ell) \|_\infty \leq C
\]
and so using Lemma \ref{lemmaMS} gives us that the final outcome $X_\infty$ (as defined by \eqref{outcome2}) is in the target set $\DD(\ff,C_\epsilon)$, where $C_\epsilon$ is a constant depending on $C$ (and thus on $\epsilon$ as per the next paragraph). 

To compute Alice's score, we use part (iii) of Lemma \ref{lemmaministrategy} to get that
\begin{align*}
\Delta(\AA,[0,k_\ell])
= \frac1\ell \sum_{j = 0}^{\ell - 1} \Delta(\AA,[k_j,k_{j+1}])
&= \frac1\ell \sum_{j = 0}^{\ell - 1} \delta(\gg^{(j)},[t_j,t_{j+1}]) + O\left(\tfrac1{\tbeta} + \tfrac1{2\kspace}\right)\\
&= \frac1\ell \sum_{j = 0}^{\ell - 1} \delta(\ff,[t_j,t_{j+1}]) + O\left(\tfrac1{\tbeta} + \tfrac1{2\kspace} + \tfrac{s}{\eta}\right)\\
&= \delta(\ff,[0,t_\ell]) + O\left(\tfrac1{\tbeta} + \tfrac1{\kspace} + \tfrac{B}{\eta}\right)
\end{align*}
and thus after taking liminfs on both sides we have
\begin{align*}
\underline\delta(\AA) &= \underline\delta(\ff) + O\left(\tfrac1{\tbeta} + \tfrac1{\kspace} + \tfrac{B}{\eta}\right).
\end{align*}
Given $\epsilon > 0$, we can choose $\beta$ small enough (and so $\tbeta$ large enough) and $\eta$ (and thus $\kspace$) large enough so that the last term is less than $\epsilon$, which shows that $\underline\delta(\AA) \geq \underline\delta(\ff) - \epsilon$, and thus $\DD(\ff,C_\epsilon)$ is $(\underline\delta(\ff)-\epsilon)$-dimensionally Hausdorff $\beta$-winning. Applying Theorem \ref{theoremHPgame} shows that $\HD(\DD(\ff,C_\epsilon)) \geq \underline\delta(\ff)-\epsilon$. 
Now in the above argument we can replace all $\underline\delta$s by $\overline\delta$s, and all liminfs by limsups, to prove that  $\PD(\DD(\ff,C_\epsilon)) \geq \overline\delta(\ff)-\epsilon$.
This completes the proof of \eqref{lowerbound} assuming \eqref{ETSsummary}. In what follows we will prove \eqref{ETSsummary}.

The basic idea is as follows: Since the perturbation vectors $\Pert^{(\ell)}$ satisfy the approximate functional equation \eqref{hrecursive} (where $\aa^{(\ell)}(-1) = \Pert^{(\ell)}$), we can view each vector $\Pert^{(\ell+1)}$ approximately as the vector $\Pert^{(\ell)}$ with some ``mixing'' done to it, in accordance with \6\ref{subsectionerrorcorrection}. Since the perturbation vectors satisfy the approximate relation $\sum_1^d \pert_q^{(\ell)} \asymp_\plus 0$, this mixing process will tend to cause $\pert_q^{(\ell)} \asymp_\plus 0$ for all $q = 1,\ldots,d$, but only if all $q$ are mixed together. Since it may be a long time between times when some $q$ is mixed with $q+1$, we need to keep track of what is happening on long intervals where $q$ and $q+1$ do not mix. This leads to our next definition:

Given $q = 0,\ldots,d$, an interval $[\ell_1,\ell_2]$ will be called a \emph{$q$-interval} if either
\[
\label{qinterval}
f_q < f_{q+1} \text{ on }(t_{\ell_1-1}',t_{\ell_2}')
\]
or
\[
f_q' = f_{q+1}' = c \text{ on } (t_{\ell_1-1}',t_{\ell_2}'), \text{ where } c \in \exceptionalset \cdot
\]
Note that every interval is both a $0$-interval and a $d$-interval (according to our convention that $f_0 = -\infty$ and $f_{d+1} = +\infty$).
\begin{claim}
\label{claiml1l2}
Fix $q=1,\ldots,d-1$ and let $[\ell_1,\ell_2]$ be a $q$-interval. Then there exists a constant $\alpha = \alpha(q,\ell_1,\ell_2)$ such that for all $\ell = \ell_1,\ldots,\ell_2$, we have
\begin{equation}
\label{l1l2}
\sum_{i=1}^q \pert_i^{(\ell)} \asymp_{\plus,\beta} \alpha(q,\ell_1,\ell_2).
\end{equation}
\end{claim}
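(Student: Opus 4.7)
The plan is to interpret the partial sum $\sum_{i=1}^q \pert_i^{(\ell)}$ geometrically. By Minkowski's second theorem (Theorem \ref{mink2}) applied to $\Lambda^{(\ell)}$, combined with Lemma \ref{lemmaminkowskibasis2} and the gap $f_q<f_{q+1}$ in Case 1 (respectively the slope condition in Case 2) that defines the $q$-interval, there exists a minimizing $q$-dimensional $\Lambda^{(\ell)}$-rational subspace $V_q^{(\ell)}$ with $\log\|V_q^{(\ell)}\| \asymp_+ \sum_{i=1}^q \mink_i(\Lambda^{(\ell)})$, and it is uniquely determined up to $O_\beta(1)$ in covolume. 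Combining this with $\Mink(\Lambda^{(\ell)}) \asymp_{+,\beta} \ff(t_\ell) + \Pert^{(\ell)}$ (which follows from Alice having not yet resigned) gives
\[
\sum_{i=1}^q \pert_i^{(\ell)} \;\asymp_{+,\beta}\; \log\|V_q^{(\ell)}\| - F_q(t_\ell),
\]
so it suffices to prove that $\log\|V_q^{(\ell)}\| - F_q(t_\ell)$ varies by only $O_\beta(1)$ across the $q$-interval $[\ell_1,\ell_2]$.

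Alice's strategy in Lemma \ref{lemmaministrategy}, together with the uniqueness of $V_q^{(\ell)}$, propagates the flag deterministically along Bob's responses, so that $V_q^{(\ell_2)} = M_* V_q^{(\ell_1)}$, where
\[
M_* \df \prod_{k=k_{\ell_1}}^{k_{\ell_2}-1} g\, u_{\bfB_k}.
\]
The decisive step is a telescoping factorization of $M_*$: repeatedly applying the commutation relation $u_X g = g u_{\beta X}$ (derived from $g u_X = u_{X/\beta} g$ and conjugation), one proves by induction that
\[
M_* \;=\; g^N\, u_{Y_*}, \qquad Y_* \df \sum_{j=0}^{N-1} \beta^{j}\,\bfB_{k_{\ell_1}+j}, \qquad N \df k_{\ell_2}-k_{\ell_1}.
\]
Because the coefficients decay geometrically and $\|\bfB_k\|\leq 1-\beta$, we have the uniform bound $\|Y_*\|\leq (1-\beta)\sum_{j\geq 0}\beta^{j}=1$, crucially independent of $\ell_2-\ell_1$. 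Hence $V_q^{(\ell_2)} = g_T\, u_{Y_*} V_q^{(\ell_1)}$ with $T\df t_{\ell_2}-t_{\ell_1}$.

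Setting $W \df u_{Y_*} V_q^{(\ell_1)}$, boundedness of $\|Y_*\|$ yields $\log\|W\| = \log\|V_q^{(\ell_1)}\| + O_\beta(1)$. Since $g_T$ preserves $\vert$ and Alice's strategy enforces $\dim(V_q^{(\ell_2)}\cap\vert) = \dir_-$, we deduce $\dim(W\cap\vert)=\dir_-$. Applying Lemma \ref{lemmapushoffvert} to $g_T$ acting on $W$ then produces
\[
\log\|V_q^{(\ell_2)}\| - \log\|V_q^{(\ell_1)}\| \;=\; F_q(t_{\ell_2}) - F_q(t_{\ell_1}) + O_\beta(1),
\]
and combining with the geometric interpretation above yields \eqref{l1l2} with $\alpha(q,\ell_1,\ell_2)\df\sum_{i=1}^q \pert_i^{(\ell_1)}$ (applying the same estimate with $\ell_2$ replaced by any intermediate $\ell$). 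The main obstacle I anticipate is verifying the dimensional hypothesis of Lemma \ref{lemmapushoffvert} for $W$: while Alice's ``good'' condition controls $u_{\bfB_k+\bfC}$ at each individual turn, transferring that control to the aggregated matrix $u_{Y_*}$---to obtain $\dir_-\geq \sup_{\|\bfC\|\leq\beta}\dim(u_{Y_*+\bfC}V_q^{(\ell_1)}\cap\vert)$---requires an upper-semicontinuity argument combined with a subdivision of the $q$-interval into maximal sub-intervals of constant $(\dir_+,\dir_-)$ and a careful aggregation of the contributions of $F_q'$ across them. Case 2 of the $q$-interval definition, with $f_q' = f_{q+1}'\in\exceptionalset$, is handled analogously, the exceptional-slope condition supplying the required dimensional control directly.
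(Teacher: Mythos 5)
Your Case 1 argument is, after unwinding, the paper's own: the reduction $\sum_{i\le q}\pert_i^{(\ell)}\asymp_{\plus,\beta}\log\|V_q^{(\ell)}\|-F_q(t_\ell)$ via Minkowski's second theorem and Lemma \ref{lemmaminkowskibasis}, the propagation of the minimizing subspace (legitimate here because the gap $f_q<f_{q+1}$ persists), the commutation \eqref{danisemi} giving $M_*=g^Nu_{Y_*}$ with $\|Y_*\|\le 1$, and the per-turn goodness conditions \eqref{good1}--\eqref{good2} (whose $2\beta^{1/2}$ margin is exactly what absorbs the geometric tail of $Y_*$) feeding into Lemma \ref{lemmapushoffvert}. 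One correction of emphasis: a \emph{single} application of Lemma \ref{lemmapushoffvert} over the whole interval with one pair $(\dir_+,\dir_-)$ cannot output $F_q(t_{\ell_2})-F_q(t_{\ell_1})$, because a $q$-interval only guarantees $f_q<f_{q+1}$, not linearity of $F_q$; transfers may occur inside it. The subdivision into intervals of constant slope that you mention parenthetically is therefore not an optional refinement but the actual structure of the proof, and once you carry it out (with goodness re-verified on each piece, as in the paragraph around \eqref{Vqk2}) you have reproduced the paper's Case 1 verbatim.

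The genuine gap is your final sentence. The second alternative in the definition of a $q$-interval, $f_q'=f_{q+1}'\in\exceptionalset$, is precisely the case where $f_q=f_{q+1}$ is permitted, and there your entire framework collapses: with no gap between $\lambda_q(\Lambda^{(\ell)})$ and $\lambda_{q+1}(\Lambda^{(\ell)})$, the minimizing $q$-dimensional rational subspace is \emph{not} unique up to $O_\beta(1)$ in covolume, does \emph{not} satisfy $V_q^{(\ell+1)}=gu_{\bfB_\ell}V_q^{(\ell)}$, and Alice's goodness conditions at level $q$ refer to interpolated subspaces $V_{p+j}^{(k)}$ that are rebuilt at \emph{every turn} from a Minkowski basis of the quotient $\Gamma_k$ in the proof of Lemma \ref{lemmaministrategy} -- so there is no single propagated $q$-dimensional subspace to which the two-sided version of Lemma \ref{lemmapushoffvert} can be applied, and the exceptional slope does not "supply dimensional control" for any such object. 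The paper's Case 2 is a different mechanism: it works at the levels $p$ and $r$ bounding the interval of equality containing $q$ (after reducing to constant $p,r$, using that they are monotone), where gaps exist and the Case 1 propagation applies, and then sandwiches $\log\|V_q^{(\ell)}\|$ between the two using the quotient $V_q^{(\ell)}/V_p^{(\ell)}$ and the operator-norm bound $\|h\|\lesssim\exp\big((t_{\ell'}-t_\ell)/m\big)$; the extremality of the slope is used only at the very end, through $z_r-z_p=(r-p)/m$, to force the upper and lower estimates to agree at the common value $z_q=z_p+(q-p)/m=z_r-(r-q)/m$. Nothing in your outline produces this sandwich, so as written the proposal proves the claim only in the first case and leaves the second -- which is the one actually needed to control the coordinates of $\Pert^{(\ell)}$ inside equality blocks of exceptional slope -- unproved.
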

\begin{proof}
First suppose that $f_q < f_{q+1} \text{ on }(t_{\ell_1-1}',t_{\ell_2}')$. By the definition of $\Pert^{(\ell)}$, we have
\[
\sum_{i=1}^q \pert_i^{(\ell)} \asymp_{\plus,\beta} \sum_{i=1}^q h_i(\Lambda^{(\ell)}) - F_q(t_\ell).
\]
Let $V^{(\ell)}_q$ be as in Lemma \ref{lemmaminkowskibasis} (applied to the lattice $\Lambda^{(\ell)}$), and recall we have 
\[
\sum_{i=1}^q h_i(\Lambda^{(\ell)}) \asymp_{\plus} \log\|V^{(\ell)}_q\|.
\]
Since $f_q < f_{q+1}$ on $(t_{\ell_1-1}',t_{\ell_2}')$, we have $g u_{\bfB_\ell} V^{(\ell)}_q = V^{(\ell+1)}_q$ for all $\ell$. 

Since we know that Alice is following her strategy, as defined in the proof of Lemma \ref{lemmaministrategy}, then for each turn $k = k_{\ell_1} - \kspace,\ldots,k_{\ell_2} + \kspace$ the matrix $X_k$ is good on turn $k$ (as defined in \eqref{good1}-\eqref{good2}\Footnote{Technically the notion of being good on turn $k$ depends on the choice of $\ff$ in Lemma \ref{lemmaministrategy}, which in the current situation is $\ff = \gg$. However, by \eqref{Spmgtft} the notion of being good on turn $k$ is the same for $\ff = \ff$ and $\ff = \gg$.}). Thus the hypotheses of Lemma \ref{lemmapushoffvert} are satisfied (with $\dir_\pm = \dir_\pm(q) = \#(S_\pm \cap [0,q])$), and it then follows that 
\[
\log\|V^{(\ell')}_q\| - \log\|V^{(\ell)}_q\| \asymp_{\plus,\beta} (t_{\ell'} - t_\ell) z
\]
for any $\ell < \ell'$ such that $F_q' = z$ on $(t_\ell,t_{\ell'})$. Note that a similar argument appeared earlier in the paragraph containing \eqref{Vqk2}.
Now since $F_q$ is piecewise linear on $(t_{\ell_1-1}',t_{\ell_2}')$ with a bounded number of intervals of linearity, it follows that
\[
\log\|V^{(\ell)}_q\| - \log\|V^{(\ell_1)}_q\| \asymp_{\plus,\beta} \int_{t_{\ell_1}}^{t_\ell} F_q' = F_q(t_\ell) - F_q(t_{\ell_1}).
\]
Thus, letting
\[
\alpha(q,\ell_1,\ell_2) \df \log\|V^{(\ell_1)}_q\| - F_q(t_{\ell_1})
\]
completes the proof of the claim in the case $f_q < f_{q+1} \text{ on }(t_{\ell_1-1}',t_{\ell_2}')$.

Now suppose that $f_q = f_{q+1}$ and $f_q' = f_{q+1}' = \tfrac1m$ on $I_0 = (t_{\ell_1-1}', t_{\ell_2}')$ (the case where $f_q' = f_{q+1}' = -\tfrac1n$ on $I_0$ proceeds similarly). For each interval of linearity $I\subset I_0$ and for each $t\in I$, let $\OC{p(t)}{r(t)}_\Z$ be the interval of equality for $\ff$ on $I$ that contains $q$. Since $\tfrac1m$ is the maximum possible derivative for any $f_j$ and since $f_q' = f_{q+1}' = \tfrac1m$ on $I_0$, it follows that $f_q = f_{q+1}$ can only merge with $f_j$ if $j > q+1$, and can only split from $f_j$ if $j < q$; equivalently, $p$ and $r$ are increasing functions. In fact, since $\ff$ is simple, such merges and splits are not possible at all, as they would require a simultaneous transfer to account for the fact that $f_j'(t^-) < \frac1m = f_j'(t^+)$ (in the case of a merge) or $f_j'(t^+) > -\frac1n = f_j'(t^-)$ (in the case of a split). Thus, $p$ and $r$ are constant. But then by the same logic as before we have
\begin{align*}
\log\|V^{(\ell')}_p\| - \log\|V^{(\ell)}_p\| 
&\asymp_{\plus,\beta} (t_{\ell'} - t_\ell) z_p,\\
\log\|V^{(\ell')}_r\| - \log\|V^{(\ell)}_r\| 
&\asymp_{\plus,\beta} (t_{\ell'} - t_\ell) z_r,
\end{align*}
where $z_p,z_r \in [-\tfrac1n,\tfrac1m]$ satisfy 
\begin{equation}
\label{zrzp}
z_r - z_p = \frac{r-p}{m} \cdot
\end{equation}
Now let
\[
h \df (g u_{\bfB_{\ell'-1}})\cdots (g u_{\bfB_{\ell}})
\]
and note that $h V_j^{(\ell)} = V_j^{(\ell')}$. Furthermore, we have
\[
\|h\| \lesssim \exp \left(2(\ell'-\ell)\kspace \cdot\tfrac1m \right) = \exp\left((t_{\ell'} - t_\ell)\cdot\tfrac1m\right)
\]
where $\|\cdot\|$ denotes the operator norm. Letting $\wbar h:V_q^{(\ell)} / V_p^{(\ell)} \to V_q^{(\ell')} / V_p^{(\ell')}$ be the induced map, we have $\|\wbar h\| \leq \|h\|$ and thus
\[
\|V_q^{(\ell')} / V_p^{(\ell')}\| \lesssim \|\wbar h\|^{q-p} \|V_q^{(\ell)} / V_p^{(\ell)}\|
\lesssim \exp\left((t_{\ell'} - t_\ell)\tfrac{q-p}{m}\right) \| V_q^{(\ell)} / V_p^{(\ell)} \|.
\]
Since the covolume of a quotient space is the quotient of the covolumes, taking logarithms gives
\begin{align*}
\log\|V_q^{(\ell')}\| - \log \|V_q^{(\ell)}\| 
&\lesssim_\plus \log\|V_p^{(\ell')}\| - \log \|V_p^{(\ell)}\| + (t_{\ell'} - t_\ell)\tfrac{q-p}m\\
&\asymp_\plus (t_{\ell'} - t_\ell)\left(z_p + \tfrac{q-p}m\right)
\end{align*}
A similar argument shows that
\[
\log\|V_q^{(\ell')}\| - \log \|V_q^{(\ell)}\| \gtrsim_\plus  (t_{\ell'} - t_\ell)\left(z_r - \tfrac{r-q}m\right).
\]
Next, we observe that \eqref{zrzp} can be rearranged to yield
\[
z_q \df z_p + \frac{q-p}m =  z_r - \frac{r-q}m
\]
and thus we have
\[
\log\|V_q^{(\ell')}\| - \log \|V_q^{(\ell)}\| \asymp_\plus (t_{\ell'} - t_\ell) z_q.
\]
The proof can be continued in the same way as in the earlier case. 
A similar argument applies if $f_q' = f_{q+1}' = -\tfrac1n$ on $(t_{\ell_1-1}', t_{\ell_2}')$. This concludes the proof of Claim \ref{claiml1l2}.
\end{proof}

Now fix $\ell\in\N$ and $q = 1,\ldots,d-1$. If $\ell$ is contained in a $q$-interval then we let
\[
\alpha(q,\ell) = \alpha(q,\ell_1,\ell_2),
\]
where $[\ell_1,\ell_2]$ is the longest $q$-interval containing $\ell$. Otherwise, we let $\alpha(q,\ell) = *$. Next, we let $\cc^{(\ell)} \in \R^d$ be the unique vector such that
\begin{align} \label{equality}
\sum_{i=1}^q c_i^{(\ell)} &= \alpha(q,\ell) &\text{ when } \alpha(q,\ell)\in\R,\\
c_q^{(\ell)} &= c_{q+1}^{(\ell)} &\text{ when } \alpha(q,\ell) = *.
\end{align}
Then by \eqref{akrecursive} and \eqref{l1l2}, we have 
\[
\cc^{(\ell)} \asymp_{\plus,\beta} \Pert^{(\ell)}.
\]

For convenience, we introduce a slightly modified version of intervals of equality (see Definition \ref{definitiondimtemplate}). We call an interval $\OC pq_\Z$ an \emph{interval of mixing} for $\ff$ on $I$ if either
\begin{itemize}
\label{intervalmixing}
\item $\OC pq_\Z$ is an interval of equality for $\ff$ on $I$, and $f_q' \notin \exceptionalset$ on $I$, or
\item $q = p+1$ and $f_q' \in \exceptionalset$ on $I$.
\end{itemize}
Note that if $\OC pq_\Z$ is an interval of mixing for $\ff$ on $I_\ell \df (t_{\ell-1}',t_\ell')$, then $[\ell,\ell]$ is both a $p$-interval and a $q$-interval.

Let $\OC pq_\Z$ be an interval of mixing for $\ff$ on $I_\ell$. Then by \eqref{akrecursive}, we have $a_i^{(\ell-1)}(1) = a$ for all $i \in \OC pq_\Z$, where $a$ is a constant. By \eqref{hrecursive}, we have $|\pert_i^{(\ell)} - a| \leq C_2+C_1$ for all $i\in \OC pq_\Z$, and thus by \eqref{akrecursive}, we have $|a_i^{(\ell)}(0) - a| \leq C_2 + C_1$ for all $i\in \OC pq_\Z$. On the other hand, for $i = 1,\ldots,d$ such that $f_i'\in \exceptionalset$ on $I_\ell$, \eqref{akrecursive} implies that $a_i^{(\ell)}(0) = a_i^{(\ell)}(-1) = \pert_i^{(\ell)}$. Thus
\[
\|\aa^{(\ell)}(0) - \Pert^{(\ell)}\| \leq 2(C_2+C_1)
\]
and consequently $\aa^{(\ell)}(0) \asymp_{\plus,\beta} \cc^{(\ell)}$. On the other hand, by \eqref{hrecursive} we have $\aa^{(\ell)}(1) \asymp_{\plus,\beta} \cc^{(\ell+1)}$, so by \eqref{akrecursive}, for every interval of mixing $\OC pq_\Z$ for $\ff$ on $I_{\ell+1}$, we have
\begin{equation}
\label{hiPhi}
c_i^{(\ell+1)} \asymp_{\plus,\beta}
\frac{1}{q-p} \sum_{j=p+1}^q c_j^{(\ell)}.
\end{equation}
(This is the reason we use the term ``interval of mixing''; the quantities $(c_i^{(\ell)})$ get ``mixed'' within the interval of mixing.) Let $B$ be a large number, fix $\epsilon\in \{\pm1\}$, and write $\wbar c_i^{(\ell)} = B + \epsilon c_i^{(\ell)}$. We claim that there exist constants $C(1),\ldots,C(d) \geq 0$, independent of $B$, such that if $B \geq \max_i C(i)/i$, then for all $j < k$ and $\ell\in\N$ we have
\begin{equation}
\label{kjBb}
\sum_{i = j + 1}^k \wbar c_i^{(\ell)} \geq C(k-j).
\end{equation}
Indeed, when $\ell = 0$, we have $\cc^{(0)} = \0$ and thus $\sum_{i=j+1}^k \wbar c_i^{(\ell)} = (k-j)B \geq C(k-j)$. For the inductive step, fix $\ell\in\N$ and suppose that \eqref{kjBb} holds for all $j < k$. Fix $j < k$, and we will show that
\begin{equation}
\label{kjBb2}
\sum_{i = j + 1}^k \wbar c_i^{(\ell+1)} \geq C(k-j).
\end{equation}
{\bf Case 1.} Suppose that $[\ell,\ell+1]$ is both a $j$-interval and a $k$-interval. Then $\alpha(j,\ell) = \alpha(j,\ell+1)\in\R$ and similarly for $k$. So
\[
\sum_{i = j + 1}^k c_i^{(\ell)} = \alpha(k,\ell) - \alpha(j,\ell) = \alpha(k,\ell+1) - \alpha(j,\ell+1) = \sum_{i = j + 1}^k c_i^{(\ell+1)}
\]
and thus
\[
\sum_{i = j + 1}^k \wbar c_i^{(\ell+1)} = \sum_{i = j + 1}^k \wbar c_i^{(\ell)} \geq C(k-j).
\]
{\bf Case 2.} Suppose that $[\ell,\ell+1]$ is a $j$-interval but not a $k$-interval. Let $\OC pq_\Z \ni k$ be an interval of mixing for $\ff$ on either $I_\ell$ or $I_{\ell+1}$. Then by \eqref{hiPhi}, we have
\begin{equation}
\label{hiPhicorollary}
c_i^{(\ell+1)} \asymp_{\plus,\beta} \frac1{q-p} \sum_{i=p+1}^q c_i^{(\ell)} \text{ for all } i\in \OC pq_\Z.
\end{equation}
In the latter case this follows directly from \eqref{hiPhi}, while if $\OC pq_\Z$ is an interval of mixing for $\ff$ on $I_\ell$, then by \eqref{hiPhi} we have $c_i^{(\ell)} \asymp_{\plus,\beta} c$ for all $i\in \OC pq_\Z$ for some constant $c$, and applying \eqref{hiPhi} again gives \eqref{hiPhicorollary}. On the other hand, since $[\ell,\ell+1]$ is a $j$-interval we have $j\leq p$, and thus the previous case gives
\[
\sum_{i=j+1}^p \wbar c_i^{(\ell)} = \sum_{i=j+1}^p \wbar c_i^{(\ell+1)}.
\]
So
\begin{align*}
\sum_{i=j+1}^k \wbar c_i^{(\ell+1)}
&\asymp_{\plus,\beta} \sum_{i=j+1}^p \wbar c_i^{(\ell)}
+ \frac{k-p}{q-p} \sum_{i=p+1}^q \wbar c_i^{(\ell)}\\
&\geq_\pt \frac{q-k}{q-p}C(p-j) + \frac{k-p}{q-p} C(q-j) \by{\eqref{kjBb}}\\
&\geq_\pt 0 + \frac1d C(q-j) \geq \frac1d C(k+1-j).
\end{align*}
Let $C$ denote the implied constant of the asymptotic, and let $C(1),\ldots,C(d)$ be defined by the recursive formula
\begin{align*}
C(1) &\df 0,&
C(k+1) &\df d(C(k)+C).
\end{align*}
Then we have demonstrated \eqref{kjBb2}, completing the inductive step.\\

\noindent {\bf Case 3.} If $[\ell,\ell+1]$ is a $k$-interval but not a $j$-interval, or is neither a $j$-interval nor a $k$-interval, then the proof is similar to Case 2. We leave the details to the reader.\\

This completes the proof of \eqref{kjBb}, which in turn implies \eqref{ETSsummary}, thereby completing the proof of \eqref{lowerbound}. Thus, we have completed proving the lower bounds in Theorem \ref{theoremvariationaluniform}.

\draftnewpage
\section{Proof of Theorem \ref{theoremvariationaluniform}, upper bound}
\label{sectionupper}
Let $\SS$ be a class of functions from $\Rplus$ to $\R^d$. We claim that for all $\epsilon > 0$ there exists $C_\epsilon > 0$ such that
\begin{align*}
\HD(S) &\leq \sup_{\ff\in\NN(\SS,C_\epsilon)\cap \TT_\dims} \underline\delta(\ff) + \epsilon,&
\PD(S) &\leq \sup_{\ff\in\NN(\SS,C_\epsilon)\cap \TT_\dims} \overline\delta(\ff) + \epsilon,
\end{align*}
where $S = \DD(\SS)$ is as in \eqref{MSdef}. As in the proof of the lower bounds, we will play the modified Hausdorff and packing games with target set $S$ and parameter $0<\beta<1$. This time, we will define a strategy for Bob for sufficiently small $\beta$.\\

{\bf Definition of the strategy.} Suppose that the game has progressed to turn $\turn$, with corresponding lattice $\Lambda_\turn$ as in \6\ref{sectiongamesgeometry}. Let $\{\rr_1,\ldots,\rr_d\}$ be a Minkowski basis of $\Lambda_\turn$ (cf. Lemma \ref{lemmaminkowskibasis}), and for each $q = 0,\ldots,d$ let $V_q = \sum_{i=1}^q \R\rr_i$. Essentially, Bob's strategy will be to ``push the subspaces $V_q$ away from $\vert$ as much as possible given Alice's move''. To make this precise, fix $\bfB\in B_\MM(\0,1-\beta)$, and for each $q = 0,\ldots,d$ let
\begin{align}
\label{Dqdef}
\dir_q^- = \dir_q^-(k,\bfB) &\df \sup_{\|\bfC\| \leq 2\beta} \dim(u_{\bfB + \bfC} V_q\cap \vert),&
\dir_q^+ &\df q - \dir_q^-.
\end{align}
Let
\begin{align*}
S_+ = S_+(k,\bfB) &\df \{q = 1,\ldots,d : L_q^+ = L_{q-1}^+ + 1 \text{ and } L_q^- = L_{q-1}^-\},\\
S_- = S_-(k,\bfB) &\df \{q = 1,\ldots,d : L_q^- = L_{q-1}^- + 1 \text{ and } L_q^+ = L_{q-1}^+\},
\end{align*}
and note that $S_+\cup S_- = \dset$ and $\#(S_\pm) = d_\pm$, where $d_+ = \pdim$, $d_- = \qdim$ (see \eqref{slopeset}). Also note that $L_q^\pm = \#(S_\pm \cap \OC 0q_\Z)$ for all $q = 1,\ldots,d$. Finally, let $\delta(\turn,\bfB) = \delta(S_+,S_-)$, where as in \eqref{deltaT+T-},
\[
\delta(T_+,T_-) \df \#\{(i_+,i_-) \in T_+\times T_-: i_+ < i_-\}.
\]

Bob's strategy on turn $\turn$ can now be given as follows: If Alice makes the move $A_\turn \subset B_\MM(\0,1-\beta)$, then Bob responds by choosing $\bfB_\turn \in A_\turn$ so as to maximize $\delta(\turn,\bfB_\turn)$. Note that larger values of $\delta(\turn,\bfB_\turn)$ correspond to larger values of $L_q^+$ and correspondingly smaller values of $L_q^-$, which in turn correspond to the intuitive idea of ``pushing $V_q$ away from $\vert$ (by a distance of at least $2\beta$)''.

The following claim will be used to relate scores in the Hausdorff and packing games with the dimensions of templates.

\begin{claim}
\label{claimAkupper}
For all $\turn$ we have
\[
\#(A_\turn) \lesssim \beta^{-\delta(\turn,\bfB_\turn)}.
\]
\end{claim}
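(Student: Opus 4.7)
The plan is to argue by a local parameter count: once Bob has chosen $\bfB_\turn$, every other element $\bfB\in A_\turn$ is pinned to lie within a ``thin slab'' around $\bfB_\turn$ in $mn - \delta(\turn,\bfB_\turn)$ of the $mn$ coordinate directions of $\MM$, and is ``free'' in only the remaining $\delta(\turn,\bfB_\turn)$ directions. A standard $3\beta$-separation count then yields the bound.

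I would begin by setting up coordinates on $\MM$ centered at $\bfB_\turn$ and adapted to the Minkowski flag $(V_q)$ and the contracting subspace $\vert$, in the spirit of the almost-orthonormal construction in Claim~\ref{claimorthobasis}. This identifies $\MM$ with the space of $S_+\times S_-$ arrays, where $S_\pm = S_\pm(\turn,\bfB_\turn)$, so that the coordinate direction $E_{i_+,i_-}$ parameterizes how the $i_+$-th expanding basis vector is coupled to the $i_-$-th contracting one. I would then classify the $mn$ coordinate directions into \emph{free} and \emph{constrained}. The $\delta(\turn,\bfB_\turn)$ pairs with $i_+>i_-$ are free: moving along them preserves the combinatorial type of $\bfB_\turn$, and hence preserves $\delta$. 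The complementary $mn-\delta(\turn,\bfB_\turn)$ pairs with $i_+<i_-$ are constrained: moving $\bfB_\turn$ by more than $O(\beta)$ in such a direction would change some $L_q^-$ and, after re-identifying $S_+$ and $S_-$, strictly decrease $\delta$. Since Bob chose $\bfB_\turn$ to maximize $\delta$ over $A_\turn$, every other $\bfB\in A_\turn$ must lie within $O(\beta)$ of $\bfB_\turn$ in each constrained direction, trapping $A_\turn$ in the claimed thin region.

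The main technical obstacle will be making the classification of free versus constrained directions rigorous, because the quantities $L_q^-(\turn,\bfB+\bfC)$ are defined as suprema over $2\beta$-balls and hence vary in discrete jumps as $\bfC$ moves. The needed quantitative control is provided by the same ``push off $\vert$'' analysis that underlies Lemma~\ref{lemmapushoffvert}, which tracks how $\dim(u_{\bfB+\bfC} V_q \cap \vert)$ reacts to perturbations along each coordinate direction. A secondary, minor issue is that different elements of $A_\turn$ may realize different combinatorial types $(S_+,S_-)$ with the same value of $\delta$; since the number of such types is bounded in terms of $d$ alone, one partitions $A_\turn$ by type and runs the argument on each piece, absorbing the multiplicity into the implied constant.
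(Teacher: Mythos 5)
There is a genuine gap at the heart of your argument, in the step ``Since Bob chose $\bfB_\turn$ to maximize $\delta$ over $A_\turn$, every other $\bfB\in A_\turn$ must lie within $O(\beta)$ of $\bfB_\turn$ in each constrained direction.'' Maximality of $\delta(\turn,\bfB_\turn)$ only tells you that $\delta(\turn,\bfB)\leq\delta(\turn,\bfB_\turn)$ for every $\bfB\in A_\turn$; it does not forbid other elements from having strictly smaller $\delta$. So even granting your claim that moving more than $O(\beta)$ in a ``constrained'' direction strictly decreases $\delta$, no contradiction with Bob's choice arises, and the confinement of $A_\turn$ to a thin slab around $\bfB_\turn$ does not follow. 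Worse, the confinement is false as a geometric statement: the condition $\delta(\turn,\bfB)\leq\delta$ is (after partitioning by combinatorial type, as you do) the condition that $\bfB$ lies in the $2\beta$-neighborhood of a Schubert-type algebraic set $\ZZ_q=\{\bfB:\dim(u_\bfB V_q\cap \vert)\geq \what\dir_q^-\}$ for each $q$. These are in general curved varieties of positive dimension (for $m,n\geq 2$ the first incidence condition is the vanishing of a determinant that is nonlinear in the entries of $\bfB$), and Alice may legitimately choose $A_\turn$ to be a $3\beta$-separated set of $\asymp\beta^{-\delta}$ points spread along such a variety at unit-scale mutual distances, nowhere near $\bfB_\turn$ in your ``constrained'' coordinates. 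So the bound is tight, but not because points cluster near $\bfB_\turn$; no linear coordinate slab centered at $\bfB_\turn$ contains the admissible set.

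The correct statement, and what the paper proves, is that each piece $A_\turn(T_+,T_-)$ lies in a $C\beta$-neighborhood of the intersection $\ZZ=\bigcap_q\ZZ_q$, and that $\dim(\ZZ)\leq\delta(T_+,T_-)\leq\delta$. This requires three ingredients your sketch does not supply: (i) a genuine argument that $\bigcap_q\NN(\ZZ_q,2\beta)\subset\NN(\ZZ,C\beta)$ — being $2\beta$-close to each $\ZZ_q$ separately does not formally imply being $C\beta$-close to their intersection, and the paper handles this by assembling, from the nearby points $\bfB_q\in\ZZ_q$, an orthogonal decomposition of $\vert$ and a single matrix $\bfD\in\ZZ$ with $\|\bfD-\bfB\|\leq C\beta$; (ii) a covering/separation bound for neighborhoods of algebraic sets of bounded diagram (the Yomdin--Comte input), which converts $\dim(\ZZ)$ into the count $\lesssim\beta^{-\dim(\ZZ)}$ for a $3\beta$-separated subset; and (iii) the Grassmannian dimension count identifying $\dim(\ZZ)$ with the number of pairs $(i_+,i_-)\in T_+\times T_-$ in the favorable order — this last point is the only place where your free/constrained pair count survives, but it must be carried out on the Schubert variety in $\GG(d,n)$ (via the chart $\bfB\mapsto u_{-\bfB}\vert$, reducing to coordinate subspaces at a smooth point), not as a parameter count in a box around $\bfB_\turn$. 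Lemma \ref{lemmapushoffvert}, which you invoke for the quantitative control, concerns growth of covolumes under $g_t$ and does not provide the neighborhood-intersection or counting statements needed here.
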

\begin{proof}
Let $\delta = \delta(\turn,\bfB_\turn)$. Clearly,
\[
A_\turn \subset \{\bfB : \delta(\turn,\bfB) \leq \delta\} \subset \bigcup_{T_+,T_-} \big\{\bfB : \dir_q^-(\bfB) \geq \#(T_-\cap \OC 0q_\Z) \text{ for all }q = 1,\ldots,d\big\},
\]
where the union is taken over all sets $T_+,T_-\subset \dset$ such that $T_+\cap T_- = \emptyset$, $T_+ \cup T_- = \dset$, $\#(T_\pm) = d_\pm$, and
\[
\delta(T_+,T_-) \leq \delta.
\]
Fix $T_+,T_-$ as above, and for each $q = 1,\ldots,d$ let $\what\dir_q^- = \#(T_-\cap \OC 0q_\Z)$. We need to estimate the size of the set
\[
A_\turn(T_+,T_-) \df \{\bfB \in A_\turn : \dir_q^-(\bfB) \geq \what\dir_q^- \text{ for all }q\}.
\]
Since $A_\turn = \bigcup_{T_+,T_-} A_\turn(T_+,T_-)$, to complete the proof it suffices to show that
\[
\#(A_\turn(T_+,T_-)) \lesssim \beta^{-\delta}.
\]
Note that for each $\bfB\in B_\MM(\0,1-\beta)$ and $q = 1,\ldots,d$, we have $\dir_q^-(\bfB) \geq \what\dir_q^-$ if and only if $\bfB$ is in the $2\beta$-neighborhood of the algebraic set
\[
\ZZ_q = \{\bfB : \dim(u_\bfB V_q \cap \vert) \geq \what\dir_q^-\} \subset \MM.
\]
Thus,
\[
A_\turn(T_+,T_-) \subset \bigcap_{q=1}^d \NN(\ZZ_q,2\beta).
\]
Let $\ZZ = \bigcap_{q=1}^d \ZZ_q$. We claim that
\begin{equation}
\label{2betaCbeta}
\bigcap_{q=1}^d \NN(\ZZ_q,2\beta) \subset \NN(\ZZ,2d \beta)
\end{equation}
Indeed, fix $\bfB \in \bigcap_q \NN(\ZZ_q,2\beta)$. For each $q = 1,\ldots,d$, choose $\bfB_q \in \ZZ_q \cap B(\bfB,2\beta)$, and let
\[
\what V_q = u_{\bfB_q} V_q\cap \vert.
\]
Next, for $q = 1,\ldots,d$ we recursively define
\[
W_q = \what V_q \cap \what W_{q-1}^\perp,\;\;\;\;
\what W_q = W_1 + \ldots + W_q,
\]
with the understanding that $\what W_0 = \{\0\}$. Note that since $\bfB_q \in \ZZ_q$,
\[
\dim(\what W_q) = \dim\big(\what W_{q-1} + (\what V_q\cap \what W_{q-1}^\perp)\big) \geq \dim(\what V_q) \geq \what\dir_q^-.
\]
Let $\bfD$ be the unique matrix such that $u_{-\bfD} \vv = u_{-\bfB_q} \vv$ for all $q = 1,\ldots,d$ and $\vv\in W_q$. Since $\vert = \what W_d = W_1 + \ldots + W_d$ is an orthogonal decomposition, such a $\bfD$ exists, and we have $\|\bfD - \bfB\| \leq \sum_{q=1}^d \|\bfB_q - \bfB\| \leq 2d \beta$. Now fix $q = 1,\ldots,d$. For all $p = 1,\ldots,q$ and $\vv\in W_p$, we have $u_{-\bfD} \vv = u_{-\bfB_p} \vv \in V_p \subset V_q$. This implies that $u_{-\bfD} \what W_q \subset V_q$ and thus
\[
\dim(u_\bfD V_q\cap \vert) \geq \dim(\what W_q) \geq \what\dir_q^-,
\]
so $\bfD \in \ZZ_q$. Since $q$ was arbitrary, we have $\bfD\in \ZZ$, and thus $\bfB \in \NN(\ZZ,2d\beta)$. This completes the proof of \eqref{2betaCbeta}.

So $A_\turn(T_+,T_-) \subset \NN(\ZZ,2d \beta)$, where $A_\turn(T_+,T_-)$ is a $3\beta$-separated set and $\ZZ$ is an algebraic set whose diagram in the sense of \cite[Definition 4.2]{YomdinComte} is constant (i.e. independent of $\turn$, $\beta$, and $\ff$). By \cite[Corollary 5.7]{YomdinComte}, it follows that
\[
\#(A_\turn(T_+,T_-)) \lesssim \beta^{-\dim(\ZZ)},
\]
whereas we wish to show that $\#(A_\turn(T_+,T_-)) \lesssim \beta^{-\delta}$. So to complete the proof we must show that $\dim(\ZZ) \leq \delta$.

Consider first the case where the subspaces $V_q$ ($q = 1,\ldots,d$) are all coordinate subspaces, i.e. $V_q = \sum_{i\in I_q} \R \ee_i$ for some $I_q \subset \dset$, and where $\dim(V_q\cap \vert) = \what\dir_q^-$ for all $q$. In this case, we write $I_- = \{m+1,\ldots,d\}$, so that $\vert = \sum_{i\in I_-} \R\ee_i$. Let $\sigma$ be the unique permutation of $\dset$ such that for each $q = 1,\ldots,d$, we have $I_q = \{\sigma(1),\ldots,\sigma(q)\}$. Then since
\[
\#(I_q\cap I_-) = \what L_q^- = \#(T_-\cap \OC 0q_\Z) \text{ for all }q,
\]
we have $I_- = \sigma(T_-)$.

It is readily verified that $\bfB \in \ZZ$ if and only if $X_{i,j} = 0$ for all $i = 1,\ldots,m$ and $j = 1,\ldots,n$ such that
\[
\sigma^{-1}(i) > \sigma^{-1}(m+j).
\]
Thus, $\dim(\ZZ)$ is equal to the number of pairs $(i,j)\in \{1,\ldots,m\}\times\{1,\ldots,n\}$ such that $\sigma^{-1}(i) < \sigma^{-1}(m+j)$, or equivalently the number of pairs $(i_+,i_-) \in T_+\times T_-$ such that $i_+ < i_-$. In other words, $\dim(\ZZ) = \delta(T_+,T_-) \leq \delta$.

For the general case, note that the map $\bfB \mapsto u_{-\bfB} \vert$ is a coordinate chart for the Grassmannian variety $\GG = \GG(d,n)$ of $n$-dimensional subspaces of $\R^d$. So it suffices to show that $\dim(\ZZ') \leq \delta$, where
\begin{equation*}
\ZZ' = \{W\in \GG : \dim(V_q\cap W) \geq \what\dir_q^- \text{ for all }q\}.
\end{equation*}
Let $W$ be a smooth point of $\ZZ'$ (i.e. a point where the tangent space to $\ZZ'$ at $W$ is defined) such that the local dimension of $\ZZ'$ at $W$ is equal to $\dim(\ZZ')$. Then $\dim(V_q\cap W) = \what\dir_q^-$ for all $q$. Moreover, there is a basis of $\R^d$ such that the subspaces $V_q$ ($q = 1,\ldots,d$) and $W$ are all coordinate subspaces with respect to this basis. So from the previous argument, it follows that $\dim(\ZZ'\cap U) = \delta$, where $U$ is a neighborhood of $W$ (depending on the basis). Since the local dimension of $\ZZ'$ at $W$ is equal to $\dim(\ZZ')$, this shows that $\dim(\ZZ') = \delta$.
\end{proof}

Now suppose that the game is played according to Bob's strategy, let $\bfA$ denote the outcome, and suppose that the corresponding successive minima function $\hh_\bfA$ is in $\SS$. By Lemma \ref{lemmafindtemplate}, there exists a template $\gg$ such that $\gg \asymp_\plus \hh_\bfA$. Fix a large constant $C_1 \geq \gamma$. Applying Lemma \ref{lemmafindtemplate} again, there exists a template $\ff$ such that $\ff \asymp_{\plus,C_1} \gg$ and such that for all $q,t,t'$ such that $f_q(t) < f_{q+1}(t)$ and $|t'-t|\leq C_1$, we have $g_{q+1}(t') - g_q(t') \geq C_1$ and $F_q'(t) \geq G_q'(t')$. Since $\hh_\bfA \in \SS$, we have $\ff\in \NN(\SS,C_\epsilon)$ for some constant $C_\epsilon$ depending on $\beta$ and $C_1$ (which will depend on $\epsilon$).
\begin{claim}
\label{claimfgeqgame}
We have
\begin{align*}
\underline\delta(\ff) &\geq \underline\delta - O(1/\log\beta),&
\overline\delta(\ff) &\geq \overline\delta - O(1/\log\beta),
\end{align*}
where $\underline\delta$ and $\overline\delta$ denote Alice's scores (see Definition \ref{definitiongames1}) in the Hausdorff and packing games, respectively.
\end{claim}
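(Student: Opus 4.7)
The plan is to combine the covering estimate of Claim~\ref{claimAkupper} with a pointwise (up to lower-order corrections) comparison between the combinatorial quantity $\delta(\turn,\bfB_\turn)$ produced by Bob's strategy and the contraction rate $\delta(\ff,k\gamma)$ of the template. Concretely, Claim~\ref{claimAkupper} gives
\[
\frac{\log\#(A_k)}{-\log\beta} \;\leq\; \delta(k,\bfB_k) \;+\; O\!\left(\frac{1}{\log\beta}\right).
\]
Averaging over $k$ and taking liminf (resp.\ limsup), the claim will follow once we prove
\[
\liminf_{k\to\infty}\frac{1}{k}\sum_{i=0}^{k-1}\delta(i,\bfB_i) \;\leq\; \underline\delta(\ff) + O\!\left(\frac{1}{\log\beta}\right),
\]
together with its limsup analogue.

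To carry out this comparison, I would work one turn at a time. Fix $k$ such that $t\df k\gamma$ lies at distance at least $C_1$ from every corner point of $\ff$, and let $I$ be the interval of linearity of $\ff$ containing $t$. Using $\Mink(\Lambda_k) \asymp_\plus \hh_\bfA(t) \asymp_{\plus,C_1}\ff(t)$ together with the sharpened separation $f_{q+1}(t)-f_q(t)\geq C_1$ guaranteed by the second application of Lemma~\ref{lemmafindtemplate}, for each $q\in Q(t)\df\{q:f_q(t)<f_{q+1}(t)\}$ the successive minima of $\Lambda_k$ have a genuine gap at level $q$. Lemma~\ref{lemmaminkowskibasis2} then forces the Minkowski subspace $V_q = V_q(\Lambda_k)$ used in Bob's strategy to coincide with the unique $\Lambda_k$-rational $q$-subspace of minimal covolume, and moreover $\log\|V_q\|\asymp_\plus F_q(t)$. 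The reverse direction of Lemma~\ref{lemmapushoffvert} applied along an interval of linearity, together with the identity $F_q'(I) = \dir_+(\ff,I,q)/m - \dir_-(\ff,I,q)/n$, now forces $\dim(V_q\cap\vert)\leq \dir_-(\ff,I,q)$ for all $q\in Q(t)$, provided $C_1$ dominates the implied constants. Taking the $2\beta$-thickening in the definition \eqref{Dqdef} of $L_q^-(k,\bfB_k)$ can only increase this by a bounded amount, and for $\beta$ small enough we obtain $L_q^-(k,\bfB_k)\leq \dir_-(\ff,I,q)$ for every $q\in Q(t)$; inserting this into the definition of $\delta(k,\bfB_k)$ via the $S_\pm$ decomposition yields $\delta(k,\bfB_k)\leq \delta(\ff,t)$.

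For the remaining ``bad'' values of $k$---those whose time $k\gamma$ lies within $C_1$ of a corner point---both $\delta(k,\bfB_k)$ and $\delta(\ff,\cdot)$ are bounded by $mn$, and the density of such $k$ is $O(1/\gamma) = O(1/\log\beta)$ since $\ff$ has only finitely many corner points per unit length after the reduction in \S\ref{subsectionsummary}. Therefore
\[
\frac{1}{k}\sum_{i=0}^{k-1}\delta(i,\bfB_i) \;\leq\; \frac{1}{k\gamma}\int_0^{k\gamma}\delta(\ff,t)\,\dee t + O\!\left(\frac{1}{\log\beta}\right) = \Delta(\ff,k\gamma) + O\!\left(\frac{1}{\log\beta}\right),
\]
and passing to liminf (resp.\ limsup) completes the proof.

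The main obstacle I expect is the pointwise comparison $\delta(k,\bfB_k)\leq \delta(\ff,k\gamma)$: one must be careful that the Minkowski subspace $V_q$ coming from an arbitrary reduced basis of $\Lambda_k$ is genuinely forced to coincide (up to a bounded error that is killed by the $2\beta$-thickening) with the distinguished subspace determined by the template, and that the inequality $\dim(V_q\cap\vert)\leq \dir_-(\ff,I,q)$ is robust to the $\asymp_\plus$ error between $\Mink(\Lambda_k)$ and $\ff(t)$. This is precisely why the second application of Lemma~\ref{lemmafindtemplate} was engineered to supply an arbitrarily large uniform gap $C_1$ between consecutive components of $\ff$ wherever they are separated; choosing $C_1$ larger than the implied constants in Lemmas~\ref{lemmaminkowskibasis2} and~\ref{lemmapushoffvert} eliminates this difficulty.
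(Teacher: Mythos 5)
Your overall frame (invoke Claim~\ref{claimAkupper}, reduce to a turn-by-turn comparison between $\delta(\turn,\bfB_\turn)$ and $\delta(\ff,\cdot)$, then average) is the same as the paper's, but the comparison you propose is inverted, and this is a genuine gap. To conclude $\delta(\turn,\bfB_\turn)\leq\delta(\ff,t')$ one needs the \emph{template} to dominate the \emph{strategy}: since $\#\big(S_+(\turn,\bfB_\turn)\cap\OC 0q_\Z\big)=\dir_q^+$ and $\#\big(S_+(\ff,t')\cap\OC 0q_\Z\big)=\dir_+(\ff,t',q)$ for $q$ with $f_q(t')<f_{q+1}(t')$, the required inequality is $\dir_+(\ff,t',q)\geq\dir_q^+$, i.e.\ \eqref{Lq+Lq+}, equivalently $\dir_-(\ff,t',q)\leq \dir_q^-$. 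Your intermediate claim $\dir_q^-=L_q^-(\turn,\bfB_\turn)\leq\dir_-(\ff,I,q)$ is the \emph{opposite} inequality: plugged into $\delta=\sum_q\#\big(S_+\cap\OC 0q_\Z\big)-\binom m2$ it yields $\delta(\turn,\bfB_\turn)\geq\delta(\ff,t)$, which does not give the claim (and is not what your own final sentence asserts). The correct direction is obtained, as in the paper, from the \emph{forward} direction of Lemma~\ref{lemmapushoffvert}: Bob's choice makes the transversality $\dim(u_{\bfD+\bfC}V_q\cap\vert)\leq\dir_q^-$ robust for the actual future play $\bfD=\sum_{\ell\geq\turn}\beta^{\ell-\turn}\bfB_\ell$ and all $\|\bfC\|\leq\beta$, which forces $\log\|g_s u_{\bfD}V_q\|$ (and, after ruling out competing subspaces via Lemma~\ref{lemmaminkowskibasis2} and the $C_1$-gap) $G_q$ to grow at average rate at least $\dir_q^+/\pdim-\dir_q^-/\qdim$; the quantized slope condition then produces a time $t''$ with $G_q'(t'')\geq\dir_q^+/\pdim-\dir_q^-/\qdim$, and the property of $\ff$ engineered in the second application of Lemma~\ref{lemmafindtemplate} transfers this to $F_q'(t')$ for every $t'$ in the block, giving \eqref{Lq+Lq+} with no exceptional turns.

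Separately, even the reversed inequality cannot be established by your mechanism. The reverse direction of Lemma~\ref{lemmapushoffvert} requires knowing the \emph{actual} intersection dimension $\dim(u_{\bfD}V_q\cap\vert)$ of the subspace being flowed, whereas $L_q^-$ in \eqref{Dqdef} is a supremum over $2\beta$-perturbations; slow growth is never forced by the sup being large, because the true trajectory may avoid the perturbation achieving it. Likewise the step ``taking the $2\beta$-thickening can only increase this by a bounded amount, and for $\beta$ small enough we obtain $L_q^-\leq\dir_-(\ff,I,q)$'' is false: $\dim(u_{\bfB+\bfC}V_q\cap\vert)$ can jump up under arbitrarily small $\bfC$ (the subspace $V_q$ may be within $\beta$ of a degenerate position relative to $\vert$), and no smallness of $\beta$ uniform in $\turn$ controls this; indeed this robustness-under-perturbation is exactly the quantity Bob's strategy maximizes and the correct argument exploits. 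Finally, your exceptional-set bound (density $O(1/\log\beta)$ of turns within $C_1$ of a corner) would need the corner spacing of $\ff$ to be large compared with $C_1$, which you have not arranged; the paper sidesteps this entirely by proving the pointwise inequality for every $t'\in[k\tbeta,(k+1)\tbeta]$.
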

\begin{proof}
It suffices to show that for all $k\in\N$ and $t'\in [k\gamma,(k+1)\gamma]$,
\[
\delta(\ff,t') \geq \frac{\log\#(A_k) - O(1)}{-\log(\beta)}\cdot
\]
Indeed, fix such $k,t'$, and let $t = k\gamma$. By Claim \ref{claimAkupper}, we have
\[
\delta(\turn,\bfB_\turn) \geq \frac{\log\#(A_k) - O(1)}{-\log(\beta)},
\]
so to complete the proof it suffices to show that
\[
\delta(\ff,t') \geq \delta(\turn,\bfB_\turn).
\]
Indeed, fix $q = 1,\ldots,d-1$ such that $f_q(t') < f_{q+1}(t')$, and we will show that
\begin{equation}
\label{Lq+Lq+}
L_+(\ff,t',q) \geq \dir_q^+.
\end{equation}
Indeed, first note that by assumption, and since $C_1 \geq \gamma$, the inequality $f_q(t') < f_{q+1}(t')$ implies that $g_{q+1}(t) - g_q(t) \geq C_1$. Now by the definition of $\gg$ and Lemma \ref{lemmaMS}, we have
\[
\gg(t) \asymp_\plus \hh_\bfA(t) \asymp_\plus \hh(\Lambda_\turn)
\]
and thus we in fact get $\log\lambda_{q+1}(\Lambda_\turn) - \log\lambda_q(\Lambda_\turn) \gtrsim_\plus C_1$.

Now let
\[
\bfD_\turn = \sum_{\ell = \turn}^\infty \beta^{\ell-\turn} \bfB_\ell \in B_\MM(\bfB_\turn,\beta) \subset B_\MM(\0,1).
\]
By \eqref{Dqdef}, we have
\[
\sup_{\|\bfC\| \leq \beta} \dim(u_{\bfD_\turn + \bfC} V_q\cap \vert) \leq \dir_q^-.
\]
Thus by Lemma \ref{lemmapushoffvert}, for all $s \geq 0$, we have
\begin{equation}
\label{firsts}
\log\|g_s u_{\bfD_\turn} V_q\| \gtrsim_{\plus,\beta} \log\|V_q\| + \left(\frac{\dir_q^+}{m} - \frac{\dir_q^-}{n}\right)s.
\end{equation}
On the other hand, since $\log\lambda_{q+1}(\Lambda_\turn) - \log\lambda_q(\Lambda_\turn) \gtrsim_\plus C_1$, for all $V_q'\in \VV_q(\Lambda_\turn) \butnot \{V_q\}$, by Lemma \ref{lemmaminkowskibasis2} we have
\[
\log\|V_q'\| - \log\|V_q\| \gtrsim_\plus C_1
\]
and thus for all $0 \leq s \leq \frac{mn}{qd}C_1$, since $\log\|g_s^{-1}\| \leq s/n$, we have
\begin{align*}
\log\|g_s u_{\bfD_\turn} V_q'\|
&\gtrsim_\plus \log\|V_q'\| - \frac qn s
\gtrsim_\plus \log\|V_q\| + C_1 - \frac qn s\\
&\geq_\pt \log\|V_q\| + \frac qm s \geq_\pt \log\|V_q\| + \left(\frac{\dir_q^+}{m} - \frac{\dir_q^-}{n}\right)s.
\end{align*}
Combining with \eqref{firsts} gives
\[
\inf_{V_q'\in\VV_q(\Lambda_k)} \log\|g_s u_{\bfD_\turn} V_q'\| \gtrsim_{\plus,\beta} \log\|V_q\| + \left(\frac{\dir_q^+}{m} - \frac{\dir_q^-}{n}\right)s.
\]
On the other hand, since $\gg\asymp_\plus \hh_\bfA$, by Lemmas \ref{lemmaminkowskibasis} and \ref{lemmaMS}, we have
\begin{align*}
\log\|V_q\| &\asymp_\plus \sum_{i=1}^q \log\lambda_i(\Lambda_k) \asymp_\plus G_q(t),\\
\inf_{V_q'\in\VV_q(\Lambda_k)} \log\|g_s u_{\bfD_\turn} V_q'\| &\asymp_\plus \sum_{i=1}^q \log\lambda_i(g_s u_{\bfD_\turn} \Lambda_k) \asymp_\plus G_q(t+s),
\end{align*}
so
\[
G_q(t+s) \gtrsim_{\plus,\beta} G_q(t) + \left(\frac{\dir_q^+}{m} - \frac{\dir_q^-}{n}\right)s.
\]
Rearranging gives
\[
\int_t^{t+s} G_q' \gtrsim_{\plus,\beta} \left(\frac{\dir_q^+}{m} - \frac{\dir_q^-}{n}\right)s.
\]
Suppose that $G_q' < \frac{\dir_q^+}{m} - \frac{\dir_q^-}{n}$ on $[t,t+s]$. Then since $\gg$ is a template,
\[
G_q' \leq \frac{\dir_q^+ - 1}{m} - \frac{\dir_q^- + 1}{n} \text{ on } [t,t+s]
\]
and thus
\[
\left(\frac{\dir_q^+ - 1}{m} - \frac{\dir_q^- + 1}{n}\right)s \gtrsim_{\plus,\beta}\left(\frac{\dir_q^+}{m} - \frac{\dir_q^-}{n}\right)s
\]
which implies $s \asymp_{\plus,\beta} 0$, i.e. $|s| \leq C_2$ for some constant $C_2$. Let $C_1,s$ be chosen so that $C_2 < s \leq \min(C_1,\frac{mn}{qd}C_1)$ and $\tbeta \leq C_1$. Then the inequality $|s| \leq C_2$ contradicts the definition of $s$, so the hypothesis that $G_q' < \frac{\dir_q^+}{m} - \frac{\dir_q^-}{n}$ on $[t,t+s]$ must be incorrect, i.e. we must have $G_q'(t'') \geq \frac{\dir_q^+}{m} - \frac{\dir_q^-}{n}$ for some $t''\in [t,t+s]$. Now since $t',t''\in [t,t+C_1]$, we have $|t''-t'| \leq C_1$, and thus by our assumptions on $\ff$ we have
\[
\frac{\dir_+(\ff,t',q)}{m} - \frac{\dir_-(\ff,t',q)}{n} = F_q'(t') \geq G_q'(t'') \geq \frac{\dir_q^+}{m} - \frac{\dir_q^-}{n}
\]
demonstrating \eqref{Lq+Lq+}.

To summarize, we have
\[
\#\big(S_+(\ff,t')\cap \OC 0q_\Z\big) \geq \#\big(S_+(\turn,\bfB_k)\cap \OC 0q_\Z\big)
\]
for all $q$ such that $f_q(t') < f_{q+1}(t')$. It follows from \eqref{Splusdef1} that the same inequality holds for all $q=1,\ldots,d-1$. Since
\[
\delta(T_+,T_-) = \sum_{q=1}^{d-1} \#\big(T_+\cap \OC 0q\big) - \binom m2,
\]
(where $\delta$ is as in \eqref{deltaT+T-}), we have
\[
\delta(\ff,t') = \delta(S_\pm(\ff,t')) \geq \delta(S_\pm(\turn,\bfB_k)) = \delta(\turn,\bfB_\turn).
\qedhere\]
\end{proof}

Fix $\epsilon > 0$ and let $\delta = \sup_{\ff\in \NN(\SS,C_\epsilon)\cap \TT_{m,n}}\underline\delta(\ff) + \epsilon$. Then by the previous Claim \ref{claimfgeqgame}, we have $\delta > \underline\delta$ as long as $\beta$ is sufficiently small. So by Theorem \ref{theoremHPgame}, we have $\delta \geq \HD(\DD(\SS))$. Since $\delta$ was arbitrary, we have
\[
\HD(\DD(\SS)) \leq \sup_{\ff\in \NN(\SS,C_\epsilon)\cap \TT_{m,n}}\underline\delta(\ff) + \epsilon.
\]
A similar argument gives the bound for the packing dimension, thereby completing the proof of the upper bounds in Theorem \ref{theoremvariationaluniform}.

\draftnewpage
\part{Appendix and references}
\label{partappendix}
\appendix
\section{Translating between Schmidt--Summerer's notation and ours}
\label{appendix}

This appendix explains the relations between certain concepts and notation in our paper and in Schmidt--Summerer's \cite{SchmidtSummerer3} to provide a guide for readers of both. 

Schmidt--Summerer are working in the framework of simultaneous approximation, so $n=1$ for them, and further: their $n$ is our $d=m+1$, their $y$ is our $r$, their $\xi$ is our $A$. In particular, note that they have $r = (q,p)$ instead of $r = (p,q)$. Their $\Lambda(\xi)$ would translate to $u_A \Z^d$ in our paper, and what they call $\KK(Q)$ is what we would call $g_{-t} B$, where $Q = e^t$ and $B = [-1,1]^d$. Finally, their $T$ is our $g_{-1}$.

Schmidt--Summerer's set-up encodes the same geometric information as ours since 
\[
\lambda_i(g_t u_A \Z^d, B) = \lambda_i(u_A \Z^d, g_{-t} B) .
\] 
Therefore, in their notation the right-hand side is $\lambda_i(\Lambda(\xi),\KK(Q))$. Similarly, $L_i(q)$ in their notation
is the same as $h_i(t)$ in our notation, where $q = t/(n-1)$.
The connection between our notion of a {\it template} (see Definition \ref{definitiontemplate}) and Schmidt--Summerer's {\it $(n,\gamma)$-systems} (see \cite[\62]{SchmidtSummerer3}) is as follows: if $P$ is an $(n,0)$-system then 
\[
\hh(t) = \frac{n}{n-1} P(t) - \frac{t}{n-1}\] 
is an $(n-1) \times 1$ template.

We further remark that after Schmidt--Summerer consider the limiting case of an $(n,0)$-system in \cite[\63]{SchmidtSummerer3}, they go on, in \cite[\64, pg. 62]{SchmidtSummerer3}, to conjecture that the study of these systems should suffice to determine the spectra of the family of exponents of approximation that they are interested in. The rest of their paper develops a theory of covers of an $(n,\gamma)$-system, which is then applied to prove relations between several exponents of approximation.

Interested readers are also referred to Roy's paper \cite{Roy3} for translating between Schmidt--Summerer's notation and his. In contrast to Schmidt--Summerer who work in the simultaneous approximation framework, Roy works in the dual framework of approximation by linear forms. Roy defines the notion of a {\it rigid system} (a special case of $(n,0)$-systems) in the introduction of \cite{Roy3} and goes on to prove that every $(n,\gamma)$-system can be approximated by a rigid system up to bounded additive difference (see \cite[Theorem 1.3]{Roy3}). Roy's rigid systems translate to our \emph{$\tspace$-integral templates} (see Definition \ref{definitionintegral}).

\hrulefill

{\bf Data availability statement}: Data sharing not applicable to this article as no datasets were
generated or analysed during the current study.


\bibliographystyle{amsplain}

\bibliography{bibliography}

\end{document}